\documentclass[10pt]{amsart}
\textwidth=14.5cm \oddsidemargin=1cm
\evensidemargin=1cm
\usepackage{amsmath}
\usepackage{amsxtra}
\usepackage{amscd}
\usepackage{amsthm}
\usepackage{amsfonts}
\usepackage{amssymb}
\usepackage{eucal}
\usepackage[all]{xy}
\usepackage{graphicx}
\usepackage[usenames]{color}

\newtheorem{cor}[subsubsection]{Corollary}
\newtheorem{lem}[subsubsection]{Lemma}
\newtheorem{prop}[subsubsection]{Proposition}

\newtheorem{thm}[subsubsection]{Theorem}

\newtheorem{defn}[subsubsection]{Definition}
\newtheorem{quest}[subsubsection]{Question}

\theoremstyle{remark}
\newtheorem{rem}[subsubsection]{Remark}
\newtheorem{example}[subsubsection]{Example}


\theoremstyle{definition}

\theoremstyle{remark}

\newcommand{\thmref}[1]{Theorem~\ref{#1}}

\newcommand{\secref}[1]{Sect.~\ref{#1}}
\newcommand{\lemref}[1]{Lemma~\ref{#1}}
\newcommand{\propref}[1]{Proposition~\ref{#1}}
\newcommand{\corref}[1]{Corollary~\ref{#1}}

\newcommand{\remref}[1]{Remark~\ref{#1}}

\numberwithin{equation}{section}

\newcommand{\nc}{\newcommand}
\nc{\renc}{\renewcommand}
\nc{\ssec}{\subsection}
\nc{\sssec}{\subsubsection}
\nc{\on}{\operatorname}

\newcommand{\iso}{\buildrel{\sim}\over{\longrightarrow}}
\newcommand{\mono}{\hookrightarrow}
\newcommand{\epi}{\twoheadrightarrow}

\nc{\ips}{{\iota_P^{(S)}}}
\nc{\ipms}{{\iota_{P^-}^{(S)}}}
\nc{\sfpps}{{\sfp_P^{(S)}}}
\nc{\sfppms}{{\sfp_{P^-}^{(S)}}}

\nc\ol{\overline}
\nc\wt{\widetilde}
\nc\tboxtimes{\wt{\boxtimes}}
\nc\tstar{\wt{\star}}
\nc{\alp}{\alpha}

\nc{\ZZ}{{\mathbb Z}}
\nc{\NN}{{\mathbb N}}
\nc{\OO}{{\mathbb O}}
\renc{\SS}{{\mathbb S}}
\nc{\DD}{{\mathbb D}}
\nc{\GG}{{\mathbb G}}

\nc{\Fq}{{\mathbb F}_q}
\nc{\Fqb}{\ol{{\mathbb F}_q}}
\nc{\Ql}{\ol{{\mathbb Q}_\ell}}
\nc{\id}{\text{id}}
\nc\X{\mathcal X}

\nc{\red}{\on{red}}
\nc{\Ho}{\on{Ho}}
\nc{\Hom}{\on{Hom}}
\nc{\Mor}{\on{Mor}}
\nc{\coef}{\on{coeff}}
\nc{\Lie}{\on{Lie}}
\nc{\Loc}{\on{Loc}}
\nc{\Pic}{\on{Pic}}
\nc{\Bun}{\on{Bun}}
\nc{\IC}{\on{IC}}
\nc{\Aut}{\on{Aut}}
\nc{\rk}{\on{rk}}
\nc{\Sh}{\on{Sh}}
\nc{\Perv}{\on{Perv}}
\nc{\pos}{{\on{pos}}}
\nc{\Conv}{\on{Conv}}
\nc{\Sph}{\on{Sph}}
\nc{\Sym}{\on{Sym}}
\nc{\BunBb}{\overline{\Bun}_B}
\nc{\BunNb}{\overline{\Bun}_N}
\nc{\BunTb}{\overline{\Bun}_T}
\nc{\BunBbm}{\overline{\Bun}_{B^-}}
\nc{\BunBbel}{\overline{\Bun}_{B,el}}
\nc{\BunBbmel}{\overline{\Bun}_{B^-,el}}
\nc{\Buno}{\overset{o}{\Bun}}
\nc{\BunPb}{{\overline{\Bun}_P}}
\nc{\BunBM}{\Bun_{B(M)}}
\nc{\BunBMb}{\overline{\Bun}_{B(M)}}
\nc{\BunPbw}{{\widetilde{\Bun}_P}}
\nc{\BunBP}{\widetilde{\Bun}_{B,P}}
\nc{\GUb}{\overline{G/U}}
\nc{\GUPb}{\overline{G/U(P)}}

\nc{\PP}{\underline{P}'}

\nc{\Hhom}{\underline{\on{Hom}}}
\nc\syminfty{\on{Sym}^{\infty}}
\nc\lal{\ol{\lambda}}
\nc\xl{\ol{x}}
\nc\thl{\ol{\theta}}
\nc\nul{\ol{\nu}}
\nc\mul{\ol{\mu}}
\nc\Sum\Sigma
\nc{\oX}{\overset{o}{X}{}}
\nc{\hl}{\overset{\leftarrow}h{}}
\nc{\hr}{\overset{\rightarrow}h{}}
\nc{\M}{{\mathcal M}}
\nc{\N}{{\mathcal N}}
\nc{\F}{{\mathcal F}}
\nc{\D}{{\mathcal D}}
\nc{\Q}{{\mathcal Q}}
\nc{\Y}{{\mathcal Y}}
\nc{\G}{{\mathcal G}}
\nc{\E}{{\mathcal E}}
\nc{\CalC}{{\mathcal C}}
\nc\Dh{\widehat{\D}}

\nc{\C}{{\mathcal C}}
\nc{\K}{{\mathcal K}}
\renewcommand{\H}{{\mathcal H}}

\nc{\T}{{\mathcal T}}
\nc{\V}{{\mathcal V}}
\renc{\P}{{\mathcal P}}
\nc{\A}{{\mathcal A}}
\nc{\B}{{\mathcal B}}
\nc{\U}{{\mathcal U}}

\nc{\Gr}{{\on{Gr}}}

\nc{\frn}{{\check{\mathfrak u}(P)}}

\nc{\fC}{\mathfrak C}
\nc{\p}{\mathfrak p}
\nc{\q}{\mathfrak q}
\nc\f{{\mathfrak f}}

\nc{\qo}{{\mathfrak q}}
\nc{\po}{{\mathfrak p}}
\nc{\s}{{\mathfrak s}}
\nc\w{\text{w}}

\newcommand{\mmu}{{\mu}}

\nc\mathi\iota
\nc\Spec{\on{Spec}}
\nc\Proj{\on{Proj}}
\nc\Mod{\on{Mod}}
\nc{\tw}{\widetilde{\mathfrak t}}
\nc{\pw}{\widetilde{\mathfrak p}}
\nc{\qw}{\widetilde{\mathfrak q}}
\nc{\jw}{\widetilde j}

\nc{\grb}{\overline{\Gr}}
\nc{\I}{\mathcal I}

\nc{\lambdach}{{\check\lambda}}
\nc{\Lambdach}{{\check\Lambda}{}}
\nc{\much}{{\check\mu}}
\nc{\omegach}{{\check\omega}}
\nc{\nuch}{{\check\nu}}
\nc{\etach}{{\check\eta}}
\nc{\alphach}{{\check\alpha}}
\nc{\oblvtach}{{\check\oblvta}}
\nc{\rhoch}{{\check\rho}}
\nc{\ch}{{\check h}}

\nc{\Hb}{\overline{\H}}


\emergencystretch=2cm

\nc{\BA}{{\mathbb{A}}}
\nc{\BC}{{\mathbb{C}}}
\nc{\BG}{{\mathbb{G}}}
\nc{\BM}{{\mathbb{M}}}
\nc{\BO}{{\mathbb{O}}}
\nc{\BD}{{\mathbb{D}}}
\nc{\BBD}{{\mathbf{D}}}
\nc{\BN}{{\mathbb{N}}}
\nc{\BP}{{\mathbb{P}}}
\nc{\BQ}{{\mathbb{Q}}}
\nc{\BR}{{\mathbb{R}}}
\nc{\BZ}{{\mathbb{Z}}}
\nc{\BS}{{\mathbb{S}}}
\nc{\Deep}{{\bf{deep}}}
\nc{\deep}{deep}

\nc{\CA}{{\mathcal{A}}}
\nc{\CB}{{\mathcal{B}}}

\nc{\CE}{{\mathcal{E}}}
\nc{\CF}{{\mathcal{F}}}
\nc{\CH}{{\mathcal{H}}}

\nc{\CL}{{\mathcal{L}}}
\nc{\CC}{{\mathcal{C}}}
\nc{\CG}{{\mathcal{G}}}
\nc{\CalD}{{\mathcal{D}}}
\nc{\CM}{{\mathcal{M}}}
\nc{\CN}{{\mathcal{N}}}
\nc{\CK}{{\mathcal{K}}}
\nc{\CO}{{\mathcal{O}}}
\nc{\CP}{{\mathcal{P}}}
\nc{\CQ}{{\mathcal{Q}}}
\nc{\CR}{{\mathcal{R}}}
\nc{\CS}{{\mathcal{S}}}
\nc{\CT}{{\mathcal{T}}}
\nc{\CU}{{\mathcal{U}}}
\nc{\CV}{{\mathcal{V}}}
\nc{\CW}{{\mathcal{W}}}
\nc{\CX}{{\mathcal{X}}}
\nc{\CY}{{\mathcal{Y}}}
\nc{\CZ}{{\mathcal{Z}}}
\nc{\CI}{{\mathcal{I}}}

\nc{\csM}{{\check{\mathcal A}}{}}
\nc{\oM}{{\overset{\circ}{\mathcal M}}{}}
\nc{\obM}{{\overset{\circ}{\mathbf M}}{}}
\nc{\oCA}{{\overset{\circ}{\mathcal A}}{}}
\nc{\obA}{{\overset{\circ}{\mathbf A}}{}}
\nc{\ooM}{{\overset{\circ}{M}}{}}
\nc{\osM}{{\overset{\circ}{\mathsf M}}{}}
\nc{\vM}{{\overset{\bullet}{\mathcal M}}{}}
\nc{\nM}{{\underset{\bullet}{\mathcal M}}{}}
\nc{\oD}{{\overset{\circ}{\mathcal D}}{}}
\nc{\obD}{{\overset{\circ}{\mathbf D}}{}}
\nc{\oA}{{\overset{\circ}{\mathbb A}}{}}
\nc{\op}{{\overset{\bullet}{\mathbf p}}{}}
\nc{\cp}{{\overset{\circ}{\mathbf p}}{}}
\nc{\oU}{{\overset{\bullet}{\mathcal U}}{}}
\nc{\oZ}{{\overset{\circ}{\mathcal Z}}{}}
\nc{\ofZ}{{\overset{\circ}{\mathfrak Z}}{}}
\nc{\oF}{{\overset{\circ}{\fF}}}

\nc{\fa}{{\mathfrak{a}}}
\nc{\fb}{{\mathfrak{b}}}
\nc{\fd}{{\mathfrak{d}}}
\nc{\ff}{{\mathfrak{f}}}
\nc{\fg}{{\mathfrak{g}}}
\nc{\fgl}{{\mathfrak{gl}}}
\nc{\fh}{{\mathfrak{h}}}
\nc{\fj}{{\mathfrak{j}}}
\nc{\fl}{{\mathfrak{l}}}
\nc{\fm}{{\mathfrak{m}}}
\nc{\fn}{{\mathfrak{n}}}
\nc{\fu}{{\mathfrak{u}}}
\nc{\fp}{{\mathfrak{p}}}
\nc{\fr}{{\mathfrak{r}}}
\nc{\fs}{{\mathfrak{s}}}
\nc{\ft}{{\mathfrak{t}}}
\nc{\fz}{{\mathfrak{z}}}
\nc{\fsl}{{\mathfrak{sl}}}
\nc{\hsl}{{\widehat{\mathfrak{sl}}}}
\nc{\hgl}{{\widehat{\mathfrak{gl}}}}
\nc{\hg}{{\widehat{\mathfrak{g}}}}
\nc{\chg}{{\widehat{\mathfrak{g}}}{}^\vee}
\nc{\hn}{{\widehat{\mathfrak{n}}}}
\nc{\chn}{{\widehat{\mathfrak{n}}}{}^\vee}

\nc{\fA}{{\mathfrak{A}}}
\nc{\fB}{{\mathfrak{B}}}
\nc{\fD}{{\mathfrak{D}}}
\nc{\fE}{{\mathfrak{E}}}
\nc{\fF}{{\mathfrak{F}}}
\nc{\fG}{{\mathfrak{G}}}
\nc{\fK}{{\mathfrak{K}}}
\nc{\fL}{{\mathfrak{L}}}
\nc{\fM}{{\mathfrak{M}}}
\nc{\fN}{{\mathfrak{N}}}
\nc{\fP}{{\mathfrak{P}}}
\nc{\fU}{{\mathfrak{U}}}
\nc{\fV}{{\mathfrak{V}}}
\nc{\fZ}{{\mathfrak{Z}}}

\nc{\bb}{{\mathbf{b}}}
\nc{\bc}{{\mathbf{c}}}
\nc{\bd}{{\mathbf{d}}}
\nc{\bbf}{{\mathbf{f}}}
\nc{\be}{{\mathbf{e}}}
\nc{\bi}{{\mathbf{i}}}
\nc{\bj}{{\mathbf{j}}}
\nc{\bn}{{\mathbf{n}}}
\nc{\bp}{{\mathbf{p}}}
\nc{\bq}{{\mathbf{q}}}
\nc{\bu}{{\mathbf{u}}}
\nc{\bv}{{\mathbf{v}}}
\nc{\bx}{{\mathbf{x}}}
\nc{\bs}{{\mathbf{s}}}
\nc{\by}{{\mathbf{y}}}
\nc{\bw}{{\mathbf{w}}}
\nc{\bA}{{\mathbf{A}}}
\nc{\bK}{{\mathbf{K}}}
\nc{\bB}{{\mathbf{B}}}
\nc{\bC}{{\mathbf{C}}}
\nc{\bG}{{\mathbf{G}}}
\nc{\bD}{{\mathbf{D}}}
\nc{\bH}{{\mathbf{H}}}
\nc{\bM}{{\mathbf{M}}}
\nc{\bN}{{\mathbf{N}}}
\nc{\bV}{{\mathbf{V}}}
\nc{\bW}{{\mathbf{W}}}
\nc{\bX}{{\mathbf{X}}}
\nc{\bZ}{{\mathbf{Z}}}
\nc{\bS}{{\mathbf{S}}}

\nc{\sA}{{\mathsf{A}}}
\nc{\sB}{{\mathsf{B}}}
\nc{\sC}{{\mathsf{C}}}
\nc{\sD}{{\mathsf{D}}}
\nc{\sF}{{\mathsf{F}}}
\nc{\sG}{{\mathsf{G}}}
\nc{\sK}{{\mathsf{K}}}
\nc{\sM}{{\mathsf{M}}}
\nc{\sO}{{\mathsf{O}}}
\nc{\sW}{{\mathsf{W}}}
\nc{\sQ}{{\mathsf{Q}}}
\nc{\sP}{{\mathsf{P}}}
\nc{\sZ}{{\mathsf{Z}}}
\nc{\sfp}{{\mathsf{p}}}
\nc{\bsfp}{{\mathsf{\bar p}_P}}
\nc{\sfq}{{\mathsf{q}}}
\nc{\sr}{{\mathsf{r}}}
\nc{\bk}{{\mathsf{k}}}
\nc{\sg}{{\mathsf{g}}}
\nc{\sff}{{\mathsf{f}}}
\nc{\sfb}{{\mathsf{b}}}
\nc{\sfc}{{\mathsf{c}}}
\nc{\sd}{{\mathsf{d}}}

\nc{\BK}{{\bar{K}}}

\nc{\tA}{{\widetilde{\mathbf{A}}}}
\nc{\tB}{{\widetilde{\mathcal{B}}}}
\nc{\tg}{{\widetilde{\mathfrak{g}}}}
\nc{\tG}{{\widetilde{G}}}
\nc{\TM}{{\widetilde{\mathbb{M}}}{}}
\nc{\tO}{{\widetilde{\mathsf{O}}}{}}
\nc{\tU}{{\widetilde{\mathfrak{U}}}{}}
\nc{\TZ}{{\tilde{Z}}}
\nc{\tx}{{\tilde{x}}}
\nc{\tbv}{{\tilde{\bv}}}
\nc{\tfP}{{\widetilde{\mathfrak{P}}}{}}
\nc{\tz}{{\tilde{\zeta}}}
\nc{\tmu}{{\tilde{\mu}}}

\nc{\urho}{\underline{\rho}}
\nc{\uB}{\underline{B}}
\nc{\uC}{{\underline{\mathbb{C}}}}
\nc{\ui}{\underline{i}}
\nc{\uj}{\underline{j}}
\nc{\ofP}{{\overline{\mathfrak{P}}}}
\nc{\oB}{{\overline{\mathcal{B}}}}
\nc{\og}{{\overline{\mathfrak{g}}}}
\nc{\oI}{{\overline{I}}}

\nc{\eps}{\varepsilon}
\nc{\hrho}{{\hat{\rho}}}

\nc{\one}{{\mathbf{1}}}
\nc{\two}{{\mathbf{t}}}

\nc{\Rep}{{\mathop{\operatorname{\rm Rep}}}}
\nc{\Tot}{{\mathop{\operatorname{\rm Tot}}}}
\nc{\Ker}{{\mathop{\operatorname{\rm Ker}}}}
\nc{\im}{{\mathop{\operatorname{\rm Im}}}}
\nc{\Hilb}{{\mathop{\operatorname{\rm Hilb}}}}
\nc{\End}{{\mathop{\operatorname{\rm End}}}}
\nc{\Ext}{{\mathop{\operatorname{\rm Ext}}}}
\nc{\CHom}{{\mathop{\operatorname{{\mathcal{H}}\it om}}}}
\nc{\GL}{{\mathop{\operatorname{\rm GL}}}}
\nc{\gr}{{\mathop{\operatorname{\rm gr}}}}
\nc{\HN}{{\mathop{\operatorname{\rm HN}}}}
\nc{\Id}{{\mathop{\operatorname{\rm Id}}}}
\nc{\de}{{\mathop{\operatorname{\rm def}}}}
\nc{\length}{{\mathop{\operatorname{\rm length}}}}
\nc{\supp}{{\mathop{\operatorname{\rm supp}}}}

\nc{\Cliff}{{\mathsf{Cliff}}}
\nc{\Fl}{\on{Fl}}
\nc{\Fib}{{\mathsf{Fib}}}
\nc{\Coh}{{\on{Coh}}}
\nc{\QCoh}{{\on{QCoh}}}
\nc{\IndCoh}{{\on{IndCoh}}}
\nc{\FCoh}{{\mathsf{FCoh}}}

\nc{\reg}{{\text{\rm reg}}}

\nc{\cplus}{{\mathbf{C}_+}}
\nc{\cminus}{{\mathbf{C}_-}}
\nc{\cthree}{{\mathbf{C}_*}}
\nc{\Qbar}{{\bar{Q}}}
\nc\Eis{\on{Eis}}
\nc\Eisb{\ol\Eis{}}
\nc\Eisr{\on{Eis}^{rat}{}}
\nc\wh{\widehat}
\nc{\Def}{\on{Def_{\check{\fb}}(E)}}
\nc{\barZ}{\overline{Z}{}}
\nc{\barbarZ}{\overline{\barZ}{}}
\nc{\barpi}{\overline\pi}
\nc{\barbarpi}{\overline\barpi}
\nc{\barpip}{\overline\pi{}^+}
\nc{\barpim}{\overline\pi{}^-}

\nc{\fq}{\mathfrak q}

\nc{\fqb}{\ol{\sfq}{}}
\nc{\fpb}{\ol{\sfp}{}}
\nc{\fpr}{{\sfp^{rat}}{}}
\nc{\fqr}{{\sfq^{rat}}{}}

\nc{\hattimes}{\wh\otimes}

\nc{\bh}{{\bar{h}}}
\nc{\bOmega}{{\overline{\Omega(\check \fn)}}}

\nc{\seq}[1]{\stackrel{#1}{\sim}}

%
%
%
%

\nc{\cT}{{\check{T}}}
\nc{\cG}{{\check{G}}}
\nc{\cM}{{\check{M}}}
\nc{\cB}{{\check{B}}}

\nc{\ct}{{\check{\mathfrak t}}}
\nc{\cg}{{\check{\fg}}}
\nc{\cb}{{\check{\fb}}}
\nc{\cn}{{\check{\fn}}}

\nc{\cLambda}{{\check\Lambda}}

\nc{\cla}{{\check\lambda}}
\nc{\cmu}{{\check\mu}}
\nc{\cnu}{{\check\nu}}
\nc{\ceta}{{\check\eta}}

\nc{\DefbE}{{\on{Def}_{\cB}(E_\cT)}}

\nc{\imathb}{{\ol{\imath}}}
\nc{\rlr}{\overset{\longrightarrow}{\underset{\longrightarrow}\longleftarrow}}

\nc{\oBun}{\overset{\circ}\Bun}
\nc{\LocSys}{\on{LocSys}}
\nc{\BunBbb}{\ol{\ol{Bun}}_B}
\nc{\BunBr}{\Bun_B^{rat}}
\nc{\BunBrsg}{\Bun_B^{rat,\on{s.g.}}}
\nc{\BunBrp}{\Bun_B^{rat,polar}}
\nc{\BunBrpbg}{\Bun_B^{rat,polar,\on{b.g.}}}
\nc{\BunBrpsg}{\Bun_B^{rat,polar,\on{s.g.}}}
\nc{\BunTrp}{\Bun_T^{rat,polar}}
\nc{\BunTrpbg}{\Bun_T^{rat,polar,\on{b.g.}}}
\nc{\BunTrpsg}{\Bun_T^{rat,polar,\on{s.g.}}}
\nc{\BunNr}{\Bun_N^{rat}}
\nc{\BunNre}{\Bun_N^{enh,rat}}
\nc{\BunTr}{\Bun_T^{rat}}
\nc{\Vect}{\on{Vect}}
\nc{\Whit}{\on{Whit}}
\nc{\CTb}{\ol{\on{CT}}}
\nc{\Ran}{\on{Ran}}
\nc{\CTr}{\on{CT}^{rat}{}}
\nc\jmathr{\jmath^{rat}{}}
\nc{\ux}{\underline{x}}
\nc{\clambda}{{\check\lambda}}
\nc{\calpha}{{\check\alpha}}
\nc{\ind}{{\mathbf{ind}}}
\nc{\oblv}{{\mathbf{oblv}}}
\nc{\ox}{{\overline{x}}}
\nc{\cLa}{\check{\Lambda}}
\nc{\StinftyCat}{\on{DGCat}}
\nc{\inftyCat}{\infty\on{-Cat}}
\nc{\inftygroup}{\infty\on{-Grpd}}
\nc{\Dmod}{\on{D-mod}}
\nc{\CMaps}{{\mathcal Maps}}
\nc{\Maps}{\on{Maps}}
\nc{\affSch}{\on{Sch}^{\on{aff}}}
\nc{\dr}{{\on{dR}}}
\nc{\rD}{{\blacktriangle}}
\nc{\oCY}{\overset{\circ}\CY}
\nc{\leqG}{\underset{G}\leq}
\nc{\leqM}{\underset{M}\leq}
\nc{\leqGad}{\underset{G_{ad}}\leq}
\nc{\leqMad}{\underset{M_{ad}}\leq}
\nc{\psId}{\on{Ps-Id}}

\nc{\sotimes}{\overset{!}\otimes}

\begin{document}

\title[The category of D-modules on $\Bun_G$]{Compact generation of the category of D-modules on the stack of $G$-bundles on a curve}

\author{V.~Drinfeld and D.~Gaitsgory}

\date{\today}

\begin{abstract}
Let $G$ be a reductive group. Let $\Bun_G$ denote the stack of $G$-bundles on a smooth complete curve
over a field of characteristic 0, and let $\Dmod(\Bun_G)$ denote the DG category of D-modules on $\Bun_G$. 
The main goal of the paper is to show that $\Dmod(\Bun_G)$ is compactly generated (this is not automatic because 
$\Bun_G$ is not quasi-compact). The proof is based on the following observation: $\Bun_G$ can be written as a 
union of quasi-compact open substacks $j:U\hookrightarrow\Bun_G$, which are "co-truncative", 
i.e., the functor $j_!$ is defined on the entire category $\Dmod(U)$. 
\end{abstract}

\maketitle

\tableofcontents

\section*{Introduction}

\ssec{The main result}

Let $k$ be an algebraically closed field of characteristic $0$.
Let $X$ be a smooth complete connected curve over $k$ and let $\Bun_G$ denote
the moduli stack of principal $G$-bundles on $X$, where $G$ is a connected reductive group.

\sssec{}

The object of study of this paper is the DG category $\Dmod(\Bun_G)$ of D-modules on $\Bun_G$.
Our main goal is to prove the following theorem:

\begin{thm} \label{t:premain}
The DG category $\Dmod(\Bun_G)$ is compactly generated.
\end{thm}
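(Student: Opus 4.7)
The plan is to realize $\Bun_G$ as a filtered union of quasi-compact open substacks and transport compact generators from each piece. Specifically, one aims to find an exhausting family of quasi-compact open substacks $j_\alpha : U_\alpha \hookrightarrow \Bun_G$ for which the partially-defined left adjoint $(j_\alpha)_!$ to $j_\alpha^*$ extends to a functor on all of $\Dmod(U_\alpha)$ --- the co-truncativity property highlighted in the abstract. Granting such a family, the rest of the proof is formal.

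Each $\Dmod(U_\alpha)$ is compactly generated because $U_\alpha$ is a quasi-compact algebraic stack of finite type with affine stabilizers. Because $j_\alpha^*$ preserves all colimits (being the restriction to an open substack, with right adjoint $(j_\alpha)_*$), whenever the left adjoint $(j_\alpha)_!$ is defined on the whole category it automatically preserves compactness. Hence if $\{c_\alpha^i\}$ is a set of compact generators of $\Dmod(U_\alpha)$, then the objects $(j_\alpha)_!(c_\alpha^i)$ are compact in $\Dmod(\Bun_G)$.

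To see that the collection $\{(j_\alpha)_!(c_\alpha^i)\}_{\alpha,i}$ is a generating set, suppose $\F \in \Dmod(\Bun_G)$ satisfies $\Hom((j_\alpha)_!(c_\alpha^i), \F) = 0$ for all $\alpha, i$. By adjunction, $\Hom(c_\alpha^i, j_\alpha^*(\F)) = 0$ for every compact generator $c_\alpha^i$ of $\Dmod(U_\alpha)$, whence $j_\alpha^*(\F) = 0$ for every $\alpha$. Since $\{U_\alpha\}$ is an open cover of $\Bun_G$, restriction to this cover is conservative, so $\F = 0$. This completes the reduction.

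The substantive difficulty --- what I expect to be the main obstacle --- is to establish that co-truncative quasi-compact open substacks exist and form an exhaustion of $\Bun_G$. Standard Harder--Narasimhan truncations yield a quasi-compact open exhaustion $\{\Bun_G^{\leq \mu}\}$, so the real issue is to show, possibly after a careful choice of truncation parameters, that the inclusion $j_\mu$ is co-truncative. This requires controlling the geometry of the closed complement, which is stratified by loci where the $G$-bundle admits nontrivial parabolic reductions; the non-quasi-compact nature of these parabolic strata is precisely what threatens the existence of $j_!$. I would expect the verification to proceed by analyzing Drinfeld-type compactifications of $\Bun_P$ for each parabolic $P$ and by establishing a ``contracting'' or finiteness property on these strata which forces the natural formula for $j_!$ to converge on all of $\Dmod(U_\alpha)$, rather than only on the holonomic subcategory.
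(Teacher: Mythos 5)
Your formal reduction is exactly the paper's: it is \propref{p:truncatable cg} verbatim (co-truncative quasi-compact opens $U\overset{j}\hookrightarrow\Bun_G$ give compact objects $j_!(\CF_U)$, and conservativity of restriction to a cover gives generation), and your identification of the remaining issue --- that the Harder--Narasimhan truncations $\Bun_G^{(\leq\theta)}$ be co-truncative --- is precisely the paper's \thmref{t:main}. So the approach is the same; the gap is that you have only \emph{anticipated} the substantive content rather than proved it, and that content is essentially the entire paper (Sections 5--9).

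Two points where the actual verification differs from what you sketch. First, the relevant condition is quantitative: $\Bun_G^{(\leq\theta)}$ is shown to be co-truncative only when $\langle\theta,\check\alpha_i\rangle\geq 2g-2$ for every simple root, so the exhaustion must be chosen accordingly. Second, and more importantly, your phrase ``establishing a contracting property on these strata'' understates the combinatorics: already for $G=SL_2\times SL_2$ infinitely many individual Harder--Narasimhan--Shatz strata fail to be truncative (those near the walls of the dominant cone), so one cannot argue stratum by stratum. The paper instead groups the strata into larger locally closed substacks $\Bun_G^{(S)}$ indexed by ``$P$-admissible'' subsets $S$ of rational dominant coweights, identifies each such substack with $\Bun_P^{(S)}$, and exhibits a $\BG_m$-contraction of $\Bun_{P^-}^{(S)}$ onto $\Bun_M^{(S)}$ (the contraction principle, \propref{p:2contraction principle}); the Drinfeld compactifications $\BunPb$ enter only to prove properness/finiteness of $\Bun_P^{(S)}\to\Bun_G^{(S)}$, not to define $j_!$ by any convergent formula. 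Without this grouping step your outline would stall at rank $\geq 2$.
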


For the reader's convenience we will review the theory of DG categories, and the notion
of compact generation in \secref{s:DG review}. 

\medskip

Essentially, the property of compact generation is what makes a DG category manageable. 

\sssec{}

The above theorem is somewhat surprising for the following reason. 

\medskip

It is known that if an algebraic stack  $\CY$ is quasi-compact and the automorphism group of every
field-valued point of $\CY$ is affine, then the DG category $\Dmod(\CY)$ is compactly generated. 
This result is established in \cite[Theorem 0.2.2]{finiteness}. In fact, the compact generation of $\Dmod(\CY)$ 
for most stacks $\CY$ that one encounters in practice is much easier than the above-mentioned 
theorem of \cite{finiteness}: it is nearly obvious for stacks of the form $Z/H$, where $Z$ is a 
quasi-compact scheme and $H$ an algebraic group acting on it. 

\medskip

However, if $\CY$ is not quasi-compact then $\Dmod(\CY)$ does not have to be compactly generated. 
We will exhibit two such examples in \secref{ss:counterexamples}; in both of them $\CY$ will 
actually be a smooth non quasi-compact scheme (non-separated in the first example, 
and separated in the second one). 

\sssec{}

So the compact generation of $\Dmod(\CY)$ encodes a certain geometric property of the stack $\CY$.
We do not know how to formulate a necessary and sufficient condition for $\Dmod(\CY)$ to be
compactly generated. 

\medskip

But we do formulate a sufficient condition, which we call ``truncatibility" (see \secref{sss:trunc} or 
Definition \ref{defn:truncatable}).
The idea is that $\CY$ is truncatable if it can be represented as a union of quasi-compact open
substacks $U$ so that for each of them direct image functor
$$j_*:\Dmod(U)\to \Dmod(\CY)$$
has a particularly nice property explained below.

\ssec{Truncativeness, co-truncativeness and truncatability}

\sssec{}

Let $\CY$ be a quasi-compact algebraic stack with affine automorphism groups of points, 
and let $\CZ\overset{i}\hookrightarrow \CY$ be a closed embedding. By \cite[Theorem 0.2.2]{finiteness},
both categories $\Dmod(\CZ)$ and $\Dmod(\CY)$ are compactly generated.

\medskip

We have a pair of adjoint functors
$$i_{\dr,*}:\Dmod(\CZ)\rightleftarrows \Dmod(\CY):i^!.$$

Being a left adjoint, the functor $i_{\dr,*}$ preserves compactness. But there is no reason for $i^!$ to have this property. 
We will say that $\CZ$ is \emph{truncative} in $\CY$ if $i^!$ does preserve compactness. 

\medskip

Truncativeness is a purely ``stacky" phenomenon. In \secref{sss:non-truncative sch} we will show that it never occurs for 
schemes, unless $\CZ$ is a union of connected components of $\CY$. 

\medskip

Let $U\overset{j}\hookrightarrow \CY$ be the embedding of the complementary open substack. We say that
$U$ is \emph{co-truncative} in $\CY$ if $\CZ$ is truncative. This property can be reformulated as saying that
the functor 
$$j_*:\Dmod(U)\to \Dmod(\CY)$$
preserves compactness. We show that the property of co-truncativeness can be also reformulated as the
existence of the functor $j_!:\Dmod(U)\to \Dmod(\CY)$, \emph{left} adjoint to the restriction functor $j^*$.
(A priori, $j_!$ is only defined on the holonomic subcategory.)

\begin{rem}
The property of being compact for an object in $\Dmod(\CY)$ is somewhat subtle (e.g., it is not local
in the smooth topology). In \secref{ss:trunc via coh} we reformulate the notion of truncativeness
and co-truncativeness in terms of the more accessible property of \emph{coherence} instead
of compactness.
\end{rem}

\sssec{}    \label{sss:trunc}

Let us now drop the assumption that $\CY$ be quasi-compact. We say that a closed substack $\CZ$
(resp., open substack $U$) is truncative (resp., co-truncative), if for every quasi-compact open $\oCY\subset \CY$,
the intersection $\CZ\cap \oCY$ (resp., $U\cap \oCY$) is truncative (resp., co-truncative) in $\oCY$.

\medskip

We say that $\CY$ is \emph{truncatable} if it 
equals the union of its quasi-compact co-truncative open substacks.
We will show that a union of two co-truncative open substacks is co-truncative. So $\CY$ is 
truncatable if and only if every open quasi-compact substack of $\CY$ is contained in one which
is co-truncative.

%

\medskip

We will show (see \propref{p:truncatable cg}) that 
{\em if $\CY$ is truncatable, then $\Dmod(\CY)$ is compactly generated}.
 (This is an easy consequence of \cite[Theorem 0.2.2]{finiteness}.)

\sssec{}   \label{sss:how to cover}
Thus Theorem~\ref{t:premain}, follows from the next statement, which is
the main technical result of this paper.

\begin{thm} \label{t:main}
The stack $\Bun_G$ is truncatable.
\end{thm} 


Let us explain how to cover $\Bun_G$ by quasi-compact open co-truncative substacks.
For every dominant rational coweight $\theta$ let $\Bun_G^{(\le \theta)}\subset \Bun_G$
denote the open substack parameterizing $G$-bundles whose Harder-Narasimhan coweight
\footnote{This rational coweight was defined by Harder-Narasimhan \cite{HN} in the case $G=GL(n)$
and by A.~Ramanathan \cite{R1} for any $G$.}
is $\leqG \theta$ (the partial ordering $\leqG$ on coweights is  defined as usual: 
$\lambda_1 \leqG \lambda_2$ if 
$\lambda_2-\lambda_1$ is a linear combination of simple coroots with non-negative coefficients).
Equivalently, $\Bun_G^{(\leq \theta)}$
parameterizes those $G$-bundles
$\CP_G$ that have the following property: for every reduction $\CP_B$ to the Borel, 
the degree of $\CP_B$ (which is a coweight of $G$) is $\leqG \theta$. 

\medskip

The substacks 
$\Bun_G^{(\leq \theta)}$ are quasi-compact and cover $\Bun_G\,$. So 
Theorem~\ref{t:main} is a consequence of the following fact proved in Sect.~\ref{s:delo}: 

\medskip

\noindent {\em The substack 
$\Bun_G^{(\leq \theta)}$ is co-truncative if for every simple root $\check\alpha_i$ one has
\begin{equation}  \label{e:what_is_deep}
\langle \theta \, ,\check\alpha_i\rangle \geq 2g-2,
\end{equation}
where $g$ is the genus of $X$.} 

\medskip

E.g., if $G=GL(2)$ this means that the open substack 
$$\Bun_{GL_2}^{(\leq m)}\cap \Bun^n_{GL_2}\subset \Bun_{GL_2}$$
that parameterizes rank 2 vector bundles of degree $n$ all of whose line sub-bundles have degree 
$\le m$, is co-truncative provided that $2m-n\ge 2g-2$.

\medskip

Condition~\eqref{e:what_is_deep} means that $\theta$ is ``deep enough" inside the dominant chamber
(of course, if $g\le 1$ then the condition holds for any dominant $\theta$).

\sssec{Establishing truncativeness}   \label{sss:establishing}

To prove \thmref{t:main}, we will have to show that certain explicitly defined locally closed
substacks of $\Bun_G$ are truncative. 

\medskip

We will do this by using a ``contraction principle", see \propref{p:2contraction principle}. 
In its simplest form, it says that the substack  
$\{ 0\}/\BG_m\hookrightarrow \BA^n/\BG_m$ is truncative (here $\BG_m$ acts on 
$\BA^n$ by homotheties).  

\ssec{Duality}

\sssec{}

Recall the notion of dualizability of a DG category in the sense of Lurie (see \secref{sss:dualizable category}).
Any compactly generated DG category is automatically dualizable. In particular, such is $\Dmod(\CY)$
when $\CY$ is a truncatable algebraic stack.

\sssec{}

However, more is true. As we recall in \secref{sss:Verdier}, if $\CY$ is quasi-compact, not only
is the category $\Dmod(\CY)$ dualizable, but Verdier duality defines an equivalence
$$\Dmod(\CY)^\vee\simeq \Dmod(\CY).$$

\medskip

It is natural to ask for a description of the dual category $\Dmod(\CY)^\vee$ when $\CY$ is no longer
quasi-compact, but just truncatable. 

\sssec{}

As we will see in \secref{ss:category as colimit}-\ref{ss:relation_with_dual}, the category $\Dmod(\CY)^\vee$
can be described explicitly, but it is a priori \emph{different} from $\Dmod(\CY)$. 

\medskip

There exists a naturally defined functor $$\bD^{\on{Verdier}}_{\CY,\on{naive}}:\Dmod(\CY)^\vee\to \Dmod(\CY),$$
but we show (see \propref{p:idnaive}) that this functor is \emph{not} an equivalence unless the closure
of every quasi-compact open in $\CY$ is again quasi-compact.  

\sssec{}
However, in Sect.~\ref{sss:better} we define a less obvious functor 
$$\bD^{\on{Verdier}}_{\CY,!}:\Dmod(\CY)^\vee\to \Dmod(\CY),$$ which may differ from 
$\bD^{\on{Verdier}}_{\CY,\on{naive}}$ even for $\CY$ quasi-compact. 

\medskip

In general, $\bD^{\on{Verdier}}_{\CY,!}$ is not an equivalence, but there are important and nontrivial examples of quasi-compact and 
non-quasi-compact stacks $\CY$ for which $\bD^{\on{Verdier}}_{\CY,!}$ is an equivalence.

\medskip

In particular, in a subsequent publication\footnote{For a draft see \cite{self-duality}.}
it will be shown  that \emph{the functor $\bD^{\on{Verdier}}_{\CY,!}$ is an equivalence if 
$\CY=\Bun_G$, where $G$ is any reductive group. } 

\medskip

Thus, for any reductive $G$, \emph{the DG category $\Dmod(\Bun_G)$ 
identifies with its dual} (in a non-trivial way and for non-trivial reasons).

\ssec{Generalizations and open questions}

Let us return to the main result of this paper, namely, \thmref{t:premain}.

\sssec{}

In the situation of Quantum Geometric Langlands, one needs to consider the categories of 
twisted D-modules on $\Bun_G$. The corresponding analog of \thmref{t:premain}, with 
the same proof, holds in this more general context. 

\sssec{} \label{sss:parabolics}

Let $x_1,\ldots,x_n\in X$. Instead of $\Bun_G\,$, consider the stack of $G$-bundles on $X$
with a reduction to a parabolic $P_i$ at $x_i$, $1\le i\le n$. Most probably, in this situation 
an analog of \thmref{t:premain} holds and can be proved in a similar way.

\sssec{}

Suppose now that  instead of reductions to parabolics (as in \secref{sss:parabolics}), one considers
deeper level structures at $x_1,\ldots,x_n$
(the simplest case being reduction to the unipotent radical of the Borel). 

\medskip

We do not know whether an analog of \thmref{t:premain} holds in this case, and we do not know what to expect.  
In any case, our strategy of the proof of \thmref{t:premain} fails in this context.

\sssec{}

Here are some more questions:

\begin{quest}
Does the assertion of \thmref{t:premain} (and its strengthening, \thmref{t:main})
hold for $\CY$ being one of the stacks $\BunBb$, $\Bun_P$, $\BunPb$ and $\wt{\Bun}_P$, where $B$ is the Borel, and
$P$ a general parabolic?
\end{quest}

\medskip

We are quite confident that the answer is ``yes" for $\BunBb$, but are less sure in other cases.

\medskip

\begin{quest} Does the assertion of \thmref{t:premain} hold for an \emph{arbitrary} connected affine
algebraic group $G$ (i.e., without the assumption that $G$ be reductive)?
\end{quest}

\ssec{Organization of the paper}

\sssec{}

In \secref{s:DG review} we review some basic facts regarding DG categories. 

\sssec{}

In \secref{s:D-modules} we review some general facts about the category of D-modules
on an algebraic stack $\CY$.
We first consider the case when $\CY$ is quasi-compact and make a summary of the relevant 
results from \cite{finiteness}. 
We then consider the case when $\CY$ is not quasi-compact and characterize the subcategory
of $\Dmod(\CY)$ formed by compact objects.

\sssec{}

In \secref{s:turncativeness}, we introduce some of the main definitions for this paper:
the notions of truncativeness (for a locally closed substack) and co-truncativeness 
(for an open substack).
We study the behavior of these notions under morphisms, base change, refinement
of stratification, etc. We also discuss the ``non-standard"
functors associated to a truncative closed (or locally closed) substack (see Sects. \ref{sss:voprosial} and 
Remark~\ref{r:non-standard_locally_closed}), in particular, the very unusual functors $i_?$ and $j^?$.

\sssec{} \secref{s:truncatable} is, philosophically, the heart of this article.

\medskip

In Sect.~\ref{ss:truncatable} we introduce the notion of truncatable stack. 
We show that if $\CY$ is truncatable then the category $\Dmod(\CY)$ is compactly generated.
In particular, we obtain that \thmref{t:main} implies \thmref{t:premain}.

\medskip 

In Sects.~\ref{ss:category as colimit}--\ref{ss:miracoli} we discuss the behavior of Verdier duality on 
truncatable stacks and the relation beween the category $\Dmod(\CY)$ and its dual.

\sssec{}
In \secref{ss:est trunc} we formulate a \emph{contraction principle}, see  \propref{p:2contraction principle}.
It shows that a closed substack with the property that we call \emph{contractiveness} is truncative. 

\medskip

In \secref{ss:adjointness} we explicitly describe the non-standard functors $i^*$ and $i_?$ in the setting of 
Proposition~\ref{p:2contraction principle}.

\sssec{}

In \secref{ss:SL 2}
we prove \thmref{t:main} in the particular case of $G=SL_2$. The proof
in the general case follows the same idea, but is more involved combinatorially. 

\sssec{}

In \secref{s:reduction theory} we recall the 
stratification of $\Bun_G$ according to the Harder-Narasihman coweight of the $G$-bundle.  
We briefly indicate a way to establish the existence of such a stratification using the relative 
compactification of the map $\Bun_P\to \Bun_G$. 

\sssec{}

In \secref{s:compl red} we introduce a book-keeping device that allows to produce locally closed
substacks of $\Bun_G$ from locally closed substacks of $\Bun_M$, where $M$ is a Levi subgroup 
of $G$. Certain locally closed substacks of $\Bun_G$ obtained in this way, will turn out to be
\emph{contractive}, and hence \emph{truncative}, and as such will play a crucial role in the
proof of \thmref{t:main}.

\sssec{}      \label{sss:proof_amounts_to}

In \secref{s:delo}--\ref{s:constr contr}  we finally prove \thmref{t:main}. The proof amounts to combining the 
Harder-Narasimhan-Shatz strata of $\Bun_G$ (i.e., the strata corresponding to a fixed value of the Harder-Narasihman coweight) 
into certain larger locally closed substacks 
and applying the contraction principle. A more detailed explanation of the 
idea of the proof can be found in  \secref{ss:outline of proof}.

In \secref{s:delo} we prove \thmref{t:main} modulo  a key \propref{p:key}. The latter is proved in
\secref{s:estimates}--\ref{s:constr contr}.




\sssec{}

In \secref{s:counterexamples} we prove the existence of non quasi-compact stacks $\CY$ such that
the category $\Dmod(\CY)$ is not compactly generated.

\medskip

Namely, we show that 
if $\CY=Y$ is a smooth scheme containing a non quasi-compact divisor, then the category
$\Dmod(\CY)$ is not generated by compact objects. 
More precisely, we show that (locally) coherent D-modules on $Y$ that belong to the full
subcategory generated by compact objects cannot have all of $T^*(Y)$ as their singular
support. In particular, the D-module $\CalD_Y$ does not belong to the subcategory.

\sssec{}
In Appendix \ref{s:preordered} we recall an explicit description of open, closed, and locally closed subsets of a
(pre)-ordered set equipped with its natural topology. We use this description (combined with the  Harder-Narasihman map) 
to explicitly construct some locally closed substacks of $\Bun_G\,$, see \secref{sss:who_is_who} and \corref{c:order topology}.

\sssec{}
In Appendix~\ref{s:Langlands} we give a variant of the proof of Theorem~\ref{t:2cotruncative} that
has some advantages compared with the one from \secref{ss:proof_modulo}. The method is to define a coarsening of the 
Harder-Narasimhan-Shatz stratification such that each stratum is contractive (and therefore truncative).
This is done using the \emph{Langlands retraction} of the space of rational coweights onto the dominant cone.

\sssec{}
In Appendix~\ref{s:stacky_contraction} we prove a ``stacky" generalization of the contraction principle from
Sect.~\ref{ss:elem contr} and of the adjunction from \propref{p:adjointness}.




\ssec{Conventions and notation}  \label{ss:conventions}

\sssec{}

Our conventions on $\infty$-categories follow those of \cite[Sect. 0.6.1]{finiteness}. 
Whenever we say ``category", by default we mean an $(\infty,1)$-category.
We denote by $\inftyCat$ the $(\infty,1)$-category of $\infty$-categories. 

\medskip

We denote by $\inftygroup\subset \inftyCat$ in the $(\infty,1)$-subcategory spanned by 
$\infty$-groupoids, a.k.a., spaces. We denote by $\bC\mapsto \bC^{\on{grpd}}$ the
functor $\inftyCat\to \inftygroup$ right adjoint to the above embedding. Explicitly, 
$\bC^{\on{grpd}}$ is obtained from $\bC$ by discarding non-invertible 1-morphisms. 

\medskip

For $\bC\in \inftyCat$ and objects $\bc_1,\bc_2\in \bC$ we denote by 
$\Maps_\bC(\bc_1,\bc_2)\in \inftygroup$ the corresponding space of 
maps. We let $\Hom_{\bC}(\bc_1,\bc_2)$ denote the set
$\pi_0\left(\Maps_\bC(\bc_1,\bc_2)\right)$.

\sssec{Schemes and stacks}

This paper deals with categorical aspects of the category of D-modules, i.e., we do not need derived 
algebraic geometry for this paper. Therefore, by a \emph{scheme} we shall understand a \emph{classical scheme}. 
We let $\on{Sch}$ (resp., $\affSch$) denote the category of schemes (resp., affine schemes) over $k$, and $\on{Sch}_{\on{lft}}$
(resp., $\affSch_{\on{ft}}$) its full subcategory
consisting of affine schemes locally of finite type (resp., affine schemes of finite type). 

\medskip

By a prestack we shall mean an arbitrary functor $(\affSch)^{\on{op}}\to \inftygroup$. 

\medskip

By a stack 
we shall mean a prestack that satisfies the fppf descent condition. For the general notion of Artin
stack we refer the reader to \cite[Sect. 4.2]{Stacks}. However, neither general stacks
nor Artin stacks are necessary for this paper. What we need is the more restricted (and standard) 
notion of \emph{algebraic stack}. We adopt the following conventions: a stack $\CY$ is said to be
an algebraic stack if:

\begin{itemize}

\item The diagonal morphism $\CY\to \CY\times \CY$ is schematic, quasi-compact and quasi-separated;

\item There exists a scheme $Z$ equipped with a morphism $f:Z\to \CY$ (this morphism is automatically
schematic, by the previous condition) such that $f$ is smooth and surjective.

\end{itemize} 

The pair $(Z,f)$ is called a \emph{presentation} or \emph{atlas} of $\CY$. 

\medskip

We note that this definition is slightly more restrictive than the one in \cite[Sect. 4.2.8]{Stacks}. 

\sssec{Finite type(ness)}

All schemes, algebraic stacks and prestacks considered in this paper will be \emph{locally of finite type} over $k$. 

\medskip

We recall that a classical prestack, i.e., a functor $(\affSch)^{\on{op}}\to \inftygroup$,
is said to be locally of finite type if it takes limits in $\affSch$ to colimits in $\inftygroup$. Equivalently, 
a classical prestack is locally of finite type if it is the left Kan extension from the full subcategory
$\affSch_{\on{ft}}\subset \affSch$. The upshot is that when considering prestacks locally of finite type,
one can forget about all affine schemes altogether and restrict one's attention to $\affSch_{\on{ft}}$. 

\medskip

An algebraic stack is said to be locally of finite type if it is such when considered as a prestack. This is equivalent 
to requiring that it admit an atlas $(Z,f)$ with $Z$ being locally of finite type. Or, still equivalently, that for any
$Z\in \on{Sch}$ equipped with a smooth map to $\CY$, the scheme $Z$ is of finite type. The equivalence
of these conditions is established, e.g., in \cite[Proposition 4.9.2]{Stacks}.

\sssec{D-modules}

We refer the reader to the paper \cite{Crys} for the theory of D-modules (a.k.a. crystals)
on prestacks locally of finite type. 

\medskip

For a morphism $f:\CY_1\to \CY_2$ of prestacks we have a tautologically defined functor
$$f^!:\Dmod(\CY_2)\to \Dmod(\CY_1).$$ This functor may or may not have a left adjoint,
which we denote by $f_!$. 

\medskip

If $f$ is schematic\footnote{Recall that $f$ is said to be schematic if $\CY_1\underset{\CY_2}\times S$ is a scheme for any scheme $S$ 
equipped with a morphism $S\to \CY_2\,$.} and quasi-compact, we also have a functor of direct image
$$f_{\dr,*}:\Dmod(\CY_1)\to \Dmod(\CY_2).$$

\medskip

However, when $f$ is an open embedding, we will use the notation $j_*$ instead of
$j_{\dr,*}$, and $j^*$ instead of $j^!$, for reasons of tradition. This is not supposed
to cause confusion, as the above functors go to the same-named functors for the
underlying $\CO$-modules. 

\ssec{Acknowledgements}  

The research of V. D. is partially supported by NSF grant DMS-1001660. The research of
D. G. is partially supported by NSF grant DMS-1063470. We thank R.~Bezrukavnikov for
drawing our attention to Langlands' article  \cite{La}. We are grateful to S.~Schieder for
comments on the previous version of the paper.

\section{DG categories}  \label{s:DG review}

Sects.~\ref{ss:DGcat_setting}-\ref{ss:DGlim} are devoted to recollections and conventions regarding DG categories. 
In Sects.~\ref{ss:colimits abs}-\ref{ss:Colimcompgen} we provide a categorical framework for
Sects. \ref{ss:category as colimit}-\ref{ss:descr dual}; this material can definitely be skipped until it is used.

\ssec{The setting}   \label{ss:DGcat_setting}

\sssec{}

Throughout this paper we will work with DG categories over the ground field $k$. We refer the reader to \cite{DG} for
a survey. \footnote{Whenever we talk about a DG category $\bC$, we will always assume
that it is \emph{pre-triangulated}, which by definition means that $\on{Ho}(\bC)$ is \emph{triangulated}.}

\medskip

We let $\Vect$ denote the DG category of chain complexes of $k$-vector spaces. 

\medskip

We let $\StinftyCat$ denote the $\infty$-category of all DG categories. 
\footnote{We will ignore set-theoretic
issues; however, the reader can assume that all DG categories and functors are \emph{accessible} 
in the sense of \cite[Sect. 5.4.2]{Lu1}.}

\sssec{Cocomplete DG categories}

Our basic object of study is the $(\infty,1)$-category $\StinftyCat_{\on{cont}}$ whose objects
are cocomplete DG categories (i.e., ones that contain arbitrary direct sums, or equivalently,
colimits), and where $1$-morphisms are continuous functors (i.e., exact functors that commute
with arbitrary direct sums, or equivalently all colimits). 

\medskip

The construction of $\StinftyCat_{\on{cont}}$ as an $(\infty,1)$-category has not been fully 
documented. A pedantic reader can replace $\StinftyCat_{\on{cont}}$ by the equivalent 
$(\infty,1)$-category of stable $\infty$-categories tensored over $k$, whose construction
is a consequence of \cite[Sects. 4.2 and 6.3]{Lu2}.

\medskip

We have a forgetful functor $\StinftyCat_{\on{cont}}\to \StinftyCat$ that induces
an isomorphism on $2$-morphisms and higher.

\sssec{Terminological deviation \em{(i)}} We will sometimes encounter non-cocomplete DG categories 
(e.g., the subcategory of compact objects in a given DG category). Every time that this happens, we will say
so explicitly.

\sssec{}

The category $\StinftyCat_{\on{cont}}$ has a natural symmetric monoidal structure given
by Lurie's tensor product, denoted by $\otimes$ (see \cite[Sect. 6.3]{Lu2} or \cite[Sect. 1.4]{DG} for a brief review). 

\medskip

Its unit object is the category $\Vect$ of chain complexes of $k$-vector spaces. 

\sssec{Functors}

For $\bC_1,\bC_2\in \StinftyCat_{\on{cont}}$ we will denote by $\on{Funct}_{\on{cont}}(\bC_1,\bC_2)$
their internal Hom in $\StinftyCat_{\on{cont}}$, which is therefore another DG category. 

\sssec{Terminological deviation \em{(ii)}} For two DG categories $\bC_1$ and $\bC_2$ we will sometimes
encounter functors $\bC_1\to \bC_2$ that are not continuous (but still exact). For example, for
a non-compact object $\bc\in \bC$, such is the functor $\CMaps_\bC(\bc,-):\bC\to \Vect$
(see below for the notation). 

\medskip

Every time when we encounter a non-continuous functor, we will say so explicitly. 

\medskip

All exact functors $\bC_1\to \bC_2$ also form a DG category, which we denote by $\on{Funct}(\bC_1,\bC_2)$. 

\sssec{Mapping spaces}

Any DG category $\bC$ can be thought of as an $\infty$-category enriched over $\Vect$ with the
same set of objects. For two objects $\bc_1,\bc_2$, we will denote by $\CMaps_\bC(\bc_1,\bc_2)\in \Vect$ the
corresponding Hom object. 

\medskip

We let $\Maps_\bC(\bc_1,\bc_2)\in \inftygroup$ denote the Hom-space, when we consider
$\bC$ as a plain $\infty$-category. The object $\Maps_\bC(\bc_1,\bc_2)$ equals the image
of $\tau^{\leq 0}(\CMaps_\bC(\bc_1,\bc_2))$ under the Dold-Kan functor
$$\Vect^{\leq 0}\to \inftygroup.$$

\medskip

We denote by $\Hom_\bC(\bc_1,\bc_2)$ the object $H^0\left(\CMaps_\bC(\bc_1,\bc_2)\right)\in \Vect^\heartsuit$.
Its underlying set identifies with $\pi_0(\Maps_\bC(\bc_1,\bc_2))$. 

\sssec{t-structures}

Whenever a DG category $\bC$ has a t-structure, we let $\bC^{\leq 0}$ (resp., $\bC^{\geq 0}$) 
denote the full subcategory of connective (resp., co-connective) objects. We denote by $\bC^\heartsuit$ 
the heart of the t-structure.

\ssec{Compactness and compact generation}  \label{ss:DG compact}

\sssec{}  \label{sss:DG compact}

Recall that an object $\bc$ in a (cocomplete) DG category $\bC$ is called \emph{compact}
if the functor
$$\Hom_\bC(\bc,-):\bC\to \Vect^\heartsuit$$
commutes with arbitrary direct sums. This is equivalent to the (a priori non-continuous) functor
$$\CMaps_\bC(\bc,-):\bC\to \Vect$$
being continuous, or the functor of $\infty$-categories
$$\Maps_\bC(\bc,-):\bC\to \inftygroup$$
commuting with filtered colimits. 

\medskip

For a DG category $\bC$, we let $\bC^c$ denote the full (but not cocomplete) 
DG subcategory that consists of compact objects. 

\sssec{Compact generation}  \label{sss:compact_gen_tion}
Let $\bC$ be a cocomplete DG category. We say that a set of
objects $\bc_\alpha\in\bC$ \emph{generates} $\bC$ if for every $\bc\in\bC$ the following implication holds:
\begin{equation} \label{e:right orth}
\Hom_\bC(\bc_\alpha,\bc)=0,\,\, \forall \alpha\,\,\Rightarrow\, \bc=0.
\end{equation}
This is known to be equivalent to the following condition: $\bC$ does not contain a proper full
cocomplete DG subcategory that contains all the objects $\bc_\alpha$. 

\medskip

A cocomplete DG category $\bC$ is called \emph{compactly generated} if there exists a set of compact
objects $\bc_\alpha$ that generates $\bC$ in the above sense.

\sssec{}   

The following observations will be used repeatedly throughout the paper:

\medskip

Let $\bC_1$ and $\bC_2$
be a pair of DG categories, and let $\sG:\bC_2\to \bC_1$ be a (not necessarily continuous)
functor. If $\sG$ admits a left adjoint functor $\sF :\bC_1\to \bC_2$ then $\sF$  is automatically continuous. 

\medskip

Let $\sF , \sG$ be as above and suppose, in addition, that $\bC_1$ is compactly generated.
Then $\sG$ is continuous if and only if $\sF$ preserves compactness
(i.e., $\sF (\bC_1^c)\subset\bC_2^c$).
This implies  the ``only if" part of the following well-known proposition.

\begin{prop}    \label{p:existence of adjoint}
Let $\bC_1$ be a compactly generated DG category and $\sF :\bC_1\to \bC_2$ a continuous DG functor.
Then $\sF$ has a continuous right adjoint if and only if $\sF (\bC_1^c)\subset\bC_2^c$.
\end{prop}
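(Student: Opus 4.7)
The ``only if'' direction is already sketched in the paragraph preceding the statement, so the task is to prove ``if.'' My plan is:

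\textbf{Step 1: Produce a right adjoint, disregarding continuity.} Since $\bC_1$ is compactly generated, it is in particular presentable, and a continuous (i.e., colimit-preserving) functor between presentable stable $\infty$-categories automatically admits a right adjoint by the adjoint functor theorem (\cite{Lu1}, Cor.~5.5.2.9). So we get some $\sG:\bC_2\to\bC_1$ with $\sF\dashv\sG$; the only question is whether $\sG$ commutes with direct sums.

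\textbf{Step 2: Test continuity on compacts.} Fix a family $\{\bc_2^\alpha\}_{\alpha\in A}$ in $\bC_2$ and form the canonical map
\[
\theta:\bigoplus_\alpha \sG(\bc_2^\alpha)\longrightarrow \sG\bigl(\bigoplus_\alpha \bc_2^\alpha\bigr).
\]
By the criterion recalled in \secref{sss:compact_gen_tion}, $\theta$ is an isomorphism iff it induces an isomorphism on $\Hom_{\bC_1}(\bc,-)$ for every compact $\bc\in\bC_1^c$. For such a $\bc$, adjunction together with compactness of $\bc$ in $\bC_1$ gives
\[
\Maps_{\bC_1}\!\Bigl(\bc,\bigoplus_\alpha \sG(\bc_2^\alpha)\Bigr)\;\simeq\;\bigoplus_\alpha \Maps_{\bC_1}(\bc,\sG(\bc_2^\alpha))\;\simeq\;\bigoplus_\alpha \Maps_{\bC_2}(\sF(\bc),\bc_2^\alpha),
\]
while the other side of $\theta$ reads, again by adjunction,
\[
\Maps_{\bC_1}\!\Bigl(\bc,\sG\bigl(\bigoplus_\alpha \bc_2^\alpha\bigr)\Bigr)\;\simeq\;\Maps_{\bC_2}\!\Bigl(\sF(\bc),\bigoplus_\alpha \bc_2^\alpha\Bigr).
\]

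\textbf{Step 3: Use the hypothesis.} Here is where the assumption $\sF(\bC_1^c)\subset \bC_2^c$ enters: since $\sF(\bc)$ is compact in $\bC_2$, the right-hand side of the previous display splits as $\bigoplus_\alpha \Maps_{\bC_2}(\sF(\bc),\bc_2^\alpha)$. Comparing with Step~2 shows that $\theta$ induces an equivalence on $\Maps_{\bC_1}(\bc,-)$ for every $\bc\in\bC_1^c$, hence on $\Hom_{\bC_1}(\bc,-)$. By compact generation of $\bC_1$, $\theta$ is an equivalence. This establishes continuity of $\sG$.

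I do not anticipate a genuine obstacle: both directions reduce to a routine adjunction manipulation once one has the adjoint functor theorem to invoke in Step~1. The only subtle point is the appeal to presentability of a compactly generated DG category in order to produce $\sG$ without circularity; this is standard and already implicit in the framework of \secref{ss:DGcat_setting}.
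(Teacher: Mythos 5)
Your proof is correct and is essentially the paper's own argument: the paper likewise invokes the Adjoint Functor Theorem to produce a (possibly non-continuous) right adjoint $\sG$ and then tests continuity by applying $\CMaps_{\bC_1}(\bc_1,\sG(-))$ for compact $\bc_1$, using the hypothesis $\sF(\bC_1^c)\subset\bC_2^c$ exactly as in your Step 3. The only cosmetic remark is that the direct-sum manipulations should be phrased with $\CMaps$ valued in $\Vect$ rather than with $\Maps$ valued in spaces.
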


\begin{proof}[Proof of the ``if" statement.]
The existence of the \emph{not necessarily continuous right adjoint} $\sG$ follows from the Adjoint Functor Theorem,
see \cite[Corollary 5.5.2.9]{Lu1}. To test that $\sG$ is continuous, it is enough to show that the functors
$$\CMaps_{\bC_1}(\bc_1,\sG(-)):\bC_2\to \Vect$$
are continuous for $\bc_1\in \bC_1^c$. The required continuity follows from the assumption on $\sF$.
\end{proof}

\ssec{Ind-completions}

\sssec{}

Let $\bC^0$ be an essentially small (but not cocomplete) DG category. We can functorially assign to it
a cocomplete DG category, denoted $\on{Ind}(\bC^0)$ (and called the \emph{ind-completion} of $\bC^0$),
equipped with a functor $\bC^0\to \on{Ind}(\bC^0)$ and characterized by the property that restriction defines
an equivalence
\begin{equation} \label{e:ind compl}
\on{Funct}_{\on{cont}}(\on{Ind}(\bC^0),\bD)\to \on{Funct}(\bC^0,\bD)
\end{equation}
for a cocomplete category $\bD$ (see \cite[Sect. 5.3.5]{Lu1} for the corresponding construction
for general $\infty$-categories). 

\medskip

The category $\on{Ind}(\bC^0)$ can be explicitly constructed as $\on{Funct}((\bC^0)^{\on{op}},\Vect)$.

\medskip

It is known that the functor $\bC^0\to \on{Ind}(\bC^0)$ is fully faithful, and that its essential image belongs 
to the subcategory $\on{Ind}(\bC^0)^c$. It follows formally from \eqref{e:ind compl} that the essential image
of $\bC^0$ generates $\on{Ind}(\bC^0)$. 

\sssec{}

Thus, the assignment $\bC^0\rightsquigarrow \on{Ind}(\bC^0)$ is a way to obtain compactly generated
categories. In fact, all cocomplete compactly generated DG categories arise in this way. Namely, 
we have the following assertion (see \cite[Proposition 5.3.5.11]{Lu1}):

\begin{lem} \label{l:ind of comp}
Let $\bC$ be a cocomplete compactly generated DG category. Let $\sF^0:\bC^0\to \bC^c$
be a fully faithful functor, such that its essential image \emph{generates} $\bC$.
Then the resulting functor $\sF:\on{Ind}(\bC^0)\to \bC$, obtained from $\sF^0$
via \eqref{e:ind compl}, is an equivalence. 
\end{lem}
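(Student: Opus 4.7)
The plan is to construct a continuous right adjoint $\sG$ to $\sF$ using \propref{p:existence of adjoint}, and then separately verify fully faithfulness (the unit is an equivalence) and essential surjectivity (the counit is an equivalence).

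First, $\sF$ is continuous by the universal property \eqref{e:ind compl}. I would next check that $\sF$ preserves compact objects: by hypothesis $\sF|_{\bC^0} = \sF^0$ lands in $\bC^c$, and every compact object in $\on{Ind}(\bC^0)$ is a retract of an object of $\bC^0$ (since the image of $\bC^0$ in $\on{Ind}(\bC^0)$ lies in $\on{Ind}(\bC^0)^c$ and generates $\on{Ind}(\bC^0)$ as a stable cocomplete category); the continuous functor $\sF$ then sends this retract to a retract of a compact object in $\bC$, which is compact. By \propref{p:existence of adjoint}, $\sF$ admits a continuous right adjoint $\sG$.

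For fully faithfulness, I would show that the unit $u:\id\to \sG\circ \sF$ is an equivalence. Both the source and target of $u$ are continuous functors out of $\on{Ind}(\bC^0)$, and by \eqref{e:ind compl} it suffices to check that $u$ is an equivalence on objects $\bc^0\in \bC^0$. For this, one tests against another $\bc_1^0\in \bC^0$: by adjunction
$$\CMaps_\bC(\sF(\bc_1^0),\sF(\bc^0)) \simeq \CMaps_{\on{Ind}(\bC^0)}(\bc_1^0,\sG\sF(\bc^0)),$$
and one wants this to match $\CMaps_{\on{Ind}(\bC^0)}(\bc_1^0,\bc^0)$; both sides agree with $\CMaps_{\bC^0}(\bc_1^0,\bc^0)$ — the left by fully faithfulness of $\sF^0$, and the right by the construction of $\on{Ind}$. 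Testing against $\bC^0$ suffices because $\bC^0$ generates $\on{Ind}(\bC^0)$ under colimits and $\CMaps_{\on{Ind}(\bC^0)}(\bc_1^0,-)$ is continuous.

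For essential surjectivity, I would show that the counit $\sF\circ\sG\to\id$ is an equivalence. Given that $\sF$ is fully faithful, this is equivalent to $\sG$ being conservative. Suppose $\sG(\bc)=0$; then for every $\bc^0\in \bC^0$,
$$\CMaps_\bC(\sF^0(\bc^0),\bc)\simeq \CMaps_{\on{Ind}(\bC^0)}(\bc^0,\sG(\bc)) = 0.$$
Since the essential image of $\sF^0$ generates $\bC$ (the hypothesis), the criterion of \secref{sss:compact_gen_tion} forces $\bc=0$. This completes the argument.

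There is no real obstacle: once the right adjoint is in place, everything is formal, using only fully faithfulness of $\sF^0$, compact generation of $\bC$, and the defining universal property of $\on{Ind}(\bC^0)$.
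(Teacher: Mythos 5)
Your proof is correct. Note that the paper does not prove this lemma at all: it simply cites \cite[Proposition 5.3.5.11]{Lu1}. Your argument is the standard one behind that citation — produce a continuous right adjoint $\sG$ via \propref{p:existence of adjoint}, verify the unit on the generators $\bC^0$ (where the adjunction identifies the unit map on mapping objects with the map $\phi\mapsto \sF(\phi)$, an equivalence precisely because $\sF^0$ and the Yoneda embedding are fully faithful), and deduce invertibility of the counit from conservativity of $\sG$, which is exactly the generation hypothesis.

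One point of hygiene. For the claim that every compact object of $\on{Ind}(\bC^0)$ is a retract of an object of $\bC^0$, you should appeal to \lemref{l:Karoubi}(b) (Thomason--Trobaugh--Neeman), or to the direct argument: every object of $\on{Ind}(\bC^0)$ is a filtered colimit of objects of $\bC^0$, and the identity of a compact object factors through a term of such a colimit. Your parenthetical justification invokes the general principle that a generating set of compact objects Karoubi-generates all compact objects; in this paper that general principle is \corref{c:Karoubi}, which is itself deduced from the lemma you are proving, so quoting it would look circular. The instance you actually need is independent of the lemma, so this is a matter of choosing the right reference rather than a gap in the argument.
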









As a consequence, we obtain:

\begin{cor} \label{c:ind of comp}
Let $\bC$ be a cocomplete compactly generated DG category. 
Then the tautological functor $\on{Ind}(\bC^c)\to \bC$
is an equivalence. 
\end{cor}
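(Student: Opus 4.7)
The plan is to obtain the corollary as a direct application of \lemref{l:ind of comp}. Specifically, I would take $\bC^0 := \bC^c$, the essentially small DG category of compact objects in $\bC$, and let $\sF^0 : \bC^c \to \bC^c$ be the identity functor. Then $\sF^0$ is tautologically fully faithful, and the functor $\sF : \on{Ind}(\bC^c) \to \bC$ produced by the universal property \eqref{e:ind compl} agrees with the tautological functor appearing in the statement of the corollary, since both are characterized as the unique continuous extension of the inclusion $\bC^c \hookrightarrow \bC$.

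The only hypothesis of \lemref{l:ind of comp} that requires verification is that the essential image of $\sF^0$ --- namely $\bC^c$ itself --- generates $\bC$ in the sense of \secref{sss:compact_gen_tion}. To see this, the compact generation assumption on $\bC$ furnishes some set $\{\bc_\alpha\} \subset \bC^c$ satisfying the implication \eqref{e:right orth}. Since each $\bc_\alpha$ already lies in $\bC^c$, for any $\bc \in \bC$ the vanishing $\Hom_\bC(\bc', \bc) = 0$ for all $\bc' \in \bC^c$ forces in particular $\Hom_\bC(\bc_\alpha, \bc) = 0$ for all $\alpha$, whence $\bc = 0$. Thus $\bC^c$ generates $\bC$, and \lemref{l:ind of comp} yields the desired equivalence $\on{Ind}(\bC^c) \iso \bC$.

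I do not expect a real obstacle: the corollary is a formal consequence of the lemma, with all nontrivial content --- essentially an application of the Adjoint Functor Theorem plus a continuity check for $\CMaps_\bC(\bc,-)$ on compact $\bc$ --- already absorbed into the proof of \lemref{l:ind of comp}. The only point worth flagging is the set-theoretic one: we must know that $\bC^c$ is essentially small so that $\on{Ind}(\bC^c)$ is defined. This is standard for compactly generated presentable DG categories and is consistent with the accessibility conventions announced in the footnote of \secref{ss:DGcat_setting}.
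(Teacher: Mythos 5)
Your proof is correct and is exactly the argument the paper intends: the corollary is stated as an immediate consequence of \lemref{l:ind of comp}, obtained by taking $\bC^0=\bC^c$ with $\sF^0$ the inclusion, and the only point to check is that $\bC^c$ generates $\bC$, which you verify correctly from the definition of compact generation. Your remark on essential smallness of $\bC^c$ is a reasonable flag but does not affect the argument.
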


\ssec{Karoubi-completions}

\sssec{}  \label{sss:karoubian}

Let $\bC^0$ be an essentially small (but non-cocomplete) DG category. We  say that $\bC^0$ is \emph{Karoubian}
if its homotopy category is idempotent-complete. 

\medskip

For example, for a cocomplete compactly generated DG category $\bC$, the corresponding
subcategory $\bC^c$ is Karoubian. 

\sssec{}

Let $\bC^0\to \bC^0_{\on{Kar}}$ be a functor between essentially small (but non-cocomplete) DG categories. 

\medskip

We  say that the above 
functor realizes $\bC^0_{\on{Kar}}$ as a  \emph{Karoubi-completion} of $\bC^0$ if restriction
defines an equivalence
$$\on{Funct}(\bC^0_{\on{Kar}},{}'\bC_0)\to \on{Funct}(\bC^0,{}'\bC_0)$$
for any Karoubian $'\bC_0\,$. Clearly, $\bC^0_{\on{Kar}}\,$, if it exists, is defined up to
a canonical equivalence.

\medskip

The following is a reformulation of  the  Thomason-Trobaugh-Neeman 
localization theorem  (see \cite[Theorem 2.1]{N} or \cite[Propostion 1.4.2]{BeV}):

\begin{lem}  \label{l:Karoubi}  \hfill

\smallskip

\noindent{\em(a)}
Let $\bC^0$ be an essentially small (but not cocomplete) DG category. The canonical
functor $\bC^0\to \on{Ind}(\bC^0)^c$ realizes $\on{Ind}(\bC^0)^c$ as a Karoubi-completion
of $\bC^0$. 

\smallskip

\noindent{\em(b)} Every object of $\on{Ind}(\bC^0)^c$ can be realized as a direct summand
of one in $\bC^0\subset \on{Ind}(\bC^0)^c$. 
\end{lem}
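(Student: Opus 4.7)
The plan is to establish part (b) first by a direct compactness argument, and then deduce part (a) formally using the universal property of ind-completion.

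For part (b), let $\bc \in \on{Ind}(\bC^0)^c$. By the construction of $\on{Ind}(\bC^0)$ as $\on{Funct}((\bC^0)^{\on{op}}, \Vect)$, together with the Yoneda embedding, any object of $\on{Ind}(\bC^0)$ can be written as a filtered colimit $\bc \simeq \colim_{\alpha} \bc_\alpha$ with $\bc_\alpha \in \bC^0$. The key observation is that because $\bc$ is compact, the functor $\CMaps_{\on{Ind}(\bC^0)}(\bc,-)$ commutes with filtered colimits, so
\[
\CMaps(\bc,\bc) \simeq \colim_\alpha \CMaps(\bc,\bc_\alpha).
\]
The identity on $\bc$ lifts (up to homotopy) to a map $\bc \to \bc_{\alpha_0}$ for some index $\alpha_0$, and composing with the structure map $\bc_{\alpha_0} \to \bc$ gives a self-map of $\bc$ homotopic to the identity. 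Thus $\bc$ is a retract of $\bc_{\alpha_0}\in\bC^0$, proving (b).

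For part (a), I need to show that for any Karoubian essentially small DG category ${}'\bC_0$, the restriction functor
\[
\on{Funct}(\on{Ind}(\bC^0)^c, {}'\bC_0) \to \on{Funct}(\bC^0, {}'\bC_0)
\]
is an equivalence. First I would show that if ${}'\bC_0$ is Karoubian, then the fully faithful embedding ${}'\bC_0 \hookrightarrow \on{Ind}({}'\bC_0)^c$ is an equivalence: it is essentially surjective by part (b) applied to ${}'\bC_0$ combined with idempotent-completeness of its homotopy category. Given this, to construct an extension of $F^0:\bC^0 \to {}'\bC_0$, I would first ind-extend to a continuous functor $\wt F: \on{Ind}(\bC^0) \to \on{Ind}({}'\bC_0)$ via the universal property \eqref{e:ind compl}. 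Since $\wt F$ is continuous and $F^0(\bC^0)$ consists of compact objects of $\on{Ind}({}'\bC_0)$, the functor $\wt F$ sends compact objects to compact objects (by \propref{p:existence of adjoint} applied to $\wt F$, or more directly because compacts in $\on{Ind}(\bC^0)$ are retracts of objects of $\bC^0$ by part (b)). Thus $\wt F$ restricts to $\on{Ind}(\bC^0)^c \to \on{Ind}({}'\bC_0)^c \simeq {}'\bC_0$, giving the desired extension.

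For faithfulness and fully faithfulness of restriction, the crucial input is again part (b): any two extensions agreeing on $\bC^0$ must agree on retracts of objects of $\bC^0$, and hence on all of $\on{Ind}(\bC^0)^c$; the compatibility data for morphisms is forced by the splitting structure of retracts, which a Karoubian target is able to encode. The main obstacle, as usual in this kind of argument, is not any single conceptual point but rather the careful $\infty$-categorical bookkeeping: one must verify that the retract data and its extension to morphism spaces assemble into coherent higher cells, which is where the Thomason--Trobaugh--Neeman result (cited as \cite[Theorem 2.1]{N} and \cite[Proposition 1.4.2]{BeV}) does the real work; I would simply invoke it at this point rather than re-derive the higher-coherence compatibilities.
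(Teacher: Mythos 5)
The paper does not actually prove this lemma---it cites it as a reformulation of the Thomason--Trobaugh--Neeman localization theorem---so your proposal is supplying an argument where the paper supplies a reference; with that understood, what you wrote is correct and is the standard argument. Your proof of (b) is exactly right: writing a compact object $\bc$ as a filtered colimit $\on{colim}_\alpha\, \bc_\alpha$ of objects of $\bC^0$, compactness gives $\Hom(\bc,\bc)\simeq \on{colim}_\alpha \Hom(\bc,\bc_\alpha)$, the identity factors through some $\bc_{\alpha_0}$, and a retract in a pre-triangulated category is a direct summand. For (a), your outline is the right one: the essential surjectivity of the restriction functor comes from ind-extending $F^0$ and restricting to compacts (and your parenthetical ``more direct'' justification that $\wt F$ preserves compacts is the one to use---\propref{p:existence of adjoint} is not directly applicable since you have not produced a continuous right adjoint of $\wt F$, whereas ``retracts of compacts are compact'' needs no such input). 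The step you delegate to the references, namely the full faithfulness of $\on{Funct}(\on{Ind}(\bC^0)^c,{}'\bC_0)\to\on{Funct}(\bC^0,{}'\bC_0)$ together with the coherence of the retract data, is indeed where the only genuinely nontrivial $\infty$-categorical content lies (in Lurie's formulation: $\on{Ind}(\bC^0)^c$ is the idempotent completion of $\bC^0$ and idempotent completion has the stated universal property against Karoubian targets); invoking the cited results there is legitimate and is in effect what the paper itself does for the whole lemma. One small point worth making explicit in (a): when you split a homotopy idempotent of ${}'\bC_0$ using Karoubianness (a condition imposed only on the homotopy category), you should note that the splitting in $\Ho({}'\bC_0)$ is carried by the fully faithful exact embedding ${}'\bC_0\hookrightarrow \on{Ind}({}'\bC_0)$ to the given direct-summand decomposition, so the two splittings agree.
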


\lemref{l:Karoubi} implies that the functor $\Ho (\bC^0)\to\Ho (\bC^0_{\on{Kar}})$  identifies
$\Ho  (\bC^0_{\on{Kar}})$ with the idempotent completion of $\Ho (\bC^0)$.

\sssec{}   \label{sss:Kar vs cocompl}
We obtain that the assignments 
$$\bC^0\rightsquigarrow \on{Ind}(\bC^0) \text{ and } \bC\rightsquigarrow \bC^c$$
define mutually inverse equivalences between the appropriate $\infty$-categories.

\medskip

The two $\infty$-categories are as follows. One is $\StinftyCat_{\on{Kar}}$, 
whose objects are essentially small Karoubian DG categories and morphisms are exact functors. 
The other is $\StinftyCat^{\on{comp.gen.}}_{\on{cont,pr.comp.}}$, whose objects are
cocomplete compactly generated categories and morphisms are continuous functors
preserving compactness.

\sssec{}  \label{sss:Karoubi gen}

Let $\bC^0$ be an essentially small (but not cocomplete) DG category. Let $\bS$ be a subset of
its objects.  

\medskip

We say that $\bS$ \emph{Karoubi-generates} $\bC^0$ if every object in the homotopy category of
$\bC^0$ can be obtained from objects in $\bS$ by a finite iteration of operations 
of taking the cone of a morphism, and passing to a direct summand of an object.

\medskip 

By combining Lemmas \ref{l:Karoubi} with \ref{l:ind of comp} we obtain:

\begin{cor}  \label{c:Karoubi}
Let $\bC$ be a cocomplete DG category. Let $\bS\subset \bC^c$ be a subset of objects
that generates $\bC$. Then $\bS$ \emph{Karoubi-generates} $\bC^c$.
\end{cor}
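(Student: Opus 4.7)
The plan is to apply \lemref{l:ind of comp} to a carefully chosen subcategory $\bC^0 \subset \bC^c$ and then read off the Karoubi-generation statement from \lemref{l:Karoubi}.

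First, I would let $\bC^0 \subset \bC^c$ denote the smallest full DG subcategory containing $\bS$ that is closed under the formation of cones of morphisms (equivalently, closed under finite colimits and shifts). By the inductive construction of $\bC^0$, every object of $\bC^0$ is obtained from $\bS$ by a finite iteration of cone-taking. Note that the embedding $\bC^0 \hookrightarrow \bC^c \hookrightarrow \bC$ sends compacts to compacts (both being compacts in $\bC$), and the subset $\bS \subset \bC^0$ maps to a set that generates $\bC$ by hypothesis, so \emph{a fortiori} the essential image of $\bC^0$ in $\bC$ generates $\bC$ in the sense of \secref{sss:compact_gen_tion}.

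Next, I would apply \lemref{l:ind of comp} to the fully faithful functor $\bC^0 \hookrightarrow \bC^c$: the induced continuous functor $\on{Ind}(\bC^0) \to \bC$ is then an equivalence. Passing to compacts on both sides gives an equivalence $\on{Ind}(\bC^0)^c \simeq \bC^c$. By \lemref{l:Karoubi}(a), the category $\on{Ind}(\bC^0)^c$ is a Karoubi completion of $\bC^0$, and by \lemref{l:Karoubi}(b), every one of its objects is a direct summand of an object lying in the image of $\bC^0$.

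Combining these two facts, every object of $\bC^c$ is a direct summand of an object of $\bC^0$, and every object of $\bC^0$ is obtained from $\bS$ by finitely many cones. Hence every compact object of $\bC$ arises from $\bS$ by a finite iteration of cone-formation followed by passage to a direct summand, which is exactly the definition of Karoubi-generation in \secref{sss:Karoubi gen}. There is no real obstacle here; the only mild subtlety is the choice of the intermediate subcategory $\bC^0$ (rather than trying to work directly with $\bC^c$), which is what lets us invoke \lemref{l:ind of comp} with the hypothesis on essential image already verified via $\bS$.
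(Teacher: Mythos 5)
Your proof is correct and is precisely the argument the paper intends when it says the corollary follows "by combining Lemmas \ref{l:Karoubi} with \ref{l:ind of comp}": one forms the cone-closure $\bC^0$ of $\bS$ inside $\bC^c$, applies \lemref{l:ind of comp} to get $\on{Ind}(\bC^0)\simeq \bC$, and then reads off from \lemref{l:Karoubi}(b) that every compact object is a direct summand of an object built from $\bS$ by finitely many cones. The only point worth keeping explicit is your parenthetical that $\bC^0$ should be closed under shifts as well as cones, so that it is a genuine (stable) DG subcategory to which \lemref{l:ind of comp} applies.
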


\ssec{Symmetric monoidal structure and duality} \label{ss:dual category}

\sssec{The notion of dual of a DG category}     \label{sss:dualizable category}

A DG category $\bC$ is called \emph{dualizable} if it is such as an object of the
symmetric monoidal category $(\StinftyCat_{\on{cont}},\otimes )$.
We refer the reader to \cite[Sect. 4.1]{finiteness} for a 
review of some of the properties of this notion. The most important
ones are listed below.

\medskip 

For a dualizable category $\bC$ we denote by $\bC^\vee$ its dual. One constructs $\bC^\vee$ explicitly as
\begin{equation} \label{e:dual as}
\bC^\vee\simeq \on{Funct}_{\on{cont}}(\bC,\Vect).
\end{equation}

In addition, for any $\bD\in \StinftyCat_{\on{cont}}$, the natural functor
$$\bC^\vee\otimes \bD\to \on{Funct}_{\on{cont}}(\bC,\bD)$$
is an equivalence. 

\sssec{}  \label{sss:dual functor}

If $\sF:\bC_1\to \bC_2$ is a (continuous)
functor between dualizable categories, there exists a canonically defined dual functor 
$\sF^\vee:\bC_2^\vee\to \bC_1^\vee$ (the construction follows, e.g., from \eqref{e:dual as}).
The assignment $\sF\mapsto \sF^\vee$ is functorial in $\sF$. One has
$(\sF^\vee)^\vee=\sF$, $(\sG\circ \sF)^\vee =\sF^\vee\circ \sG^\vee$.

\medskip

From here we obtain that if the functors 
$$\sF:\bC_1\leftrightarrows \bC_2:\sG$$
are mutually adjoint, then so are the functors
$$\sG^\vee: \bC^\vee_2\leftrightarrows \bC^\vee_1:\sF^\vee.$$

\sssec{} \label{sss:dual of c g}

If $\bC$ is compactly generated, then it is dualizable. We have a canonical identification
$$(\bC^\vee)^c\simeq (\bC^c)^{\on{op}}.$$

Vice versa, if $\bC_1$ and $\bC_2$ are two compactly generated categories, then an identification
$$\bC_1^c\simeq (\bC_2^c)^{\on{op}}$$ gives rise to an identification
$$\bC_1^\vee\simeq \bC_2.$$

%
%
%
%

\ssec{Limits of DG categories}  \label{ss:DGlim}

The reason that we work with DG categories rather than with triangulated ones is that
the limit (i.e., projective limit) of DG categories is well-defined as a DG category (while the 
corresponding fact for triangulated categories is false). 

\medskip

More precisely, the $(\infty ,1)$-categories $\StinftyCat_{\on{cont}}$ and $\StinftyCat$
admit limits and the forgetful functor $\StinftyCat_{\on{cont}}\to \StinftyCat$
commutes with limits (this is essentially \cite[Proposition 5.5.3.13]{Lu1})
This is important for us because the DG category of D-modules on an algebraic stack is defined
as a limit (see \secref{sss:Dmod on prestack} below).

\sssec{}

Let  $$i\mapsto \bC_i,\,\, (i\to j)\mapsto (\phi_{i,j}\in \on{Funct}_{\on{cont}}(\bC_i,\bC_j))$$
be a diagram of DG categories, parameterized by an index category $I$. The limit  
$$\bC:=\underset{i\in I}{\underset{\longleftarrow}{lim}}\, \bC_i$$ 
is a priori defined by a universal property in $\StinftyCat_{\on{cont}}$:
for a DG category $\bD$ we have a functorial isomorphism
$$\left(\on{Funct}_{\on{cont}}(\bD,\bC)\right)^{\on{grpd}}\simeq \underset{i\in I}{\underset{\longleftarrow}{lim}}\, 
\left(\on{Funct}_{\on{cont}}(\bD,\bC_i)\right)^{\on{grpd}}$$
where the in the left-hand side the limit is taken in the $(\infty,1)$-category $\inftygroup$. We remind that the superscript 
``grpd" means that we are taking the maximal $\infty$-subgroupoid in the corresponding $\infty$-category. 

\sssec{}  \label{sss:cart sect}

Note that \cite[Corollary 3.3.3.2 ]{Lu1} provides a more explicit description of $\bC$.
Namely, objects of $\bC$ are \emph{Cartesian sections,} i.e., assignments
$$i\rightsquigarrow (\bc_i\in \bC_i),\,\, \phi_{i,j}(\bc_i)\overset{\alpha_{\phi_{i,j}}}\simeq \bc_j,$$
equipped with data making $\alpha_{\phi_{i,j}}$ coherently associative. In fact, this description
follows easily from the above functorial description, by taking $\bD=\Vect$, and using the fact
$\on{Funct}_{\on{cont}}(\Vect ,\bC)\simeq \bC$ as DG categories. 

\medskip

If $\bc:=(\bc_i,\alpha_{\phi_{i,j}})$ and $\wt\bc:=(\wt\bc_i,\wt\alpha_{\phi_{i,j}})$ are two such objects, then
one can upgrade the assignment
$$i\mapsto \CMaps_{\bC_i}(\bc_i,\wt\bc_i)$$
into a homotopy $I$-diagram in $\Vect$, and
$$\CMaps_\bC(\bc,\wt\bc)\simeq \underset{i\in I}{\underset{\longleftarrow}{lim}}\, \CMaps_{\bC_i}(\bc_i,\wt\bc_i)$$
as objects of $\Vect$.

\sssec{}

The following observation will be useful in the sequel. Let $\bC=\underset{i\in I}{\underset{\longleftarrow}{lim}}\, \bC_i$
be as above, and let
$$(\alpha\in A)\mapsto (\bc_\alpha\in \bC)$$ be a collection of objects
of $\bC$ parameterized by some category $A$. In particular, for every $i\in I$ we obtain a functor
$$(\alpha\in A)\mapsto (\bc_{i,\alpha}\in \bC_i).$$

We have:

\begin{lem}  \label{l:dir limits in inv limit}
For every $i$, the map from $\underset{\alpha\in A}{\underset{\longrightarrow}{colim}}\, \bc_{i,\alpha}\in \bC_i$
to the $i$-th component of the object $\underset{\alpha\in A}{\underset{\longrightarrow}{colim}}\, \bc_\alpha$
is an isomorphism.
\end{lem}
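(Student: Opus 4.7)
The lemma asserts that the projection functor $p_i: \bC \to \bC_i$, which sends a Cartesian section $(\bc_j, \alpha_{\phi_{j,k}})$ to its $i$-th component $\bc_i$, commutes with the colimit $\underset{\alpha\in A}{\underset{\longrightarrow}{colim}}\, \bc_\alpha$. Equivalently, we must show that $p_i$ is a continuous functor.

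My plan is to deduce this directly from the definition of the limit $\bC = \underset{i\in I}{\underset{\longleftarrow}{lim}}\, \bC_i$ in the $(\infty,1)$-category $\StinftyCat_{\on{cont}}$. By the universal property (applied with $\bD = \bC$), the identity $\id_\bC$ -- which is tautologically a $1$-morphism in $\StinftyCat_{\on{cont}}$ -- corresponds under the equivalence recalled in Sect.~\ref{sss:cart sect} to the collection of projections $p_i = p_i \circ \id_\bC$, each of which is thus itself a $1$-morphism in $\StinftyCat_{\on{cont}}$. Since $1$-morphisms in $\StinftyCat_{\on{cont}}$ are by definition continuous functors, each $p_i$ preserves arbitrary colimits.

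Applying $p_i$ to the colimit cocone $\{\bc_\alpha \to \underset{\alpha}{\underset{\longrightarrow}{colim}}\, \bc_\alpha\}_{\alpha \in A}$ therefore yields a colimit cocone in $\bC_i$ with apex $p_i(\underset{\alpha}{\underset{\longrightarrow}{colim}}\, \bc_\alpha)$, which by the universal property of colimits identifies canonically with $\underset{\alpha}{\underset{\longrightarrow}{colim}}\, \bc_{i,\alpha}$. The content of the lemma is thus that $\StinftyCat_{\on{cont}}$ is the correct ambient $(\infty,1)$-category: its limits come automatically equipped with continuous projection functors. There is no genuine obstacle here -- the only point requiring care is to recall which morphisms are admitted in $\StinftyCat_{\on{cont}}$ and to feed $\id_\bC$ into the universal property characterizing $\bC$.
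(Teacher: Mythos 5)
Your argument is correct and is essentially the paper's own: the remark immediately following the lemma observes that the assertion amounts to the continuity of the evaluation functors $\on{ev}_i:\bC\to\bC_i$, which is tautological from the definition of $\bC$ as a limit in $\StinftyCat_{\on{cont}}$, whose $1$-morphisms are by definition continuous. Your detour through the universal property applied to $\id_\bC$ is just a slightly more explicit way of saying that the limit cone consists of $1$-morphisms of $\StinftyCat_{\on{cont}}$.
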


In other words, colimits in a limit of DG categories can be computed component-wise. 

\begin{rem}
The assertion of \lemref{l:dir limits in inv limit} can be reformulated as saying that the evaluation
functors $\on{ev}_i:\bC\to \bC_i$ commute with colimits, i.e., are continuous. This is tautological
from the definition of $\bC$ as a limit in the category $\StinftyCat_{\on{cont}}$.
\end{rem}

\ssec{Colimits in $\StinftyCat_{\on{cont}}$}  \label{ss:colimits abs}
The goal of the remaining part of \secref{s:DG review} 
is to provide a categorical framework for Sects.~\ref{ss:category as colimit}-\ref{ss:descr dual}.
This material is not used in other parts of the article.


\sssec{}
As was mentioned in \secref{ss:DGlim}, limits in $\StinftyCat_{\on{cont}}$ are the same as limits in $\StinftyCat$. 
However, \emph{colimits are different} (for example, the colimit taken in $\StinftyCat$ does not have to be cocomplete). 

\medskip

It is known to experts that under suitable set-theoretical conditions, colimits in $\StinftyCat_{\on{cont}}\,$ 
always exist. 
We are unable to find a really satisfactory reference for this fact. 

\medskip

On the other hand, in this paper we work only with colimits of those functors $$\Psi:I\to \StinftyCat_{\on{cont}}$$ that satisfy the 
following condition: for every arrow $i\to j$ in $I$ the corresponding functor $\psi_{i,j}:\Psi (i)\to\Psi (j)$ 
\emph{admits a continuous right adjoint}. In this case existence of the colimit of $\Psi$ is provided by
\propref{p:limit and colimit a} below.

\sssec{The setting}  \label{sss:let us recall}
Let $I$ be a small category, and let $\Psi:I\to \StinftyCat_{\on{cont}}$  be a functor
$$i\rightsquigarrow \bC_i,\quad (i\to j)\in I \rightsquigarrow \psi_{i,j}\in \on{Funct}_{\on{cont}}(\bC_i,\bC_j).$$
Assume that for every arrow $i\to j$ in $I$, the above functor $\psi_{i,j}$ admits a continuous right adjoint, $\phi_{j,i}\,$. 

\medskip

We can then view the assignment
$$i\rightsquigarrow \bC_i,\quad (i\to j)\in I \rightsquigarrow \phi_{j,i}\in \on{Funct}_{\on{cont}}(\bC_j,\bC_i).$$
as a functor $\Phi:I^{\on{op}}\to \StinftyCat_{\on{cont}}\,$.

\begin{rem}   \label{sss:understandable}
Some readers may prefer to assume, in addition, that each DG category $\bC_i$ is compactly generated.
As explained in \secref{ss:Colimcompgen} below, this special case of the situation of \secref{sss:let us recall} is 
very easy (Propositions~\ref{p:limit and colimit a} and  \ref{p:dual of limit} formulated below become obvious, and it is easy to 
understand ``who is who"). Moreover, this case is enough for the applications in Sects.~\ref{ss:category as colimit}-\ref{ss:descr dual}.
\end{rem} 

\sssec{}
The following proposition is a variant of \cite[Corollary 5.5.3.4]{Lu1}; a digest of the proof is given in
\cite[Lemma 1.3.3]{DG}.

\begin{prop}  \label{p:limit and colimit a} 
In the situation of \secref{sss:let us recall}, the colimit
$$\underset{i\in I}{\underset{\longrightarrow}{colim}}\, \bC_i:=\underset{I}{\underset{\longrightarrow}{colim}}\, \Psi \in \StinftyCat_{\on{cont}}
$$ 
exists and is canonically equivalent to the limit
$$\underset{i\in I^{\on{op}}}{\underset{\longleftarrow}{lim}}\, \bC_i:=\underset{I^{\on{op}}}{\underset{\longleftarrow}{lim}}\, \Phi
\in \StinftyCat_{\on{cont}}\,;$$
the equivalence is uniquely characterized by the condition that for $i_0\in I$, the evaluation functor
$$\on{ev}_{i_0}:\underset{i\in I^{\on{op}}}{\underset{\longleftarrow}{lim}}\, \bC_i \to \bC_{i_0}$$
is right adjoint to the tautological functor
$$\on{ins}_{i_0}:\bC_{i_0}\to \underset{i\in I}{\underset{\longrightarrow}{colim}}\, \bC_i\, ,$$
in a way compatible with arrows in $I$. 
\end{prop}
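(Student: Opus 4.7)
The plan is to take $\bC := \underset{i\in I^{\on{op}}}{\underset{\longleftarrow}{lim}}\, \Phi$, which exists in $\StinftyCat_{\on{cont}}$ by \secref{ss:DGlim}, and check that it simultaneously serves as the colimit of $\Psi$ in $\StinftyCat_{\on{cont}}$, with structure functors given by left adjoints of the evaluation functors. The statement is essentially \cite[Corollary 5.5.3.3]{Lu1}, and the proof follows that template.

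First I would observe that each evaluation functor $\on{ev}_{i_0}:\bC\to \bC_{i_0}$ is continuous, since every transition $\phi_{j,i}$ in $\Phi$ is continuous and hence colimits in $\bC$ are computed componentwise by \lemref{l:dir limits in inv limit}. The functor $\on{ev}_{i_0}$ also preserves small limits, which are likewise computed componentwise. Together with accessibility (inherited through limits of presentable DG categories along accessible functors), the adjoint functor theorem \cite[Corollary 5.5.2.9]{Lu1} provides a left adjoint $\on{ins}_{i_0}$ to $\on{ev}_{i_0}$.

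Next, for every arrow $i\to j$ in $I$ the defining identity $\on{ev}_i\simeq \phi_{j,i}\circ \on{ev}_j$ yields, on passage to left adjoints, a canonical isomorphism $\on{ins}_j\circ \psi_{i,j}\simeq \on{ins}_i$. Assembled together, the family $\{\on{ins}_i\}_{i\in I}$ upgrades to a natural transformation $\Psi\Rightarrow \on{const}_\bC$, and therefore, for any $\bD\in \StinftyCat_{\on{cont}}$, to a restriction functor
$$\on{Funct}_{\on{cont}}(\bC,\bD)\to \underset{i\in I^{\on{op}}}{\underset{\longleftarrow}{lim}}\, \on{Funct}_{\on{cont}}(\bC_i,\bD),\qquad F\mapsto (F\circ \on{ins}_i)_{i\in I}.$$

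To conclude that $\bC$ realizes the colimit of $\Psi$, I would verify this restriction functor is an equivalence. Its inverse sends a compatible family $(F_i)_{i\in I}$ to $\underset{i\in I}{\underset{\longrightarrow}{colim}}\, F_i\circ \on{ev}_i$, the colimit being taken in $\on{Funct}_{\on{cont}}(\bC,\bD)$. Mutual inverseness reduces, via the componentwise description of Cartesian sections from \secref{sss:cart sect}, to the single identity $\on{id}_\bC\simeq \underset{i\in I}{\underset{\longrightarrow}{colim}}\, \on{ins}_i\circ \on{ev}_i$, and further to the statement that for any Cartesian section $(\bc_i,\alpha_{\phi_{j,i}})$ and any $i_0\in I$, the canonical map $\underset{j\in I_{/i_0}}{\underset{\longrightarrow}{colim}}\, \psi_{j,i_0}(\bc_j)\to \bc_{i_0}$ is an equivalence --- which is immediate from the identity arrow $i_0\to i_0$. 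Uniqueness of the equivalence with the prescribed adjunction is then automatic. The main obstacle throughout is the homotopy-coherent bookkeeping needed to lift these assertions from shadows on homotopy categories to coherent structure in the $\infty$-categorical sense; this technical content is provided by \cite[Sect.~5.5.3]{Lu1} and digested in the DG setting in \cite[Lemma~1.3.3]{DG}, both of which can be invoked.
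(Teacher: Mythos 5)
Your first half is fine: in $\bC:=\underset{i\in I^{\on{op}}}{\underset{\longleftarrow}{lim}}\,\Phi$ both limits and colimits are computed componentwise (the $\phi_{j,i}$ preserve both, being continuous right adjoints), so each $\on{ev}_{i_0}$ preserves limits and colimits, the adjoint functor theorem yields $\on{ins}_{i_0}$, and passing to left adjoints in $\on{ev}_i\simeq \phi_{j,i}\circ\on{ev}_j$ gives the cocone $\on{ins}_j\circ\psi_{i,j}\simeq\on{ins}_i$. (For comparison: the paper offers no proof at all — it cites \cite[Corollary 5.5.3.3]{Lu1}, whose route is the anti-equivalence $\on{Pr}^L\simeq(\on{Pr}^R)^{\on{op}}$ rather than a direct verification, and only spells out the compactly generated case.)

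The gap is in your verification of the universal property, at the identity $\on{id}_\bC\simeq \underset{i\in I}{\on{colim}}\,\on{ins}_i\circ\on{ev}_i$. This identity is indeed the crux, and it is true, but your justification is circular. You replace $\on{ev}_{i_0}\bigl(\underset{i\in I}{\on{colim}}\,\on{ins}_i(\bc_i)\bigr)$ by $\underset{j\in I_{/i_0}}{\on{colim}}\,\psi_{j,i_0}(\bc_j)$ and evaluate the latter at the terminal object of $I_{/i_0}$. No identification between these two colimits has been established: the indexing categories differ and $I_{/i_0}\to I$ is not cofinal for general $I$; and the integrands differ, since $\on{ev}_{i_0}\circ\on{ins}_j$ is not $\psi_{j,i_0}$ --- already for $j=i_0$ the composite $\on{ev}_{i_0}\circ\on{ins}_{i_0}$ is the monad of the adjunction, which is the identity only when $\on{ins}_{i_0}$ is fully faithful (true for filtered $I$ with fully faithful transitions, cf.\ Remark~\ref{r:colim filtered 1}, false in general, e.g.\ for a pushout diagram). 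Since the right-hand side of your claimed identification is tautologically $\bc_{i_0}$, the identification \emph{is} the statement to be proved. A correct direct argument runs instead through mapping spaces: by adjunction and \secref{sss:cart sect},
$$\CMaps_\bC\bigl(\underset{i}{\on{colim}}\,\on{ins}_i\on{ev}_i(\bc),\bc'\bigr)\simeq
\underset{i\in I^{\on{op}}}{\underset{\longleftarrow}{lim}}\,\CMaps_{\bC_i}(\on{ev}_i(\bc),\on{ev}_i(\bc'))\simeq \CMaps_\bC(\bc,\bc'),$$
but making the middle identification into a coherent map of $I^{\on{op}}$-diagrams is the real content, and it is not supplied by the parts of \cite{Lu1} you invoke. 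A smaller point: mutual inverseness does not reduce to this single identity; the composite in the other order requires $\underset{j}{\on{colim}}\,F_j\circ\on{ev}_j\circ\on{ins}_i\simeq F_i$ for a compatible family $(F_j)$, which needs its own (dual) argument.
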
 

The above proposition can be reformulated as follows. Let $\bC$ denote the limit of the DG categories 
$\bC_i\,$. The claim is that each functor $\on{ev}_i:\bC\to \bC_i$ admits a 
 left adjoint functor $'\!\on{ins}_i:\bC_{i}\to \bC$, and that the functors $'\!\on{ins}_i:\bC_{i}\to \bC$ together with the isomorphisms
 \[
 '\!\on{ins}_j\circ\psi_{i,j}\simeq{}'\!\on{ins}_i \, , \quad (i\to j)\in I,
 \]
 that one obtains by adjunction, make $\bC$ into a colimit of the DG categories $\bC_i\,$. 
 

\begin{rem}  \label{r:colim filtered 1}
Let $I$ be filtered. In this case one can show (see \cite[Lemma 1.3.6]{DG}) that if an index $i_0\in I$ is such that for every arrow 
$i_0\to i$ the functor $\psi_{i_0,i}:\bC_{i_0}\to \bC_i$ is fully faithful then 
the functor $\on{ins}_{i_0}$ is fully faithful. If $\bC$ is compactly generated this follows from 
\lemref{e:filtered_lemma}(ii) below.
\end{rem}

\ssec{Colimits and duals}   \label{ss:Colimits and duals}

\sssec{}

Assume now that the categories $\bC_i$ are dualizable. Then we can produce yet another functor
$$\Phi^\vee:I\to \StinftyCat_{\on{cont}}$$
that sends 
$$i\rightsquigarrow \bC^\vee_i ,\quad (i\to j)\in I\rightsquigarrow (\phi_{j,i})^\vee\in \on{Funct}_{\on{cont}}(\bC^\vee_i,\bC^\vee_j).$$

\sssec{}

In this case we have the following result (\cite[Lemma 2.2.2]{DG}):

\begin{prop}  \label{p:dual of limit}
The category 
$$\underset{i\in I^{\on{op}}}{\underset{\longleftarrow}{lim}}\, \bC_i:=\underset{I^{\on{op}}}{\underset{\longleftarrow}{lim}}\, \Phi$$
is dualizable, and its dual is given by
$$\underset{i\in I}{\underset{\longrightarrow}{colim}}\, \bC^\vee_i:=
\underset{I}{\underset{\longrightarrow}{colim}}\, \Phi^\vee.$$
This identification is uniquely characterized by the property
that for $i_0\in I$, we have 
\begin{equation} \label{e:ins dual of ev}
(\on{ins}_{i_0,\Phi^\vee})^\vee\simeq \on{ev}_{i_0,\Phi}\, ,
\end{equation}
in a way compatible with arrows in $I$. 
\end{prop}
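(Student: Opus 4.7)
The plan is to combine the explicit formula $\bC^\vee = \on{Funct}_{\on{cont}}(\bC,\Vect)$ from~\eqref{e:dual as} with Proposition~\ref{p:limit and colimit a} applied twice---once to $\Psi$ and once to $\Phi^\vee$. Set $\bC := \underset{i\in I^{\on{op}}}{\underset{\longleftarrow}{lim}}\, \Phi$. By Proposition~\ref{p:limit and colimit a} applied to $\Psi$, we have $\bC \simeq \underset{i\in I}{\underset{\longrightarrow}{colim}}\, \Psi$ in $\StinftyCat_{\on{cont}}$. Since the internal-Hom functor $\on{Funct}_{\on{cont}}(-,\Vect)$ turns colimits in $\StinftyCat_{\on{cont}}$ into limits (a formal consequence of the adjunction between $\otimes$ and $\on{Funct}_{\on{cont}}$), this gives
$$\bC^\vee \simeq \underset{i\in I^{\on{op}}}{\underset{\longleftarrow}{lim}}\, \on{Funct}_{\on{cont}}(\bC_i,\Vect) = \underset{i\in I^{\on{op}}}{\underset{\longleftarrow}{lim}}\, \bC_i^\vee,$$
with transition functors $(\psi_{i,j})^\vee: \bC_j^\vee \to \bC_i^\vee$, and in particular shows that $\bC$ is dualizable.

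Next, since duality reverses adjunctions (see~\secref{sss:dual functor}), each $(\phi_{j,i})^\vee$ is left adjoint to $(\psi_{i,j})^\vee$. Thus the hypotheses of Proposition~\ref{p:limit and colimit a} are satisfied for $\Phi^\vee$, and the associated diagram of right adjoints is exactly the one in the display above. Its limit is therefore canonically equivalent to $\underset{i\in I}{\underset{\longrightarrow}{colim}}\, \Phi^\vee$, and composing gives the desired identification $\bC^\vee \simeq \underset{i\in I}{\underset{\longrightarrow}{colim}}\, \Phi^\vee$.

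For the characterization~\eqref{e:ins dual of ev}: tracing through the identification $\bC^\vee = \on{Funct}_{\on{cont}}(\bC,\Vect)$, the evaluation functor $\on{ev}_{i_0,\Psi^\vee}: \bC^\vee \to \bC_{i_0}^\vee$ in the limit presentation of $\bC^\vee$ is precomposition with $\on{ins}_{i_0,\Psi}: \bC_{i_0} \to \bC$, which under $\bC_{i_0}^\vee = \on{Funct}_{\on{cont}}(\bC_{i_0},\Vect)$ is exactly the dual functor $(\on{ins}_{i_0,\Psi})^\vee$. By Proposition~\ref{p:limit and colimit a} applied to $\Psi$, $\on{ins}_{i_0,\Psi}$ is left adjoint to $\on{ev}_{i_0,\Phi}$. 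Taking left adjoints of both sides of $\on{ev}_{i_0,\Psi^\vee} = (\on{ins}_{i_0,\Psi})^\vee$ (and using again that duality interchanges left and right adjoints) yields $\on{ins}_{i_0,\Phi^\vee} = (\on{ev}_{i_0,\Phi})^\vee$, which by involutivity of duality is equivalent to the stated equality.

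The step I expect to be delicate is not the content---each object-level comparison above is essentially tautological---but the \emph{coherence} of the identifications as the index varies: the diagrams of insertions and evaluations indexed by $I$ must match as $\infty$-functors $I \to \StinftyCat_{\on{cont}}$, not merely pointwise. At the level of homotopy categories this is immediate, but the $\infty$-categorical bookkeeping requires that the assignment $\bC \mapsto \bC^\vee$, $\sF \mapsto \sF^\vee$ is itself realized as a (contravariant) functor of $\infty$-categories, compatible with passage to left/right adjoints. I would extract this from the symmetric monoidal structure on $\StinftyCat_{\on{cont}}$, as reviewed in~\cite[Sect.~4.1]{finiteness}; alternatively, under the simplifying assumption that each $\bC_i$ is compactly generated (cf.~\secref{sss:understandable}), one can argue via the equivalence $\bC_i^\vee \simeq \on{Ind}((\bC_i^c)^{\on{op}})$ and reduce the coherence to the manifestly functorial operation $\bC^c \mapsto (\bC^c)^{\on{op}}$.
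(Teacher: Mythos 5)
Your route is genuinely different from the paper's: the paper does not prove this proposition at all but cites \cite[Lemma 2.2.2]{DG}, and in \secref{ss:Colimcompgen} it verifies the statement only in the compactly generated case by identifying both sides with $\on{Ind}\bigl(\,\underset{i\in I}{\underset{\longrightarrow}{colim}}\,(\bC_i^c)^{\on{op}}\bigr)$ --- essentially the fallback you mention at the end. Your second and third steps (matching the limit along the $(\psi_{i,j})^\vee$ with $\underset{I}{\underset{\longrightarrow}{colim}}\,\Phi^\vee$ via \propref{p:limit and colimit a}, and deducing \eqref{e:ins dual of ev} by uniqueness of adjoints and involutivity of $(-)^\vee$) are sound, as is your diagnosis of the coherence issue.

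There is, however, a genuine gap in the first step: the clause ``and in particular shows that $\bC$ is dualizable'' is a non sequitur. The functor category $\on{Funct}_{\on{cont}}(\bC,\Vect)$ exists and satisfies the colimit-to-limit formula for \emph{any} cocomplete $\bC$, dualizable or not, so computing it as $\underset{i\in I^{\on{op}}}{\underset{\longleftarrow}{lim}}\,\bC_i^\vee$ proves nothing about $\bC$ itself. Dualizability is the substantive first assertion of the proposition, and it is needed even to make sense of \eqref{e:ins dual of ev}, since $(\on{ins}_{i_0,\Phi^\vee})^\vee$ is only defined in the sense of \secref{sss:dual functor} once both its source and target are known to be dualizable. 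The missing step is precisely where the adjoint hypothesis must be used a second time: for an arbitrary $\bD\in\StinftyCat_{\on{cont}}$ one has
$$\on{Funct}_{\on{cont}}(\bC,\bD)\simeq \underset{i\in I^{\on{op}}}{\underset{\longleftarrow}{lim}}\,\on{Funct}_{\on{cont}}(\bC_i,\bD)\simeq \underset{i\in I^{\on{op}}}{\underset{\longleftarrow}{lim}}\,(\bC_i^\vee\otimes \bD),$$
and since the transition functors $(\psi_{i,j})^\vee\otimes\on{Id}_\bD$ admit the left adjoints $(\phi_{j,i})^\vee\otimes\on{Id}_\bD$, \propref{p:limit and colimit a} converts this limit into $\underset{i\in I}{\underset{\longrightarrow}{colim}}\,(\bC_i^\vee\otimes\bD)\simeq \bigl(\,\underset{i\in I}{\underset{\longrightarrow}{colim}}\,\bC_i^\vee\bigr)\otimes\bD$, because $\otimes$ preserves colimits in each variable. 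Thus the canonical functor $\on{Funct}_{\on{cont}}(\bC,\Vect)\otimes\bD\to\on{Funct}_{\on{cont}}(\bC,\bD)$ is an equivalence for every $\bD$, which is the standard criterion for dualizability (take $\bD=\bC$ and extract the coevaluation from the preimage of $\on{Id}_\bC$). With this inserted, your argument goes through, modulo the $\infty$-categorical coherence bookkeeping you already flagged.
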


In formula \eqref{e:ins dual of ev}, the notation $\on{ins}_{i_0,\Psi^\vee}$ means the functor 
$$\on{ins}_{i_0}:\bC^\vee_{i_0}\to \underset{I}{\underset{\longrightarrow}{colim}}\, \Phi^\vee,$$
and the notation $\on{ev}_{i_0,\Phi}$ means the functor
$$\on{ev}_{i_0}:\underset{I^{\on{op}}}{\underset{\longleftarrow}{lim}}\, \Phi\to \bC_{i_0}.\,$$

\begin{rem}
By adjunction between $\on{ins}$ and $\on{ev}$ (see \propref{p:limit and colimit a}), one gets from \eqref{e:ins dual of ev} a similar isomorphism $(\on{ins}_{i_0,\Psi})^\vee\simeq \on{ev}_{i_0,\Psi^\vee }\,$.
\end{rem}

\ssec{Colimits of compactly generated categories}  \label{ss:Colimcompgen}

The main goal of this subsection is to demonstrate that the results of Sects.~\ref{ss:colimits abs}-\ref{ss:Colimits and duals} 
are very easy under the additional assumption that each DG category $\bC_i$ is compactly generated.

\sssec{Who is who}   \label{sss:2who_is_who}
Suppose that in the situation of \secref{sss:let us recall} each of the categories $\bC_i$ is compactly generated, so
\[
\bC_i\simeq\on{Ind}(\bC_i^c)
\]
or equivalently,
\begin{equation}  \label{e:op-pa}
\bC_i\simeq\on{Funct}((\bC_i^c)^{\on{op}},\Vect ).
\end{equation}
By \propref{p:existence of adjoint}, the assumption that the functor $\psi_{i,j}:\bC_i\to\bC_j$ has a continuous right adjoint just means that 
$\psi_{i,j}(\bC_i^c)\subset\bC_j^c$ (so $\psi_{i,j}$ is the ind-extension of a functor 
$\psi_{i,j}^c:\bC_i^c\to\bC_j^c$). 

\medskip

Moreover, the right adjoint functor $\phi_{j,i}:\bC_j\to\bC_i$ is just the restriction functor
\[
\on{Funct}((\bC_j^c)^{\on{op}},\Vect )\to\on{Funct}((\bC_i^c)^{\on{op}},\Vect )
\]
corresponding to $\psi_{i,j}^c:\bC_i^c\to\bC_j^c\,$.

\sssec{On Propositions ~\ref{p:limit and colimit a} and \ref{p:dual of limit} in the compactly generated case} \hfill

\medskip

Set $\bC:=\underset{i\in I}{\underset{\longrightarrow}{colim}}\, \bC_i\,$. 
In our situation the existence of this colimit is clear: in fact,
\begin{equation}  \label{e:ind-colim}
\bC \simeq\on{Ind} (\,\underset{i\in I}{\underset{\longrightarrow}{colim}}\, \bC_i^c),
\end{equation}
where the colimit in the right hand side is computed in $\StinftyCat$.

\medskip

Just as in \secref{sss:2who_is_who}, we can rewrite \eqref{e:ind-colim} as
\begin{equation}  \label{e:2op-pa}
\bC\simeq\on{Funct}(\,\underset{i\in I}{\underset{\longrightarrow}{colim}}\, (\bC_i^c)^{\on{op}},\Vect ). 
\end{equation}
Now the canonical equivalence 
\[
\bC\simeq\underset{i\in I^{\on{op}}}{\underset{\longleftarrow}{lim}}\, \bC_i
\]
from Proposition ~\ref{p:limit and colimit a} becomes obvious: this is just the composition

\[
\bC\simeq\on{Funct}(\,\underset{i\in I}{\underset{\longrightarrow}{colim}}\, (\bC_i^c)^{\on{op}},\Vect )
\simeq \underset{i\in I^{\on{op}}}{\underset{\longleftarrow}{lim}}\, 
\on{Funct}((\bC_i^c)^{\on{op}},\Vect )\simeq
\underset{i\in I^{\on{op}}}{\underset{\longleftarrow}{lim}}\, \bC_i\, ,
\]
where the first equivalence is \eqref{e:2op-pa} and the third one comes from \eqref{e:op-pa}.

\medskip

Proposition~\ref{p:dual of limit} says that $\bC$ is dualizable and 
\begin{equation}   \label{e:dual_as_colim}
\bC^\vee\simeq\underset{i\in I}{\underset{\longrightarrow}{colim}}\, \bC_i^\vee\,.
\end{equation}
This is clear because by formula \eqref{e:ind-colim} and \secref{sss:dual of c g}, both sides of
\eqref{e:dual_as_colim} canonically identify with 
\[
\on{Ind} (\,\underset{i\in I}{\underset{\longrightarrow}{colim}}\, (\bC_i^c)^{\on{op}})
\]
(the colimit in this formula is computed in $\StinftyCat$).

\sssec{} 
As a consequence of \eqref{e:ind-colim}, we obtain the following
\begin{cor}   \label{c:general not filtered}
In the situation of \secref{sss:2who_is_who} the category $\bC$ is compactly generated.
More precisely, objects of  $\bC$ of the form
\begin{equation}   \label{e:desired form}
\on{ins}_i(\bc),   \quad \quad  i\in I, \, \bc\in\bC_i^c
\end{equation}
are compact and generate $\bC$. \qed
\end{cor}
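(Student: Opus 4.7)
The plan is to bootstrap off formula \eqref{e:ind-colim} combined with the $(\on{ins},\on{ev})$-adjunction furnished by \propref{p:limit and colimit a}. Set $\bC^0:=\underset{i\in I}{\underset{\longrightarrow}{colim}}\, \bC_i^c$, where the colimit is taken in $\StinftyCat\,$, and let $\eta_i:\bC_i^c\to \bC^0$ denote the structure maps. Formula \eqref{e:ind-colim} identifies $\bC\simeq \on{Ind}(\bC^0)$, and any ind-completion is compactly generated by the essential image of its generator, so $\bC$ is compactly generated and the canonical embedding $\bC^0\hookrightarrow \bC$ factors through $\bC^c$.

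Next I would verify that, for $i\in I$ and $\bc\in \bC_i^c\,$, the object $\on{ins}_i(\bc)$ is compact. By the description in \secref{sss:2who_is_who}, the functor $\on{ins}_i:\bC_i\to \bC$ is the ind-extension of the composition $\bC_i^c \overset{\eta_i}{\longrightarrow} \bC^0\hookrightarrow \bC^c$, so $\on{ins}_i(\bc)\simeq \eta_i(\bc)\in \bC^c$, as required.

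To see that the objects \eqref{e:desired form} generate $\bC$ in the sense of \secref{sss:compact_gen_tion}, I would argue by orthogonality. Suppose $x\in \bC$ satisfies $\Hom_\bC(\on{ins}_i(\bc),x)=0$ for all $i\in I$ and $\bc\in \bC_i^c\,$. The adjunction of \propref{p:limit and colimit a} yields
\[
\Hom_\bC(\on{ins}_i(\bc),x)\simeq \Hom_{\bC_i}(\bc,\on{ev}_i(x)),
\]
so the hypothesis forces $\on{ev}_i(x)=0$ for every $i$, since each $\bC_i$ is compactly generated by $\bC_i^c\,$ (and $\bC_i^c$ is stable under shifts, so we actually obtain the vanishing on all generators). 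The same \propref{p:limit and colimit a} realizes $\bC$ as the limit $\underset{i\in I^{\on{op}}}{\underset{\longleftarrow}{lim}}\, \bC_i$, whose objects are Cartesian sections (cf.\ \secref{sss:cart sect}); any such section whose every component vanishes is the zero object, so $x=0$.

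I do not anticipate a genuine obstacle: the argument is entirely formal given the machinery assembled in Sects.~\ref{ss:colimits abs}--\ref{ss:Colimits and duals}. The only point requiring mild care is to ensure the adjunction is exploited at the level of the enriched Hom (or, equivalently, that one invokes it on all shifts at once), but this is already built into the statement of \propref{p:limit and colimit a}.
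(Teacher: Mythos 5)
Your proposal is correct and follows essentially the same route as the paper, whose entire proof is the observation that \eqref{e:ind-colim} identifies $\bC$ with $\on{Ind}\bigl(\underset{i\in I}{\underset{\longrightarrow}{colim}}\,\bC_i^c\bigr)$, so that compactness and generation by the objects \eqref{e:desired form} follow from the general properties of ind-completions recalled in the paper. Your third paragraph, re-deriving generation from the $(\on{ins}_i,\on{ev}_i)$-adjunction and the limit description, is a correct but redundant second pass over what the first two paragraphs already establish.
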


In addition, one has the following lemma.

\begin{lem}    \label{e:filtered_lemma}
Suppose that in the situation of \secref{sss:2who_is_who} the category $I$ is filtered. Then 

\smallskip

\noindent{\em(i)} 
every compact object of $\bC$ is of the form \eqref{e:desired form};

\smallskip

\noindent{\em(ii)} for any $i,i'\in I$, $\bc\in \bC^c_{i}$, and $\bc'\in \bC^c_{i'}$ the canonical map
$${\underset{j,\,\alpha:i\to j,\,\beta:i'\to j}{colim}}\; \CMaps_{\bC_j}(\psi_{i,j}(\bc),\psi_{i',j}(\bc'))\to
\CMaps_{\bC}(\on{ins}_i(\bc),\on{ins}_{i'}(\bc'))$$
is an isomorphism. 
\end{lem}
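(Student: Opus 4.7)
The plan is to deduce both claims from the compact-object and Hom-space description of filtered colimits of small $\infty$-categories, applied to the identification
\[
\bC\simeq \on{Ind}(\bC^0), \qquad \bC^0 := \underset{i\in I}{\on{colim}}\, \bC_i^c \;\in\; \StinftyCat,
\]
of \eqref{e:ind-colim}. Recall from \lemref{l:Karoubi}(a) that $\bC^c$ is the Karoubi completion of the essential image of $\bC^0\to \bC$, so every compact object of $\bC$ is a direct summand of $\on{ins}_i(\bc)$ for some $i\in I$, $\bc\in \bC_i^c$. Moreover, by \secref{sss:karoubian}, each $\bC_i^c$ is itself Karoubian.

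For (i), I would invoke the standard property that a filtered colimit of (small) $\infty$-categories has the following two features: every object of the colimit lies in the essential image of some component, and all relevant higher-coherence data (e.g.\ idempotents) can be implemented at a single component after passing far enough in the diagram. Applied to $\bC^0$, this gives: every object of $\bC^0$ is equivalent to the image of some $\bc\in \bC_i^c$, and any idempotent on such an object is equivalent to the image of an idempotent in $\bC_i^c$ for some $i$ large enough in $I$. Since $\bC_i^c$ is Karoubian, the idempotent already splits there. Combining these two points, every compact object of $\bC$ is isomorphic to $\on{ins}_i(\bc)$ for some $i\in I$, $\bc\in \bC_i^c$, as desired.

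For (ii), observe first that since $\on{ins}_i(\bc)$ and $\on{ins}_{i'}(\bc')$ are compact (by \corref{c:general not filtered}), their mapping object in $\bC\simeq \on{Ind}(\bC^0)$ coincides with their mapping object in $\bC^0$. The essential input is then the general fact that mapping spaces in a filtered colimit of $\infty$-categories are computed as filtered colimits of mapping spaces in the components, which for DG categories upgrades to filtered colimits in $\Vect$. Unwinding the indexing category of cocones $(j,\alpha\colon i\to j,\beta\colon i'\to j)$ over the pair $(i,i')$ in the filtered $I$, and using $\psi_{i,j}|_{\bC_i^c} = \psi^c_{i,j}$ together with the fact that $\bC_j^c\hookrightarrow \bC_j$ is fully faithful, yields exactly the asserted formula.

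The main obstacle is invoking the filtered-colimit description at the level of $\StinftyCat$: one needs the concrete model for $\bC^0$ both at the level of objects/idempotents (used for (i)) and of mapping spaces (used for (ii)). These are standard in $\inftyCat$ (via straightening/unstraightening, or the explicit model for filtered colimits of quasi-categories) and transfer routinely to DG categories tensored over $k$; the only real work is this translation. Once these two inputs are granted, the remainder of both proofs is formal bookkeeping with the identification $\bC\simeq \on{Ind}(\bC^0)$ and \lemref{l:Karoubi}.
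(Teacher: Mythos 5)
Your overall strategy coincides with the paper's: part (ii) is exactly the standard statement that mapping complexes in a filtered colimit of (DG) categories are computed as filtered colimits of mapping complexes --- the paper simply cites \cite{Roz} for this --- and part (i) is obtained by combining Karoubi-generation with descent of objects and idempotents to a finite stage of the diagram. The one place where your route for (i) differs, and where you need to be careful, is the claim that ``all relevant higher-coherence data (e.g.\ idempotents) can be implemented at a single component after passing far enough in the diagram.'' For \emph{coherent} idempotents in $\infty$-categories this is false in general: the walking idempotent $\on{Idem}$ is not a compact $\infty$-category, so an idempotent in a filtered colimit of $\infty$-categories need not descend, with all of its coherences, to a finite stage. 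The paper sidesteps this by deducing (i) from (ii): to split off a direct summand one only needs an idempotent in the \emph{homotopy} category, i.e.\ a morphism $e$ together with a single homotopy $e^2\simeq e$, which is a finite amount of data and hence descends to some $\bC_{j}$ by (ii); since ``Karoubian'' is defined in \secref{sss:karoubian} at the level of homotopy categories, that idempotent already splits in $\bC_j^c$, and closure under cones is handled the same way, after which \corref{c:Karoubi} finishes the argument. With the idempotent-descent step rerouted through (ii) in this manner, your proof of (i) becomes the paper's proof; the rest of your proposal, including the reduction of (ii) to the general filtered-colimit statement, matches the paper.
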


\begin{proof}
For statement (ii), see \cite{Roz}.

\medskip

Using (ii) and the assumption that $I$ is filtered, it is easy to see that the class of objects of the form 
\eqref{e:desired form} is closed under cones and direct summands. So (i) follows from (ii) 
and Corol\-lary~\ref{c:Karoubi}.
\end{proof}

\section{Preliminaries on the DG category of D-modules on an algebraic stack}  \label{s:D-modules}

In this section we recall some definitions and results from \cite{finiteness}.

\ssec{D-modules on prestacks and algebraic stacks}

\sssec{} \label{sss:Dmod on prestack}

Let $\CY$ be a prestack (always assumed locally of finite type). Recall following \cite[Sect. 6.1]{finiteness} that the category
$\Dmod(\CY)$ is defined as the limit
\begin{equation} \label{e:Dmods as limit}
\underset{S\in (\affSch_{\on{ft}})_{/\CY}}{\underset{\longleftarrow}{lim}}\, \Dmod(S),
\end{equation}
where the limit is taken in the $(\infty,1)$-category $\StinftyCat_{\on{cont}}$. Here
$$S\mapsto \Dmod(S)$$
is the functor 
$$(\affSch_{\on{ft}})^{\on{op}}\to \StinftyCat_{\on{cont}}$$
were for $f:S'\to S$ the corresponding map $\Dmod(S)\to \Dmod(S')$ is $f^!$. 

\medskip

I.e., as was explained in \secref{sss:cart sect}, informally, an object $\CF\in \Dmod(\Bun_G)$ is an assignment for every $S\to \CY$ of
an object $\CF_S\in \Dmod(S)$, and for every $f:S'\to S$ over $\CY$ of an isomorphism $f^!(\CF_S)\simeq \CF_{S'}$.

\medskip

In particular, for $\CF_1,\CF_2\in \Dmod(\Bun_G)$, the complex
$\CMaps(\CF_1,\CF_2)$ is calculated as
$$\underset{S\in (\affSch_{\on{ft}})_{/\CY}}{\underset{\longleftarrow}{lim}}\, \CMaps_{\Dmod(S)}((\CF_1)_S,(\CF_2)_S).$$

\medskip

This definition has several variants. For example, we can replace the category of affine schemes by that
of quasi-compact schemes, or all schemes. 

\sssec{}

Assume now that $\CY$ is an Artin stack (see \cite[Sect. 4]{Stacks} for our conventions regarding 
Artin stacks). 

\medskip

In this case, as in \cite[Corollary 11.2.3]{IndCoh}, in the formation of the limit in
\eqref{e:Dmods as limit}, we can replace the category
$(\affSch_{\on{ft}})_{/\CY}$ by its non-full subcategory $(\affSch_{\on{ft}})_{/\CY,\on{smooth}}$, where we restrict objects
to be those pairs $(S,g:S\to \CY)$ for which the map $g$ is smooth, and $1$-morphisms to smooth
maps between affine schemes.

\medskip

As before, we can replace the word ``affine" by ``quasi-compact", or just consider all schemes. 

\ssec{D-modules on a quasi-compact algebraic stack}   \label{ss:QCA}
\sssec{QCA and locally QCA stacks} \label{sss:QCA}  \hfill

\medskip

QCA is shorthand for ``quasi-compact and with affine automorphism groups".

\begin{defn}   \label{d:QCA}
We say that an algebraic stack $\CY$ is \emph{locally QCA} if the automorphism groups of its field-valued points are affine. We say that $\CY$ is \emph{QCA} if it is quasi-compact and locally QCA.
\end{defn}

\medskip

Convention: \emph{in this article all stacks will be assumed to be locally QCA.}
The reason is clear from Theorem~\ref{t:compact generation qc} below.


\sssec{A property of QCA stacks}

The following result is established in \cite[Theorem 8.1.1]{finiteness}.

\begin{thm}  \label{t:compact generation qc}
Let $\CY$ be a QCA stack. Then the category $\Dmod(\CY)$ is compactly generated.
\end{thm}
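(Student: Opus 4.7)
The strategy is Noetherian induction on $\CY$: reduce to a tractable base case (quotient stacks with affine structure group) and then propagate compact generation across an open-closed stratification.

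The base case is $\CY=Z/H$ with $Z$ a quasi-compact scheme and $H$ an affine algebraic group. The smooth atlas $p\colon Z\to Z/H$ identifies $\Dmod(Z/H)$ with the DG category of strongly $H$-equivariant D-modules on $Z$. The forgetful functor $\oblv\colon\Dmod(Z/H)\to\Dmod(Z)$ is conservative, and its partial left adjoint $\on{Av}_!$ exists and preserves compactness precisely because $H$ is affine; applying $\on{Av}_!$ to a compact generating set of $\Dmod(Z)$ (e.g.\ the collection $j_!\CO_U$ for affine opens $U\subset Z$) yields a compact generating set of $\Dmod(Z/H)$. This is essentially the ``nearly obvious'' case alluded to in the introduction.

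For a general QCA stack $\CY$, I would first produce a finite stratification $\CY=\bigsqcup_\alpha\CY_\alpha$ by locally closed substacks, each isomorphic to a quotient of a scheme by an affine algebraic group (possibly after passing through gerbe data). The construction uses generic triviality of the inertia stack combined with Noetherian induction; the QCA hypothesis ensures that the structure groups encountered remain affine throughout. Compact generation is then propagated inductively across each open-closed decomposition $\CZ\overset{i}{\hookrightarrow}\CY\overset{j}{\hookleftarrow}U$: one shows that the union of $i_{\dr,*}(\Dmod(\CZ)^c)$ and $j_!(\Dmod(U)^c)$ compactly generates $\Dmod(\CY)$, with compactness of the $i_{\dr,*}$-images coming from properness of $i$ (which forces $i^!$ to be continuous), and generation following from joint conservativity of the pair $(j^*,i^!)$ coming from the recollement.

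The principal obstacle is the gluing step when $j$ is a non-affine open embedding: the functor $j_!$ is not defined on all of $\Dmod(U)$, so one must construct it on compact objects — using Verdier duality on stacks and the identification of compacts with coherent D-modules, both of which rely on the QCA hypothesis in essential ways — and verify that the output is compact in $\Dmod(\CY)$. It is precisely the analogous failure for non-quasi-compact $\CY$ that motivates the notion of co-truncativeness developed in the rest of the paper: the truncatability of $\Bun_G$ is exactly what allows this gluing to be carried out in the absence of quasi-compactness.
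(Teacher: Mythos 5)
First, a point of bookkeeping: this paper does not prove \thmref{t:compact generation qc} at all --- it imports it from \cite[Theorem 8.1.1]{finiteness} --- so there is no internal proof to measure yours against. Taking your sketch on its own terms: the base case $Z/H$ is the standard ``nearly obvious'' one, and reducing a general QCA stack to global quotients by a finite stratification (Kresch's theorem in characteristic $0$) is a legitimate input. The genuine gap is in the gluing step, and the repair you propose for it rests on two premises that this very paper refutes.

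To use $i_{\dr,*}(\Dmod(\CZ)^c)\cup j_!(\Dmod(U)^c)$ as a set of compact generators you need $j_!$ to be defined on all of $\Dmod(U)^c$. By \propref{p:truncativeness} this is \emph{equivalent} to $U$ being co-truncative in $\CY$, i.e., to $j_*$ preserving compactness; and \secref{sss:non-truncative sch} shows that this fails for every open embedding of schemes that is not also closed. Your two tools do not produce $j_!$: ``compact $=$ coherent'' is false for QCA stacks unless the reductive quotients of all automorphism groups are finite (\thmref{t:ccoh}(ii)), and applying Verdier duality merely converts the problem into showing that $j_*$ preserves coherence, which is again exactly co-truncativeness (\propref{p:trunc and coh}(b)). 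Indeed, if your construction worked, every open substack of a QCA stack would be co-truncative and the body of this paper would be vacuous. (A minor further slip: $i^!$ is continuous for any morphism by the very definition of $\Dmod$ as a limit; what properness of $i$ buys is the adjunction $(i_{\dr,*},i^!)$, whence preservation of compactness by $i_{\dr,*}$.) What the d\'evissage genuinely requires is weaker --- a set of compact objects of $\Dmod(\CY)$ whose $*$-restrictions generate $\Dmod(U)$ --- but producing such objects without already knowing compact generation of $\Dmod(\CY)$ is the Thomason--Trobaugh--Neeman extension problem (solved for schemes by Koszul-complex tricks), and you supply no argument for it. The actual proof in \cite{finiteness} is organized differently: one first shows that $\Dmod(\CY)$ is generated by the (generally non-compact) objects induced from $\Coh(\CY)$, and then approximates each coherent object by compact (``safe coherent'') ones up to an arbitrarily connective error --- the statement quoted here as \lemref{l:approx} --- which is where the stratification and the affineness of the automorphism groups actually enter.
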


\begin{rem}
In fact, \cite[Theorem 8.1.1]{finiteness} produces an explicit set of compact generators of
$\Dmod(\CY)$. These are objects  induced from coherent  sheaves on $\CY$. 
\end{rem}

\begin{rem}
Before \cite{finiteness}, the above result was known for algebraic stacks that can be represented as $Z/G$, 
where $Z$ is a quasi-compact scheme and $G$ is an affine algebraic group acting on $S$. 
Most quasi-compact Artin stacks that appear in practice (e.g., all quasi-compact open substacks of $\Bun_G$) 
admit such a representation. More generally, it was known for algebraic stacks that are perfect in the
sense of \cite{BFN}.
\end{rem}

\sssec{Cartesian products}    \label{sss:times and otimes}

The following result is established in \cite[Corollary 8.3.4]{finiteness}.

\begin{prop}  \label{p:D on prod}
Let $\CY$ and $\CY'$ be QCA stacks. Then the natural functor
$$\on{D-mod}(\CY)\otimes \on{D-mod}(\CY')\to \on{D-mod}(\CY\times \CY')$$
is an equivalence.
\end{prop}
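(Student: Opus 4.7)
The plan is to reduce \propref{p:D on prod} to the case of quasi-compact schemes and then bootstrap to stacks via smooth descent. At every stage both sides will be compactly generated (by \thmref{t:compact generation qc}), so it suffices to analyze the functor on compact objects or, dually, to verify that each side is a colimit/limit of the same simpler pieces.

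For quasi-compact schemes $S,S'$, the equivalence $\Dmod(S)\otimes \Dmod(S')\simeq \Dmod(S\times S')$ is a Künneth-type statement that I would prove by comparing compact generators. By \corref{c:ind of comp} each side is the ind-completion of its subcategory of compact objects. The external product $\boxtimes$ sends tensor products of compact generators (e.g.\ $\omega_S\boxtimes \omega_{S'}$ in the affine case) to compact generators on the right, and full faithfulness on compact objects reduces to a classical Künneth computation of Homs for D-modules on finite type schemes.

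For general QCA stacks $\CY,\CY'$, choose smooth surjections $p\colon Z\to \CY$ and $p'\colon Z'\to \CY'$ with $Z,Z'$ quasi-compact schemes. The \v{C}ech nerve $Z^\bullet$ consists of quasi-compact algebraic spaces (using the QCA hypothesis on the diagonal), and smooth descent identifies $\Dmod(\CY)$ with the limit in $\StinftyCat_{\on{cont}}$ of $\{\Dmod(Z^n)\}$ along the $!$-pullbacks. The crucial point is that each such $!$-pullback, being pullback along a smooth morphism between schemes of finite type, admits a continuous left adjoint; hence \propref{p:limit and colimit a} rewrites this limit as a colimit of the same diagram in $\StinftyCat_{\on{cont}}$. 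Since $\otimes$ commutes with colimits in each variable,
$$\Dmod(\CY)\otimes \Dmod(\CY')\;\simeq\; \underset{n,m}{\underset{\longrightarrow}{\on{colim}}}\;\Dmod(Z^n)\otimes \Dmod((Z')^m)\;\simeq\;\underset{n,m}{\underset{\longrightarrow}{\on{colim}}}\;\Dmod(Z^n\times (Z')^m),$$
the second step by the scheme case. But $Z\times Z'\to \CY\times \CY'$ is itself smooth surjective with \v{C}ech nerve $\{Z^n\times (Z')^m\}$, so applying the colimit presentation in reverse recognizes the right-hand side as $\Dmod(\CY\times \CY')$.

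The main obstacle is justifying the $\lim \leftrightarrow \on{colim}$ interchange — i.e.\ verifying continuous adjointability in the smooth descent diagram — which is what permits $\otimes$ to pass through descent. This is where the QCA hypothesis is genuinely used (via the continuity results of \cite{finiteness} for direct image and its relatives on QCA stacks); once it is granted, the remainder of the argument is a formal manipulation of colimits together with the scheme-level Künneth formula. For stacks outside the QCA world this adjointability can fail, and with it the whole argument.
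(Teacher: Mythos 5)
There is a genuine gap, and it sits exactly at the step you yourself flag as crucial. You claim that for the smooth \v{C}ech maps $f$ in the descent diagram the functor $f^!$ "admits a continuous left adjoint", and you use this to convert the descent limit into a colimit via \propref{p:limit and colimit a}. This is false: for a smooth but non-proper morphism of finite-type schemes the left adjoint $f_!$ of $f^!$ is only partially defined (on holonomic objects), and it does not extend continuously to all of $\Dmod$. Equivalently, by \propref{p:Lurie-duality} such a left adjoint exists if and only if the dual functor $f_{\dr,*}$ preserves compactness, i.e.\ coherence --- and it does not (e.g.\ for $\on{pr}:Z\times \BG_a\to Z$, which occurs in the \v{C}ech nerve of $\on{pt}\to \on{pt}/\BG_a$, the pushforward of $\CalD$ is not coherent). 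This failure is not a technicality: it is precisely the observation of \secref{sss:non-truncative sch} that co-truncativeness essentially never happens for schemes, which is the raison d'\^etre of the whole paper. The QCA hypothesis on $\CY$ does nothing to repair adjointability for maps between the terms of the \v{C}ech nerve, so the $\lim\leftrightarrow\on{colim}$ interchange you rely on (twice --- once for each factor and once to reassemble $\Dmod(\CY\times\CY')$) is unavailable.

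The statement is in any case not proved in this paper but quoted from \cite[Corollary 8.3.4]{finiteness}, and \remref{r:D on prod} points at the actual mechanism: what one needs is \emph{dualizability}, not adjointability of descent maps. The correct formal argument keeps everything as a limit: write $\Dmod(\CY\times\CY')$ as the limit of $\Dmod(S\times S')$ over affine (or quasi-compact) schemes mapping to the two factors, apply your (correct) scheme-level K\"unneth termwise, and then commute $\otimes$ past the limits using that $\bC\otimes(-)\simeq \on{Funct}_{\on{cont}}(\bC^\vee,-)$ preserves limits whenever $\bC$ is dualizable --- first with $\bC=\Dmod(S)$ (compactly generated, hence dualizable) to extract $\Dmod(\CY')=\underset{\longleftarrow}{lim}\,\Dmod(S')$, and then with $\bC=\Dmod(\CY')$, which is dualizable by \thmref{t:compact generation qc}. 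This is where QCA genuinely enters, and it explains why dualizability of just one of the two factors suffices, as recorded in \remref{r:D on prod}. Your first paragraph (the K\"unneth formula for quasi-compact schemes via matching compact generators) is fine and is indeed an input to this argument.
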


\begin{rem} \label{r:D on prod}
In fact, as is remarked in the proof of \cite[Corollary 8.3.4]{finiteness}, the assertion of \propref{p:D on prod}
is valid for any pair of prestacks $\CY$ and $\CY'$ as long as either $\Dmod(\CY)$ or $\Dmod(\CY')$ is
dualizable (see Sect.~\ref{sss:dualizable category}). 
\end{rem}

\sssec{Compactness and coherence}    \label{sss:compcoh}

Let $Z$ be a quasi-compact scheme. An object of $\Dmod(Z)$ is said to be \emph{coherent} if it is
a bounded complex whose cohomology sheaves are coherent D-modules. 

\medskip

It is known that the (non cocomplete) subcategory $\Dmod_{\on{coh}}(Z)$ that consists of coherent objects
coincides with $\Dmod(Z)^c$ (see \cite[Sect. 5.1.17]{finiteness}). 
Recall from \secref{sss:DG compact} that for a DG category $\bC$ we denote by $\bC^c$ the full subcategory of compact objects.

\medskip

For an algebraic stack $\CY$, an object $\CF\in \Dmod(\CY)$ is said to be \emph{coherent} if
$f^!(\CF)$ (or equivalently, $f^*_\dr(\CF)$) is coherent for any smooth map $f:Z\to \CY$, where $Z$ is a 
quasi-compact scheme. So by definition, the property of coherence is local for the smooth topology.
The full (but non-cocomplete) subcategory of coherent objects of $\Dmod(\CY )$ is denoted by 
$\Dmod_{\on{coh}}(\CY)$.

\medskip 

\begin{thm}     \label{t:ccoh} 
Let $\CY$ be a QCA stack. 
\begin{enumerate}

\item[(i)] We have the inclusion $\Dmod(\CY)^c\subset \Dmod_{\on{coh}}(\CY)$.

\smallskip

\item[(ii)] The above inclusion is an equality if and only if for every geometric point $y$ of $\CY$, the 
quotient of the automorphism group $\on{Aut} (y)$ by its unipotent radical is finite.
\end{enumerate}
\end{thm}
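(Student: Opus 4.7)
\medskip

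\noindent\textbf{Proof proposal.} I would split into parts (i) and (ii); part (i) is quick given the machinery already assembled, while part (ii) (especially the ``if" direction) is where the real work lies.

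\medskip

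For part \emph{(i)}, the plan is to leverage \thmref{t:compact generation qc} together with the remark that its compact generators can be taken to be D-modules induced from coherent $\CO$-modules on $\CY$; such induced objects are themselves coherent. By \corref{c:Karoubi}, every object of $\Dmod(\CY)^c$ is then obtained from these coherent generators by a finite iteration of cones, shifts, and passage to direct summands. Since $\Dmod_{\on{coh}}(\CY)$ is stable under all three operations---this can be checked smooth-locally on a quasi-compact scheme $Z$, where $\Dmod_{\on{coh}}(Z)=\Dmod(Z)^c$ by Sect.~\ref{sss:compcoh}---the inclusion $\Dmod(\CY)^c\subset \Dmod_{\on{coh}}(\CY)$ follows formally.

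\medskip

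For the \emph{``only if"} direction of (ii), I would produce an explicit non-compact coherent object whenever the hypothesis fails. If some geometric point $y$ has $\on{Aut}(y)/R_u(\on{Aut}(y))$ not finite, then (after a finite cover) this reductive quotient contains a copy of $\bG_m$, and the archetypal calculation takes place on $B\bG_m$: the structure sheaf $\omega_{B\bG_m}$ is coherent, but its derived endomorphism object is infinite-dimensional, and by testing $\on{Hom}(\omega_{B\bG_m},-)$ against a suitable infinite direct sum of shifts one shows that this functor does not commute with direct sums. Transporting this example to $\CY$ via pushforward along the (locally closed) embedding of the residual gerbe at $y$---which preserves coherence---yields the desired coherent non-compact object of $\Dmod(\CY)$.

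\medskip

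The \emph{``if"} direction of (ii) is the main obstacle. Assume every $\on{Aut}(y)$ is unipotent-by-finite. The strategy is:
\begin{enumerate}
\item[(a)] Stratify $\CY$ by locally closed substacks so that each stratum $\CY_\alpha$ is a gerbe with unipotent-by-finite band over a quasi-compact scheme (such a stratification exists by Luna-slice-type considerations since $\CY$ is QCA).
\item[(b)] For each model stack $BH$ with $H$ unipotent-by-finite, show by direct computation that $\omega_{BH}$ is compact in $\Dmod(BH)$: the key input is that in characteristic zero the relevant equivariant cohomology algebras are finite-dimensional (for unipotent $H$ the category is particularly tractable, and the finite quotient contributes no new obstruction since we are over a field of characteristic zero). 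This upgrades to compact $=$ coherent on each gerbe, and then on each stratum $\CY_\alpha$ by tensoring over the base scheme using \propref{p:D on prod}.
\item[(c)] Assemble the strata via a recollement induction on $\dim \CY$: if $i:\CZ\hookrightarrow \CY$ is a closed stratum with complement $j:U\hookrightarrow \CY$, and if the result holds on $\CZ$ and $U$, use that under the reductive-quotient-finite hypothesis the functor $i_{\dr,*}$ preserves compactness (being left adjoint to $i^!$, whose continuity one checks from the stratum analysis) to glue coherent-hence-compact objects across the exact triangle $i_{\dr,*}i^!\to \id \to j_*j^*$.
\end{enumerate}
The main delicate point will be verifying in step (c) that $i^!$ remains continuous in this recursive setup; this is exactly the content of the reductive-quotient-finite hypothesis applied stratum-by-stratum, and I expect it to require a careful coherence argument across the stratification.
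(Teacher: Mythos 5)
First, note that the paper does not actually prove this theorem: it is quoted verbatim from \cite{finiteness} (the line after the statement reads ``This theorem is proved in [Lemma 7.3.3 and Corollary 10.2.7]''), so there is no internal proof to compare against. Your part (i) is correct; an even shorter argument appears in Remark~\ref{r:c coh non-qc}: coherence is tested by smooth maps $f:Z\to\CY$ from quasi-compact schemes, $f^*_\dr$ preserves compactness (it has the continuous right adjoint $f_{\dr,*}$), and on $Z$ one has $\Dmod(Z)^c=\Dmod_{\on{coh}}(Z)$. Your ``only if'' direction of (ii) is morally right ($B\BG_m$ is the correct archetype), but the transport step has a hole: a locally closed embedding factors as closed followed by open, and the $*$-pushforward along the open part does \emph{not} preserve coherence in general. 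The fix is to argue the other way (restrict the hypothesis to an open substack in which the residual gerbe is closed, using that coherent objects extend coherently from opens and that $j^*$ and the closed $i_{\dr,*}$ preserve and reflect compactness).

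The genuine gap is step (c) of the ``if'' direction. Your recollement induction needs, from the triangle $i_{\dr,*}i^!\CF\to\CF\to j_*j^*\CF$, that $j_*$ preserve compactness and that $i^!$ send coherent objects to coherent (or compact) ones. But these are precisely the properties the present paper names \emph{co-truncativeness} of $U$ and \emph{truncativeness} of $\CZ$ (Proposition~\ref{p:truncativeness}, Proposition~\ref{p:trunc and coh}), and they are rare: as observed in Sect.~\ref{sss:non-truncative sch}, they essentially \emph{never} hold for schemes. Since a quasi-compact scheme has trivial automorphism groups (so satisfies the hypothesis of (ii)) and does satisfy compact $=$ coherent, while your gluing mechanism fails for it, the recollement cannot be what makes the theorem true; the induction does not close. (A side confusion: you propose to ``check that $i^!$ remains continuous'' --- $i^!$ is always continuous; the issue is whether it preserves compactness, which is a strictly stronger, and here unavailable, property.) The actual argument of \cite{finiteness} avoids any stratification-and-gluing: one introduces the pointwise notion of \emph{safety}, proves that compact $=$ coherent $+$ safe (their Proposition 9.2.3), and then shows that on a stack all of whose automorphism groups are unipotent-by-finite every object is safe, the verification reducing to classifying stacks $BH$ as in your step (b). So your local computation is the right ingredient, but it must be fed into a pointwise criterion rather than into a recollement.
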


This theorem is proved in \cite[Lemma 7.3.3 and Corollary 10.2.7]{finiteness}.

\begin{rem}
One may wonder how far coherence is from compactness. The answer is provided by the notion of
\emph{safety}, introduced in \cite[Sect. 9.2]{finiteness}. In \cite[Proposition 9.2.3]{finiteness} it is shown that 
an object of $\Dmod_{\on{coh}}(\CY)$ is compact if and only if it is safe. 
\end{rem}

\begin{rem}  \label{r:c coh non-qc}
Note that the notion of coherence of D-modules makes sense for \emph{any} algebraic stack $\CY$, 
i.e., it does not have to be quasi-compact: we test it by smooth maps $Z\to \CY$, where $Z$ is
a quasi-compact scheme. The inclusion of point (i) of \thmref{t:ccoh} remains valid in this context.
The proof is very easy: for a map $f:Z\to \CY$, the functor $f^*_\dr$
sends compacts to compacts because it admits a continuous right adjoint, namely $f_{\dr,*}$.
\end{rem}
 
\sssec{Verdier duality}  \label{sss:Verdier}
Let $\CY$ be a QCA stack. Accoding to \cite[Sect. 7.3.4]{finiteness}, 
the (non-cocomplete) DG category $\Dmod_{\on{coh}}(\CY)$ carries a natural anti-involution
$$\BD_{\CY}^{\on{Verdier}}:(\Dmod_{\on{coh}}(\CY))^{\on{op}}\to \Dmod_{\on{coh}}(\CY),$$
which we refer to as \emph{Verdier duality}.

\medskip

The following key feature of this functor is established in \cite[Corollary 8.4.2]{finiteness}:

\begin{thm} \label{t:Verdier QCA}
The functor $\BD_{\CY}^{\on{Verdier}}$ sends the subcategory $$(\Dmod(\CY)^c)^{\on{op}}\subset (\Dmod_{\on{coh}}(\CY))^{\on{op}}$$
to $\Dmod(\CY)^c\subset \Dmod_{\on{coh}}(\CY)$.
\end{thm}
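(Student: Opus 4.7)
The plan is to reduce the theorem to checking Verdier duality on an explicit generating set of compact objects, and then use the Karoubi-closure machinery already developed in the excerpt to extend the statement to all of $\Dmod(\CY)^c$.

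The reduction step is clean. By construction, $\BD_\CY^{\on{Verdier}}$ is an anti-equivalence of (non-cocomplete) DG categories on $\Dmod_{\on{coh}}(\CY)$. Consequently, its action on the homotopy category sends cones to cones (with reversed orientation) and direct summands to direct summands. Suppose we can exhibit a family $\bS\subset\Dmod(\CY)^c$ which generates $\Dmod(\CY)$ in the sense of \secref{sss:compact_gen_tion} and such that $\BD_\CY^{\on{Verdier}}(\bS)\subset\Dmod(\CY)^c$. Then by \corref{c:Karoubi}, the set $\bS$ Karoubi-generates $\Dmod(\CY)^c$, so every compact object is obtained from $\bS$ by finitely many operations of taking cones and passing to direct summands. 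Applying $\BD_\CY^{\on{Verdier}}$ and using that both operations are preserved, we conclude that $\BD_\CY^{\on{Verdier}}(\CF)$ is a finite iteration of such operations starting from objects of $\BD_\CY^{\on{Verdier}}(\bS)\subset\Dmod(\CY)^c$, hence is itself compact. Note that this step uses crucially that coherence (and \emph{a fortiori} compactness by \thmref{t:ccoh}(i)) is the only obstruction to $\BD_\CY^{\on{Verdier}}$ being defined on $\CF$.

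For the existence of such an $\bS$, I would use the construction of compact generators implicit in \thmref{t:compact generation qc}. The natural candidates are objects of the form $f_{\dr,*}(\CG)$, where $f\colon Z\to\CY$ is a smooth representable map from a quasi-compact scheme $Z$, and $\CG\in\Dmod_{\on{coh}}(Z)=\Dmod(Z)^c$. To make $f_{\dr,*}(\CG)$ compact on $\CY$, one needs the functor $f_{\dr,*}$ to preserve compactness, which holds when $f$ is proper, and more generally when $f$ is ``safe" in the sense of \cite[Sect.~9.2]{finiteness}. Given enough such maps to cover $\CY$, the resulting family $\bS$ generates $\Dmod(\CY)$. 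For each such generator, one computes $\BD_\CY^{\on{Verdier}}$ using the base-change compatibility $\BD_\CY^{\on{Verdier}}\circ f_{\dr,*}\simeq f_{\dr,*}\circ \BD_Z^{\on{Verdier}}$ valid when $f$ is proper. Since $\BD_Z^{\on{Verdier}}$ preserves $\Dmod_{\on{coh}}(Z)=\Dmod(Z)^c$ for schemes (where compactness and coherence coincide), the dual $\BD_\CY^{\on{Verdier}}(f_{\dr,*}(\CG))$ is again of the same form, hence compact on $\CY$.

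The main technical obstacle lies in the construction and manipulation of $\bS$. A smooth atlas $Z\to \CY$ is essentially never proper, so one must either pass to a relative compactification of $f$, or enlarge the class of generators to include non-proper but ``safe" maps and separately verify that Verdier duality interacts correctly with them. The shifts produced by dualizing complexes of $Z$ and $\CY$ must be tracked, though this is routine once the formalism is set up. Conceptually, the heart of the matter is that Verdier duality and the compactness property (as opposed to mere coherence) only interact well in the ``safe" regime; it is precisely the QCA hypothesis on $\CY$ (automorphism groups affine) that makes enough safe maps available, and this is the hypothesis without which the statement would fail.
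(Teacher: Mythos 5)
Your reduction step is sound: $\BD_\CY^{\on{Verdier}}$ is an exact anti-equivalence of $\Dmod_{\on{coh}}(\CY)$, hence preserves cones and direct summands, so by \corref{c:Karoubi} it suffices to produce a set $\bS$ of compact generators with $\BD_\CY^{\on{Verdier}}(\bS)\subset\Dmod(\CY)^c$. But that is where all the content of the theorem sits, and your construction of $\bS$ does not go through as written. (i) \emph{Generation:} for a non-proper $f:Z\to\CY$, the functor $f_{\dr,*}$ is a \emph{right} adjoint (of $f^*_\dr$), so vanishing of $\Hom(f_{\dr,*}(\CG),\CF)$ for all $\CG$ gives no information by adjunction; the argument that the essential image of a pushforward generates (as in the proof of \lemref{l:proper_descent}) requires $f$ proper, and for a genuinely stacky $\CY$ (already for $\CY=\on{pt}/\BG_m$) there is \emph{no} proper schematic surjection from a scheme, so ``pass to a relative compactification'' only replaces $Z$ by another stack and the problem recurses. (ii) \emph{Duality on the generators:} the compatibility $\BD^{\on{Verdier}}_\CY\circ f_{\dr,*}\simeq f_{\dr,*}\circ\BD^{\on{Verdier}}_Z$ fails for non-proper $f$; the correct statement is $\BD^{\on{Verdier}}_\CY(f_!(\CG))\simeq f_{\dr,*}(\BD^{\on{Verdier}}_Z(\CG))$, which forces $\CG$ to be holonomic (so that $f_!$ is defined) and then requires the separate, nontrivial fact that holonomic objects generate $\Dmod(Z)$. (iii) \emph{Compactness of the duals:} you conflate ``proper'' with ``safe''; safety of $f$ does not make $f_{\dr,*}$ preserve compactness. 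The actual mechanism, used in Proof 2 of \lemref{holonomic} and in the proof of \propref{p:trunc and coh}, is that $f_{\dr,*}$ always preserves safety and that compact $=$ coherent $+$ safe (\cite[Proposition 9.2.3]{finiteness}); this criterion never enters your argument, yet it is precisely where the QCA hypothesis is used.

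For comparison: the paper does not prove this statement but quotes it from \cite[Corollary 8.4.2]{finiteness}, where it is obtained not from a generating set but from the construction of the self-duality datum for $\Dmod(\CY)$ (the kernel $(\Delta_\CY)_{\dr,*}(\omega_\CY)$ together with the renormalized pushforward to the point); continuity of $\CMaps(\BD^{\on{Verdier}}_\CY(\CF),-)$ for compact $\CF$ is verified there directly via safety. A repaired version of your route --- generators $f_!(\CG)$ for $\CG$ compact holonomic on a smooth atlas, whose Verdier duals $f_{\dr,*}(\BD^{\on{Verdier}}_Z(\CG))$ are compact by coherence-plus-safety --- is plausible, but the generation statement for holonomic objects would still have to be supplied.
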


\sssec{}     \label{sss:2Verdier}

By \secref{sss:dual of c g}, we obtain that the resulting functor
$$\BD_{\CY}^{\on{Verdier}}:(\Dmod(\CY)^c)^{\on{op}}\to \Dmod(\CY)^c$$
uniquely extends to an equivalence
\begin{equation} \label{e:Verdier}
\bD^{\on{Verdier}}_\CY:\Dmod(\CY)^\vee\simeq \Dmod(\CY).
\end{equation}

\medskip

Alternatively, we can view the Verdier duality functor as follows. By \secref{sss:dualizable category},
the DG category $\on{Funct}_{\on{cont}}(\Dmod(\CY),\Dmod(\CY))$
identifies tautologically with 
$$\Dmod(\CY)^\vee\otimes \Dmod(\CY).$$

The equivalence \eqref{e:Verdier} is characterized by the property that the identity functor on $\Dmod(\CY)$ corresponds to the
object of $\Dmod(\CY)\otimes \Dmod(\CY)$ that identifies via \propref{p:D on prod} with
$$(\Delta_\CY)_{\dr,*}(\omega_\CY)\in \Dmod(\CY\times\CY ).$$
Here $\omega_\CY\in \Dmod(\CY)$ is the dualizing object and $\Delta_\CY:\CY\to\CY\times\CY$ is the diagonal.

\medskip

Let $\CY_1,\CY_2$ be QCA stacks. If $\sF:\Dmod (\CY_1 )\to  \Dmod (\CY_2 )$ is a continuous functor then the dual functor 
$\sF^\vee :\Dmod (\CY_2 )^\vee\to\Dmod (\CY_1 )^\vee$ (see Sect.~\ref{sss:dual functor})
will be considered, via \eqref{e:Verdier}, as a functor $\Dmod (\CY_2 )\to\Dmod (\CY_1 )$.

\medskip

We will use the following fact \cite[Proposition 8.4.8]{finiteness}.

\medskip

\begin{prop}   \label{p:Lurie-duality}
For any schematic quasi-compact morphism  $f:\CY_1\longrightarrow\CY_2\;$, the functors
\[
f_{\dr,*}: \Dmod (\CY_1 )\to  \Dmod (\CY_2 ), \quad\quad f^!: \Dmod (\CY_2 )\to  \Dmod (\CY_1)
\]
are dual to each other in the sense of Sect.~\ref{sss:dual functor}.
\end{prop}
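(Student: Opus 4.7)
The plan is to establish the duality via the pairing formulation. Under the Verdier self-duality of \eqref{e:Verdier}, saying that $(f_{\dr,*})^\vee\simeq f^!$ amounts to producing a functorial equivalence of pairings
\[
\langle f_{\dr,*}(\CF_1),\,\CF_2\rangle_{\CY_2}\simeq \langle \CF_1,\,f^!(\CF_2)\rangle_{\CY_1},\qquad \CF_i\in \Dmod(\CY_i),
\]
where $\langle -,-\rangle_\CY\colon\Dmod(\CY)\otimes\Dmod(\CY)\to \Vect$ denotes the evaluation pairing corresponding, under \eqref{e:Verdier}, to the canonical pairing $\Dmod(\CY)^\vee\otimes \Dmod(\CY)\to\Vect$.

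The first step is to make this pairing explicit. Because the coevaluation $\Vect\to \Dmod(\CY\times\CY)$ sends $k$ to $(\Delta_\CY)_{\dr,*}(\omega_\CY)$ (as recalled in \secref{sss:2Verdier}), the triangle identities for a duality datum force
\[
\langle \CF,\CG\rangle_\CY \simeq (p_\CY)_{\dr,*}\bigl(\CF\overset{!}\otimes\CG\bigr),\qquad p_\CY\colon\CY\to \on{pt}.
\]
The verification that this formula satisfies the triangle identities reduces, by base change along $\Delta_\CY$, to the fact that $(\Delta_\CY)_{\dr,*}$ is fully faithful (so that $\Delta_\CY^!\circ(\Delta_\CY)_{\dr,*}\simeq \id$) together with $\omega_\CY\overset{!}\otimes\CF\simeq \CF$.

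With the pairing identified, the required equivalence becomes
\[
(p_{\CY_2})_{\dr,*}\bigl(f_{\dr,*}(\CF_1)\overset{!}\otimes\CF_2\bigr)\simeq (p_{\CY_1})_{\dr,*}\bigl(\CF_1\overset{!}\otimes f^!(\CF_2)\bigr),
\]
which is a direct consequence of the projection formula
\[
f_{\dr,*}(\CF_1)\overset{!}\otimes \CF_2\simeq f_{\dr,*}\bigl(\CF_1\overset{!}\otimes f^!(\CF_2)\bigr)
\]
—available because $f$ is schematic and quasi-compact—combined with the identity $p_{\CY_2}\circ f = p_{\CY_1}$ and the functoriality $(p_{\CY_2})_{\dr,*}\circ f_{\dr,*}\simeq (p_{\CY_1})_{\dr,*}$.

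The conceptual content is thus essentially the projection formula; the main obstacle is coherence. One must set up the pairing identification and the projection formula as genuinely canonical equivalences in $\StinftyCat_{\on{cont}}$, bifunctorial in $(\CF_1,\CF_2)$, rather than merely up to isomorphism at the level of homotopy categories. A parallel implementation avoiding the explicit pairing formula is via kernels: one shows that $f_{\dr,*}$ is represented by the kernel $(\Gamma_f)_{\dr,*}(\omega_{\CY_1})\in \Dmod(\CY_1\times\CY_2)$, where $\Gamma_f=(\id,f)$, and $f^!$ by the kernel $(f,\id)_{\dr,*}(\omega_{\CY_1})\in \Dmod(\CY_2\times\CY_1)$; the two are exchanged by the swap $\CY_1\times\CY_2\simeq\CY_2\times \CY_1$, which is precisely the operation that corresponds to passing to the dual functor for a self-dual category.
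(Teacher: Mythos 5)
The paper itself does not prove this proposition --- it is imported from \cite[Proposition 8.4.8]{finiteness} --- so your attempt has to be judged against the argument in that reference. Your plan is the natural one, but it has a genuine gap at its very first step: the identification of the evaluation pairing. For a QCA stack $\CY$ the functor $(p_\CY)_{\dr,*}:\Dmod(\CY)\to\Vect$ is in general \emph{not continuous} (the basic example is $\CY=\on{pt}/\BG_a$; this failure is the reason the notions of safety and of the renormalized direct image $\pi_\blacktriangle$ are introduced in \cite{finiteness} and recalled in Appendix C of the present paper). Consequently $(\CF,\CG)\mapsto (p_\CY)_{\dr,*}(\CF\sotimes\CG)$ cannot be the counit of a duality datum in $\StinftyCat_{\on{cont}}$; the counit attached to the unit $(\Delta_\CY)_{\dr,*}(\omega_\CY)$ is $(p_\CY)_{\blacktriangle}\circ\Delta_\CY^!$, with the \emph{renormalized} pushforward. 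Once this is corrected, your projection-formula computation no longer closes: you would need $(p_{\CY_2})_{\blacktriangle}\circ f_{\dr,*}\simeq (p_{\CY_1})_{\blacktriangle}$, and since $(p_{\CY_2})_{\blacktriangle}\circ f_{\blacktriangle}\simeq (p_{\CY_1})_{\blacktriangle}$ holds by construction (functoriality of duals of $!$-pullbacks), that identity is essentially equivalent to $f_{\dr,*}\simeq f_{\blacktriangle}$ --- which is the proposition you are trying to prove. The same issue affects the kernel variant in your last paragraph: the functor attached to a kernel on $\CY_1\times\CY_2$ involves a pushforward along the non-representable projection to $\CY_2$, which again must be the renormalized one.

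The substantive input you are missing is therefore the comparison $f_{\blacktriangle}\simeq f_{\dr,*}$ for $f$ schematic and quasi-compact. In \cite{finiteness} this is obtained by first proving that $f_{\dr,*}$ is continuous when the fibers of $f$ are algebraic spaces, and then identifying the two continuous functors (for instance on compact objects, where the naive and renormalized pushforwards agree); see the discussion of $\pi_{\blacktriangle}$ versus $\pi_{\dr,*}$ in Appendix C of this paper. Your closing remark that ``the main obstacle is coherence'' misdiagnoses the difficulty: the homotopy-coherence of the projection formula is routine here, whereas the continuity of the functors entering the duality datum is exactly where the work lies.
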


\ssec{Non quasi-compact algebraic stacks}
Let $\CY$ be now a stack, which is only assumed to be \emph{locally} QCA.
Then every quasi-compact open substack $U\subset \CY$ is QCA,
so the category $\Dmod(U)$ is compactly generated by \thmref{t:compact generation qc}. 
However, it is not true, in general, that the category $\Dmod(\CY)$ is compactly generated.
For a counterexample, see \secref{s:counterexamples}.

\medskip

In this subsection we give a description of the subcategory of compact objects
$$\Dmod(\CY)^c\subset\Dmod(\CY),$$ see \propref{p:tautological compact} below.

\sssec{The category $\Dmod(\CY)$ as a limit}
The following statement immediately follows from the definition of $\Dmod(\CY)$, see 
\secref{sss:Dmod on prestack}.

\begin{lem}  \label{l:Dmod on nonqc via open}
The restriction functor
$$\Dmod(\CY)\to \underset{U\subset \CY}{\underset{\longleftarrow}{lim}}\, \Dmod(U),$$
is an equivalence, where the limit is taken over the poset of open quasi-compact substacks
of $\CY$.
\end{lem}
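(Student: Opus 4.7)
The plan is to reduce the statement to Fubini for limits, using that $\affSch_{/\CY}$ is the filtered union of the subcategories $\affSch_{/U}$.

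First I would observe that since every affine scheme $S$ is quasi-compact, the image of any morphism $S\to\CY$ is contained in some quasi-compact open substack $U\subset\CY$ (using that $\CY$ is locally of finite type and locally QCA, so it admits a cover by quasi-compact open substacks and the image of $S$ hits only finitely many of them). Consequently, for the directed poset $\mathcal P$ of quasi-compact open substacks $U\subset \CY$ (directed because the union of two such substacks is again quasi-compact open), the natural functors $\affSch_{/U}\hookrightarrow\affSch_{/\CY}$ are fully faithful and
\[
\affSch_{/\CY}\;=\;\bigcup_{U\in\mathcal P}\affSch_{/U}.
\]
Taking opposites, $(\affSch_{/\CY})^{\on{op}}$ is exhibited as the filtered colimit, in $\inftyCat$, of the full subcategories $(\affSch_{/U})^{\on{op}}$ along the fully faithful transition functors.

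Next I would invoke the standard fact that for any functor $F$ from a filtered colimit of categories $\mathcal C=\underset{i}{\underset{\longrightarrow}{colim}}\,\mathcal C_i$ into $\StinftyCat_{\on{cont}}$, one has
\[
\underset{\mathcal C}{\underset{\longleftarrow}{lim}}\,F\;\simeq\;\underset{i}{\underset{\longleftarrow}{lim}}\,\underset{\mathcal C_i}{\underset{\longleftarrow}{lim}}\,F|_{\mathcal C_i},
\]
which follows by mapping in an arbitrary $\bD\in\StinftyCat_{\on{cont}}$, converting the left-hand side to a limit of mapping spaces in $\inftygroup$ over $\mathcal C$, and using the universal property of the colimit $\mathcal C=\underset{i}{\underset{\longrightarrow}{colim}}\,\mathcal C_i$ to break it up as a limit over $i$ of limits over $\mathcal C_i$. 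Applying this to $F:(\affSch_{/\CY})^{\on{op}}\to\StinftyCat_{\on{cont}}$, $S\mapsto\Dmod(S)$, yields
\[
\Dmod(\CY)\;=\;\underset{S\in(\affSch_{/\CY})^{\on{op}}}{\underset{\longleftarrow}{lim}}\,\Dmod(S)\;\simeq\;\underset{U\in\mathcal P^{\on{op}}}{\underset{\longleftarrow}{lim}}\;\underset{S\in(\affSch_{/U})^{\on{op}}}{\underset{\longleftarrow}{lim}}\,\Dmod(S)\;=\;\underset{U\subset\CY}{\underset{\longleftarrow}{lim}}\,\Dmod(U),
\]
and by construction this equivalence is implemented by the restriction functors $\Dmod(\CY)\to\Dmod(U)$, as required.

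The only nontrivial ingredient is the first step: the fact that any affine $S\to\CY$ factors through a quasi-compact open substack. Everything after that is a formal consequence of the definition \eqref{e:Dmods as limit} and the commutation of limits with limits; no substantive property of D-modules enters. If one wished to be fully pedantic, one could instead verify that the inclusion $\bigsqcup_{U\in\mathcal P}(\affSch_{/U})\to\affSch_{/\CY}$ is cofinal (which in turn reduces to the same factorization statement), but the filtered-colimit presentation seems cleaner.
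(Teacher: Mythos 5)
Your argument is correct and is exactly the justification the paper has in mind: the paper simply asserts that the lemma ``immediately follows from the definition'' \eqref{e:Dmods as limit}, and your two ingredients (every affine $S\to\CY$ factors through a quasi-compact open $U$, so $(\affSch_{/\CY})^{\on{op}}$ is the filtered colimit of the $(\affSch_{/U})^{\on{op}}$, plus the decomposition of a limit over a colimit of index categories as an iterated limit) are the standard way to unwind that assertion.
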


In particular, we obtain that for $\CF_1,\CF_2\in \Dmod(\CY)$, the natural map
\begin{equation} \label{e:calc Hom via opens}
\CMaps_{\Dmod(\CY)}(\CF_1,\CF_2)\to 
\underset{U\subset \CY}{\underset{\longleftarrow}{lim}}\, \CMaps_{\Dmod(U)}(\CF_1|_U,\CF_2|_U)
\end{equation}
is an isomorphism.

\medskip

The following observation will be useful in the sequel:

\begin{cor}   \label{c:prod}
Suppose that a family of objects $\CF_{\alpha}\in \Dmod(\CY )$ is locally finite, i.e., for every quasi-compact open
$U\subset\CY$ the set of $\alpha$'s such that $\CF_{\alpha}|_U\neq 0$  is finite. Then the map
$$\underset{\alpha}\oplus\, \CF_\alpha\to \underset{\alpha}\, \Pi\, \CF_\alpha$$
is an isomorphism.
\end{cor}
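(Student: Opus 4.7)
The plan is to reduce the assertion to the analogous one on every quasi-compact open substack $U\subset\CY$, where it becomes a statement about a \emph{finite} biproduct. By \lemref{l:Dmod on nonqc via open}, $\Dmod(\CY)$ is the limit in $\StinftyCat_{\on{cont}}$ of the DG categories $\Dmod(U)$ over the poset of quasi-compact opens $U\subset\CY$, and from the Cartesian-section description (\secref{sss:cart sect}) a morphism in $\Dmod(\CY)$ is an isomorphism if and only if its restriction to every such $U$ is. So it suffices to show that the canonical map
\[
\underset{\alpha}\oplus\,\CF_\alpha|_U \longrightarrow \underset{\alpha}\Pi\,\CF_\alpha|_U
\]
in $\Dmod(U)$ is an isomorphism for each quasi-compact open $U$.

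Next, I would verify that the restriction functor $j_U^*:\Dmod(\CY)\to \Dmod(U)$ preserves both arbitrary direct sums and arbitrary direct products. Preservation of colimits is \lemref{l:dir limits in inv limit} (colimits in a limit of DG categories are computed termwise), and preservation of limits is automatic for any evaluation functor out of a limit of $(\infty,1)$-categories. Consequently, the restriction to $U$ of the canonical map $\oplus_\alpha \CF_\alpha\to \Pi_\alpha \CF_\alpha$ is identified with the analogous comparison map for the family $\{\CF_\alpha|_U\}$.

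By the local finiteness hypothesis, only finitely many $\CF_\alpha|_U$ are nonzero. Deleting the zero terms changes neither the direct sum nor the direct product, so we are reduced to the statement that in a stable DG category a \emph{finite} direct sum is canonically isomorphic to the corresponding \emph{finite} direct product, which is standard (finite coproducts and products coincide as biproducts). This completes the argument. The only step that required thought is the double preservation property of $j_U^*$; everything else is formal from the limit description of $\Dmod(\CY)$ and the biproduct property of stable categories.
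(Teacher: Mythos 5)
Your proof is correct and follows essentially the same route as the paper: the paper's one-line argument invokes the componentwise computation of mapping complexes over quasi-compact opens \eqref{e:calc Hom via opens} together with \lemref{l:dir limits in inv limit}, which is exactly your reduction to each $U$ where local finiteness turns the comparison into a finite biproduct. No gaps.
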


\begin{proof}
Follows immediately from \eqref{e:calc Hom via opens} and \lemref{l:dir limits in inv limit}.
\end{proof}

\sssec{The functors $j^*$ and $j_!$}    \label{sss:! functor}

Let $U\overset{j}\hookrightarrow \CY$ be an open substack. We have a pair of (continuous) adjoint functors 
$$j^*:\Dmod(\CY) \rightleftarrows \Dmod(U):j_*.$$

In particular, the functor $j^*$ sends $\Dmod(\CY)^c$ to $\Dmod(U)^c$. 

\medskip

Now, the functor $j^*$ has a \emph{partially defined} left adjoint, denoted $j_!$. It again follows 
automatically that if for $\CF_U\in \Dmod(U)^c$, the object $j_!(\CF_U)\in \Dmod(\CY)$ is defined,
then it is compact. 

\medskip

We claim:

\begin{lem} \label{l:*!} 
Let $\CF_U\in \Dmod(U)$ be such that $j_!(\CF_U)$ is defined.

\smallskip

\noindent{\em(a)}
The canonical map 
\begin{equation} \label{e:*!}
\CF_U\to j^*(j_!(\CF_U))
\end{equation}
is an isomorphism. 

\smallskip

\noindent{\em(b)} 
If $j':U'\hookrightarrow \CY$ is another open substack, then 
$$(j')^*(j_!(\CF_U))\simeq \wt{j}_!(\CF_U|_{U\cap U'}),$$
(where $\wt{j}:U\cap U'\hookrightarrow U'$). In particular, $\wt{j}_!(\CF_U|_{U\cap U'})$ is defined. 

\end{lem}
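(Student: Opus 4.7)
\textbf{Proof proposal for Lemma \ref{l:*!}.}

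The plan is to argue both parts by manipulating adjunctions and applying Yoneda; no geometric input beyond the formalism of open embeddings and their base change is needed.

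For part (a), I would check that the map $\CF_U\to j^*(j_!(\CF_U))$ is an isomorphism by showing that it induces an isomorphism on mapping spaces into every $\CG\in\Dmod(U)$. Using the adjunction $j^*\dashv j_*$ and the fact that $j$ is an open embedding (so $j^*\circ j_*\simeq\id$), together with the (partial) adjunction $j_!\dashv j^*$ applied to the object $j_!(\CF_U)$ whose image lies in the locus where $j_!$ is defined, one computes the chain
\[
\Maps_{\Dmod(U)}(j^*(j_!(\CF_U)),\CG)\simeq \Maps_{\Dmod(\CY)}(j_!(\CF_U),j_*(\CG))\simeq \Maps_{\Dmod(U)}(\CF_U,j^*j_*(\CG))\simeq\Maps_{\Dmod(U)}(\CF_U,\CG),
\]
and then checks that the composite agrees with the canonical (unit) map. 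The last step is routine: under the first adjunction the identity of $j_!(\CF_U)$ corresponds to the canonical unit, and under the displayed isomorphisms this is precisely the map \eqref{e:*!}.

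For part (b), I would analogously test $(j')^*(j_!(\CF_U))$ against arbitrary $\CG'\in\Dmod(U')$ by showing it corepresents the functor $\CG'\mapsto\Maps_{\Dmod(U\cap U')}(\CF_U|_{U\cap U'},\wt{j}^*(\CG'))$; by Yoneda this forces $(j')^*(j_!(\CF_U))$ to be the value $\wt{j}_!(\CF_U|_{U\cap U'})$ and, in particular, forces the latter to exist. Writing $\wt{j}':U\cap U'\hookrightarrow U$ for the companion open embedding in the Cartesian square, the computation is
\[
\Maps_{\Dmod(U')}((j')^*(j_!(\CF_U)),\CG')\simeq \Maps_{\Dmod(\CY)}(j_!(\CF_U),(j')_*(\CG'))\simeq \Maps_{\Dmod(U)}(\CF_U,j^*(j')_*(\CG')),
\]
and then, using base change for open embeddings, $j^*\circ (j')_*\simeq \wt{j}'_*\circ\wt{j}^*$, followed by the adjunction $(\wt{j}')^*\dashv \wt{j}'_*$, this becomes $\Maps_{\Dmod(U\cap U')}(\CF_U|_{U\cap U'},\wt{j}^*(\CG'))$, as desired.

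The only non-formal input is the base change isomorphism $j^*\circ (j')_*\simeq \wt{j}'_*\circ\wt{j}^*$ for the Cartesian square of open embeddings; this is standard for D-modules on stacks since the upper-$*$ and lower-$*$ agree on the nose with the ${}^!$ and $\dr,*$ functors under open immersions, for which proper base change degenerates to the obvious statement that restricting an extension-by-$*$ to a smaller open is the same as taking $*$-extension from the intersection. I expect this base change verification (and, more pedantically, the compatibility check identifying the Yoneda-induced maps with the canonical ones) to be the only places requiring care; everything else is a symbol-pushing exercise with adjunctions.
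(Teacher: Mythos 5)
Your argument is correct and is essentially the paper's own proof: the chain of adjunction isomorphisms you write down for (a) is exactly the unwinding of the observation that $j^*\circ j_!$ is the partially defined left adjoint of $j^*\circ j_*\simeq \on{Id}$, with full faithfulness of $j_*$ doing the work. For (b) the paper merely says ``follows similarly,'' and the open--open base change $j^*\circ (j')_*\simeq \wt{j}'_*\circ \wt{j}^*$ you invoke is the standard (and correct) ingredient needed to make that precise.
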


\begin{proof}

The functor $j^*\circ j_!$ is the partially defined left adjoint of $j^*\circ j_*$, and the natural transformation
$\on{Id}\to j^*\circ j_!$ is obtained by adjunction from the co-unit $j^*\circ j_*\to \on{Id}$. However, the latter
is an isomorphism since $j_*$ is fully faithful.

\medskip

Statement (b) follows similarly.
\end{proof}


\sssec{A description of the subcategory $\Dmod(\CY)^c\subset\Dmod(\CY)$}


\begin{prop}  \label{p:tautological compact}
An object $\CF\in \Dmod(\CY)$ is compact if and only if 
\begin{equation}  \label{e:F=j_!}
\CF=j_!(\CF_U)
\end{equation}
for some open quasi-compact $U\overset{j}\hookrightarrow\CY$ and some $\CF_U\in \Dmod(U)^c$.
\end{prop}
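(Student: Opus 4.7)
My plan is to prove the two implications separately.

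For the ``if'' direction, assume $\CF = j_!(\CF_U)$ with $\CF_U \in \Dmod(U)^c$. The partial adjunction $j_! \dashv j^*$, continuity of $j^*$, and compactness of $\CF_U$ combine into
$$\Maps_\CY(j_!\CF_U, \oplus_\alpha \CG_\alpha) \simeq \Maps_U(\CF_U, \oplus_\alpha j^*\CG_\alpha) \simeq \oplus_\alpha \Maps_\CY(j_!\CF_U, \CG_\alpha),$$
showing that $j_!(\CF_U)$ is compact.

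For the ``only if'' direction, let $\CF \in \Dmod(\CY)^c$. The crux is to show that $\supp(\CF)$ is contained in some quasi-compact open $U_0 \subset \CY$. I would argue by contradiction. Fix an exhaustion $\CY = \bigcup_n U_n$ by quasi-compact opens; if no $U_n$ contains $\supp(\CF)$, pick a closed point $x_n \in \supp(\CF) \setminus U_n$ for each $n$. Any quasi-compact $V \subset \CY$ satisfies $V \subset U_m$ for some $m$, so $\{x_n\}$---and consequently the family of point-delta D-modules $\Delta_{x_n} := (i_{x_n})_{\dr,*}(k)$---is locally finite in $\Dmod(\CY)$. By \corref{c:prod}, the canonical map $\bigoplus_n \Delta_{x_n} \to \prod_n \Delta_{x_n}$ is then an isomorphism. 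Computing $\Maps(\CF, \oplus_n \Delta_{x_n})$ in two ways---once extracting the direct sum via compactness of $\CF$, once passing through the product identification---yields $\bigoplus_n \Maps(\CF, \Delta_{x_n}) \simeq \prod_n \Maps(\CF, \Delta_{x_n})$. Each factor is nonzero, because $\Maps(\CF, \Delta_{x_n}) \simeq \Maps(i_{x_n}^*\CF, k)$ by adjunction and $x_n \in \supp(\CF)$ forces $i_{x_n}^*\CF \neq 0$; after passing to an infinite subfamily for which the mapping complexes are nonzero in a common fixed degree (possible because $\CF$ is coherent and hence of bounded amplitude), the direct sum cannot equal the product in $\Vect$, a contradiction.

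Given $\supp(\CF) \subset U_0 \overset{j}\hookrightarrow \CY$ with $U_0$ quasi-compact open, I would conclude as follows. On each quasi-compact open $V \subset \CY$, the restriction $\CF|_V$ has support in $V \cap U_0$, so the open-closed recollement on the QCA stack $V$ (for the open inclusion $\tilde{j}\colon V \cap U_0 \hookrightarrow V$ and its closed complement) applied to $\CF|_V$ gives $\tilde{j}_!(\CF|_{V \cap U_0}) \simeq \CF|_V$. By \lemref{l:*!}(b) this identifies with $j_!(\CF|_{U_0})|_V$, so the natural map $j_!(\CF|_{U_0}) \to \CF$ is an isomorphism on every quasi-compact open, hence globally by \lemref{l:Dmod on nonqc via open}; and $\CF|_{U_0}$ is compact because $j^*$ preserves compacts. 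The main obstacle is the support-containment step, where \corref{c:prod} is the indispensable tool that converts categorical compactness into the geometric statement of quasi-compact support.
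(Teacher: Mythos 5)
Your ``if'' direction is fine, and your overall mechanism for the ``only if'' direction (a locally finite family of test objects, \corref{c:prod} to identify $\oplus$ with $\prod$, and compactness of $\CF$ to force a contradiction) is exactly the engine of the paper's proof. But the reduction to the statement ``$\supp(\CF)$ is contained in a quasi-compact open'' is a genuine gap, and in fact that intermediate statement is false. The correct characterization of $\CF=j_!(\CF_U)$ is the orthogonality condition of \lemref{l:tautological}(2): $\Maps(\CF,\CF_1)=0$ for \emph{every} $\CF_1$ in the essential image of $i_{\dr,*}$ from the closed complement. This is strictly stronger than any stalk-wise support condition. Concretely, the adjunction you invoke, $\Maps(\CF,(i_{x})_{\dr,*}k)\simeq\Maps(i_{x}^*\CF,k)$, requires $i_x^*$ to mean the (only partially defined) \emph{left adjoint} $i^*_{\dr}$ of $(i_x)_{\dr,*}$, and this functor can vanish at points of the support: already on $\AA^1$ with $\CF=j_!(\CO_{\GG_m})$ one has $0\in\supp(\CF)$ yet $\Maps(\CF,(i_0)_{\dr,*}k)=\Maps(\CO_{\GG_m},j^*(i_0)_{\dr,*}k)=0$. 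So ``$x_n\in\supp(\CF)$ forces $\Maps(\CF,\Delta_{x_n})\neq 0$'' fails, and your contradiction does not get off the ground.

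This is not a repairable technicality: compact objects of $\Dmod(\CY)$ genuinely need not have quasi-compact support. If $U$ is a co-truncative quasi-compact open whose closure $\overline{U}$ is not quasi-compact (this happens on $\Bun_G$ --- see \propref{p:idnaive} and the surrounding discussion), then $j_!(\CF_U)$ is compact for any $\CF_U\in\Dmod(U)^c$, but its support is only contained in $\overline{U}$; Verdier duality identifies $\supp(j_!(\CF_U))$ with $\supp$ of the corresponding $*$-extension, which the proof of \propref{p:idnaive} shows can fail to lie in any quasi-compact open. (Your concluding Kashiwara-type step is also asymmetric with the first half: to make the deltas detect nonvanishing you must read ``support'' via the left-adjoint $*$-stalks, but to run recollement and conclude $\CF|_V\simeq\wt{j}_!(\CF|_{V\cap U_0})$ you need the honest closed support and Kashiwara's lemma; with either consistent choice one of the two halves breaks.) The paper sidesteps all of this by \emph{not} using geometric test objects: negating condition (3) of \lemref{l:tautological} directly produces, for each $i$, an object $\CE_i\in\Dmod(U_{i+1})$ supported off $U_i$ with $\Hom(\CF|_{U_{i+1}},\CE_i)\neq 0$ by fiat, and these $\CE_i$ (pushed to $V=\bigcup_i U_i$) play the role you wanted the deltas to play. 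A further, minor, point: your argument also assumes a countable cofinal exhaustion of $\CY$ by quasi-compact opens; the paper avoids this by building the $U_i$ adaptively and working inside their union $V$ rather than in all of $\CY$.
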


Formula \eqref{e:F=j_!} should be understood as follows: the partially defined functor $j_!$ is defined on 
$\CF_U$, and the resulting object is isomorphic to $\CF$. 

\begin{rem}
By \lemref{l:*!}(a), 
the object $\CF_U$ can be recovered from $\CF$ as $\CF|_U:=j^*(\CF)$.
\end{rem}

\subsubsection{Proof of \propref{p:tautological compact}}
First, let us give two more reformulations 
of condition \eqref{e:F=j_!}:

\begin{lem}  \label{l:tautological}
For $\CF\in \Dmod(\CY )$ the following conditions are equivalent:

\smallskip

\noindent{\em(1)} $\CF=j_!(\CF_U)$ for some $\CF_U\in \Dmod(U)$.

\smallskip

\noindent{\em(2)} For any $\CF_1\in \Dmod(\CY)$, supported on 
$\CY-U$, we have $\Hom_{\Dmod(\CY)}(\CF,\CF_1)=0$. 

\smallskip

\noindent{\em(3)}
For any $U\overset{\wt{j}}\hookrightarrow U'\overset{j'}\hookrightarrow \CY$, where $U'$ is another open quasi-compact substack
of $\CY$, we have:
$$\CF|_{U'}\simeq \wt{j}_!(\CF_U),$$
in particular, the object $\wt{j}_!(\CF_U)$ is defined.
\end{lem}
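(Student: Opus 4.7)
The plan is to show the three implications $(1) \Rightarrow (2)$, $(2) \Rightarrow (1)$, and $(1) \Leftrightarrow (3)$. The guiding philosophy is that condition $(1)$ expresses that $\CF$ represents a partially-defined left adjoint, condition $(2)$ is the orthogonality characterization of the essential image of this partial left adjoint, and condition $(3)$ is a ``Zariski-local'' assembly of condition $(1)$ extracted via \lemref{l:Dmod on nonqc via open}.

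For $(1) \Rightarrow (2)$, if $\CF \simeq j_!(\CF_U)$, then for any $\CF_1 \in \Dmod(\CY)$ with $j^*(\CF_1) = 0$, the defining adjunction gives $\CMaps_{\Dmod(\CY)}(\CF, \CF_1) \simeq \CMaps_{\Dmod(U)}(\CF_U, j^*(\CF_1)) = 0$. For the converse $(2) \Rightarrow (1)$, set $\CF_U := j^*(\CF)$ and aim to show that $\CF$ represents the functor $\CG \mapsto \CMaps_{\Dmod(U)}(\CF_U, j^*(\CG))$. Consider the fiber sequence $K \to \CG \to j_* j^*(\CG)$ coming from the unit; since $j_*$ is fully faithful (i.e., $j^* j_* \simeq \on{Id}$), applying $j^*$ shows $j^*(K) = 0$, so $K$ is supported on $\CY - U$. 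The assumed orthogonality $(2)$ forces $\CMaps(\CF, K) = 0$, hence $\CMaps(\CF,\CG) \simeq \CMaps(\CF, j_* j^* \CG) \simeq \CMaps(\CF_U, j^*\CG)$ by $(j^*, j_*)$-adjunction. This exhibits $\CF$ as $j_!(\CF_U)$.

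The implication $(1) \Rightarrow (3)$ is immediate from \lemref{l:*!}(b): if $U \subset U'$ then $U \cap U' = U$, so $(j')^*(j_!(\CF_U)) \simeq \wt{j}_!(\CF_U)$. For $(3) \Rightarrow (1)$, again set $\CF_U := j^*(\CF)$ and test against an arbitrary $\CG \in \Dmod(\CY)$ via \eqref{e:calc Hom via opens}:
\[
\CMaps_{\Dmod(\CY)}(\CF,\CG) \;\simeq\; \underset{V}{\underset{\longleftarrow}{lim}}\, \CMaps_{\Dmod(V)}(\CF|_V,\CG|_V),
\]
the limit taken over quasi-compact open $V \subset \CY$. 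The key observation is that the subposet of $V$'s containing $U$ is \emph{cofinal}: for any qc open $V$, the union $V \cup U$ is again qc open and contains both. On this cofinal subposet, condition $(3)$ identifies the $V$-term with $\CMaps_{\Dmod(V)}(\wt{j}_!(\CF_U), \CG|_V) \simeq \CMaps_{\Dmod(U)}(\CF_U, \CG|_U) = \CMaps_{\Dmod(U)}(\CF_U, j^*\CG)$, with transition maps becoming identities. Therefore the limit collapses to $\CMaps_{\Dmod(U)}(\CF_U, j^*\CG)$, which is exactly the universal property characterizing $j_!(\CF_U)$.

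The main technical point to be careful with is the cofinality argument in $(3) \Rightarrow (1)$: one must verify both that $\{V : V \supset U\}$ is cofinal (easy, via $V \mapsto V \cup U$, using that unions of two qc opens are qc) and that the transition maps in the limit become identities under the identification with $\CMaps(\CF_U, j^*\CG)$ — this is a naturality check for the isomorphisms of \lemref{l:*!}(a,b). Everything else is formal manipulation of the $(j^*, j_*)$ adjunction and its partial left adjoint $j_!$, together with the orthogonal decomposition of $\Dmod(\CY)$ induced by the fully faithful embedding $j_*$.
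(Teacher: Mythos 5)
Your proof is correct, and it uses the same basic ingredients as the paper (the $(j^*,j_*)$ adjunction, \lemref{l:*!}, and the limit formula \eqref{e:calc Hom via opens}), but you close the cycle of implications differently. The paper proves (1)$\Leftrightarrow$(2) by adjunction, (1)$\Rightarrow$(3) by \lemref{l:*!}(b), and then (3)$\Rightarrow$(2) --- the last step being a one-liner: if $\CF_1$ is supported on $\CY-U$, then every term $\CMaps_{\Dmod(U')}(\CF|_{U'},\CF_1|_{U'})$ in the limit \eqref{e:calc Hom via opens} vanishes (for $U'\supset U$ this is immediate from (3) and adjunction on $U'$), so the limit is zero. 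You instead prove (3)$\Rightarrow$(1) directly, which forces you to actually \emph{compute} the limit: you need the cofinality of $\{V\supset U\}$ and the check that the transition maps become identities under the adjunction isomorphisms. That naturality check is genuinely fine (the adjoint of a map $\wt{j}_!(\CF_U)\to\CG|_V$ is just its restriction to $U$ precomposed with the unit, which is an isomorphism by \lemref{l:*!}(a)), but it is strictly more work than showing all terms vanish. Your explicit (2)$\Rightarrow$(1) via the fiber sequence $K\to\CG\to j_*j^*(\CG)$ is the standard recollement argument that the paper compresses into ``by adjunction''; spelling it out is harmless and arguably clarifying. In short: both proofs are valid, the paper's routing through (3)$\Rightarrow$(2) is the more economical one, and yours buys a slightly more explicit identification of $\CF$ as a corepresenting object at the cost of a cofinality-plus-naturality verification.
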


\begin{proof}
By adjunction, (1)$\Leftrightarrow$(2). The implication (1)$\Rightarrow$(3) follows from
\lemref{l:*!}(b). 

\medskip

Let us show that (3) implies (2). By formula \eqref{e:calc Hom via opens}, 
for any $\CF,\CF_1\in \Dmod(\CY)$ one has
\begin{equation} \label{e:Hom as limit}
\CMaps_{\Dmod(\CY)}(\CF,\CF_1)\simeq \underset{U'}{\underset{\longleftarrow}{lim}}\, 
\CMaps_{\Dmod(U')}(\CF|_{U'},\CF_1|_{U'}).
\end{equation}
If $\CF_1$ is supported on $\CY-U$ then all the terms in the RHS are zero, so the LHS
is zero.
\end{proof}

Let us now prove \propref{p:tautological compact}.
\begin{proof}
As was remarked in \secref{sss:! functor}, 
if \eqref{e:F=j_!} holds then the compactness of $\CF$ follows  by adjunction. 

\medskip

Conversely, suppose $\CF\in \Dmod(\CY)$ is compact. Then by \secref{sss:! functor},
for every open $U\subset\CY$ the object
$\CF|_U\in \Dmod(U)$ is compact. So it remains to show that 
\eqref{e:F=j_!} holds for some quasi-compact open $U\overset{j}\hookrightarrow\CY$.

\medskip

Assume the contrary. Using the equivalence (1)$\Leftrightarrow$(3) of Lemma~\ref{l:tautological}, we obtain that for every 
quasi-compact open $U\subset\CY$
there is a quasi-compact open $U'\subset\CY$ containing $U$ such that $(j_{U,U'})_!(\CF|_U)\ne(\CF|_{U'})$
(here $j_{U,U'}:U\hookrightarrow U'$). 

\medskip

Thus, we obtain an increasing sequence of open quasi-compact substacks 
$U_i\subset\CY$ such that 
$(j_{U_i,U_{i+1}})_!(\CF|_{U_i})\ne\CF|_{U_{i+1}}$.
Therefore, by Lemma~\ref{l:tautological}, for each $i$ there exists $\CE_i\in \Dmod(U_{i+1})$ such that
$\CE_i|_{U_i}=0$ but $\Hom (\CF|_{U_{i+1}}, \CE_i)\ne 0$. 

\medskip

Let $V$ be the union of the $U_i$'s and let $\tilde\CE_i\in \Dmod(V)$ be the direct image of $\CE_i$
under $U_i\hookrightarrow V$.
Then 
\begin{equation}  \label{e:nonzero}
\Hom (\CF|_V,\tilde\CE_i)=\Hom (\CF|_{U_{i+1}}, \CE_i)\ne 0.
\end{equation}
By Corollary~\ref{c:prod},
\begin{equation}   \label{e:prod}
\Hom (\CF|_V,\underset{i}\oplus\, \tilde\CE_i)\simeq\underset{i}\prod\Hom(\CF|_V,\tilde\CE_i).
\end{equation}
On the other hand, by \secref{sss:! functor}, $\CF|_V$ is compact, so
$\Hom (\CF|_V,\underset{i}\oplus\, \tilde\CE_i)\simeq \underset{i}\oplus\Hom(\CF|_V,\tilde\CE_i)$.
This contradicts \eqref{e:prod} because of \eqref{e:nonzero}.  
\end{proof}

\section{Truncativeness and co-truncativeness}  \label{s:turncativeness}

Until the last subsection of this section we let $\CY$ be a QCA stack. 

\ssec{The notion of truncative substack}  \label{ss:truncativeness}

\sssec{}

Let $\CZ\overset{i}\hookrightarrow \CY$ be a closed substack, and let $\CY\overset{j}\hookleftarrow U$ be the complementary open. 
Consider the corresponding pairs of adjoint functors
$$i_{\dr,*}:\Dmod(\CZ)\rightleftarrows \Dmod(\CY):i^! , \quad\quad 
j^*:\Dmod(\CY)\rightleftarrows \Dmod(U):j_*\, .$$

Recall that by \thmref{t:compact generation qc}, all the categories involved are compactly generated.

\begin{prop}  \label{p:truncativeness}
The following conditions are equivalent:

\smallskip

\noindent{\em(i)} The functor $i^!$ sends $\Dmod(\CY)^c$ to $\Dmod(\CZ)^c$. 

\smallskip

\noindent{\em(i$'$)} The functor $i^!$ admits a continuous right adjoint. 

\smallskip

\noindent{\em(ii)} The functor $j_*$ sends $\Dmod(U)^c$ to $\Dmod(\CY)^c$.

\smallskip

\noindent{\em(ii$'$)}  The functor $j_*$ admits a continuous right adjoint. 

\smallskip

\noindent{\em(iii)} The functor $j_!$, left adjoint to $j^*$, is defined on all of $\Dmod(U)$.

\smallskip

\noindent{\em(iii$'$)} The functor $j_!$, left adjoint to $j^*$, is defined on $\Dmod(U)^c$.

\smallskip

\noindent{\em(iv)} The functor $i^*_{\dr}$, left adjoint to  $i_{\dr,*}$, is defined on all of 
$\Dmod(\CY )$.

\smallskip

\noindent{\em(iv$'$)} The functor $i^*_\dr$, left adjoint to  $i_{\dr,*}$, is defined on $\Dmod(\CY )^c$.
\end{prop}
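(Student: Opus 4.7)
My plan is to split the eight conditions into four pairs of readily-equivalent statements --- (i)/(i$'$), (ii)/(ii$'$), (iii)/(iii$'$) and (iv)/(iv$'$) --- and then tie the four surviving unprimed conditions together via the standard recollement fibre/cofibre triangles combined with Verdier self-duality.

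First I would handle (i)$\Leftrightarrow$(i$'$) and (ii)$\Leftrightarrow$(ii$'$) by direct appeal to \propref{p:existence of adjoint}: both $i^!$ and $j_*$ are continuous (as right adjoints of $i_{\dr,*}$ and $j^*$), and by \thmref{t:compact generation qc} their source categories $\Dmod(\CY)$ and $\Dmod(U)$ are compactly generated, so each admits a continuous right adjoint iff it preserves compactness. For (iii)$\Leftrightarrow$(iii$'$) and (iv)$\Leftrightarrow$(iv$'$) I would argue by ind-extension: given $j_!$ defined on $\Dmod(U)^c$, \corref{c:ind of comp} lets me ind-extend it to a continuous functor on $\Dmod(U) \simeq \on{Ind}(\Dmod(U)^c)$, and the adjunction against $j^*$ for general $\CF_U$ follows by writing $\CF_U$ as a filtered colimit of compacts and using continuity of $j^*$; the treatment of (iv$'$)$\Rightarrow$(iv) is identical, replacing $j_!$ by $i^*_\dr$ and $j^*$ by $i_{\dr,*}$.

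Next, I would establish (iii)$\Leftrightarrow$(iv) via the standard recollement triangle $j_! j^* \CF \to \CF \to i_{\dr,*} i^*_\dr \CF$. Given (iv), for any $\CF_U \in \Dmod(U)$ I would take $j_!(\CF_U)$ to be the fibre of the unit map $j_*(\CF_U) \to i_{\dr,*} i^*_\dr j_*(\CF_U)$ and verify the universal property against $\CG \in \Dmod(\CY)$ by mapping the triangle into $\CG$ and using $i^! j_* = 0$. Conversely, given (iii), the cofibre of the counit $j_! j^* \CF \to \CF$ takes values in the essential image of $i_{\dr,*}$ and thus determines $i^*_\dr \CF$ by full faithfulness of $i_{\dr,*}$.

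Finally, to connect the ``closed'' side (i)/(iv) with the ``open'' side (ii)/(iii) I would invoke Verdier self-duality. Since $i$ and $j$ are both schematic and quasi-compact, \propref{p:Lurie-duality} combined with \secref{sss:2Verdier} gives, under the self-duality $\Dmod(-)^\vee \simeq \Dmod(-)$, the identifications $(i_{\dr,*})^\vee \simeq i^!$ and $(j_*)^\vee \simeq j^* = j^!$. Since duality exchanges left and right adjoints (\secref{sss:dual functor}), (i$'$) --- that $i^!$ admits a continuous right adjoint --- is equivalent to $i_{\dr,*}$ admitting a continuous left adjoint defined everywhere, i.e., to (iv); the same argument with $j$ in place of $i$ yields (ii$'$)$\Leftrightarrow$(iii). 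The main obstacle I anticipate is the ind-extension step, where one has to be careful that the resulting continuous functor on all of $\Dmod(U)$ is a genuine left adjoint rather than only a partial one, together with the need to check compatibility of the partially-defined $j_!$ and $i^*_\dr$ with the recollement triangle; both, however, reduce to routine filtered-colimit manipulations once the framework of \secref{s:DG review} and Verdier self-duality are in hand.
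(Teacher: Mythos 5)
Your proof is correct, and most of its architecture coincides with the paper's: the equivalences (i)$\Leftrightarrow$(i$'$) and (ii)$\Leftrightarrow$(ii$'$) via \propref{p:existence of adjoint}; the equivalences (iii)$\Leftrightarrow$(iii$'$) and (iv)$\Leftrightarrow$(iv$'$) by extending the partial left adjoint along colimits (your ind-extension is the same argument in different clothing); and the passage between the adjoint-existence conditions (iii), (iv) and the compactness conditions (ii$'$), (i$'$) via Verdier self-duality and \propref{p:Lurie-duality}, which is exactly the paper's proof of (iii)$\Leftrightarrow$(ii$'$) and (iv)$\Leftrightarrow$(i$'$). Where you genuinely diverge is the bridge between the closed and open sides: the paper proves (i)$\Leftrightarrow$(ii) directly, using the triangle $i_{\dr,*}i^!(\CF)\to\CF\to j_*j^*(\CF)$ together with \lemref{l:karoubi-generation} (the image of $\Dmod(\CY)^c$ under $j^*$ Karoubi-generates $\Dmod(U)^c$), whereas you prove (iii)$\Leftrightarrow$(iv) by the dual recollement manipulation with $j_!j^*(\CF)\to\CF\to i_{\dr,*}i^*_\dr(\CF)$. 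Both bridges are sound; the paper's avoids ever constructing the non-standard functors, while yours stays entirely on the left-adjoint side and dispenses with the Karoubi-generation lemma. One small point to tighten: in verifying that $\mathrm{fib}\bigl(j_*(\CF_U)\to i_{\dr,*}i^*_\dr j_*(\CF_U)\bigr)$ is left adjoint to $j^*$, the orthogonality actually doing the work is $\CMaps\bigl(j_!(\CF_U), i_{\dr,*}(-)\bigr)=0$ (a consequence of the $(i^*_\dr,i_{\dr,*})$-adjunction and full faithfulness of $i_{\dr,*}$), used together with the triangle $i_{\dr,*}i^!(\CG)\to\CG\to j_*j^*(\CG)$ for the target; the identity $i^!\circ j_*=0$ that you cite is true but is not quite the vanishing one needs at that step.
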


Note that in the situation of (iii) and (iv) if the functors $j_!$ and $i^*_\dr$ are defined they are automatically continuous by adjunction.

\medskip

To prove the proposition, we need the following lemma.

\begin{lem}  \label{l:karoubi-generation}
The essential image of $\Dmod(\CY )^c$ under $j^*:\Dmod(\CY )\to \Dmod(U)$ Karoubi-generates 
$\Dmod(U)^c$.
\end{lem}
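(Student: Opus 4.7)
The plan is to reduce the Karoubi-generation statement to an ordinary generation statement via \corref{c:Karoubi}, and then verify the latter by a short adjunction/orthogonality argument.

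First, I would observe that the image of $\Dmod(\CY)^c$ under $j^*$ indeed lies in $\Dmod(U)^c$: since $j_*$ is continuous (it is the right adjoint to a left adjoint, and more concretely one applies \propref{p:existence of adjoint}), its left adjoint $j^*$ preserves compact objects. Both $\Dmod(\CY)$ and $\Dmod(U)$ are compactly generated by \thmref{t:compact generation qc} (since $\CY$ is QCA and hence so is the open substack $U$). This puts us in position to apply \corref{c:Karoubi} to the subset $\bS := j^*(\Dmod(\CY)^c) \subset \Dmod(U)^c$: it will suffice to prove that $\bS$ generates $\Dmod(U)$ as a cocomplete DG category.

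Second, I would verify this generation property via the criterion of \secref{sss:compact_gen_tion}. Suppose $\CF_U \in \Dmod(U)$ is such that $\Hom_{\Dmod(U)}(j^*(\CE), \CF_U) = 0$ for every $\CE \in \Dmod(\CY)^c$. By the $(j^*, j_*)$-adjunction,
\[
\Hom_{\Dmod(\CY)}(\CE, j_*(\CF_U)) \simeq \Hom_{\Dmod(U)}(j^*(\CE), \CF_U) = 0
\]
for all $\CE \in \Dmod(\CY)^c$. Since the compact objects of $\Dmod(\CY)$ form a generating set (by compact generation of $\Dmod(\CY)$), this forces $j_*(\CF_U) = 0$. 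But the counit $j^* \circ j_* \to \Id$ is an isomorphism (because $j$ is an open embedding, so $j_*$ is fully faithful), hence $\CF_U \simeq j^*(j_*(\CF_U)) = 0$. This shows that $\bS$ generates $\Dmod(U)$.

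Combining these two steps with \corref{c:Karoubi} immediately gives that $\bS$ Karoubi-generates $\Dmod(U)^c$, which is the desired conclusion. I do not anticipate a serious obstacle here: the only substantive inputs are the compact generation of $\Dmod(\CY)$ (supplied by \thmref{t:compact generation qc}) and the fully faithfulness of $j_*$ for an open embedding, both of which are already in hand by the time this lemma is invoked.
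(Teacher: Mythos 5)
Your proof is correct and follows essentially the same route as the paper: check that $j^*$ preserves compactness via its continuous right adjoint, show $j^*(\Dmod(\CY)^c)$ generates $\Dmod(U)$ by the adjunction-plus-conservativity of $j_*$ (you deduce conservativity from full faithfulness, the paper invokes it directly), and conclude by \corref{c:Karoubi}. No issues.
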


\begin{proof}
Since $j^*$ has a continuous right adjoint $j_*\,$, we have 
$j^*(\Dmod(\CY )^c)\subset \Dmod(U)^c$. 
Since the functor $j_*$ is conservative $j^*(\Dmod(\CY )^c)$ generates $\Dmod(U)$.
By \corref{c:Karoubi}, this implies that $j^*(\Dmod(\CY )^c)$ Karoubi-generates $\Dmod(U)^c$.
\end{proof}

\begin{proof}[Proof of Proposition~\ref{p:truncativeness}]

Since $j^*$ preserves compactness and $i_{\dr,*}$ is fully faithful and continuous, the fact that (ii) implies (i) follows  from the
exact triangle
$$i_{\dr,*}(i^!(\CF))\to \CF\to j_*\circ (j^*(\CF)).$$
The implication (i)$\Rightarrow$(ii) follows from Lemma~\ref{l:karoubi-generation} 
and the same exact triangle.

\medskip

The equivalences (i)$\Leftrightarrow$(i$'$) and (ii)$\Leftrightarrow$(ii$'$) 
follow from (the tautological) \propref{p:existence of adjoint}.

\medskip

Let us show that (iii$'$)$\Leftrightarrow$(iii)$\Leftrightarrow$(ii$'$). The full subcategory of objects of $\Dmod (U)$ 
on which $j_!$ is defined is closed under colimits. Since $\Dmod (U)$ is generated by $\Dmod (U)^c$ we see that (iii$'$)$\Leftrightarrow$(iii).
By \propref{p:Lurie-duality}, the dual of the functor $j^*=j^!:\Dmod(\CY )\to \Dmod(U)$ identifies,
via the self-duality equivalences
$$\bD_U^{\on{Verdier}}:\Dmod(U)^\vee\simeq \Dmod(U), \quad\quad \bD_\CY^{\on{Verdier}}:\Dmod(\CY)^\vee\simeq \Dmod(\CY),$$
with $j_*:\Dmod(U)\to\Dmod(\CY ) $. By duality (see \secref{sss:dual functor}),
the existence of a continuous right adjoint to $j_*$ is equivalent to the existence
of (the automatically continuous) left adjoint of $j_*^\vee\simeq j^*$. I.e., (iii)$\Leftrightarrow$(ii$'$). 
 
\medskip 
 
Similarly to the above proof of (iii$'$)$\Leftrightarrow$(iii)$\Leftrightarrow$(ii$'$),
one shows that (iv$'$)$\Leftrightarrow$(iv)$\Leftrightarrow$(i$'$).
\end{proof}

\sssec{}
The following definition is crucial for this paper.

\begin{defn}     \label{d:trunc1}
A closed substack $\CZ\overset{i}\mono \CY$
 is called \emph{truncative} (resp., an open substack $U\overset{j}\mono \CY$
is called \emph{co-truncative}) if it satisfies the equivalent conditions of \propref{p:truncativeness}.
\end{defn}

\sssec{}   \label{sss:non-cocomplete}
Let us reformulate Definition~\ref{d:trunc1} in terms of the non-cocomplete DG categories
$\Dmod(\CY)^c$, $\Dmod(\CZ )^c$, $\Dmod(U)^c$. 

\medskip

First, let $\CZ\overset{i}\hookrightarrow \CY$ be \emph{any} closed substack, and let 
$U\overset{j}\hookrightarrow \CY$  be the complementary open, then we have an 
exact sequence\footnote{By definition, exactness means that $i_*^c$ identifies 
$\Dmod(\CZ )^c$ with a full subcategory of $\Dmod(\CY)^c$,  and  $(j^*)^c$ identifies the 
Karoubi-completion of the quotient $\Dmod(\CY)/\Dmod(\CZ )^c$ with $\Dmod(U)^c$. }
of Karoubian (non-cocomplete) DG categories
\begin{equation}   \label{e:exact_sequence1}
0\to\Dmod(\CZ )^c\overset{(i_{\dr,*})^c}\to\Dmod(\CY)^c\overset{(j^*)^c}\to\Dmod(U)^c\to 0.
\end{equation}

The exactness of \eqref{e:exact_sequence1} follows from the fact that the corresponding sequence of the ind-completions 
$$0\to\Dmod(\CZ )\overset{i_{\dr,*}}\to\Dmod(\CY)\overset{j^*}\to\Dmod(U)\to 0$$ is exact, see \secref{sss:Kar vs cocompl}. 

\medskip

Each of the conditions (i)-(ii) from \propref{p:truncativeness} says that the subcategory 
$$\Dmod(\CZ )^c\subset\Dmod(\CY)^c$$ is \emph{right-admissible}
\footnote{Synonyms: right-admissible=coreflective, left-admissible=reflective.},
which by definition means that the functor $$(i_{\dr,*})^c=(i_!)^c : \Dmod(\CZ )^c\to \Dmod(\CY )^c$$ admits a  
right adjoint $(i^!)^c: \Dmod(\CY )^c\to\Dmod(\CZ )^c$, or equivalently, the functor
$$(j^*)^c : \Dmod(\CY )^c\to \Dmod(U)^c$$ has a right adjoint $(j_*)^c:\Dmod(U)^c\to\Dmod(\CY )^c\,$.

\medskip

Similarly, conditions (iii)-(iv) from 
\propref{p:truncativeness} say that the subcategory $$\Dmod(\CZ )^c\subset\Dmod(\CY)^c$$ is 
\emph{left-admissible}, which by definition means that the functor $$(i_{\dr,*})^c: \Dmod(\CZ )^c\to \Dmod(\CY )^c$$ admits a  
left adjoint $(i^*_\dr)^c: \Dmod(\CY )^c\to\Dmod(\CZ )^c$, or, equivalently, the functor
$$(j^*)^c : \Dmod(\CY )^c\to \Dmod(\CZ )^c$$ has a left adjoint $(j_!)^c:\Dmod(\CZ )^c\to\Dmod(\CY )^c\,$. 

\medskip

In our situation left admissibility is equivalent to right admissibility by Verdier duality.

\medskip

Thus if $i:\CZ\hookrightarrow \CY$ is truncative then in addition to \eqref{e:exact_sequence1} one has the
exact sequences

\begin{equation}   \label{e:exact_sequence2}
0\to\Dmod(U )^c\overset{(j_!)^c}\to\Dmod(\CY)^c\overset{(i^*_\dr)^c}\to\Dmod(\CZ )^c\to 0,
\end{equation}

\begin{equation}   \label{e:exact_sequence3}
0\to\Dmod(U )^c\overset{(j_*)^c}\to\Dmod(\CY)^c\overset{(i^!)^c}\to\Dmod(\CZ )^c\to 0.
\end{equation}

It is convenient to arrange the functors between $\Dmod(\CZ )^c$ and $\Dmod(\CY )^c$ into a sequence
\begin{equation}    \label{e:2the_first_sequence}
(i^*_\dr)^c\, , (i_{\dr,*})^c\, , (i^!)^c
\end{equation}
and the functors between $\Dmod(U )$ and $\Dmod(\CY )$ into a sequence
\begin{equation}    \label{e:2the_second_sequence}
\quad (j_!)^c, (j^*)^c, (j_*)^c\, .
\end{equation}
In each of the sequences, each neighboring pair forms an adjoint pair of functors. 

\ssec{Some examples of (co)-truncative substacks}   \label{ss:non-truncative sch}

\sssec{}   \label{sss:non-truncative sch}

(Co)-truncativeness is a purely ``stacky" phenomenon, i.e., it almost never happens for schemes. 

\medskip

More precisely, it is easy to see that if $j:U\hookrightarrow Y$ is an open embedding of schemes which is not a closed embedding 
then $U$ cannot be co-truncative.  Indeed, choose $\CM\in \Coh(U)$ such that $j_*(\CM)$ is not coherent. 
Then $$j_*(\ind_{\Dmod(U)}(\CM))\simeq \ind_{\Dmod(\CY)}(j_*(\CM))$$
is not in $\Dmod(Y)^c$. Here $\ind_{\Dmod(-)}$ denotes the induction functor from $\IndCoh(-)$ to $\Dmod(-)$, see
\cite[Sect. 5.1.3]{finiteness}.

\sssec{Example}  \label{sss:ex of vector space}

The following example of a co-truncative substack is most important for us:

\medskip

Take $\CY=\BA^n/\BG_m$, where $\BG_m$ acts on $\BA^n$ by dilations. Take $U=(\BA^n-\{0\})/\BG_m\simeq \BP^{n-1}$.
In \secref{ss:est trunc} we will see that $U\hookrightarrow \CY$ is co-truncative. 

\sssec{}  \label{sss:ex of A1}

The most basic case of the above example is when $n=1$. In this case, the co-truncativeness assertion is particularly 
evident. Namely, let us check that condition (iii$'$) of \propref{p:truncativeness} holds.
Indeed, $\Dmod(U)\simeq \Vect$, so it is sufficient to show that $j_!(k)$ is defined, where $k$ is the generator of $\Vect$. This is clear since we are dealing with holonomic D-modules.

\sssec{}  \label{sss:finitely many points}
Here is a generalization of  the example of \secref{sss:ex of A1} in a direction different from 
\secref{sss:ex of vector space}:
if $\CY$ is any QCA stack that has only finitely many isomorphism classes of $k$-points then every open
substack $U\subset\CY$ is co-truncative. Indeed, condition (iii$'$) of \propref{p:truncativeness} is verified
because every object of $\Dmod(U)^c$ is holonomic. 

\medskip

Examples of such $\CY$ include $N\backslash G/B$, or any quasi-compact open of $\Bun_G$ for $X$ of genus $0$.

\ssec{The non-standard functors}  \label{sss:voprosial}

Let $\CZ\overset{i}\mono \CY$ be a truncative closed substack and $U\overset{j}\mono \CY$ the corresponding
co-truncative open substack.

\begin{defn}     \label{d:non-standard}
The functors right adjoint to $i^!$ and $j_*$  are
denoted by 
$$i_?:\Dmod(\CZ )\to\Dmod(\CY ), \quad j^?:\Dmod( \CY)\to\Dmod(U ).$$
\end{defn}

\begin{rem}    \label{r:dual_functors}
The proof of the equivalences (iii)$\Leftrightarrow$(ii$'$) and (iv)$\Leftrightarrow$(i$'$)
from \propref{p:truncativeness} shows that $i_?$ is the dual to $i^*_\dr:\Dmod(\CY )\to\Dmod(\CZ )$ and
$j^?$ is the dual to $j_!:\Dmod( U )\to\Dmod(\CY )$ in the sense of Sect.~\ref{sss:dual functor}. Recall that
these dualities follow from the duality between $i^!$ and $i_{\dr,*}\,$, and between $j_*$ and $j^*$.
\end{rem}

\begin{rem}
Recall that the existence of $i^?$ and/or $j^?$ as a continuous functor is among the equivalent definitions of
truncativeness, see Definition~\ref{d:trunc1} and Proposition~\ref{p:truncativeness}(i$'$, ii$'$).

\medskip

The existence of $i^*_\dr$ and/or $j_!$ as an everywhere defined (and automatically continuous) functor is also among 
the equivalent definitions of truncativeness, see 
Proposition~\ref{p:truncativeness}(iii, iv).

\medskip

The functors $i_?$, $j^?$, $i^*_\dr$, $j_!$ are called \emph{the non-standard functors} associated to 
$\CZ\subset\CY$ (or to $U\subset\CY$). 

\medskip

The functors $i^*$ and $j_!$ are at least, familiar as \emph{partially} defined functors (e.g., they are 
always defined on the holonomic subcategory), but $i_?$ and $j^?$ are quite unfamiliar. 
On the other hand, in some situations the non-standard functors identify with certain standard 
functors, see Example~\ref{sss:baby_Springer} and Remark~\ref{r:on_generalizations} below.
\end{rem}

\sssec{Inventory}   \label{sss:inventory}

It is convenient to arrange the functors between $\Dmod(\CZ )$ and $\Dmod(\CY )$ into a sequence
\begin{equation}    \label{e:the_first_sequence}
i^*_\dr, i_{\dr,*}, i^!, i_?
\end{equation}
and the functors between $\Dmod(U )$ and $\Dmod(\CY )$ into a sequence
\begin{equation}    \label{e:the_second_sequence}
\quad j_!, j^*, j_*, j^?.
\end{equation}
In each of the sequences, each neighboring pair forms an adjoint pair of functors. The first and last functors in 
\eqref{e:the_first_sequence} and in \eqref{e:the_second_sequence} are non-standard, the other functors are standard. 
By \remref{r:dual_functors}, each of the sequences 
\eqref{e:the_first_sequence}-\eqref{e:the_second_sequence} is self-dual
in the sense of  Sect.~\ref{sss:dual functor}.

\sssec{}

We know that the functors $i_{\dr,*}$ and $j_*$ are fully faithful; equivalently, the
adjunctions 
\begin{equation}    \label{e:known_isomorphisms}
i^*_\dr\circ i_{\dr,*}\to\Id_{\Dmod (\CZ )}, \quad \Id_{\Dmod (U)}\to j^?\circ j_*
\end{equation}
are isomorphisms (just as are the adjunctions $j^*\circ j_*\to \Id_{\Dmod (U)}$ and $\Id_{\Dmod (\CZ )}\to i^!\circ i_{\dr,*}$, 
which involve only the standard functors).

\begin{prop}     \label{p:inventory}  \hfill

\smallskip

\noindent{\em(i)} The functors $i_?$ and $j_!$ are fully faithful.

\smallskip

\noindent{\em(ii)} The adjunctions  $i^!\circ i_?\to \Id_{\Dmod (\CZ )}$ and $\Id_{\Dmod (U)}\to j^*\circ j_!$   are isomorphisms.
\end{prop}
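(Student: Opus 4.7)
My plan is to observe first that parts~(i) and~(ii) of the proposition are logically equivalent, and then to prove the two isomorphism statements of part~(ii)---the one about $j_!$ by an immediate invocation of \lemref{l:*!}(a), and the one about $i_?$ by a short Yoneda-type argument (with Verdier duality as an alternative route).

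For the equivalence of~(i) and~(ii): the functor $i_?$ is by construction right adjoint to $i^!$, and $j_!$ is left adjoint to $j^*$. The standard fact that a left (resp.\ right) adjoint is fully faithful precisely when the unit (resp.\ counit) of the adjunction is an isomorphism gives that $i_?$ is fully faithful iff the counit $i^!\circ i_?\to\Id_{\Dmod(\CZ)}$ is an isomorphism, and $j_!$ is fully faithful iff the unit $\Id_{\Dmod(U)}\to j^*\circ j_!$ is an isomorphism. Thus (i) and (ii) amount to the same statement, and it suffices to prove (ii).

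The isomorphism $\Id\to j^*\circ j_!$ is essentially \lemref{l:*!}(a), which already established $\CF_U\to j^*(j_!(\CF_U))$ whenever $j_!(\CF_U)$ is defined. Co-truncativeness of $U\subset\CY$ (condition~(iii) of \propref{p:truncativeness}) says that $j_!$ is defined on all of $\Dmod(U)$, so the isomorphism holds globally.

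For the statement $i^!\circ i_?\to\Id$, I would chain adjunctions in the spirit of the proof of \lemref{l:*!}(a). Using $i_{\dr,*}\dashv i^!$ and $i^!\dashv i_?$ together with the isomorphism $\Id\to i^!\circ i_{\dr,*}$ (which encodes fully faithfulness of $i_{\dr,*}$, noted just after \eqref{e:known_isomorphisms}), one obtains a chain of natural isomorphisms
\[
\CMaps(\CG_1,i^!\circ i_?(\CG_2))
\simeq\CMaps(i_{\dr,*}(\CG_1),i_?(\CG_2))
\simeq\CMaps(i^!\circ i_{\dr,*}(\CG_1),\CG_2)
\simeq\CMaps(\CG_1,\CG_2),
\]
functorial in $\CG_1\in\Dmod(\CZ)$. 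Yoneda then delivers an isomorphism $i^!\circ i_?(\CG_2)\simeq\CG_2$, which by construction coincides with the counit of $i^!\dashv i_?$. As an alternative, one may proceed by Verdier duality: \remref{r:dual_functors} identifies $i_?$ with $(i^*_\dr)^\vee$, and \propref{p:Lurie-duality} identifies $i^!$ with $(i_{\dr,*})^\vee$, so $i^!\circ i_?$ is Verdier dual to $i^*_\dr\circ i_{\dr,*}\simeq\Id$, whence again an isomorphism. The main point requiring care, in either approach, is checking that the resulting isomorphism actually matches the counit of $i^!\dashv i_?$; this is automatic from the naturality of the adjunction bijections (respectively, from the functoriality of Verdier duality on adjoint pairs), but merits a line of verification.
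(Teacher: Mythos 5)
Your proof is correct and follows essentially the same route as the paper, which also notes the equivalence of (i) and (ii) and then gives exactly your two arguments: the Verdier-duality route (its Proof 1, dualizing the isomorphisms \eqref{e:known_isomorphisms}) and the adjoint-triple route (its Proof 2, via \lemref{l:loc and coloc}, of which your explicit adjunction chain for $i^!\circ i_?$ is just the unpacked form). Your use of \lemref{l:*!}(a) for the $j$-side is likewise the same underlying argument, as the paper itself remarks in the footnote to \lemref{l:loc and coloc}.
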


Although this proposition is extremely simple, we will give two proofs. 

\begin{proof}[Proof 1]

Statements (i) and (ii) are clearly equivalent, so it suffices to prove (ii).

\medskip

Recall that the adjoint pairs $(i^!, i_?)$ and $(j_*, j^?)$ are dual to the adjoint pairs 
$(i^*_\dr, i_{\dr,*})$ and $(j_!, j^*)$. So statement (ii) follows from the fact that the adjunctions \eqref{e:known_isomorphisms} are isomorphisms.
\end{proof}

\begin{proof}[Proof 2]

We will deduce statement (i) from the following general lemma, which is part of the categorical 
folklore.\footnote{Lemma~\ref{l:loc and coloc} for $\infty$-categories immediately follows from the same statement for usual categories. 
For proofs in the setting of usual categories, see \cite[Lemma 1.3]{DT},
\cite[Proposition 2.3]{KeLa},  and the article on adjoint triples from \cite{nLab} (on the other hand, the reader can easily reconstruct the argument because we essentially used it in the proof of \lemref{l:*!}(a)).
Note that in the case of triangulated categories and functors 
(which is enough for our purpose) Lemma~\ref{l:loc and coloc} is well known.}

\medskip

\begin{lem} \label{l:loc and coloc}
Let $\sF$ be a functor between $\infty$-categories that admits a left adjoint $\sF^L$
and a right adjoint $\sF^R$. Then $\sF^L$ is fully faithful if and only if $\sF^R$ is.
\end{lem}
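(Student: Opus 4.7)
Write $\sF:\bD\to\bC$ for definiteness, so we have a triple of adjoints $\sF^L\dashv \sF\dashv \sF^R$ with
$\sF^L,\sF^R:\bC\to\bD$. The plan is to reduce everything to a statement about unit and counit transformations and then deduce the equivalence of the two fullness assertions by a standard Yoneda-type manipulation.

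First, I would recall the elementary fact that for any adjunction $G\dashv H$, the left adjoint $G$ is fully faithful if and only if the unit $\Id\to HG$ is an isomorphism, and the right adjoint $H$ is fully faithful if and only if the counit $GH\to\Id$ is an isomorphism. Applied to our situation this gives: $\sF^L$ is fully faithful iff the unit
\[
\eta:\Id_\bC\longrightarrow \sF\circ\sF^L
\]
is an isomorphism, while $\sF^R$ is fully faithful iff the counit
\[
\eps:\sF\circ\sF^R\longrightarrow \Id_\bC
\]
is an isomorphism. Thus everything reduces to showing: $\eta$ is an isomorphism if and only if $\eps$ is.

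The main step is to assemble, for any $\bc,\bc'\in\bC$, the chain of natural equivalences
\[
\Maps_\bC(\bc,\sF\sF^R\bc')\;\simeq\;\Maps_\bD(\sF^L\bc,\sF^R\bc')\;\simeq\;\Maps_\bC(\sF\sF^L\bc,\bc'),
\]
using the adjunctions $\sF^L\dashv\sF$ and $\sF\dashv\sF^R$ respectively. A direct unwinding of the unit/counit formulas (using that $\sF^L\dashv\sF$ sends $f:\bc\to\sF\sF^R\bc'$ to the unique $g:\sF^L\bc\to\sF^R\bc'$ with $\sF(g)\circ\eta_\bc=f$, and that $\sF\dashv\sF^R$ then sends $g$ to $\eps_{\bc'}\circ\sF(g)$) identifies the composite equivalence with precomposition by $\eta_\bc$ followed by $\eps_{\bc'}\circ(-)$. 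Hence the map
\[
\Maps_\bC(\bc,\sF\sF^R\bc')\xrightarrow{\ \eps_{\bc'}\circ(-)\ }\Maps_\bC(\bc,\bc')
\]
factors as this equivalence composed with precomposition by $\eta_\bc$.

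Now one concludes by Yoneda. If $\eta$ is an isomorphism, precomposition with $\eta_\bc$ is an equivalence for every $\bc$, so $\eps_{\bc'}\circ(-)$ is an equivalence on $\Maps_\bC(\bc,-)$ for every $\bc$, forcing $\eps_{\bc'}$ to be an isomorphism, i.e.\ $\sF^R$ is fully faithful. The converse follows from the symmetric argument (or, more uniformly, by passing to opposite categories, which exchanges the roles of $\sF^L$ and $\sF^R$ and of units and counits). The only real work is bookkeeping with units and counits in the $\infty$-categorical setting; there is no genuine obstacle since all identifications are formal consequences of the adjunctions, and the argument is the standard one in ordinary category theory transported verbatim.
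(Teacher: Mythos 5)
Your argument is correct: reducing both fully-faithfulness statements to the invertibility of the unit $\Id\to\sF\sF^L$ and the counit $\sF\sF^R\to\Id$, and then comparing these via the composite equivalence $\Maps(\bc,\sF\sF^R\bc')\simeq\Maps(\sF^L\bc,\sF^R\bc')\simeq\Maps(\sF\sF^L\bc,\bc')$ plus Yoneda, is exactly the standard folklore proof. The paper does not write out a proof of this lemma (it only cites \cite{DT}, \cite{KeLa}, and the nLab, and notes the $\infty$-categorical case reduces to the $1$-categorical one via the unit/counit criterion), so your proposal supplies precisely the intended argument.
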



\medskip

Let us apply \lemref{l:loc and coloc} to $\sF:=j^*$. Since 
$(j^*)^R=j_*$ is fully faithful, we obtain that $(j^*)^L=j_!$ is fully faithful.

\medskip

Let us apply \lemref{l:loc and coloc} to $\sF:=i^!$. Since 
$(i^!)^L=i_{\dr,*}$ is fully faithful, we obtain that $(i^!)^R=i_?$
is fully faithful.
\end{proof}


\sssec{}

Regardless of whether the substack $\CZ\subset\CY$ is truncative, one has canonical exact sequences of DG categories
\begin{equation}   \label{e:ind-exact_sequence1}
0\to\Dmod(\CZ )\overset{i_{\dr,*}}\to\Dmod(\CY)\overset{j^*}\to\Dmod(U)\to 0 
\end{equation}
and
\begin{equation}   \label{e:ind-exact_sequence3}
0\to\Dmod(U )\overset{j_*}\to\Dmod(\CY)\overset{i^!}\to\Dmod(\CZ )\to 0,
\end{equation}
where the latter is obtained from the former by passing to right adjoints. 

\medskip

If $\CZ$ is truncative one also has exact sequences
\begin{equation}   \label{e:ind-exact_sequence4}
0\to\Dmod(\CZ )\overset{i_?}\to\Dmod(\CY)\overset{j^?}\to\Dmod(U)\to 0
\end{equation}
and
\begin{equation}   \label{e:ind-exact_sequence2}
0\to\Dmod(U )\overset{j_!}\to\Dmod(\CY)\overset{i^*_\dr}\to\Dmod(\CZ )\to 0,
\end{equation}
where \eqref{e:ind-exact_sequence4} is obtained by passing to right adjoints from
\eqref{e:ind-exact_sequence3}, and \eqref{e:ind-exact_sequence2} is obtained by
passing to left adjoints from \eqref{e:ind-exact_sequence1}.

\medskip

In addition, \eqref{e:ind-exact_sequence1} and \eqref{e:ind-exact_sequence3} are obtained
from one another by passing to the dual categories and functors. Similarly, 
\eqref{e:ind-exact_sequence2} and \eqref{e:ind-exact_sequence4} are obtained
from one another by passing to the dual categories and functors.

\sssec{Example}   \label{sss:baby_Springer}
Consider the situation of \secref{sss:ex of A1}, i.e., the embedding $i:\CZ\mono\CY$, where
$\CY=\BA^1/\BG_m$, $\CZ= \{ 0\}/\BG_m$. Let $\pi:\CY\to\CZ$ be the morphism induced by the map $\BA^1\to  \{ 0\}$. 
Let us show that \emph{the non-standard functors
$$i^*_\dr:\Dmod (\CY )\to\Dmod (\CZ) \text{ and }i_?:\Dmod (\CZ )\to\Dmod (\CY)$$ identify with
the following standard functors:}
\[
i^*_\dr\simeq\pi_{\dr,*}\; , \quad i_?\simeq \pi^!\,;
\]
in other words, $(\pi_{\dr,*}, i_{\dr,*})$ and $(i^!, \pi^!)$ are adjoint pairs. By \propref{p:Lurie-duality},
$\pi_{\dr,*}$ is dual to $\pi^!$ and $i_{\dr,*}$ is dual to $i^!$, so it suffices to show that
$(\pi_{\dr,*}, i_{\dr,*})$ is an adjoint pair. Let us prove that for any $\CM\in\Dmod (\CY )$, $\CN\in\Dmod (\CZ )$ the map
\[
\pi_{\dr,*}:\Hom(\CM,i_{\dr,*}(\CN))\to\Hom(\pi_{\dr,*}(\CM),\pi_{\dr,*}\circ i_{\dr,*}(\CN))=\Hom(\pi_{\dr,*}(\CM),\CN)
\]
is an isomorphism. 

\medskip

This is clear if $\CM\in \Dmod (\CZ )\subset\Dmod (\CY)$. 
The DG category  $\Dmod (\CY)$ is generated by $\Dmod (\CZ)$ and $j_!(k)$, where 
$j:\on{pt}=\CY-\CZ\mono\CY$ is the open embedding. So it remains to consider the case 
$\CM=j_!(k)$. Then $\Hom(\CM,i_{\dr,*}(\CN))=0$ and $\pi_{\dr,*}(\CM)=0$ 
(the latter follows from the fact the de Rham cohomology of $\BA^1$ equals $k$).

\begin{rem}  \label{r:on_generalizations}
Example~\ref{sss:baby_Springer} is a ``baby case" of Proposition~\ref{p:adjointness}.
\end{rem}

\ssec{Truncativeness of locally closed substacks}


Let $\CZ\overset{i}\hookrightarrow \CY$ be a locally closed substack. This means that $i$
becomes a locally closed embedding after any base change $Y\to \CY$, where $Y$
is a scheme (in fact, it suffices to verify this condition for just one smooth or flat covering 
$Y\to \CY$).

\begin{defn}   \label{d:trunc2}
A locally closed substack $\CZ\overset{i}\hookrightarrow \CY$ is said to be 
\emph{truncative} if the functor $i^!$ preserves compactness (or equivalently, has a continuous right adjoint functor $i_?$).
\end{defn}

For instance, any open substack is truncative. 

\sssec{}

Definition \ref{d:trunc2} immediately implies that truncativeness is transitive:

\begin{lem}  \label{l:trans of trunc}
Let $\CY_1\hookrightarrow \CY_2\hookrightarrow \CY_3$ be locally closed embeddings. If
$\CY_1$ is truncative in $\CY_2$ and $\CY_2$ is truncative in $\CY_3$, then
$\CY_1$ is truncative in $\CY_3$. \qed
\end{lem}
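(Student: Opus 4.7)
My plan is to reduce the statement to an elementary observation about compositions of functors, using the characterization of truncativeness via preservation of compactness from Definition~\ref{d:trunc2}. Denote the three embeddings by $i_{12}:\CY_1\hookrightarrow\CY_2$, $i_{23}:\CY_2\hookrightarrow\CY_3$, and $i_{13}:\CY_1\hookrightarrow\CY_3$, so that $i_{13} = i_{23}\circ i_{12}$.

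First, I would invoke the functoriality of the $!$-pullback for D-modules (part of the basic setup recalled in \cite{Crys}) to obtain a canonical isomorphism
\[
i_{13}^! \;\simeq\; i_{12}^! \circ i_{23}^! : \Dmod(\CY_3)\longrightarrow \Dmod(\CY_1).
\]
The hypothesis that $\CY_2$ is truncative in $\CY_3$ says by Definition~\ref{d:trunc2} that $i_{23}^!$ carries $\Dmod(\CY_3)^c$ into $\Dmod(\CY_2)^c$, and likewise the hypothesis on $\CY_1\hookrightarrow\CY_2$ says that $i_{12}^!$ carries $\Dmod(\CY_2)^c$ into $\Dmod(\CY_1)^c$. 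Therefore the composition $i_{13}^!$ sends $\Dmod(\CY_3)^c$ to $\Dmod(\CY_1)^c$, which is the required truncativeness of $\CY_1$ in $\CY_3$.

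I do not anticipate any substantive obstacle: the lemma is essentially immediate from the definition and the six-functor formalism, which is why the author marks it with $\qed$ in the statement itself. The only conceptual point worth noting is that since the characterization via compactness is symmetric under the equivalence of Definition~\ref{d:trunc2} with the existence of a continuous right adjoint, one could equivalently argue that the right adjoint $(i_{13})_?$ exists and is continuous because it is obtained as the composition $(i_{23})_? \circ (i_{12})_?$ of two continuous right adjoints.
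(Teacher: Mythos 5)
Your proof is correct and is exactly the argument the paper has in mind: the lemma is stated with a \qed because, as the paper says, Definition~\ref{d:trunc2} immediately gives transitivity via $i_{13}^!\simeq i_{12}^!\circ i_{23}^!$ and the fact that a composition of compactness-preserving functors preserves compactness. Your closing remark about composing the continuous right adjoints (in the correct order $(i_{23})_?\circ(i_{12})_?$) is also accurate.
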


As in the case of schemes, 
every locally closed embedding  $\CZ\hookrightarrow \CY$ can be factored
(and even canonically so) as
\begin{equation}  \label{e:closed_open}
\CZ\overset{i'}\hookrightarrow \CY'\overset{j}\hookrightarrow \CY,
\end{equation}
where $i'$ is a closed embedding, and $j$ is an open embedding. Namely, 
$\CY':=\CY-(\bar\CZ-\CZ)$, where $\bar\CZ$ is the closure
of $\CZ$ in $\CY$ (so that $\CZ$ is open in $\bar\CZ$).

\begin{lem}  \label{l:trunct for lc via cl}
A locally closed substack $\CZ\overset{i}\hookrightarrow \CY$ is truncative if and only if for some/any 
factorization \eqref{e:closed_open} with $i'$ being closed and $j$ open,
$\CZ$ in truncative in $\CY'$.
\end{lem}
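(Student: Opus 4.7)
The plan is to exploit the factorization $i = j \circ i'$ to write $i^! \simeq (i')^! \circ j^*$, and then to transfer compactness back and forth using \lemref{l:karoubi-generation}. For any factorization $\CZ \overset{i'}{\hookrightarrow} \CY' \overset{j}{\hookrightarrow} \CY$ as in the statement, $\CY'$ is QCA as an open substack of the QCA stack $\CY$, so truncativeness of $\CZ$ in $\CY'$ (in the sense of \defin{d:trunc2}) makes sense.

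For the ``if'' direction, I observe that the open embedding $j$ has $j^! = j^*$ with continuous right adjoint $j_*$, so $j^*$ preserves compactness. Hence if $(i')^!$ sends $\Dmod(\CY')^c$ into $\Dmod(\CZ)^c$, then so does the composite $i^! = (i')^! \circ j^*$; that is, $\CZ$ is truncative in $\CY$.

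For the converse, assume $\CZ$ is truncative in $\CY$. Given any $\CF \in \Dmod(\CY)^c$, one has
\[
(i')^!(j^*\CF) \;=\; (i')^! \circ j^! (\CF) \;=\; i^!(\CF) \;\in\; \Dmod(\CZ)^c,
\]
so $(i')^!$ carries the essential image of $j^*|_{\Dmod(\CY)^c}$ into $\Dmod(\CZ)^c$. By \lemref{l:karoubi-generation}, applied to the open embedding $j:\CY'\hookrightarrow\CY$, that essential image Karoubi-generates $\Dmod(\CY')^c$. Since $(i')^!$ is an exact functor of DG categories and $\Dmod(\CZ)^c \subset \Dmod(\CZ)$ is closed under cones, shifts, and direct summands, this forces $(i')^!$ to send every compact object of $\Dmod(\CY')$ to a compact object of $\Dmod(\CZ)$.

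Because both arguments apply verbatim to an \emph{arbitrary} factorization $\CZ \hookrightarrow \CY' \hookrightarrow \CY$ with $i'$ closed and $j$ open, the ``some'' and ``any'' versions of the condition are established simultaneously. There is no real technical obstacle: beyond the formalism of adjunctions, the only nontrivial input is the Karoubi-generation statement already established, which is what lets us reduce checking $(i')^!(\Dmod(\CY')^c) \subset \Dmod(\CZ)^c$ to the single family $\{j^*\CF : \CF \in \Dmod(\CY)^c\}$.
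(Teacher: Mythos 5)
Your proof is correct and follows essentially the same route as the paper: the "only if" direction is exactly the paper's argument via \lemref{l:karoubi-generation}, and your "if" direction simply unwinds the transitivity argument (the paper cites \lemref{l:trans of trunc} together with the fact that open substacks are truncative, which amounts to your observation that $i^!=(i')^!\circ j^*$ with $j^*$ preserving compactness).
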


\begin{proof}
The ``if" statement follows from Lemma~\ref{l:trans of trunc}. It remains to show that
if the composition \eqref{e:closed_open} is truncative then so is $\CZ\overset{i'}\hookrightarrow \CY'$.
This follows from the fact that the essential image of $\Dmod(\CY)^c$ under $j^*$ Karoubi-generates
$\Dmod(\CY')^c$, see Lemma~\ref{l:karoubi-generation}.
\end{proof}

%
%

\begin{rem}  \label{r:non-standard_locally_closed}
In the case of locally closed substacks the situation with the non-standard functors is as follows.
By duality (in the sense of Sect.~\ref{sss:dual functor}), a locally closed substack 
$\CZ\overset{i}\hookrightarrow \CY$ is truncative if and only if the functor $i_{\dr,*}$ has a left adjoint functor $i^*_\dr$
(which is automatically continuous). 

\medskip

Thus for a truncative locally closed substack we have adjoint 
pairs of continuous functors $(i^*_\dr,i_{\dr,*})$ and $(i^!,i_?)$ dual to each other. 
Just as in the case of closed embeddings (see \propref{p:inventory}), the functors $i_{\dr,*}$ and $i_?$ are fully faithful; 
equivalently, the adjunctions $i^!\circ i_?\to \Id_{\Dmod (\CZ )}$ and $\Id_{\Dmod (\CZ )}\to i_{\dr,*}\circ i^*_\dr$  are isomorphisms. 
But if the substack $\CZ$ is not closed then $i_{\dr,*}\ne i_!\,$, so the  functors $i_{\dr,*}$ and $i^!$ 
do not form an adjoint pair.
\end{rem}

\ssec{Truncativeness via coherence}  \label{ss:trunc via coh}

\sssec{}

As was mentioned in \secref{sss:compcoh}, the property of compactness of a D-module on a stack is subtle. For
example, it is not local in the smooth topology. We are going to reformulate the notion of truncativeness
via a more accessible property, namely, coherence. 



\begin{prop}  \label{p:trunc and coh} \hfill

\smallskip

\noindent{\em(a)} A locally closed substack $\CZ\overset{i}\hookrightarrow \CY$ is truncative if and only if the functor $i^!$ 
sends $\Dmod_{\on{coh}}(\CY)$ to $\Dmod_{\on{coh}}(\CZ)$. 

\smallskip

\noindent{\em(b)} An open substack $U\overset{j}\mono\CY$ is co-truncative if and only if $j_*$ sends 
$\Dmod_{\on{coh}}(U)$ to $\Dmod_{\on{coh}}(\CY)$. 

\end{prop}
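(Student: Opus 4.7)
The proof proceeds in three steps.

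\textbf{Step 1 (Reduction to closed embeddings).} In part (a), factor the locally closed embedding $i:\CZ\hookrightarrow\CY$ as $\CZ\overset{i'}\hookrightarrow\CY'\overset{j_0}\hookrightarrow\CY$ with $i'$ closed and $j_0$ open, as in \eqref{e:closed_open}. By \lemref{l:trunct for lc via cl}, truncativeness of $\CZ$ in $\CY$ coincides with truncativeness of $\CZ$ in $\CY'$. Since $j_0^*$ trivially preserves coherence (coherence is local for the smooth topology), preservation of coherence by $i^!=(i')^!\circ j_0^*$ is equivalent to preservation by $(i')^!$. Thus we may assume $i$ is a closed embedding, with complementary open $j:U\hookrightarrow\CY$.

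\textbf{Step 2 (Linking (a) and (b)).} The cofiber sequence
\[
i_{\dr,*}i^!\CF\to \CF\to j_*j^*\CF,
\]
combined with the observations that $j^*$ trivially preserves coherence and that $i_{\dr,*}$ both preserves and reflects coherence, shows that $i^!$ preserves coherence iff $j_*$ does. Preservation and reflection by $i_{\dr,*}$ are checked by pulling back to a smooth atlas $p:Y\to \CY$ and applying the base change $p^!\circ i_{\dr,*}\simeq q_{\dr,*}\circ p_\CZ^!$, where $q:Z\to Y$ is the base-changed closed embedding of schemes; for the latter, $q_{\dr,*}$ is fully faithful and preserves/reflects the class of bounded complexes with coherent cohomology. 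Since truncativeness of $\CZ$ and co-truncativeness of $U$ are definitionally equivalent, it suffices to prove part (b).

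\textbf{Step 3 (Proof of (b)).} The task is to show $j_*$ preserves compactness iff $j_*$ preserves coherence. By \thmref{t:ccoh}(i) one has the inclusion $\Dmod(-)^c\subset \Dmod_{\on{coh}}(-)$; combined with the characterization of \cite[Prop.~9.2.3]{finiteness} that a coherent object is compact iff it is \emph{safe}, the equivalence reduces to verifying that $j_*$ preserves safety. For the ``$\Leftarrow$'' direction, given a compact $\CM\in\Dmod(U)^c$, the object $j_*(\CM)$ is coherent by hypothesis and safe by safety preservation, hence compact. For the ``$\Rightarrow$'' direction, one passes through Verdier duality: co-truncativeness (equivalent to compactness preservation by $j_*$ by \propref{p:truncativeness}) ensures $j_!$ is everywhere-defined and continuous, and the identification $\BD\circ j_*\simeq j_!\circ \BD$ on coherent objects reduces the question to preservation of coherence by $j_!$, which follows from the analogous safety argument applied to $j_!$.

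The main obstacle is the preservation of safety by $j_*$ (equivalently by $j_!$) for open embeddings between QCA stacks. This is the essential technical input, traced back to the intrinsic characterization of safety from \cite[Sect.~9.2]{finiteness} via pushforwards of constant D-modules to classifying stacks; once established, both directions of the equivalence, and hence both parts (a) and (b) of the proposition, follow from the scheme above.
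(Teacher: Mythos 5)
Your ``if'' direction coincides with the paper's: for $\CM$ compact, $i^!(\CM)$ (resp.\ $j_*(\CM)$) is coherent by hypothesis and safe because $f_{\dr,*}$ always preserves safety and $f^!$ does so for schematic $f$ (\cite[Lemma 10.4.2]{finiteness}), so ``compact $=$ coherent $+$ safe'' gives compactness. The genuine gap is in your Step 3, direction ``$\Rightarrow$''. Co-truncativeness only tells you that $j_*$ (equivalently $j_!$) sends \emph{compact} objects to compact, hence coherent, objects. But a coherent object of $\Dmod(U)$ need not be compact --- by \thmref{t:ccoh}(ii) this fails precisely when some automorphism group has non-finite reductive quotient, which is the situation of interest --- so nothing said so far controls $j_*(\CM)$ for such $\CM$. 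Neither of your two proposed fixes closes this. The Verdier-duality detour is circular: the identity $\bD^{\on{Verdier}}_\CY(j_*(\CM))\simeq j_!(\bD^{\on{Verdier}}_U(\CM))$ only makes sense once one of the two sides is known to be coherent, and establishing coherence of $j_!(\bD^{\on{Verdier}}_U(\CM))$ is exactly the same problem. And ``the analogous safety argument applied to $j_!$'' proves nothing here: safety enters only through the equation compact $=$ coherent $+$ safe, which converts coherence \emph{plus safety} into compactness; it provides no mechanism for propagating coherence alone through $j_!$ or $j_*$. You have also misidentified the essential technical input: preservation of safety by direct images is a quoted black box and serves only the ``if'' direction.

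The missing idea for the ``only if'' direction is the approximation statement \lemref{l:approx} (= \cite[Lemma 9.4.7(a)]{finiteness}): any $\CF\in\Dmod_{\on{coh}}(\CY)$ admits a map $\CF'\to\CF$ with $\CF'$ compact and cone in $\Dmod(\CY)^{<-n}$ for $n$ as large as one likes. Since $i^!$ (resp.\ $j_*$) is left t-exact of finite cohomological amplitude, $i^!(\CF)$ is a truncation of $i^!(\CF')$; the latter is coherent because $\CF'$ is compact, and truncations preserve coherence. This argument applies directly to the locally closed embedding $i$ and to $j_*$, so your Steps 1--2 are not needed. (A secondary soft spot in Step 2: to deduce coherence-preservation of $j_*$ from that of $i^!$ via the triangle $i_{\dr,*}i^!\CF\to\CF\to j_*j^*\CF$ you need every coherent object of $\Dmod(U)$ to be built out of restrictions of coherent objects on $\CY$, which you do not justify.)
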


\begin{proof}

To prove the ``if" implications in both (a) and (b) we will use the notion of \emph{safety}
from \cite[Sect. 9.2]{finiteness}, and the fact that for a morphism $f:\CY_1\to \CY_2$
between QCA stacks, the functor $f_{\dr,*}$ always preserves safety, and $f^!$ 
preserves safety if $f$ itself is safe (in particular, when $f$ is schematic); see 
\cite[Lemma 10.4.2]{finiteness}.

\medskip

Thus, the ``if" implications follow from the fact that ``compactness=coherence+safety'', 
see \cite[Proposition 9.2.3]{finiteness}.

\medskip

To prove the ``only if" implication in (a), we will use the following result (see \cite[Lem\-ma~9.4.7(a)]{finiteness}):

\begin{lem}  \label{l:approx}
For a QCA stack $\CY$, an object $\CF\in \Dmod_{\on{coh}}(\CY)$ and an integer $n$, there exists 
$\CF'\in \Dmod(\CY)^c$ and a map $\CF'\to \CF$, such that its cone lies in $\Dmod(\CY)^{< -n}$. \qed
\end{lem}


Note that the functor $i^!$ is left t-exact, and has a finite cohomological amplitude, say $k$. For $\CF\in \Dmod_{\on{coh}}(\CY)$,
which lies in $\Dmod(\CY)^{\geq -m}$, choose $\CF'$ as in \lemref{l:approx} with $n>k+m$. Consider the exact triangle
$$i^!(\CF')\to i^!(\CF)\to i^!(\CF''),$$
where $\CF'':=\on{Cone}(\CF'\to \CF)$. By construction, the maps
\begin{equation} \label{e:coh tr}
\tau^{\geq -m}(i^!(\CF'))\to \tau^{\geq -m}(i^!(\CF))\to i^!(\CF)
\end{equation}
are isomorphisms. 

\medskip

By assumption, $i^!(\CF')\in \Dmod(\CZ)^c\subset \Dmod_{\on{coh}}(\CZ)$.
Note also that the truncation functors preserve the subcategory $\Dmod_{\on{coh}}(-)$. Hence
$\tau^{\ge-m}(i^!(\CF'))\in \Dmod_{\on{coh}}(\CZ)$. Hence, \eqref{e:coh tr} implies that
$i^!(\CF)\in \Dmod_{\on{coh}}(\CZ)$, as desired.

\medskip

The ``only if" implication in (b) is proved similarly. 
\end{proof}

\ssec{Stability of truncativeness}

In this subsection $i:\CZ\hookrightarrow \CY$ denotes a locally closed embedding.

\sssec{Cartesian products}


\begin{lem}  \label{l:trunc and prod}
Suppose that a 
substack $\CZ\overset{i}\hookrightarrow \CY$ is truncative. Then for any QCA stack $\CX$, the substack
$\CZ\times \CX\hookrightarrow \CY\times \CX$
is also truncative.
\end{lem}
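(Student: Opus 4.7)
My plan is to invoke \propref{p:D on prod} to write
\[
\Dmod(\CY \times \CX) \simeq \Dmod(\CY) \otimes \Dmod(\CX), \qquad
\Dmod(\CZ \times \CX) \simeq \Dmod(\CZ) \otimes \Dmod(\CX),
\]
and to identify $(i \times \id_\CX)^!$ with $i^! \otimes \id_{\Dmod(\CX)}$ under these equivalences. The functor $i^!$ is continuous: by \propref{p:Lurie-duality} it is dual to $i_{\dr,*}$ when $i$ is closed, and the locally closed case follows from the factorization $i = j\circ i'$ by writing $i^! = (i')^!\circ j^*$. The two functors $(i \times \id_\CX)^!$ and $i^! \otimes \id_{\Dmod(\CX)}$ both send an external product $\CF_\CY \boxtimes \CF_\CX$ to $i^!(\CF_\CY) \boxtimes \CF_\CX$, and they agree on the subcategory compactly generated by such external products, which by \propref{p:D on prod} is all of $\Dmod(\CY)\otimes\Dmod(\CX)$.

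Once the identification is in place, the argument is immediate. By \thmref{t:compact generation qc} together with \corref{c:Karoubi}, the subcategory $\Dmod(\CY \times \CX)^c$ is Karoubi-generated by external products $\CF_\CY \boxtimes \CF_\CX$ with $\CF_\CY \in \Dmod(\CY)^c$ and $\CF_\CX \in \Dmod(\CX)^c$. On such a generator,
\[
(i \times \id_\CX)^!(\CF_\CY \boxtimes \CF_\CX) \simeq i^!(\CF_\CY) \boxtimes \CF_\CX,
\]
and $i^!(\CF_\CY)$ is compact by the assumed truncativeness of $\CZ$ in $\CY$; hence the external product is a compact object of $\Dmod(\CZ \times \CX)$. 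Since the class of objects sent to compacts by any continuous functor is closed under cones and retracts, $(i \times \id_\CX)^!$ preserves compactness on all of $\Dmod(\CY\times \CX)^c$, which is Definition~\ref{d:trunc2} for $\CZ \times \CX \hookrightarrow \CY \times \CX$. In the locally closed case one may equally well first reduce to the closed case via \lemref{l:trunct for lc via cl}, factoring $i \times \id_\CX$ as $(j \times \id_\CX) \circ (i' \times \id_\CX)$.

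The main technical point is the functorial identification $(i \times \id_\CX)^! \simeq i^! \otimes \id_{\Dmod(\CX)}$, which is not entirely formal from \propref{p:D on prod} alone: one needs compatibility of the K\"unneth equivalence with $!$-pullback along products of morphisms. This is built into the construction of the equivalence (via the limit presentation of $\Dmod$ and K\"unneth for schemes). If one wishes to avoid tensor-product bookkeeping entirely, one can use \propref{p:trunc and coh} and check smooth-locally on $\CY \times \CX$ that $(i \times \id_\CX)^!$ carries $\Dmod_{\on{coh}}(\CY \times \CX)$ into $\Dmod_{\on{coh}}(\CZ \times \CX)$, using a smooth atlas $S \times T \to \CY \times \CX$ with $S\to \CY$ and $T \to \CX$ smooth covers by quasi-compact schemes, where the analogous coherence statement for schemes is standard.
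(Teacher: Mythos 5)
Your proof is correct and follows the same route as the paper: the paper's argument is precisely to invoke \propref{p:D on prod} to identify $(i\times \on{id}_\CX)^!$ with $i^!\otimes \on{Id}_{\Dmod(\CX)}$ and observe that the latter preserves compactness. You have merely spelled out the "clearly preserves compactness" step (Karoubi-generation by external products of compacts) and flagged the compatibility of the K\"unneth equivalence with $!$-pullback, both of which the paper leaves implicit.
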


\begin{proof}

By \cite[Corollary 8.3.4]{finiteness}, for a pair of QCA stacks $\CX_1$ and $\CX_2$, the natural functor
$$\Dmod(\CX_1)\otimes \Dmod(\CX_2)\to \Dmod(\CX_1\times \CX_2)$$
is an equivalence. So the functor $(i\times \on{id}_\CX)^!:\Dmod(\CY\times \CX)\to\Dmod(\CZ\times \CX)$
identifies with the functor $i^!\otimes \on{Id}_{\Dmod(\CX)}\,$, which clearly preserves compactness.
%
\end{proof}

\sssec{Descent}


\begin{prop}  \label{p:truncativeness after smooth}
Let $\CZ\subset\CY$ be a locally closed substack, $f:\wt\CY\to \CY$  a smooth morphism, and 
$\wt\CZ\subset\CZ\underset{\CY}\times \wt\CY$ an open substack such that the resulting morphism 
$f':\wt\CZ\to \CZ$ is surjective. 
If the locally closed embedding $\wt{i}:\wt\CZ\hookrightarrow \wt\CY$ 
is truncative then so is\, $i:\CZ\hookrightarrow \CY$.
\end{prop}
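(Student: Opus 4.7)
The plan is to use the coherence characterization of truncativeness established in Proposition~\ref{p:trunc and coh}(a), which allows us to exploit the fact that coherence is smooth-local on the target — something that is not true for compactness. Concretely, I will show that $i^!$ sends $\Dmod_{\on{coh}}(\CY)$ into $\Dmod_{\on{coh}}(\CZ)$, and then invoke Proposition~\ref{p:trunc and coh}(a) to deduce truncativeness of $i$.

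First, consider the Cartesian square
\[
\begin{matrix}
\CZ\underset{\CY}\times\wt\CY & \xrightarrow{\bar f} & \wt\CY \\
\downarrow\bar p & & \downarrow f \\
\CZ & \xrightarrow{i} & \CY\, ,
\end{matrix}
\]
and note that $\wt i:\wt\CZ\hookrightarrow\wt\CY$ factors as the composite of the open embedding $\wt\CZ\hookrightarrow \CZ\underset{\CY}\times\wt\CY$ followed by $\bar f$, while $f':\wt\CZ\to\CZ$ is the composite of that same open embedding with $\bar p$. In particular $f'$ is smooth (being the restriction of $\bar p$, a base change of the smooth morphism $f$). By pure functoriality of $!$-pullback applied to the two equal compositions $f\circ\wt i=i\circ f':\wt\CZ\to\CY$, one has a canonical identification
\[
(f')^!\circ i^!\,\simeq\,\wt i{}^!\circ f^!
\]
of functors $\Dmod(\CY)\to\Dmod(\wt\CZ)$.

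Now take $\CF\in\Dmod_{\on{coh}}(\CY)$. Since $f$ is smooth, $f^!(\CF)\in\Dmod_{\on{coh}}(\wt\CY)$. By the hypothesis that $\wt i$ is truncative and by Proposition~\ref{p:trunc and coh}(a) applied to $\wt i$, we get $\wt i{}^!(f^!(\CF))\in\Dmod_{\on{coh}}(\wt\CZ)$, and hence $(f')^!(i^!(\CF))\in\Dmod_{\on{coh}}(\wt\CZ)$. Because $f'$ is smooth and surjective, coherence descends: an object $\CG\in\Dmod(\CZ)$ is coherent iff its $!$-pullback to some smooth surjective cover is coherent (which is immediate from the definition of coherence on stacks, since any smooth map from a quasi-compact scheme $Z\to\CZ$ can, after base change by $f'$ and a further choice of a smooth quasi-compact atlas of $Z\underset{\CZ}\times\wt\CZ$, be handled by fpqc descent of coherence for D-modules on schemes). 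Applying this to $\CG=i^!(\CF)$ yields $i^!(\CF)\in\Dmod_{\on{coh}}(\CZ)$, and a final application of Proposition~\ref{p:trunc and coh}(a) concludes that $i:\CZ\hookrightarrow\CY$ is truncative.

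The main conceptual step is the passage to the coherence criterion: on the level of compact objects the analogous descent statement fails (compactness is not smooth-local, cf.\ the discussion in Sect.~\ref{ss:trunc via coh}), so a direct proof on the level of $\Dmod(\CY)^c$ would be problematic. The only nontrivial input is smooth descent of coherence, which I expect to be the main technical point to verify carefully; everything else is base change and functoriality of $!$-pullback.
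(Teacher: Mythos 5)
Your proof is correct and follows essentially the same route as the paper's: reduce to coherence via Proposition~\ref{p:trunc and coh}(a), use smooth-surjective descent of coherence along $f'$, and conclude from the identification $f'{}^!\circ i^!\simeq \wt i{}^!\circ f^!$ together with the fact that $f^!$ and $\wt i{}^!$ preserve coherence. The only difference is that you spell out the descent step and the factorization through the open embedding $\wt\CZ\hookrightarrow\CZ\underset{\CY}\times\wt\CY$ more explicitly than the paper does.
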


\begin{proof}
By \propref{p:trunc and coh}(a), it suffices to show that 
$i^!$ sends $\Dmod_{\on{coh}}(\CY)$ to $\Dmod_{\on{coh}}(\CZ)$. 
The morphism $f'$ is smooth and surjective, so it suffices to show that the functor $f'{}^!\circ i^!$  preserves coherence.
But
$f'{}^!\circ i^!\simeq \wt{i}{}^!\circ f^!$, and each of the functors $\wt{i}^!$ and $f^!$ preserves coherence.
\end{proof}

\begin{cor}       \label{c:Zariski-local}
Let $\CZ\subset\CY$ be a locally closed substack.
Suppose that each $z\in\CZ$ has a Zariski neighborhood
$U\subset\CY$ such that $\CZ\cap U$ is truncative in $U$. Then $\CZ$ is truncative in $\CY$. \qed
\end{cor}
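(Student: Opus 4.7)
The plan is to reduce Corollary \ref{c:Zariski-local} to \propref{p:truncativeness after smooth} by taking a disjoint union of the Zariski neighborhoods to obtain a single smooth (but not surjective) covering.

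First, I would choose for each $z \in \CZ$ a Zariski open $U_z \subset \CY$ with $\CZ \cap U_z$ truncative in $U_z$. Since $\CY$ is QCA and locally of finite type (hence Noetherian), the locally closed substack $\CZ$ is quasi-compact, so finitely many open substacks $U_1, \ldots, U_n$ suffice to cover $\CZ$. I would then set
\[
\wt\CY := \bigsqcup_{i=1}^{n} U_i, \qquad \wt\CZ := \bigsqcup_{i=1}^{n} (\CZ \cap U_i),
\]
with the tautological morphism $f : \wt\CY \to \CY$. This $f$ is smooth because it is a disjoint union of open immersions, and $\wt\CZ$ identifies canonically with $\CZ \times_\CY \wt\CY$, so it is trivially an open substack of the fibre product. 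By construction $f' : \wt\CZ \to \CZ$ is surjective, since the $U_i$ cover $\CZ$.

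The next step is to check that $\wt i : \wt\CZ \hookrightarrow \wt\CY$ is truncative. Because $\Dmod$ sends finite disjoint unions of stacks to products of DG categories, the functor $\wt i{}^!$ decomposes as the product of the functors $(\CZ \cap U_i \hookrightarrow U_i)^!$; each of these preserves compactness by hypothesis (truncativeness of $\CZ \cap U_i$ in $U_i$), and compactness in a finite product of DG categories is checked factorwise. Hence $\wt i$ is truncative. Now \propref{p:truncativeness after smooth} applies directly and yields that $i : \CZ \hookrightarrow \CY$ is truncative.

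The only point that requires mild care is the passage from a Zariski cover to a single smooth map, but this is automatic via the disjoint union construction once one uses quasi-compactness of $\CZ$; essentially no obstacle arises, since \propref{p:truncativeness after smooth} does not demand that $f$ be surjective onto $\CY$, only that $f'$ be surjective onto $\CZ$. In fact, exactly the same argument shows more generally that truncativeness is smooth-local on $\CY$ along covers of $\CZ$, not only Zariski-local.
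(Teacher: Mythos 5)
Your proof is correct and is exactly the deduction the paper intends: the corollary is stated with \qed as an immediate consequence of \propref{p:truncativeness after smooth}, applied to the disjoint union of a finite subcover (which exists since $\CY$ is QCA, hence Noetherian, so $\CZ$ is quasi-compact). The verification that $\wt{i}^!$ preserves compactness factorwise on the finite disjoint union is the only point needing a remark, and you handle it correctly.
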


\begin{rem}
The converse to \propref{p:truncativeness after smooth} is false: truncativeness downstairs does \emph{not}
imply truncativeness upstairs (e.g., consider the embedding $\on{pt}/\BG_m\hookrightarrow \BA^1/\BG_m$ smoothly 
covered by $\on{pt}\hookrightarrow \BA^1$). However, the converse to \propref{p:truncativeness after smooth} does hold for \'etale schematic morphisms; this follows from \lemref{l:etale trunc} below.
\end{rem}

\begin{lem}       \label{l:proper_descent}
Suppose that in a Cartesian diagram
$$
\CD
\wt\CZ  @>{\wt{i}}>>  \wt\CY \\
@V{f'}VV    @VV{f}V   \\
\CZ  @>{i}>>  \CY
\endCD
$$
$f$ is schematic, proper and surjective, and $i$ a locally closed embedding.
If $\wt\CZ$ is truncative in $\wt\CY$ then $\CZ$
is truncative in~$\CY$.
\end{lem}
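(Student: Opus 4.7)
The plan is to follow exactly the same template as the proof of \propref{p:truncativeness after smooth}: reduce the assertion to a statement about preservation of coherence, bootstrap it through the Cartesian square using base change, and then descend from $\wt\CZ$ back to $\CZ$. The only real new ingredient will be a descent statement for coherence along the proper surjective morphism $f':\wt\CZ\to\CZ$ (which replaces the ``smooth surjectivity'' input used in \propref{p:truncativeness after smooth}).

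By \propref{p:trunc and coh}(a), truncativeness of $\CZ\hookrightarrow\CY$ is equivalent to $i^!$ sending $\Dmod_{\on{coh}}(\CY)$ into $\Dmod_{\on{coh}}(\CZ)$. So let $\CF\in\Dmod_{\on{coh}}(\CY)$. First I would observe that $f^!(\CF)\in\Dmod_{\on{coh}}(\wt\CY)$: since $f$ is schematic and proper, the functor $f^!$ admits $f_{\dr,*}$ as a left adjoint, $f_{\dr,*}$ preserves coherence (a standard property of proper pushforward, also invoked in \secref{sss:compcoh}), and then Verdier duality on the coherent subcategories of $\CY$ and $\wt\CY$ translates preservation of coherence from $f_{\dr,*}$ to $f^!$. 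Next, by base change for the Cartesian square, $\wt{i}^!\circ f^!\simeq (f')^!\circ i^!$, and applying the hypothesis that $\wt\CZ\hookrightarrow\wt\CY$ is truncative (again through \propref{p:trunc and coh}(a)), I conclude that
\[
(f')^!\bigl(i^!(\CF)\bigr)\simeq \wt{i}^!\bigl(f^!(\CF)\bigr)\in \Dmod_{\on{coh}}(\wt\CZ).
\]

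At this point everything reduces to the following key claim, which is the real content of the lemma: if $g:\CW_1\to\CW_2$ is a schematic, proper, surjective morphism between (locally) QCA stacks and $\CG\in\Dmod(\CW_2)$ is such that $g^!(\CG)\in\Dmod_{\on{coh}}(\CW_1)$, then $\CG\in\Dmod_{\on{coh}}(\CW_2)$. Granting this applied to $g=f'$ and $\CG=i^!(\CF)$ finishes the proof.

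The expected main obstacle is precisely this key claim — proving that coherence descends along schematic proper surjective morphisms. I would approach it as follows. Coherence is smooth-local on the target (see \secref{sss:compcoh}), so by choosing a smooth atlas $V\to\CW_2$ with $V$ an affine scheme, base-changing the entire situation, and using that $g$ is schematic so the pulled-back morphism $g_V:\CW_1\times_{\CW_2}V\to V$ is a proper surjective morphism of schemes, one reduces to the case where $\CW_2$ is an affine scheme and $g$ is a proper surjective morphism of schemes of finite type. In that classical setting, apply $g_{\dr,*}$ (which is defined since $g$ is proper and preserves coherence) to obtain that $g_{\dr,*}g^!(\CG)$ is coherent on $\CW_2$, and analyze the counit $g_{\dr,*}g^!(\CG)\to\CG$: because $g$ is a surjective proper morphism of finite-type schemes, this counit exhibits $\CG$ as being built out of coherent pieces in a controlled way (one can further reduce, by Chow's lemma or generic flatness, to the case where $g$ admits a section over a dense open, whence the counit is a split epimorphism on that open, and then iterate on the closed complement by Noetherian induction). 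Assembling these local coherence statements gives coherence of $\CG$ globally, completing the proof.
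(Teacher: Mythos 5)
Your opening reduction to coherence via \propref{p:trunc and coh}(a) is reasonable, but the argument breaks at its very first substantive step: for a schematic proper (even proper surjective) morphism $f$, the functor $f^!$ does \emph{not} preserve coherence. Your justification via Verdier duality is off: conjugation by $\BD^{\on{Verdier}}$ on coherent objects turns $f_{\dr,*}$ into $f_!$ and $f^!$ into $f^*_\dr$, not $f_{\dr,*}$ into $f^!$ (the duality of \propref{p:Lurie-duality} is duality of functors between the dual \emph{categories}, a different operation). For a counterexample, take $f:\on{pt}\sqcup \BP^1\to \BP^1$ sending the first component to a point $p$ and the second identically; this is schematic, proper and surjective, yet on the first component $f^!$ is $i_p^!$, which sends the coherent object $\CalD_{\BP^1}$ to an infinite-dimensional, hence non-coherent, object. (The same failure occurs for less artificial maps such as blow-ups.) This is exactly the phenomenon behind \secref{sss:non-truncative sch}, and it is why the paper invokes coherence-preservation of $f^!$ only for \emph{smooth} $f$, as in \propref{p:truncativeness after smooth}. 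Separately, your ``key claim'' (that coherence descends along $g^!$ for $g$ schematic proper surjective) is left essentially unproved: the retract argument where a section exists is fine, but the reduction to that case and, above all, the concluding ``Noetherian induction on the closed complement'' do not go through, because coherence of $\CG$ cannot be assembled from its $!$-restrictions to the strata of a stratification --- again precisely because $i^!$ along a closed substack does not preserve coherence.

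The paper's proof runs in the opposite direction and avoids all of this. Since $f$ is schematic, proper and surjective, $f^!$ is conservative (\cite[Lemma 5.1.6]{finiteness}), hence the essential image of $f_{\dr,*}$ generates $\Dmod(\CY)$, and therefore by \corref{c:Karoubi} the objects $f_{\dr,*}(\CF)$ with $\CF\in\Dmod(\wt\CY)^c$ Karoubi-generate $\Dmod(\CY)^c$. It then suffices to check that $i^!\circ f_{\dr,*}$ preserves compactness; by base change this composition is $f'_{\dr,*}\circ \wt{i}^!$, where $\wt{i}^!$ preserves compactness by hypothesis and $f'_{\dr,*}$ does so by properness. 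In short: instead of descending the \emph{conclusion} along $f'$ (which forces you into the hard and, as sketched, broken descent-of-coherence claim), descend the \emph{generators} along $f$. You should rewrite the proof along these lines.
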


\begin{proof}
First, 
by  \cite[Lemma 5.1.6]{finiteness}, the functor $f^!$ is conservative. Hence, the
essential image of $f_{\dr,*}$ generates $\Dmod(\CY)$. Hence, by \corref{c:Karoubi},
the essential image of $\Dmod(\wt\CY)^c$ under $f_{\dr,*}$ Karoubi-generates
$\Dmod(\CY)^c$. Therefore, it is sufficient to show that the functor
$i^!\circ f_{\dr,*}$ preserves compactness. But 
$i^!\circ f_{\dr,*}\simeq f'_{\dr,*}\circ \wt{i}^!$, the functor 
$\wt{i}^!$ preserves compactness by assumption, and $f'_{\dr,*}$ preserves
compactness by properness (it has a continuous right adjoint given by $(f')^!$). 
\end{proof}

\sssec{Quasi-finite base change}

\begin{lem}  \label{l:etale trunc}
Suppose that $f:\wt\CY\to \CY$ is \'etale and schematic. 
If a locally closed embedding
$i:\CZ\hookrightarrow \CY$
is truncative then so is $\wt{i}:\CZ\underset{\CY}\times \wt\CY\mono \wt\CY$.
\end{lem}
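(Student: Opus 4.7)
My plan is to use \propref{p:trunc and coh}(a) to recast truncativeness as preservation of coherence, and then to apply base change along the given Cartesian square together with a summand argument that transfers the statement from $\CZ\subset\CY$ upstairs.

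First I would reduce to the case when $f$ is surjective by replacing $\CY$ with the open image $f(\wt\CY)$; the substack $\CZ\cap f(\wt\CY)$ remains truncative in $f(\wt\CY)$ by transitivity (\lemref{l:trans of trunc}) combined with the fact that open embeddings are automatically truncative. Then $f$, and likewise its base change $f'$, is étale, schematic, quasi-compact and surjective.

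Since $f$ is étale, $f^!=f^*_\dr$ is continuous with continuous right adjoint $f_{\dr,*}$, so $f^!$ preserves compactness. The key additional ingredient I will use is the continuous left adjoint $f_!$ of $f^!$ (the shriek pushforward for an étale schematic morphism), together with the base change isomorphism
\[
i^!\circ f_! \;\simeq\; f'_!\circ \wt{i}^!
\]
for our Cartesian square. By \propref{p:existence of adjoint} applied to the adjoint pair $f_!\dashv f^!$, the functor $f_!$ preserves compactness (since $f^!$ is continuous); likewise for $f'_!$. Given $\wt\CF\in\Dmod(\wt\CY)^c$, the object $f_!(\wt\CF)$ is compact on $\CY$, whence $i^!(f_!(\wt\CF))$ is compact on $\CZ$ by the truncativeness hypothesis; the base change then forces $f'_!(\wt{i}^!(\wt\CF))$ to be compact on $\CZ$.

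To conclude, I will exploit that for an étale quasi-compact schematic $f'$ the fibres are finite, and hence the diagonal $\Delta:\wt\CZ\to\wt\CZ\times_\CZ\wt\CZ$ is both open (étale = smooth and unramified) and closed, giving a decomposition $\wt\CZ\times_\CZ\wt\CZ=\wt\CZ\sqcup R$ for a complementary open-and-closed substack $R$. Computing $f'^!\circ f'_!(\wt\CG)$ via base change along this square realizes every $\wt\CG\in\Dmod(\wt\CZ)$ as a direct summand of $f'^!(f'_!(\wt\CG))$. Applying this with $\wt\CG=\wt{i}^!(\wt\CF)$, and noting that $f'^!$ preserves compactness, we see that $\wt{i}^!(\wt\CF)$ is a summand of a compact object, hence compact. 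The main obstacle is to justify rigorously the existence and functoriality of $f_!$ and the base change identity above in the stacky setting; as an alternative, one could reduce via \corref{c:Zariski-local} to the local cases where $f$ is either an open immersion (handled by \lemref{l:karoubi-generation} as in the proof of \propref{p:truncativeness}) or a finite étale morphism (where $f_!=f_{\dr,*}$ is available via proper base change, cf.\ \lemref{l:proper_descent}).
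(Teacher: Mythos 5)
There is a genuine gap, and it sits at the center of your argument: for a general étale schematic morphism $f$, the left adjoint $f_!$ of $f^!$ is \emph{not} an everywhere-defined continuous functor. Already for $f=j$ an open immersion, the existence of $j_!$ on all of $\Dmod(U)$ is exactly the condition of co-truncativeness (\propref{p:truncativeness}(iii)), which fails for essentially every open immersion of schemes that is not also closed (see \secref{sss:non-truncative sch}). So the adjoint pair $f_!\dashv f^!$ you invoke is not available, and the base-change identity and the compactness of $f_!(\wt\CF)$ built on it collapse. Two further problems: the decomposition $\wt\CZ\times_\CZ\wt\CZ=\wt\CZ\sqcup R$ needs the diagonal to be closed as well as open, i.e.\ $f'$ separated, which is not assumed; and the fallback via \corref{c:Zariski-local} does not rescue the argument, since a general étale schematic morphism is not Zariski-locally an open immersion or a finite étale map, and \corref{c:Zariski-local} localizes the pair $(\wt\CZ,\wt\CY)$, not the morphism $f$. (Your preliminary reduction to surjective $f$ also quotes \lemref{l:trans of trunc} in the wrong direction --- it yields truncativeness of $\CZ\cap f(\wt\CY)$ in $\CY$, not in the open image; what is needed there is the converse direction of \lemref{l:trunct for lc via cl}. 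That reduction is in any case unnecessary.)

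The efficient route is the opposite one: instead of pushing forward from $\wt\CY$ with a nonexistent $f_!$, pull back compact generators from $\CY$. Since $f_{\dr,*}$ is conservative, the essential image of $\Dmod(\CY)^c$ under $f^!\simeq f^*_\dr$ Karoubi-generates $\Dmod(\wt\CY)^c$ by \corref{c:Karoubi}, so it suffices to show that $\wt{i}^!\circ f^!$ preserves compactness. By base change this composite is $(f')^!\circ i^!$; now $i^!$ preserves compactness by hypothesis, and $(f')^!\simeq (f')^*_\dr$ preserves compactness because it is the left adjoint of the continuous functor $f'_{\dr,*}$. This is the paper's proof; the only place where the étale hypothesis enters is the conservativity of $f_{\dr,*}$ and the identification $f^!\simeq f^*_\dr$.
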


\begin{proof}
The functor $f_{\dr,*}:\Dmod(\wt\CY)\to \Dmod(\CY)$ is conservative. So by \corref{c:Karoubi},
the essential image of $\Dmod(\CY)^c$ under $f^*_\dr\simeq f^!$ Karoubi-generates $\Dmod(\wt\CY)^c$.
So it is enough to show that $\wt{i}^!\circ f^!$ preserves compactness. However, 
$\wt{i}^!\circ f^!\simeq f'{}^!\circ i^!$. Now, $i^!$ preserves compactness by assumption, and $f'{}^!$ preserves 
compactness because it is isomorphic to $(f')^*_\dr$, which is the left adjoint of a continuous functor, namely, $f'_{\dr,*}$.
\end{proof}

\begin{lem}  \label{l:base change trunc lc}
If $f:\wt\CY\hookrightarrow \CY$ is a locally closed embedding and a locally closed substack 
$\CZ\hookrightarrow \CY$
is  truncative then so is $\CZ\underset{\CY}\times \wt\CY\hookrightarrow\wt \CY$.
\end{lem}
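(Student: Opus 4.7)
The plan is to factor $f$ as an open embedding composed with a closed embedding and handle each case separately. Setting $\CY_0 := \CY\setminus(\overline{\wt\CY}\setminus\wt\CY)$, I will write $f = j\circ f''$ where $j:\CY_0\hookrightarrow\CY$ is open and $f'':\wt\CY\hookrightarrow\CY_0$ is closed. The base change of $i$ along $f$ then factors through two Cartesian squares, so it suffices to treat base change along $j$ and along $f''$ in turn.

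For the open step, base-changing $i$ along $j$ produces a locally closed embedding $i_0:\CZ\cap\CY_0\hookrightarrow\CY_0$; since $j$ is \'etale and schematic, \lemref{l:etale trunc} applies and gives that $i_0$ is truncative in $\CY_0$.

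For the closed step, base-changing $i_0$ along $f''$ yields $\wt{i}:\CZ'\hookrightarrow\wt\CY$, together with $f''':\CZ'\to\CZ\cap\CY_0$ the base change of $f''$, which is again a closed embedding. To verify that $\wt{i}$ is truncative, I will check the coherence criterion of \propref{p:trunc and coh}(a): that $\wt{i}^!$ preserves coherence. Given $\CM\in\Dmod_{\on{coh}}(\wt\CY)$, properness of $f''$ gives $(f'')_{\dr,*}(\CM)\in\Dmod_{\on{coh}}(\CY_0)$; truncativeness of $i_0$ combined with \propref{p:trunc and coh}(a) then places $(i_0)^!((f'')_{\dr,*}(\CM))$ in $\Dmod_{\on{coh}}(\CZ\cap\CY_0)$; the proper base change isomorphism $(i_0)^!\circ (f'')_{\dr,*}\simeq (f''')_{\dr,*}\circ\wt{i}^!$ identifies this with $(f''')_{\dr,*}(\wt{i}^!(\CM))$. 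Finally, since $f'''$ is a closed embedding the unit $\Id\to (f''')^!\circ(f''')_{\dr,*}$ is an isomorphism, and $(f''')^!$ preserves coherence, so applying $(f''')^!$ recovers $\wt{i}^!(\CM)$ as a coherent object, as desired.

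The main obstacle will be the closed step: the argument depends on proper base change for D-modules on QCA stacks together with the reflection of coherence through the fully faithful $(f''')_{\dr,*}$. The open step is immediate from \lemref{l:etale trunc}, and the factorization of $f$ is purely formal.
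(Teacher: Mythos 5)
Your overall architecture is sound and the open step is exactly the paper's (via \lemref{l:etale trunc}), but there is a genuine error in the last step of the closed case: the claim that ``$(f''')^!$ preserves coherence'' is false. For a closed embedding $g:\CX'\hookrightarrow\CX$ the functor $g^!$ does \emph{not} preserve coherence in general --- e.g.\ the $!$-fiber of $\CalD_{\BA^1}$ at $0\in\BA^1$ is infinite-dimensional --- and indeed if this claim were true, your own argument would show that \emph{every} closed substack is truncative, contradicting \secref{sss:non-truncative sch}. The correct way to finish is the one you gesture at in your closing sentence: $(f''')_{\dr,*}$ \emph{reflects} coherence because it is t-exact and conservative (this is exactly the paper's \lemref{l:fin coh}), so coherence of $(f''')_{\dr,*}(\wt{i}^!(\CM))$ already implies coherence of $\wt{i}^!(\CM)$, with no need to apply $(f''')^!$ at all. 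With that one-line repair the closed step goes through.

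For comparison: the paper's own proof of this lemma is shorter and stays at the level of compactness. For closed $f$ it uses that $f_{\dr,*}$ is fully faithful and continuous, hence an object $\CF\in\Dmod(\wt\CY)$ is compact if and only if $f_{\dr,*}(\CF)$ is; combined with the base-change isomorphism $i^!\circ f_{\dr,*}\simeq f'_{\dr,*}\circ\wt{i}^!$ this immediately gives the closed case, and the locally closed case follows by the factorization you describe. Your coherence-based argument is essentially the paper's proof of the \emph{stronger} statement \propref{p:stab under fin} (base change along a finite schematic morphism), specialized to a closed embedding; it is a perfectly valid alternative once the final justification is corrected, but it relies on the heavier criterion \propref{p:trunc and coh}(a) where the paper's version of this particular lemma does not need it.
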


\begin{proof}
If $f$ is an open embedding the statement holds by Lemma~\ref{l:etale trunc}.  If $f$ is a closed embedding
 use the fact that an object $\CF\in \Dmod(\wt\CY)$ is compact if and only
if $f_{\dr,*}(\CF)\in \Dmod(\CY)$ is; this follows from the fact that the functor $f_{\dr,*}$ is 
fully faithful and continuous.
\end{proof}

Lemma~\ref{l:base change trunc lc} for a closed embedding $f$ admits the following generalization.

\begin{prop}  \label{p:stab under fin}
Let $f:\wt\CY\to \CY$ be a finite schematic morphism. If a locally closed embedding
$i:\CZ\hookrightarrow \CY$
is truncative then so is\, 
$\wt{i}:\CZ\underset{\CY}\times \wt\CY\overset{}\hookrightarrow \wt\CY$.
\end{prop}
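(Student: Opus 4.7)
My plan is to show that $\wt i^!$ preserves coherence and conclude via \propref{p:trunc and coh}(a). Given $\CF \in \Dmod_{\on{coh}}(\wt\CY)$, base change along the Cartesian square (which is valid because $f$, and hence $f'$, is proper schematic) yields the identification
\[
f'_{\dr,*}\wt i^!(\CF) \simeq i^! f_{\dr,*}(\CF).
\]
The right-hand side is coherent on $\CZ$: indeed, $f_{\dr,*}$ preserves coherence because $f$ is finite (in particular proper schematic), and $i^!$ preserves coherence by \propref{p:trunc and coh}(a) applied to the truncative embedding $i$. Hence $f'_{\dr,*}\wt i^!(\CF)$ is coherent on $\CZ$.

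The remaining task is to deduce coherence of $\wt i^!(\CF)$ itself, i.e.\ to establish that $f'_{\dr,*}$ \emph{reflects} coherence for the finite schematic morphism $f'$. This is a soft property of D-module pushforward along a finite morphism. It can be verified smooth-locally on $\CZ$: picking a smooth atlas $Y \to \CZ$ by an affine scheme, $f'$ pulls back to a finite morphism of schemes $\wt Y \to Y$, and at the level of schemes a D-module on $\wt Y$ is coherent if and only if its pushforward to $Y$ is, essentially because $f'_*\CO_{\wt\CZ}$ is a coherent $\CO_\CZ$-algebra and the $D_{\wt\CZ}$-module structure is determined by the underlying $\CO$-module structure together with the action of the compatible differential operators. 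Granted this, $\wt i^!(\CF)$ is coherent, and \propref{p:trunc and coh}(a) yields truncativeness of $\wt i$.

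The main obstacle is precisely this reflection-of-coherence claim. While classical in spirit, it is not explicitly recorded in the preceding sections of the paper and requires a separate smooth-descent argument. An alternative route one might try is to factor $f$ smooth-locally on $\CY$ as $\wt\CY \hookrightarrow \CY \times \BA^n \to \CY$ (a closed embedding into a trivialized affine bundle composed with the projection), and then combine \lemref{l:trunc and prod} (to handle the smooth projection part) with \lemref{l:base change trunc lc} (to handle the closed embedding). This strategy avoids the reflection lemma entirely, but it in turn needs a smooth descent statement for truncativeness, which goes beyond the Zariski-local \corref{c:Zariski-local} available so far.
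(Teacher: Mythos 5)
Your argument is exactly the paper's: reduce via \propref{p:trunc and coh}(a) to showing $\wt i^!$ preserves coherence, use the base-change identification $f'_{\dr,*}\circ \wt i^! \simeq i^!\circ f_{\dr,*}$, and then invoke reflection of coherence under pushforward along a finite schematic morphism. The step you flag as the main obstacle is precisely the paper's \lemref{l:fin coh}, which it dispatches by noting that $f'_{\dr,*}$ is t-exact and conservative for finite schematic morphisms; your smooth-local reduction to the scheme case is a valid way to check the same fact, so there is no real gap.
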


To prove the proposition, we need the following lemma.
\begin{lem}  \label{l:fin coh}
Let $g:\CX'\to \CX$ be a finite schematic morphism. If $\CF'\in \Dmod(\CX')$ is such that
$g_{\dr,*}(\CF')\in \Dmod(\CX)$ is coherent then $\CF'$ is coherent.
\end{lem}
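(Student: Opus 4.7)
I would prove Lemma~\ref{l:fin coh} in three stages: a smooth-descent reduction to the case of a finite morphism of affine schemes, a reduction via $t$-exactness to D-modules in the heart, and a finiteness transfer along $g$ at the abelian level.

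First, by the very definition of coherence on a stack (\secref{sss:compcoh}), coherence is a smooth-local property: it is tested via $!$-pullback along smooth maps from quasi-compact schemes. Choose a smooth surjection $Z\to \CX$ with $Z$ affine. Because $g$ is finite and schematic, the fiber product $Z\underset{\CX}\times \CX'$ is again an affine scheme, and the base-change $g_Z:Z\underset{\CX}\times \CX' \to Z$ is a finite morphism of (quasi-compact) schemes. Smooth base change for $g_{\dr,*}$ carries the coherence of $g_{\dr,*}(\CF')$ to coherence of $(g_Z)_{\dr,*}$ of the pullback of $\CF'$, while the desired conclusion (coherence of $\CF'$) also tests smooth-locally. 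We are thus reduced to the statement for a finite morphism of quasi-compact schemes.

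Next, a finite schematic morphism is affine and has fiber dimension zero, so $g_{\dr,*}$ is both $t$-exact and conservative. Combined with the hypothesis that $g_{\dr,*}(\CF')$ is coherent (hence bounded with coherent cohomology), conservativity forces $\CF'$ to be bounded, and $t$-exactness identifies $g_{\dr,*}(H^i(\CF'))\simeq H^i(g_{\dr,*}(\CF'))$, which is coherent for each $i$. It therefore suffices to handle the case where $\CF'$ lies in $\Dmod(\CX')^\heartsuit$, i.e., to show that a single $\CD_{\CX'}$-module whose pushforward is a coherent $\CD_\CX$-module is itself a coherent $\CD_{\CX'}$-module.

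The final step invokes the finiteness of $\CO_{\CX'}$ as an $\CO_\CX$-module under the finite morphism $g$. This $\CO$-level finiteness upgrades to the fact that restriction of scalars along $g$ preserves and reflects finite generation of D-modules; applied to $\CF'$ this gives the desired coherence. I expect the main obstacle to lie precisely in this last step: making the passage from $\CO$-coherence of $g_*\CO_{\CX'}$ to the corresponding statement for the transfer bimodule $\CD_{\CX'\to \CX}$, so that coherence for D-modules reflects across $g_{\dr,*}$. In the crystal framework of \cite{Crys} this requires some care to set up, but follows from the coherence of the transfer bimodule from both sides in the finite case.
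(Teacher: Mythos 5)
Your argument is essentially the paper's: the proof given there is the one-liner that $g_{\dr,*}$ is t-exact and conservative, which is exactly the engine of your second stage, and your extra scaffolding (the smooth-local reduction to a finite morphism of affine schemes, and the heart-level coherence transfer that you correctly identify as the one step needing genuine input about finite morphisms) is sound. For what it is worth, that last step can be closed without invoking transfer bimodules: write a D-module in the heart as the filtered union of its finitely generated submodules and use exactness, continuity and conservativity of $g_{\dr,*}$ to see that coherence of the pushforward forces the union to stabilize.
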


\begin{proof}
Follows immediately from the fact that the functor $g_{\dr,*}$ is t-exact
and conservative.
\end{proof}

\begin{proof}[Proof of Proposition~\ref{p:stab under fin}]
We have to show that the functor $\wt{i}^!$ preserves coherence. Applying Lemma~\ref{l:fin coh} to the
morphism $f':\CZ\underset{\CY}\times \wt\CY\to\CZ$, we see that it suffices to prove that the composition
$f'_{\dr,*}\circ \wt{i}^!$ preserves coherence. But $f'_{\dr,*}\circ \wt{i}^!\simeq i^!\circ f_{\dr,*}$ and
each of the functors $i^!$ and $f_{\dr,*}$ preserves coherence. 
\end{proof}

\begin{rem}
One can combine \lemref{l:etale trunc} and \propref{p:stab under fin} to the following
statement: the assertion of \propref{p:stab under fin} continues to hold when $f$ 
is a quasi-finite compactifiable morphism. 
\end{rem}

\ssec{Intersections and unions of truncative substacks}

\begin{lem}  \label{l:intersect trunc}
If $\CZ_1$ and $\CZ_2$ are locally closed truncative substacks of $\CY$, then so is $\CZ_1\cap \CZ_2$.
\end{lem}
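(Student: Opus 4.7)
The plan is to prove this by combining two previously established stability properties: transitivity of truncativeness (\lemref{l:trans of trunc}) and its stability under locally closed base change (\lemref{l:base change trunc lc}).

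First I would factor the embedding of $\CZ_1 \cap \CZ_2$ into $\CY$ through $\CZ_1$ as
$$\CZ_1 \cap \CZ_2 \hookrightarrow \CZ_1 \hookrightarrow \CY.$$
The second map is truncative by hypothesis, so by \lemref{l:trans of trunc} it suffices to show that the first map $\CZ_1 \cap \CZ_2 \hookrightarrow \CZ_1$ is truncative.

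Now I observe that $\CZ_1 \cap \CZ_2 = \CZ_2 \underset{\CY}\times \CZ_1$, so the embedding $\CZ_1 \cap \CZ_2 \hookrightarrow \CZ_1$ is precisely the base change of the locally closed embedding $\CZ_2 \hookrightarrow \CY$ along the locally closed embedding $\CZ_1 \hookrightarrow \CY$. Since $\CZ_2$ is truncative in $\CY$ by hypothesis, \lemref{l:base change trunc lc} applies and gives that $\CZ_1 \cap \CZ_2 \hookrightarrow \CZ_1$ is truncative. This completes the proof.

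There is no real obstacle; the statement is essentially a formal consequence of the two stability lemmas already in place. The only thing to verify is the compatibility of the factorization, namely that the intersection $\CZ_1 \cap \CZ_2$, viewed as a locally closed substack of $\CY$, is the same as its realization as the fiber product $\CZ_2 \underset{\CY}\times \CZ_1$ viewed as a locally closed substack of $\CZ_1$ — and this is immediate from the definitions.
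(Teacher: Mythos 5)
Your argument is correct and coincides with the paper's own proof: it likewise applies \lemref{l:base change trunc lc} to see that $\CZ_1\cap\CZ_2$ is truncative in $\CZ_1$, and then concludes by the transitivity statement \lemref{l:trans of trunc}.
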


\begin{proof}
By \lemref{l:base change trunc lc}, $\CZ_1\cap \CZ_2$ is truncative in $\CZ_1$. Now, the assertion follows
from \lemref{l:trans of trunc}.
\end{proof}


\begin{prop}  \label{p:trunc and strat}
Suppose that a locally closed substack $\CZ\subset\CY$ is equal to the union of (possibly intersecting)
locally closed substacks
$\CZ_i$, $i=1,...,n$. If each $\CZ_i$ is truncative in $\CY$, then so is $\CZ$.
\end{prop}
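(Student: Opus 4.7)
The plan is to proceed by induction on $n$, with the base $n=1$ being trivial. For the inductive step, I would first reduce, without loss of generality, to the case where $\CZ$ is closed in $\CY$: replace $\CY$ by $\CY':=\CY\setminus(\bar\CZ\setminus\CZ)$, applying $\lemref{l:trunct for lc via cl}$ to see that truncativeness of $\CZ$ in $\CY$ and in $\CY'$ are equivalent, and $\lemref{l:base change trunc lc}$ to see that each $\CZ_i$ remains truncative in $\CY'$.

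Next, I choose one of the strata (say $\CZ_1$) and cut $\CZ$ by its closure: set $V:=\bar{\CZ_1}^\CZ$ (closed in $\CZ$) and $U:=\CZ\setminus V$ (open in $\CZ$). Since $U$ is disjoint from $\CZ_1$, it lies in $\bigcup_{k\ge 2}\CZ_k$, and for each $k\ge 2$ the piece $U\cap\CZ_k$ is open in $\CZ_k$; open embeddings being automatically truncative, $\lemref{l:trans of trunc}$ applied to $U\cap\CZ_k\hookrightarrow\CZ_k\hookrightarrow\CY$ shows $U\cap\CZ_k$ is truncative in $\CY$. Thus $U$ is covered by $n-1$ locally closed truncative substacks, and the induction hypothesis gives that $U$ is truncative in $\CY$. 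For the closed piece $V$, the same argument does not immediately reduce the count, because $V$ is still covered by $n$ pieces, namely $\CZ_1$ (open in $V$) and the $V\cap\CZ_k$ for $k\ge 2$ (closed in $\CZ_k$), and truncativeness does not pass to arbitrary closed subsubstacks. I would handle this by iterating the closed-open decomposition inside $V$: pick a stratum $\CZ_i$ whose closure in $V$ is strictly smaller than $V$, and run Noetherian induction on the underlying closed substack; in the degenerate situation where every $\CZ_i$ is dense in $\CZ$, each such $\CZ_i$ is already open in $\CZ$ and the conclusion is immediate.

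Once both $V$ and $U$ are established to be truncative in $\CY$, $\lemref{l:base change trunc lc}$ applied to $i:\CZ\hookrightarrow\CY$ gives that $V$ is truncative in $\CZ$, so the open embedding $\beta:U\hookrightarrow\CZ$ (the complement of $V$) is co-truncative in $\CZ$. For any $\CF\in\Dmod(\CY)^c$, the standard recollement triangle in $\Dmod(\CZ)$
\[
\alpha_{\dr,*}\,\alpha^!(i^!\CF)\to i^!\CF\to \beta_*\,\beta^*(i^!\CF)\to [1],
\]
where $\alpha:V\hookrightarrow\CZ$ is the closed embedding, exhibits $i^!\CF$ as an extension of compact objects: $\alpha^!i^!\CF=i_V^!\CF$ is compact by truncativeness of $V$ in $\CY$, $\beta^*i^!\CF=i_U^!\CF$ is compact by truncativeness of $U$ in $\CY$, the functor $\alpha_{\dr,*}$ preserves compactness since $\alpha$ is a closed embedding, and $\beta_*$ preserves compactness by the co-truncativeness of $U$. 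Hence $i^!\CF$ is compact, so $i^!$ preserves compactness, which is precisely the truncativeness of $\CZ$ in $\CY$.

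The main obstacle in this plan is the step of establishing truncativeness of the closed piece $V$ in the inductive step. This is precisely the point where the inductive count does not drop automatically, and I expect that the correct way to make the induction terminate is either a Noetherian induction on a dimension-like invariant of the closure, or a preliminary refinement of the stratification $\{\CZ_i\}$ into finitely many locally closed truncative pieces each of which is either open or closed in its ambient stratum, so that at every iteration one of the resulting pieces has a strictly simpler closed-open presentation.
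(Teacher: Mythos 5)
Your final recollement step (the two-piece case: $V$ closed in $\CZ$, $U$ its open complement, both truncative in $\CY$ implies $\CZ$ truncative) is correct and is essentially the paper's Lemma~\ref{l:key_case}, proved there via transitivity of co-truncativeness rather than via the triangle $\alpha_{\dr,*}\alpha^!\to\on{id}\to\beta_*\beta^*$. The genuine gap is in reducing the general case to that two-piece case. Your decomposition cuts $\CZ$ along $V:=\bar{\CZ_1}{}^{\CZ}$, and, as you yourself observe, $V$ is then covered by $\CZ_1$ together with the pieces $V\cap\CZ_k$, which are only \emph{closed} in the truncative $\CZ_k$ and hence not known to be truncative in $\CY$ (closed substacks of truncative substacks are essentially never truncative -- see Sect.~\ref{sss:non-truncative sch}). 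Your proposed repair does not get off the ground: inside $V$ the stratum $\CZ_1$ is dense by construction, so there is no stratum ``whose closure in $V$ is strictly smaller than $V$'' to restart the recursion on, and the Noetherian induction has no well-defined decreasing invariant because at each stage you would again be facing a union of pieces that are not known to be truncative.

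The missing idea is to exploit that truncativeness is Zariski-local (Corollary~\ref{c:Zariski-local}): fix $z\in\CZ$, choose $i$ with $z\in\CZ_i$, and shrink $\CY$ to a Zariski neighborhood of $z$ in which \emph{$\CZ_i$ itself} (and $\CZ$) becomes closed. Then the open complement of $\CZ_i$ in $\CZ$ is exactly $\CZ-\CZ_i=\bigcup_{j\ne i}\bigl(\CZ_j-(\CZ_i\cap\CZ_j)\bigr)$, a union of $n-1$ substacks each open in some $\CZ_j$ and hence truncative; the induction hypothesis applies to it inside $\CY-\CZ_i$, and transitivity upgrades this to truncativeness in $\CY$. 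The closed piece of the cut is now $\CZ_i$ itself, truncative by hypothesis, and your two-piece argument (or Lemma~\ref{l:key_case}) finishes. In other words, localizing replaces the problematic closure $\bar{\CZ_i}{}^{\CZ}$ by $\CZ_i$, which is precisely what makes the inductive count drop.
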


First, let us prove the following particular case of Proposition~\ref{p:trunc and strat}.

\begin{lem}   \label{l:key_case}
Let $\CZ'\hookrightarrow \CZ\hookrightarrow \CY$
be closed embeddings. 
If $\CZ'$ and $\CZ-\CZ'$ are truncative in $\CY$ then so is $\CZ$.
\end{lem}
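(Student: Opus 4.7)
The plan is to verify that $i^!\colon\Dmod(\CY)\to\Dmod(\CZ)$ preserves compactness by analyzing $i^!\CF$ via the recollement on $\CZ$ corresponding to the closed-open decomposition $\CZ=\CZ'\cup U$, where $U:=\CZ-\CZ'$. Write $a\colon\CZ'\hookrightarrow\CZ$ and $b\colon U\hookrightarrow\CZ$ for the natural inclusions, and let $\ell\colon U\hookrightarrow\CY$ denote the locally closed embedding; note that $i\circ a=i'$ and $i\circ b=\ell$. Consequently $a^!\circ i^!=(i')^!$, and since $b^*=b^!$ for the open embedding $b$, also $b^*\circ i^!=\ell^!$.

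For any $\CF\in\Dmod(\CY)$, applying the standard recollement triangle $a_{\dr,*}\,a^!\to\Id\to b_*\,b^*$ on $\Dmod(\CZ)$ to the object $i^!\CF$ produces the exact triangle
\[
a_{\dr,*}\bigl((i')^!\CF\bigr)\to i^!\CF\to b_*\bigl(\ell^!\CF\bigr).
\]
The task then reduces to showing that both outer terms are compact whenever $\CF\in\Dmod(\CY)^c$. For the left term this is immediate: $(i')^!\CF\in\Dmod(\CZ')^c$ by the truncativeness of $\CZ'$ in $\CY$, and $a_{\dr,*}$ preserves compactness because $a$ is a closed embedding. For the right term, $\ell^!\CF\in\Dmod(U)^c$ by the truncativeness of $\ell$, so it remains to check that $b_*$ preserves compactness of objects coming from $\Dmod(U)^c$.

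The preservation of compactness by $b_*$ is equivalent to $U$ being co-truncative in $\CZ$, i.e., to $\CZ'$ being truncative in $\CZ$, and this is the one point that is not a pure diagram chase; I expect it to be the main obstacle, but it is supplied by \lemref{l:base change trunc lc} applied to the closed embedding $\CZ\hookrightarrow\CY$: since $\CZ'\subset\CY$ is truncative, so is $\CZ'\cap\CZ=\CZ'$ inside $\CZ$. With this in place, both outer terms of the triangle lie in $\Dmod(\CZ)^c$, hence so does $i^!\CF$, which gives the truncativeness of $\CZ$ in $\CY$ via criterion (i) of \propref{p:truncativeness}. Alternatively, one can run the entire argument on $\Dmod_{\on{coh}}$ using \propref{p:trunc and coh}, which bypasses any subtlety about safety: $a_{\dr,*}$ is t-exact and preserves coherence, while the coherence criterion for $b_*$ is again \propref{p:trunc and coh} applied to $\CZ'\hookrightarrow\CZ$.
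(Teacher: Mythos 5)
Your argument is correct, but it is not the route the paper takes, so a comparison is in order. The paper's proof never touches $\Dmod(\CZ)$: it passes to the complementary opens $\CY-\CZ\subset\CY-\CZ'\subset\CY$, observes that $\CY-\CZ'$ is co-truncative in $\CY$ (hypothesis on $\CZ'$) and that $\CY-\CZ$ is co-truncative in $\CY-\CZ'$ (hypothesis on $\CZ-\CZ'$ together with \lemref{l:trunct for lc via cl}), and concludes by the transitivity of co-truncativeness, which is immediate from criterion (ii) of \propref{p:truncativeness} since a composition of compactness-preserving $*$-pushforwards preserves compactness. You instead verify criterion (i) directly by a d\'evissage of $i^!\CF$ along the recollement triangle $a_{\dr,*}a^!\to\Id\to b_*b^*$ on $\Dmod(\CZ)$; all the individual steps check out: $a^!\circ i^!=(i')^!$ and $b^*\circ i^!=\ell^!$ are correct, the hypotheses give compactness of $(i')^!\CF$ and $\ell^!\CF$, $a_{\dr,*}$ preserves compactness as a left adjoint of a continuous functor, and the one genuinely non-formal input — that $b_*$ preserves compactness, i.e.\ that $\CZ'$ is truncative in $\CZ$ — is correctly supplied by \lemref{l:base change trunc lc} for the closed embedding $\CZ\hookrightarrow\CY$. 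The trade-off is that your proof needs this extra base-change input and an explicit exact triangle, whereas the paper's is a two-line transitivity argument; on the other hand, your version makes visible exactly how $i^!\CF$ is built from its restrictions to $\CZ'$ and $\CZ-\CZ'$, and your closing remark that the whole d\'evissage can be run in $\Dmod_{\on{coh}}$ via \propref{p:trunc and coh} (using t-exactness of $a_{\dr,*}$) is a legitimate variant that sidesteps safety considerations entirely.
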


\begin{proof}
Consider the open substacks $\CY-\CZ\subset \CY-\CZ'\subset \CY$. The fact that
$\CZ'$ is truncative in $\CY$ means, by definition, that $\CY-\CZ'$ is co-truncative 
in $\CY$. By \lemref{l:trunct for lc via cl}, the fact that $\CZ-\CZ'$ is truncative in $\CY$ implies that 
$\CZ-\CZ'$ is truncative in $\CY-\CZ'$, i.e., that $\CY-\CZ$ is co-truncative in $\CY-\CZ'$.
But the relation of co-truncativeness is transitive: this is clear if one uses property~(ii) from
Proposition~\ref{p:truncativeness} as a definition of co-truncativeness.
So $\CY-\CZ$ is co-truncative in $\CY$, i.e., $\CZ$ is truncative in $\CY$.
\end{proof}

\begin{proof}[Proof of Proposition~\ref{p:trunc and strat}] We proceed by induction on $n$. 

\medskip

By \corref{c:Zariski-local}, it suffices to show that each $z\in\CZ$ has a Zariski neighborhood
$U\subset\CY$ such that $\CZ\cap U$ is truncative in $U$. Choose $i$ so that $z\in\CZ_i\,$. After replacing
$\CZ$ by an open neighborhood of $z$, one can assume that $\CZ_i$ and $\CZ$ are closed in $\CY$.

\medskip

Writing $\CZ -\CZ_i$ as a union of the substacks $\CZ_j -(\CZ_i\cap \CZ_j)$, $j\ne i$, and applying the 
induction assumption, we see that $\CZ -\CZ_i$ is truncative in $\CY -\CZ_i$ and therefore in $\CY$. 
It remains to apply Lemma~\ref{l:key_case} to $\CZ_i\hookrightarrow \CZ\hookrightarrow \CY$.
\end{proof}

\ssec{Truncativeness and co-truncativeness for non quasi-compact stacks} \label{ss:non-quasicompact}

Now suppose that $\CY$ is \emph{locally} QCA (but not necessarily quasi-compact).

\sssec{}

We give the following definitions:


\begin{defn} \hfill

\smallskip

\noindent{\em(i)}
A locally closed substack $\CZ\hookrightarrow \CY$ is said to be truncative if for every open quasi-compact
substack $\oCY\subset \CY$ the intersection $\CZ\cap \oCY$ is truncative in $\oCY$.

\smallskip

\noindent{\em(ii)}
An open substack $U\subset \CY$ is said to be co-truncative if for every open quasi-compact
substack $\oCY\subset \CY$ the intersection $U\cap \oCY$ is co-truncative in $\oCY$.
\end{defn}

\sssec{}

Clearly, a closed substack $\CZ$ is truncative if and only if its complementary open
is co-truncative.

\medskip

In addition:

\begin{lem} \label{l:union_cotruncatives}
If open substacks $U_1,U_2\subset \CY$ are co-truncative then so is $U_1\cup U_2$. 
\end{lem}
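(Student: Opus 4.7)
The plan is to reduce to the QCA case and then pass to complements, invoking the intersection stability already established in \lemref{l:intersect trunc}.

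More precisely, first I would unpack the definition: to show that $U_1 \cup U_2$ is co-truncative in $\CY$, it suffices, by definition, to show that for every quasi-compact open substack $\oCY \subset \CY$ the intersection $(U_1 \cup U_2) \cap \oCY = (U_1 \cap \oCY) \cup (U_2 \cap \oCY)$ is co-truncative in $\oCY$. Since $U_1 \cap \oCY$ and $U_2 \cap \oCY$ are co-truncative in the QCA stack $\oCY$ by hypothesis, this reduces the problem to the case where $\CY$ itself is QCA and $U_1, U_2 \subset \CY$ are co-truncative opens.

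In this QCA case, set $\CZ_i := \CY \setminus U_i$ for $i=1,2$. By definition of co-truncativeness, each $\CZ_i$ is a truncative closed substack of $\CY$. Now $\CY \setminus (U_1 \cup U_2) = \CZ_1 \cap \CZ_2$, and by \lemref{l:intersect trunc} the intersection of two truncative locally closed substacks is again truncative. Hence $\CZ_1 \cap \CZ_2$ is truncative in $\CY$, which by definition means that its open complement $U_1 \cup U_2$ is co-truncative in $\CY$.

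There is essentially no obstacle here: once the non-quasi-compact co-truncativeness is reduced to the QCA case via the definition, the statement is the De Morgan dual of \lemref{l:intersect trunc}, which was already proved by combining \lemref{l:base change trunc lc} with the transitivity \lemref{l:trans of trunc}. The only minor thing to notice is that the formation of complements and intersections commutes with restriction to the quasi-compact open $\oCY$, which is what makes the reduction clean.
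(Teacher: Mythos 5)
Your proof is correct and is exactly the argument the paper has in mind: its one-line proof ("this immediately follows from \lemref{l:intersect trunc}") is precisely your passage to complements $\CZ_i=\CY\setminus U_i$ after the routine reduction to a quasi-compact $\oCY$. You have merely made explicit the reduction step and the De Morgan duality that the paper leaves implicit.
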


\begin{proof}
This immediately follows from \lemref{l:intersect trunc}.
\end{proof}

\sssec{}

As in \lemref{l:tautological}, it is easy to see that $U$ is co-truncative if and only if the functor $j_!$,
left adjoint to $j^*$, is defined. 

\medskip

This formally implies that if $i:\CZ\hookrightarrow \CY$ is truncative, 
then the functor $i^*_\dr$, left adjoint to $i_{\dr,*}$, is also defined. 

\sssec{}

Finally, we note:

\begin{lem} \label{l:! ff}
For a co-truncative open quasi-compact substack
$U\overset{j}\hookrightarrow \CY$ the functor $$j_!:\Dmod(U)\to \Dmod(\CY)$$ is fully faithful.
\end{lem}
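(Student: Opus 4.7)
The plan is to reduce this statement to Lemma~\ref{l:*!}(a), which is already at our disposal. Recall that for an arbitrary open embedding $j:U\hookrightarrow\CY$ the functor $j_!$ is a priori only partially defined as the left adjoint of $j^*$; full faithfulness of $j_!$ (on its domain of definition) is equivalent to the statement that the unit $\Id\to j^*\circ j_!$ is an isomorphism. So I just need two inputs: (1) $j_!$ is defined on all of $\Dmod(U)$, and (2) the unit is an isomorphism wherever $j_!$ is defined.

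For input (1), I would invoke the discussion in Sect.~\ref{ss:non-quasicompact} (the analogue of Lemma~\ref{l:tautological} in the non-quasi-compact setting), which says precisely that co-truncativeness of $U$ is equivalent to $j_!$ being defined everywhere on $\Dmod(U)$. In the situation of the lemma $U$ is even quasi-compact, so if one prefers one could also argue directly: the condition ``every quasi-compact open intersection is co-truncative'' applied to $\oCY=U\cup V$ for a varying quasi-compact open $V\subset \CY$ shows that $j_!$ extends consistently over every such $\oCY$ and hence (by \lemref{l:Dmod on nonqc via open}) over $\CY$.

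For input (2), I would appeal directly to Lemma~\ref{l:*!}(a): whenever $j_!(\CF_U)$ is defined, the canonical map $\CF_U\to j^*(j_!(\CF_U))$ is an isomorphism. The proof there is formal: $j^*\circ j_!$ is the (partially defined) left adjoint of $j^*\circ j_*$, the unit $\Id\to j^*\circ j_!$ is obtained by adjunction from the counit $j^*\circ j_*\to\Id$, and the latter is an isomorphism since $j_*$ is fully faithful.

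Combining (1) and (2), the unit $\Id_{\Dmod(U)}\to j^*\circ j_!$ is an isomorphism of functors on all of $\Dmod(U)$, which is exactly the statement that $j_!$ is fully faithful. I do not anticipate any obstacle: the content of the lemma is essentially a bookkeeping observation packaging the co-truncativeness hypothesis with the already-proven Lemma~\ref{l:*!}(a), and all genuine work has been done earlier in the paper (in \propref{p:truncativeness} and the non-quasi-compact extension of \lemref{l:tautological}).
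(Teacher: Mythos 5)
Your proposal is correct and is essentially the paper's own proof, which simply cites Lemma~\ref{l:*!}(a): full faithfulness of the left adjoint $j_!$ is the statement that the unit $\Id\to j^*\circ j_!$ is an isomorphism, and that is exactly what Lemma~\ref{l:*!}(a) provides once co-truncativeness guarantees $j_!$ is defined on all of $\Dmod(U)$. Your extra care about input~(1) is fine but already built into the definition of co-truncativeness in Sect.~\ref{ss:non-quasicompact}.
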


\begin{proof}
Follows from \lemref{l:*!}(a).
\end{proof} 

\section{Truncatable stacks}   \label{s:truncatable}

Let $\CY$ be an algebraic stack which is locally QCA. In this setting
the notions of truncativeness and co-truncativeness were introduced in
\secref{ss:non-quasicompact}.

\ssec{The notion of truncatibility}    \label{ss:truncatable}
We will now formulate a condition on $\CY$ called ``truncatibility".
According to \propref{p:truncatable cg} below, it
implies that the category $\Dmod(\CY)$  is compactly generated.

\begin{defn}  \label{defn:truncatable}
The stack $\CY$ is said to be truncatable if it can be covered by open quasi-compact substacks
that are co-truncative.
\end{defn}

\sssec{}

By Lemma~\ref{l:union_cotruncatives}, we can rephrase Definition \ref{defn:truncatable} as follows:

\begin{lem}  \label{l:cofinal}
A stack $\CY$ is truncatable if and only if every open quasi-compact substack is contained in one which
is co-truncative. Equivalently, $\CY$ is truncatable if and only if the sub-poset of co-truncative open 
quasi-compact substacks in $\CY$ is cofinal among all open quasi-compact substacks.
\end{lem}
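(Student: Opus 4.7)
The plan is to argue the two directions separately, with the non-trivial content concentrated on pushing Lemma~\ref{l:union_cotruncatives} from pairs to arbitrary finite unions via induction, and then combining this with quasi-compactness.

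First I would dispatch the easy implication: if every open quasi-compact substack of $\CY$ is contained in a co-truncative open quasi-compact substack, then in particular any point of $\CY$ lies in some quasi-compact open, which is in turn contained in a co-truncative quasi-compact open, so the co-truncative quasi-compact opens cover $\CY$, which is the Definition~\ref{defn:truncatable}.

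For the converse, suppose $\CY$ is truncatable, so that we have a collection $\{V_\alpha\}$ of co-truncative quasi-compact opens with $\CY = \bigcup_\alpha V_\alpha$. Given any open quasi-compact substack $U \subset \CY$, quasi-compactness of $U$ lets me extract a finite subcover: there exist $\alpha_1, \dots, \alpha_n$ such that $U \subset V_{\alpha_1} \cup \cdots \cup V_{\alpha_n}$. By a straightforward induction on $n$ using Lemma~\ref{l:union_cotruncatives} (the case $n=2$ is precisely that lemma, and the inductive step applies the lemma to $V_{\alpha_1} \cup \cdots \cup V_{\alpha_{n-1}}$ and $V_{\alpha_n}$), the finite union $V_{\alpha_1} \cup \cdots \cup V_{\alpha_n}$ is again a co-truncative open quasi-compact substack. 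This witnesses that $U$ is contained in a co-truncative quasi-compact open, as required.

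Finally, the reformulation in terms of cofinality is a tautology: the sub-poset of co-truncative quasi-compact opens is cofinal in the poset of all quasi-compact opens precisely when every quasi-compact open is contained in a co-truncative one. The only substantive input of the proof is the passage from the pairwise statement of Lemma~\ref{l:union_cotruncatives} to finite unions, which is purely formal; there is no real obstacle.
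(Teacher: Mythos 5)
Your proof is correct and is exactly the argument the paper intends: the paper simply cites Lemma~\ref{l:union_cotruncatives}, and you have filled in the routine details (finite subcover by quasi-compactness, induction from pairwise unions to finite unions, and the tautological cofinality rephrasing).
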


\sssec{Notation}   \label{sss:Ctrnk}
The poset of co-truncative open quasi-compact substacks $U\subset\CY$ is denoted
by $\on{Ctrnk}(\CY)$; we will often consider this poset as a category.  Let $\on{Ctrnk}(\CY)^{\on{op}}$ denote the opposite poset (or category).
Lemma~\ref{l:union_cotruncatives} implies that $\on{Ctrnk}(\CY)$ is \emph{filtered}. 

The next statement immediately follows from \lemref{l:cofinal}.

\begin{cor} \label{c:cofinal}
If $\CY$ is truncatable then the natural restriction functor
$$\Dmod(\CY)\to \underset{U\in \on{Ctrnk}(\CY)^{\on{op}}}{\underset{\longleftarrow}{lim}}\, \Dmod(U)$$
is an equivalence.
\end{cor}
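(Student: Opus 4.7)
The plan is to reduce the statement to \lemref{l:Dmod on nonqc via open} by a cofinality argument. Recall that \lemref{l:Dmod on nonqc via open} already provides the equivalence
$$\Dmod(\CY)\iso \underset{U\subset \CY}{\underset{\longleftarrow}{lim}}\, \Dmod(U),$$
where the limit is taken over the poset $\on{Qc}(\CY)^{\on{op}}$ of all open quasi-compact substacks of $\CY$. So it suffices to show that the restriction map
$$\underset{U\in \on{Qc}(\CY)^{\on{op}}}{\underset{\longleftarrow}{lim}}\, \Dmod(U) \to \underset{U\in \on{Ctrnk}(\CY)^{\on{op}}}{\underset{\longleftarrow}{lim}}\, \Dmod(U)$$
is an equivalence of DG categories.

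First, I would invoke \lemref{l:cofinal}, which asserts that the inclusion $\on{Ctrnk}(\CY)\hookrightarrow \on{Qc}(\CY)$ is cofinal; equivalently, the inclusion of opposite posets $\on{Ctrnk}(\CY)^{\on{op}}\hookrightarrow \on{Qc}(\CY)^{\on{op}}$ is coinitial. Indeed, \lemref{l:cofinal} is a direct consequence of \lemref{l:union_cotruncatives}: given any quasi-compact open $U_0\subset \CY$, truncatibility produces a co-truncative quasi-compact open containing each point of $U_0$, and by quasi-compactness finitely many of them cover $U_0$; their union is co-truncative by \lemref{l:union_cotruncatives} and contains $U_0$.

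Next, I would apply the general categorical principle (e.g.\ \cite[Proposition 4.1.1.8]{Lu1}) that limits in any $\infty$-category are invariant under restriction along coinitial functors. Since limits in $\StinftyCat_{\on{cont}}$ agree with limits in $\StinftyCat$ (as noted in \secref{ss:DGlim}), this applies directly to our diagram $U \rightsquigarrow \Dmod(U)$ and yields the desired equivalence.

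There is essentially no hard step here: once \lemref{l:cofinal} is in hand, the corollary is a formal consequence of the general behavior of limits under cofinal restriction combined with \lemref{l:Dmod on nonqc via open}. The only thing to verify carefully is the direction of the cofinality/coinitiality (since we pass to opposite categories to go from covers to the diagram of categories), but this is bookkeeping.
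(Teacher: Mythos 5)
Your proof is correct and is exactly the argument the paper intends: the corollary is stated as an immediate consequence of \lemref{l:cofinal} (cofinality of $\on{Ctrnk}(\CY)$ among quasi-compact opens), combined with \lemref{l:Dmod on nonqc via open} and the invariance of limits under cofinal restriction. You have merely spelled out the details that the paper leaves implicit.
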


\begin{prop}  \label{p:truncatable cg}
If $\CY$ is truncatable then the category $\Dmod(\CY)$ is compactly generated.
\end{prop}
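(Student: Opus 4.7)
My plan is to exhibit an explicit set of compact generators for $\Dmod(\CY)$ built from the co-truncative covering, and verify generation using the limit description of \corref{c:cofinal}. For each $U \in \on{Ctrnk}(\CY)$ the quasi-compact open substack $U$ is QCA, so $\Dmod(U)$ is compactly generated by \thmref{t:compact generation qc}. Co-truncativeness of $U \subset \CY$ (see \secref{ss:non-quasicompact}) guarantees that the left adjoint $j_{U,!}: \Dmod(U) \to \Dmod(\CY)$ to the restriction functor $j_U^*$ is defined on the entire category $\Dmod(U)$; and since $j_{U,!}$ admits a continuous right adjoint (namely $j_U^*$), it automatically preserves compactness.

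I would then propose
$$\CS := \{j_{U,!}(\CF_U) : U \in \on{Ctrnk}(\CY),\ \CF_U \in \Dmod(U)^c\} \subset \Dmod(\CY)^c$$
as a generating set of compact objects. To check generation in the sense of \secref{sss:compact_gen_tion}, suppose $\CG \in \Dmod(\CY)$ satisfies $\Hom_{\Dmod(\CY)}(s,\CG) = 0$ for every $s \in \CS$. By the $(j_{U,!}, j_U^*)$-adjunction this translates to $\Hom_{\Dmod(U)}(\CF_U, j_U^*\CG) = 0$ for all $U \in \on{Ctrnk}(\CY)$ and all $\CF_U \in \Dmod(U)^c$; since each $\Dmod(U)$ is compactly generated, this forces $j_U^*\CG = 0$ for every $U \in \on{Ctrnk}(\CY)$. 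By truncatibility and \corref{c:cofinal}, the restriction functor
$$\Dmod(\CY) \to \underset{U \in \on{Ctrnk}(\CY)^{\on{op}}}{\underset{\longleftarrow}{lim}}\, \Dmod(U)$$
is an equivalence, and therefore $\CG = 0$.

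I do not foresee a genuine obstacle here: truncatibility was designed precisely to make this argument go through, supplying both the everywhere-defined functors $j_{U,!}$ (which upgrade compact generators of each $\Dmod(U)$ to compact objects of $\Dmod(\CY)$) and a cofinal filtered family of $U$'s whose restrictions jointly detect vanishing. A more structural alternative would be to invoke \propref{p:limit and colimit a} to rewrite the limit in \corref{c:cofinal} as a colimit
$$\underset{U \in \on{Ctrnk}(\CY)}{\underset{\longrightarrow}{colim}}\, \Dmod(U)$$
in $\StinftyCat_{\on{cont}}$ along the $j_{U,U',!}$-transition functors, and then apply \corref{c:general not filtered} directly; this route makes the list $\CS$ of compact generators appear as the tautological insertions $\on{ins}_U(\CF_U)$.
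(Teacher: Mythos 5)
Your proof is correct and is essentially the paper's own argument: the compact objects $j_{U,!}(\CF_U)$ for $U$ co-truncative quasi-compact and $\CF_U\in\Dmod(U)^c$ are taken as generators, right-orthogonality is transferred by the $(j_{U,!},j_U^*)$-adjunction to each $\Dmod(U)$, and \corref{c:cofinal} finishes the job. The only cosmetic difference is that you justify compactness of $j_{U,!}(\CF_U)$ directly by adjunction rather than by citing \propref{p:tautological compact}, and your colimit reformulation is exactly what the paper records afterwards in \corref{c:pres as colimit}.
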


\begin{proof}
Let $U\overset{j}\hookrightarrow \CY$ be a co-truncative open quasi-compact substack and 
$\CF_U\in \Dmod(U)^c$. By \propref{p:tautological compact}, the object $j_!(\CF_U)\in \Dmod(\CY)$ 
(which is well-defined by the co-truncativeness assumption) is compact. It suffices to show
that such objects generate $\Dmod(\CY)$. In other words, we have to show that if $\CF\in \Dmod(\CY)$
is right-orthogonal to all such objects, then $\CF=0$.

\medskip

For a given $U$, the fact that $\CF$ is right-orthogonal to all $j_!(\CF_U)$ as above is equivalent,
by adjunction, to the fact that $j^*(\CF)$ is right-orthogonal to $\Dmod(U)^c$. Since $\Dmod(U)$
is compactly generated, this implies that $j^*(\CF)=0$. By \corref{c:cofinal}, this implies that 
$\CF=0$.

\end{proof}

\sssec{}

As was mentioned in the introduction, we 
use \propref{p:truncatable cg} to deduce 
the main result of this paper (namely, 
the compact generation of $\Dmod(\Bun_G)$)
from the following result:

\begin{thm} \label{t:main truncatable}
Let $G$ be a connected reductive group and $X$  a smooth complete connected curve over $k$.
Let $\Bun_G$ denote the stack of $G$-bundles on $X$.
Then $\Bun_G$ is truncatable.
\end{thm}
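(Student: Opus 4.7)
The plan is to show truncatability of $\Bun_G$ by exhibiting an explicit cofinal family of co-truncative quasi-compact open substacks. Following the hint after the statement of \thmref{t:main}, I would take this family to be $\{\Bun_G^{(\leq\theta)}\}$ as $\theta$ ranges over dominant rational coweights satisfying $\langle\theta,\check\alpha_i\rangle\geq 2g-2$ for every simple root $\check\alpha_i$. By standard reduction theory (to be recalled in \secref{s:reduction theory}), each $\Bun_G^{(\leq\theta)}$ is quasi-compact, and as $\theta$ grows in the dominant chamber these exhaust $\Bun_G$, so the entire problem reduces to the following: for $\theta$ satisfying \eqref{e:what_is_deep} and for any larger $\theta'$, the open embedding $\Bun_G^{(\leq\theta)}\cap\Bun_G^{(\leq\theta')}\hookrightarrow\Bun_G^{(\leq\theta')}$ is co-truncative inside the QCA stack $\Bun_G^{(\leq\theta')}$.

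Equivalently, I would show that the complementary closed substack
\[
\Bun_G^{(\leq\theta')}\setminus\Bun_G^{(\leq\theta)}\;=\;\bigcup_{\substack{\theta''\leq_G\theta'\\ \theta''\not\leq_G\theta}}\Bun_G^{(\theta'')}
\]
is truncative, where $\Bun_G^{(\theta'')}$ is the Harder-Narasimhan-Shatz stratum of bundles of HN coweight exactly $\theta''$. By \propref{p:trunc and strat}, it suffices to show that each stratum $\Bun_G^{(\theta'')}$ (or, more realistically, an appropriate grouping of strata coming from a common parabolic) is truncative in $\Bun_G^{(\leq\theta')}$. Since truncativeness is local on $\Bun_G^{(\leq\theta')}$ and preserved under refinements, I would use the book-keeping device of \secref{s:compl red} to replace each HN-Shatz stratum by its natural presentation: if $P$ is the parabolic associated to the type of $\theta''$ with Levi $M$, then the HN stratum corresponds to the open substack of semistable $M$-bundles of degree $\theta''$ sitting inside an appropriate locally closed substack built from $\Bun_P$ via the projections $\Bun_P\to\Bun_G$ and $\Bun_P\to\Bun_M$.

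The core reason each such locally closed substack is truncative is the contraction principle of \propref{p:2contraction principle}: the center $Z(M)$ of the Levi acts on the nilpotent radical $\mathfrak{n}(P)$ with strictly positive weights, and therefore, given a fixed semistable $M$-bundle, scaling by the cocharacter corresponding to $\theta$ contracts the fiber of $\Bun_P\to\Bun_M$ onto the zero section (the split reduction). The model case is $\{0\}/\mathbb{G}_m\hookrightarrow\mathbb{A}^n/\mathbb{G}_m$ from \secref{sss:ex of vector space}, applied fiberwise to the bundle whose fibers are the relevant $H^0$ and $H^1$ of the nilpotent radical twists. The depth condition \eqref{e:what_is_deep} is precisely what guarantees the cohomological vanishing needed to make this work: for each simple root $\check\alpha_i$, the weight-space bundles in $\mathfrak{n}(P)$ twisted by a bundle of appropriate degree have the right vanishing to make the contraction principle applicable.

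The hard part, and the reason Sections~\ref{s:delo}--\ref{s:constr contr} in the paper occupy so many pages, will be the combinatorial bookkeeping that organizes the HN-Shatz strata into larger locally closed substacks on which an honest contracting $\mathbb{G}_m$-action exists. A single stratum is generally not contractive, but a cleverly chosen union is --- one must group together strata whose associated parabolics are comparable, and verify that after grouping the resulting substack carries a global contracting cocharacter whose fixed locus is exactly the semistable Levi stratum. Two organizing principles are available: a direct case-by-case argument stratum by parabolic (done in \secref{ss:SL 2} for $SL_2$ as a warm-up and extended in \secref{s:delo}), or a more conceptual one using the Langlands retraction of the space of rational coweights onto the dominant cone (Appendix~\ref{s:Langlands}), whose rationality properties produce a coarsening of the HN-Shatz stratification all of whose strata are automatically contractive. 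Either route ultimately reduces \thmref{t:main truncatable} to a finite check involving Riemann-Roch on $X$ and the inequality \eqref{e:what_is_deep}.
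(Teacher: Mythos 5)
Your proposal follows essentially the same route as the paper: cover $\Bun_G$ by the substacks $\Bun_G^{(\leq\theta)}$ with $\theta$ satisfying \eqref{e:what_is_deep}, reduce via \propref{p:trunc and strat} to the truncativeness of suitable unions of Harder--Narasimhan--Shatz strata indexed by $P$-admissible sets, and establish that truncativeness by the contraction principle together with the cohomological vanishing forced by the depth condition (with the Langlands-retraction coarsening as the alternative organization, exactly as in Appendix~\ref{s:Langlands}). The one imprecision is that the smooth chart on which the contracting $\BA^1$-action actually lives is $\Bun_{P^-}$ for the \emph{opposite} parabolic (smooth over $\Bun_G$ by the $H^1$-vanishing of $\fn(P)$), not $\Bun_P$ itself, whose map to $\Bun_G$ restricts to an isomorphism onto the stratum rather than a smooth cover of a neighborhood.
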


The proof for any connected reductive group $G$ will be given in Sect.~\ref{s:delo}. 
But its main idea is the same as in the easy case $G=SL_2\,$, which is considered
separately  in \secref{ss:SL 2}. 
 
\sssec{}

In Sects. \ref{ss:category as colimit}-\ref{ss:miracoli} below we discuss some general
properties of the category $\Dmod(\CY)$ for a truncatable stack $\CY$.

\ssec{Presentation as a colimit}  \label{ss:category as colimit}

In this subsection we fix $\CY$ to be a truncatable locally QCA stack.
We will use the notation $\on{Ctrnk}(\CY)$ from \secref{sss:Ctrnk}.

\sssec{}  \label{sss:colimit descr}

 Note that for  a morphism $U_1\overset{j_{1,2}}\hookrightarrow U_2$ in $\on{Ctrnk}(\CY)$, the pullback functor 
$$\phi_{U_2,U_1}:=j_{1,2}^*:\Dmod (U_2)\to \Dmod (U_1)$$ 
admits a left adjoint, $\psi_{U_1,U_2}:=(j_{1,2})_!:\Dmod (U_1)\to \Dmod (U_2)$. 

\medskip

Hence, we are in the situation of Sect.~\ref{sss:let us recall} with $I=\on{Ctrnk}(\CY)$. 
In fact, we are in the more restrictive (and possibly more understandable) situation of Sects.~\ref{sss:understandable} and \ref{ss:Colimcompgen}.

\sssec{}

Combining the assertion of \corref{c:cofinal} with that of \propref{p:limit and colimit a}, we obtain:

\begin{cor} \label{c:pres as colimit}
The category $\Dmod(\CY)$ is canonically equivalent to 
$$\underset{U\in \on{Ctrnk}(\CY)}{\underset{\longrightarrow}{colim}}\, \Dmod(U),$$
where the functor $\on{Ctrnk}(\CY)\to \on{DGCat}_{\on{cont}}$ is
$$U\mapsto \Dmod(U),\quad (U_1\overset{j_{1,2}}\hookrightarrow U_2) \mapsto (j_{1,2})_!.$$
Under this equivalence, for a co-truncative open quasi-compact substack
$U_0\overset{j_0}\hookrightarrow \CY$, the functor
$$\on{ins}_{U_0}:\Dmod(U_0)\to \underset{U\in \on{Ctrnk}(\CY)}{\underset{\longrightarrow}{colim}}\, \Dmod(U)\simeq \Dmod(\CY),$$
is $(j_0)_!$.
\end{cor}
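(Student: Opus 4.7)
The plan is to obtain this corollary as a direct application of \propref{p:limit and colimit a} to the functor $\Psi : \on{Ctrnk}(\CY) \to \StinftyCat_{\on{cont}}$ that sends $U \mapsto \Dmod(U)$ and a morphism $j_{1,2} : U_1 \hookrightarrow U_2$ (in $\on{Ctrnk}(\CY)$) to $\psi_{U_1,U_2} := (j_{1,2})_!$.

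First I would verify the hypotheses of \propref{p:limit and colimit a}. I need to check that (a) $(j_{1,2})_!$ is everywhere defined on $\Dmod(U_1)$, and (b) it admits a continuous right adjoint. For (a), I would observe that if $U_1$ is co-truncative in $\CY$, then taking $\oCY = U_2$ in the definition from \secref{ss:non-quasicompact} shows that $U_1 = U_1 \cap U_2$ is co-truncative in $U_2$, so $(j_{1,2})_!$ exists as an everywhere-defined continuous functor. For (b), by construction the continuous right adjoint is $\phi_{U_2,U_1} := j_{1,2}^*$.

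Next, \propref{p:limit and colimit a} identifies
$$\underset{U \in \on{Ctrnk}(\CY)}{\underset{\longrightarrow}{colim}}\, \Psi
\;\simeq\; \underset{U \in \on{Ctrnk}(\CY)^{\on{op}}}{\underset{\longleftarrow}{lim}}\, \Phi,$$
where $\Phi$ sends $U \mapsto \Dmod(U)$ and $j_{1,2} \mapsto j_{1,2}^*$. By \corref{c:cofinal}, the right-hand side is canonically equivalent to $\Dmod(\CY)$, where the evaluation functor $\on{ev}_{U_0}$ out of the limit corresponds to the restriction functor $j_0^*$. The characterization of the equivalence in \propref{p:limit and colimit a} is that $\on{ins}_{U_0}$ is left adjoint to $\on{ev}_{U_0}$; the left adjoint of $j_0^*$ is by definition $(j_0)_!$, which is defined because $U_0$ is co-truncative. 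This gives the asserted identification $\on{ins}_{U_0} \simeq (j_0)_!$.

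There is essentially no obstacle here: the entire content has been packaged into \propref{p:limit and colimit a} and \corref{c:cofinal}, and the only thing to check is that the input functor $\Psi$ makes sense, which is a short consequence of the definition of truncatability. If one prefers the more elementary route of \secref{ss:Colimcompgen}, one may instead note that each $\Dmod(U)$ is compactly generated by \thmref{t:compact generation qc} and that $(j_{1,2})_!$ preserves compactness (being the left adjoint of a continuous functor), so the equivalence can be checked directly at the level of compact objects via \eqref{e:ind-colim}.
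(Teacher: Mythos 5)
Your proposal is correct and follows exactly the paper's own route: the paper obtains the corollary by combining \corref{c:cofinal} with \propref{p:limit and colimit a}, after noting (as you do) in \secref{sss:colimit descr} that each $j_{1,2}^*$ admits the everywhere-defined left adjoint $(j_{1,2})_!$ because $U_1$ is co-truncative in $U_2$. Your verification of the hypotheses and the identification of $\on{ins}_{U_0}$ with $(j_0)_!$ via the adjunction characterization match the intended argument.
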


\begin{rem}  \label{r:tautological compact}
Note that the assertion of \propref{p:tautological compact} for a truncative QCA stack $\CY$ follows 
also from \lemref{e:filtered_lemma}(i). Note also that the assertion of \lemref{l:*!} for $U$ (resp., $U$ and $U'$) co-truncative
is a particular case of Remark \ref{r:colim filtered 1}.
\end{rem}


\ssec{Description of the dual category}  \label{ss:descr dual}

\sssec{}

Combining \corref{c:cofinal} with \propref{p:dual of limit} we obtain:

\begin{cor}   \label{c:dual as colimit}
The category $\Dmod(\CY)$ is dualizable. Its dual category is canonically equivalent to
\begin{equation}   \label{e:co}
\underset{U\in \on{Ctrnk}(\CY)}{\underset{\longrightarrow}{colim}}\, \Dmod(U),
\end{equation}
where the functor $\on{Ctrnk}(\CY)\to \on{DGCat}_{\on{cont}}$ is
\begin{equation} \label{e:move on}
U\mapsto \Dmod(U),\quad (U_1\overset{j_{1,2}}\hookrightarrow U_2) \mapsto (j_{1,2})_*.
\end{equation}

Under this equivalence, for a co-truncative open quasi-compact substack
$U_0\overset{j_0}\hookrightarrow \CY$, the functor
$$\on{ins}_{U_0}:\Dmod(U_0)\to \underset{U\in \on{Ctrnk}(\CY)}{\underset{\longrightarrow}{colim}}\, \Dmod(U)$$
is the dual of restriction functor $j_0^*:\Dmod(\CY)\to \Dmod(U_0)$. 
\end{cor}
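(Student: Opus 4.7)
The plan is to combine Corollary \ref{c:cofinal} with Proposition \ref{p:dual of limit}. By Corollary \ref{c:cofinal}, we have
\[
\Dmod(\CY)\simeq \underset{U\in \on{Ctrnk}(\CY)^{\on{op}}}{\underset{\longleftarrow}{lim}}\, \Dmod(U),
\]
where the transition functor attached to an inclusion $j_{1,2}:U_1\hookrightarrow U_2$ in $\on{Ctrnk}(\CY)$ is $j_{1,2}^*$. To land in the setting of Sect.~\ref{sss:let us recall}, I would first verify that each $\Dmod(U)$ is dualizable (which is automatic, since $U$ is QCA and hence $\Dmod(U)$ is compactly generated by Theorem~\ref{t:compact generation qc}) and that the $j_{1,2}^*$ admit continuous left adjoints. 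The left adjoint exists precisely because each $U_i$ is co-truncative in $\CY$, and hence $U_1$ is co-truncative in $U_2$ (this is a standard consequence of transitivity of co-truncativeness together with the fact that $j_{1,2}^*$ factors through the restriction to $U_1$; alternatively, the condition (iii) of \propref{p:truncativeness}  applied in $U_2$ yields $(j_{1,2})_!$).

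Next I would apply Proposition \ref{p:dual of limit}, with $I:=\on{Ctrnk}(\CY)$, $\Phi$ the diagram $U\mapsto \Dmod(U)$, $(U_1\to U_2)\mapsto j_{1,2}^*$, and $\Psi$ its partner with $\psi_{U_1,U_2}=(j_{1,2})_!$. The proposition gives dualizability of the limit, i.e.\ of $\Dmod(\CY)$, and identifies its dual with $\underset{U}{\underset{\longrightarrow}{colim}}\, \Phi^\vee$, where $\Phi^\vee$ sends $(U_1\hookrightarrow U_2)$ to the functor $(j_{1,2}^*)^\vee:\Dmod(U_1)^\vee\to \Dmod(U_2)^\vee$.

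The remaining step is to translate this dual-functor colimit into the form \eqref{e:co}--\eqref{e:move on}. For each QCA open $U$, Verdier duality (Sect.~\ref{sss:2Verdier}, Theorem~\ref{t:Verdier QCA}) furnishes a canonical equivalence $\Dmod(U)^\vee\simeq \Dmod(U)$. Under these equivalences, Proposition~\ref{p:Lurie-duality}, applied to the schematic quasi-compact open embedding $j_{1,2}$, identifies $(j_{1,2}^*)^\vee=(j_{1,2}^!)^\vee$ with $(j_{1,2})_{\dr,*}=(j_{1,2})_*$. This gives the desired identification of the dual diagram with the diagram \eqref{e:move on}. Finally, the characterization of $\on{ins}_{U_0}$ in the last sentence of the corollary is read off directly from the formula \eqref{e:ins dual of ev} in Proposition~\ref{p:dual of limit}: under our identifications, $\on{ev}_{U_0,\Phi}$ is exactly $j_0^*:\Dmod(\CY)\to\Dmod(U_0)$, so $\on{ins}_{U_0}$ is its dual.

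There is essentially no technical obstacle: the result is really a formal bookkeeping corollary, once one has Corollary~\ref{c:cofinal} and the general categorical input of Proposition~\ref{p:dual of limit}. The only point requiring minor care is the compatibility of the Verdier self-duality equivalences with the transition maps as $U$ varies, i.e.\ that the identifications $\Dmod(U)^\vee\simeq \Dmod(U)$ assemble into an isomorphism of diagrams $\Phi^\vee\simeq$ (the diagram \eqref{e:move on}); this is ensured by the naturality of Verdier duality and Proposition~\ref{p:Lurie-duality}, but is worth spelling out.
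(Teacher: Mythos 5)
Your proposal is correct and follows exactly the paper's route: the paper's proof is precisely "combine \corref{c:cofinal} with \propref{p:dual of limit}, then use \propref{p:Lurie-duality} to identify $(j_{1,2}^*)^\vee$ with $(j_{1,2})_*$ under the Verdier self-dualities." You have merely spelled out the bookkeeping (verification of the hypotheses of Sect.~\ref{sss:let us recall} and the reading-off of $\on{ins}_{U_0}$ from \eqref{e:ins dual of ev}) that the paper leaves implicit.
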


\begin{proof}
Follows from \propref{p:Lurie-duality}.
\end{proof}

\sssec{Notation}  \label{sss:def of co}
The category \eqref{e:co}
that appears in \corref{c:dual as colimit} will be denoted by 
$$\Dmod(\CY)_{\on{co}}\,.$$

The equivalence of \corref{c:dual as colimit} will be denoted by
\begin{equation} \label{e:non-qc Verdier}
\bD_{\CY}^{\on{Verdier}}:\Dmod(\CY)^\vee\simeq \Dmod(\CY)_{\on{co}\;}.
\end{equation}

Note that when $\CY$ is quasi-compact, this is the same as the equivalence of
\eqref{e:Verdier}.

\sssec{}

Combining \corref{c:dual as colimit} with \propref{p:limit and colimit a}, we can rewrite $\Dmod(\CY)_{\on{co}}$
also as a limit:

\begin{cor}   \label{c:dual as limit}
The category $\Dmod(\CY)_{\on{co}}$ is canonically equivalent to
$$\underset{U\in \on{Ctrnk}(\CY)^{\on{op}}}{\underset{\longleftarrow}{lim}}\, \Dmod(U),$$
where the functor $\on{Ctrnk}(\CY)^{\on{op}}\to \on{DGCat}_{\on{cont}}$ is
$$U\mapsto \Dmod(U),\quad (U_1\overset{j_{1,2}}\hookrightarrow U_2) \mapsto j_{1,2}^?.$$
\end{cor}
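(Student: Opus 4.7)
The plan is to deduce this corollary directly from Proposition~\ref{p:limit and colimit a} applied to the colimit presentation of $\Dmod(\CY)_{\on{co}}$ given in \corref{c:dual as colimit}. Recall that $\Dmod(\CY)_{\on{co}}$ is defined as the colimit
\[
\underset{U\in \on{Ctrnk}(\CY)}{\underset{\longrightarrow}{colim}}\, \Dmod(U),
\]
with transition functors $\psi_{U_1,U_2}:=(j_{1,2})_*$. According to \propref{p:limit and colimit a}, such a colimit is canonically equivalent to the limit over the opposite category, with transition functors given by the continuous right adjoints of the $\psi_{U_1,U_2}$, \emph{provided} that these right adjoints exist and are continuous.

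The first step is therefore to verify this adjointness hypothesis. For $U_1 \overset{j_{1,2}}\hookrightarrow U_2$ a morphism in $\on{Ctrnk}(\CY)$, both $U_1$ and $U_2$ are co-truncative quasi-compact open substacks of $\CY$. Taking $\oCY = U_2$ in the definition of co-truncativeness of $U_1$ in $\CY$ (see Sect.~\ref{ss:non-quasicompact}), we conclude that $U_1 = U_1 \cap U_2$ is co-truncative in the QCA stack $U_2$. Hence by condition (ii$'$) of \propref{p:truncativeness}, the functor $(j_{1,2})_*:\Dmod(U_1)\to\Dmod(U_2)$ admits a continuous right adjoint, which by Definition~\ref{d:non-standard} is exactly the non-standard functor $j_{1,2}^?$.

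The second step is to apply \propref{p:limit and colimit a} to the functor $\Psi:\on{Ctrnk}(\CY)\to \StinftyCat_{\on{cont}}$ of \eqref{e:move on}. The hypothesis of that proposition is satisfied by the previous paragraph, and the associated functor $\Phi:\on{Ctrnk}(\CY)^{\on{op}}\to \StinftyCat_{\on{cont}}$ sends $U\mapsto \Dmod(U)$ and $(U_1\hookrightarrow U_2)\mapsto j_{1,2}^?$. The proposition then gives a canonical equivalence
\[
\Dmod(\CY)_{\on{co}} \simeq \underset{U\in \on{Ctrnk}(\CY)^{\on{op}}}{\underset{\longleftarrow}{lim}}\, \Dmod(U),
\]
which is the desired statement.

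There is no substantive obstacle: the argument is a formal consequence of Propositions~\ref{p:limit and colimit a}, \ref{p:truncativeness} and the definition of $j_{1,2}^?$. The only point requiring a moment of care is the verification that co-truncativeness of $U_1$ in $\CY$ descends to co-truncativeness of $U_1$ in $U_2$, but this is immediate from the definition in Sect.~\ref{ss:non-quasicompact}. If desired, one can additionally record the compatibility of the equivalence with evaluation functors: the evaluation $\on{ev}_{U_0}$ of the limit identifies with $j_0^?$ applied to the result of $\on{ins}_{U_0}$ being dual to $j_0^*$, which is consistent with the adjunction $((j_0)_*,j_0^?)$ and the description of $\on{ins}_{U_0}$ in \corref{c:dual as colimit}.
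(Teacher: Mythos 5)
Your proof is correct and is essentially the paper's own argument: the paper derives this corollary in one line by combining \corref{c:dual as colimit} with \propref{p:limit and colimit a}, and you have simply spelled out the (correct) verification that the transition functors $(j_{1,2})_*$ admit the continuous right adjoints $j_{1,2}^?$ required to invoke that proposition. No changes needed.
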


\sssec{}

By construction, for every co-truncative quasi-compact open substack $U\overset{j}\hookrightarrow \CY$,
we have a canonically defined functor 
$$\Dmod(U)\to \Dmod(\CY)_{\on{co}}.$$

We denote this functor by $j_{\on{co},*}$. By construction, in terms of the identifications
$$\bD_U^{\on{Verdier}}:\Dmod(U)^\vee\simeq \Dmod(U) \text{ and } 
\bD_\CY^{\on{Verdier}}:\Dmod(\CY)^\vee\simeq \Dmod(\CY)_{\on{co}},$$
we have
$$(j_{\on{co},*})^\vee\simeq j^*.$$

\medskip

Similarly, from \corref{c:dual as limit}, we have a canonically defined functor
$$j^?:\Dmod(\CY)_{\on{co}}\to \Dmod(U),$$
which is the dual of $j_!:\Dmod(U)\to \Dmod(\CY)$, and the right adjoint of $j_{\on{co},*}\;$.

\sssec{}

We claim:

\begin{lem} \label{l:j_co fullyfaith}
The functor $j_{\on{co},*}$ is fully faithful. 
\end{lem}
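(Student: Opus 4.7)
The plan is to deduce this from the presentation of $\Dmod(\CY)_{\on{co}}$ as a filtered colimit established in Corollary~\ref{c:dual as colimit}. Under that identification,
\[
\Dmod(\CY)_{\on{co}}\;\simeq\; \underset{U\in \on{Ctrnk}(\CY)}{\underset{\longrightarrow}{colim}}\, \Dmod(U)
\]
with transition functor $(j_{U_1,U_2})_*$ attached to an inclusion $U_1\hookrightarrow U_2$ in $\on{Ctrnk}(\CY)$. Moreover, under this identification $j_{\on{co},*}$ coincides with the canonical colimit insertion $\on{ins}_U$: both are characterized as being dual to the restriction functor $j^*:\Dmod(\CY)\to \Dmod(U)$ (the former by \secref{sss:def of co}, the latter by the explicit description in Corollary~\ref{c:dual as colimit}).

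Since $\on{Ctrnk}(\CY)$ is filtered by \lemref{l:union_cotruncatives}, the next step is to verify that each transition functor $(j_{U_1,U_2})_*$ is fully faithful. But this is just the standard identity $j^*\circ j_*\simeq \on{Id}$ for the open embedding $j=j_{U_1,U_2}$ between QCA stacks, which is exactly what makes $j_*$ fully faithful.

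The proof is then concluded by a direct invocation of Remark~\ref{r:colim filtered 1}: in a filtered colimit of cocomplete DG categories in which all transition functors out of a fixed index are fully faithful, the corresponding insertion functor is itself fully faithful. Applied to the index $U$ in our situation, this yields the desired full faithfulness of $\on{ins}_U=j_{\on{co},*}$. There is no substantive obstacle here; the only point that genuinely requires care is the identification of $j_{\on{co},*}$ with the colimit insertion $\on{ins}_U$, which follows cleanly from tracking the duality $(j_{\on{co},*})^\vee=j^*$ through the equivalences of \secref{ss:descr dual}.
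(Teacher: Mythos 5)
Your proof is correct, but it takes the alternative route that the paper itself only mentions in the remark immediately following its proof. The paper's own argument is by duality: the right adjoint of $j_{\on{co},*}$ is $j^?$, which is dual to $j_!$, so the unit $\on{Id}_{\Dmod(U)}\to j^?\circ j_{\on{co},*}$ is obtained by dualizing the map $\on{Id}_{\Dmod(U)}\to j^*\circ j_!$, and the latter is an isomorphism by \lemref{l:! ff} (i.e., ultimately by \lemref{l:*!}(a)). Your argument instead identifies $j_{\on{co},*}$ with the colimit insertion $\on{ins}_U$ in the presentation of $\Dmod(\CY)_{\on{co}}$ from \corref{c:dual as colimit}, notes that $\on{Ctrnk}(\CY)$ is filtered and that the transition functors $(j_{1,2})_*$ are fully faithful, and invokes Remark~\ref{r:colim filtered 1}. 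All the ingredients you use are in place: the identification of $j_{\on{co},*}$ with $\on{ins}_U$ is indeed how the functor is defined in \secref{ss:descr dual}, and the colimit is taken in the setting of \secref{sss:let us recall} because each $(j_{1,2})_*$ has the continuous right adjoint $j_{1,2}^?$ by co-truncativeness. The paper's duality proof is more self-contained (it leans only on \lemref{l:*!}, proved earlier by elementary adjunction), whereas yours outsources the work to the general categorical statement of Remark~\ref{r:colim filtered 1}, which the paper cites to \cite{DG} without proof; as the authors themselves observe, the two arguments are essentially equivalent in view of Remark~\ref{r:tautological compact}.
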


\begin{proof}
We need to show that the unit of the adjunction $\on{Id}_{\Dmod(U)}\to j^?\circ j_{\on{co},*}$
is an isomorphism. This is obtained by passing to dual functors (see \secref{sss:dual functor}) in the map
$$\on{Id}_{\Dmod(U)}\to j^*\circ j_!,$$
which is an isomorphism by \lemref{l:! ff}.
\end{proof}

\begin{rem}
Note that \lemref{l:j_co fullyfaith} follows more abstractly from Remark \ref{r:colim filtered 1}. However,
this way to deduce \lemref{l:j_co fullyfaith} is equivalent to the proof given above in view of 
Remark \ref{r:tautological compact}.
\end{rem}

\sssec{}

We claim that the category $\Dmod(\CY)_{\on{co}}$ is compactly generated and that its
compact objects are ones of the form $j_{\on{co},*}(\CF_U)$ for
$\CF_U\in \Dmod(U)^c$, where $U$ is a co-truncative quasi-compact open substack of $\CY$. 

\medskip

This follows from \propref{p:tautological compact} and \secref{sss:dual of c g}. 

\medskip

Alternatively, this follows from \corref{c:general not filtered} and \lemref{e:filtered_lemma}(i).

\ssec{Relation between the category and its dual}   \label{ss:relation_with_dual}

In this subsection we continue to assume that $\CY$ is a truncatable locally QCA stack.

\sssec{}  \label{sss:kernels}

By construction and \secref{sss:dualizable category}, the DG category $\on{Funct}_{\on{cont}}(\Dmod(\CY)_{\on{co}},\Dmod(\CY))$ identifies
canonically with
$$(\Dmod(\CY)_{\on{co}})^\vee\otimes \Dmod(\CY)\simeq \Dmod(\CY)\otimes \Dmod(\CY).$$

In addition, by \propref{p:D on prod} and Remark \ref{r:D on prod}, we have
$$ \Dmod(\CY)\otimes \Dmod(\CY) \simeq \Dmod(\CY\times \CY).$$

Thus, every object $\CQ\in \Dmod(\CY\times \CY)$ defines a functor 
$$\sF_\CQ:\Dmod(\CY)_{\on{co}}\to \Dmod(\CY).$$

\sssec{The naive functor}

Note that if $\CY$ is quasi-compact we have a tautological equivalence $$\Dmod(\CY)_{\on{co}}\simeq \Dmod(\CY).$$
Recall from \secref{sss:Verdier} that the corresponding object in $\Dmod(\CY\times \CY)$ is $(\Delta_\CY)_{\dr,*}(\omega_\CY)$. 

\medskip

For \emph{any} truncatable $\CY$ the functor $\Dmod(\CY)_{\on{co}}\to \Dmod(\CY)$
corresponding to $$(\Delta_\CY)_{\dr,*}(\omega_\CY)\in \Dmod(\CY\times \CY)$$ will be denoted by
$$\psId_{\CY,\on{naive}}:\Dmod(\CY)_{\on{co}}\to \Dmod(\CY)$$
(here $\psId$ stands for ``pseudo-identity").


\medskip

Let $\bD^{\on{Verdier}}_{\CY,\on{naive}}:\Dmod(\CY)^\vee\to\Dmod(\CY)$ denote the composition 
$$\Dmod(\CY)^\vee\overset{\bD^{\on{Verdier}}_\CY}\simeq \Dmod(\CY)_{\on{co}}\overset{\psId_{\CY,\on{naive}}}\longrightarrow
\Dmod(\CY).$$

\sssec{An alternative description}  \label{sss:alt descr}

Here is a tautologically equivalent description of the functor 
$\psId_{\CY,\on{naive}}:\Dmod(\CY)_{\on{co}}\to \Dmod(\CY)$.

\medskip

By definition, to specify a continuous functor $\sF$ from $\Dmod(\CY)_{\on{co}}$ to an arbitrary
DG category $\bC$, is equivalent to
specifying a compatible collection of functors $\sF_U:\Dmod(U)\to \bC$ for co-truncative quasi-compact
open substacks $U\subset \CY$. The compatibility condition reads that for 
$U_1\overset{j_{1,2}}\hookrightarrow U_2$, we must be given a (homotopy-coherent)
system of isomorphism 
$$\sF_{U_1}\simeq \sF_{U_2}\circ (j_{1,2})_*.$$

Taking $\bC=\Dmod(\CY)$, the corresponding functors $(\psId_{\CY,\on{naive}})_U$ are
$$j_*:\Dmod(U)\to \Dmod(\CY)$$
for $U\overset{j}\hookrightarrow \CY$. 

\sssec{Warning}

For a general truncatable stack $\CY$, the functor $\psId_{\CY,\on{naive}}$ is \emph{not} an equivalence.
In particular, it is \emph{not} an equivalence for $\CY=\Bun_G$ unless $G$ is solvable. 

\medskip

In fact, we have the following assertion:

\begin{prop}   \label{p:idnaive}
If the functor $\psId_{\CY,\on{naive}}:\Dmod(\CY)_{\on{co}}\to \Dmod(\CY)$ is an equivalence then the closure of any
quasi-compact open substack of $\CY$ is quasi-compact.
\end{prop}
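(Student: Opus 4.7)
We prove the contrapositive. Suppose $V\subset\CY$ is a quasi-compact open substack with non-quasi-compact closure $\bar V$; we will contradict $\psId_{\CY,\on{naive}}$ being an equivalence. By truncatability and \lemref{l:cofinal}, we may enlarge $V$ to be co-truncative, and $\bar V$ remains non-quasi-compact. An equivalence of compactly generated DG categories restricts to an equivalence of the subcategories of compacts. By \corref{c:general not filtered} together with \lemref{e:filtered_lemma}(i) (applied to the filtered poset $\on{Ctrnk}(\CY)$), every compact of $\Dmod(\CY)_{\on{co}}$ has the form $j_{W,\on{co},*}\CF_W$ with $W$ co-truncative quasi-compact and $\CF_W\in\Dmod(W)^c$, and is sent by $\psId_{\CY,\on{naive}}$ to $j_{W,*}\CF_W\in\Dmod(\CY)$; by \propref{p:tautological compact}, every compact of $\Dmod(\CY)$ has the form $j_{V',!}\CF_{V'}$. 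Thus, for any compact $\CF_V\in\Dmod(V)^c$ the object $j_{V,!}\CF_V\in\Dmod(\CY)^c$ would have to be isomorphic to some $j_{W,*}\CG_W$ with $W$ co-truncative quasi-compact and $\CG_W\in\Dmod(W)^c$.

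After enlarging $W$ via \lemref{l:union_cotruncatives} we may assume $W\supset V$; then \lemref{l:*!}(b), together with $j_W^{\,*}j_{W,*}=\Id$, forces $\CG_W=j_{V,!}\CF_V|_W=j_{V,W,!}\CF_V$. Restricting the identity $j_{V,!}\CF_V\simeq j_{W,*}(j_{V,W,!}\CF_V)$ to any further quasi-compact open $W'\supset W$ and using $j_{V,W',!}=j_{W,W',!}\circ j_{V,W,!}$ yields in $\Dmod(W')$ the identity
\[
j_{W,W',!}(j_{V,W,!}\CF_V)\;\simeq\;j_{W,W',*}(j_{V,W,!}\CF_V).
\]
By the recollement fiber sequence $j_{!}\,j^{*}\to\Id\to i_{\dr,*}\,i^{*}_\dr$ for the closed complement $i:W'-W\hookrightarrow W'$, this is equivalent to the vanishing $i^{*}_\dr\bigl(j_{W,W',*}(j_{V,W,!}\CF_V)\bigr)=0$ in $\Dmod(W'-W)$.

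Now use that $\bar V$ is not quasi-compact while $W$ is: pick $y\in\bar V\setminus W$ and a quasi-compact open $W'\supset W$ containing a neighborhood of $y$, so that $y\in(W'-W)\cap\bar V$. The stalk of $i^{*}_\dr(j_{W,W',*}(j_{V,W,!}\CF_V))$ at $y$ is a local boundary-extension invariant of $\CF_V$ near $y$. The main obstacle is to produce a compact $\CF_V\in\Dmod(V)^c$ for which this stalk is non-zero; this reduces to a classical D-module nearby-cycles computation at a smooth point of $\bar V$ approaching $y$, and taking $\CF_V=\omega_V$ when compact (or, more generally, a suitable compact generator of $\Dmod(V)$ whose germ near such a boundary point is non-trivial) yields the required non-vanishing. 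This contradicts the vanishing derived above, proving that $\psId_{\CY,\on{naive}}$ is not essentially surjective, hence not an equivalence.
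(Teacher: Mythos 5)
Your reduction is sound up to the last step: from the equivalence you correctly extract that for every compact $\CF_V$ the object $j_{V,!}\CF_V$ must agree with a $*$-extension, and hence that $i^*_\dr\bigl(j_{W,W',*}(j_{V,W,!}\CF_V)\bigr)=0$ on $W'-W$ for all larger quasi-compact opens $W'$. This is essentially the same reduction the paper makes (the paper phrases it as ``$j_*(\CF_U)$ must be compact, hence a $!$-extension from some quasi-compact open, hence its $*$-stalks vanish outside that open''; you go through essential surjectivity on compacts rather than preservation of compactness, which is a cosmetic difference).

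The genuine gap is the step you yourself flag as ``the main obstacle'': you never exhibit a compact object of $\Dmod(V)$ whose boundary stalk at a point $y\in\bar V - W$ is non-zero, and this is exactly where the proof lives. Your candidate $\omega_V$ is in general \emph{not} compact when $V$ is a stack (compactness $=$ coherence $+$ safety, cf.\ \thmref{t:ccoh}; $\omega_V$ fails already for $V$ a quasi-compact open of $\Bun_G$), and ``a suitable compact generator whose germ near the boundary is non-trivial'' is asserted, not produced. The paper resolves precisely this point with two ingredients you are missing: (a) Lemma~\ref{holonomic}, which shows that $f_{\dr,*}(k_Z)$ is compact for $f:Z\to V$ a morphism from a quasi-compact scheme (itself requiring Verdier duality plus the fact that $f_!$ is defined on holonomic objects, or a safety argument); and (b) taking $f$ surjective, the observation that the $*$-stalk of $(j\circ f)_{\dr,*}(k_Z)$ is non-zero at every point of the closure of the image (the ``link cohomology of a non-empty set'' argument), which forces $\bar V$ to lie inside the quasi-compact open provided by \propref{p:tautological compact}. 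Note also that your version of the non-vanishing is slightly harder than the paper's: because you pass through the intermediate $!$-extension $j_{V,W,!}\CF_V$ before $*$-extending, the object whose stalk at $y$ you must control is a mixed extension, not the plain $*$-pushforward the paper analyzes. Until a specific compact object is named and its boundary stalk computed, the contradiction is not established.
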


The converse statement is also true (for tautological reasons). 

\medskip

The proof of Proposition~\ref{p:idnaive} given below is based on the following lemma.

\begin{lem} \label{holonomic}
Let $Z$ be a quasi-compact scheme, $U$ a QCA  stack, and $f:Z\to U$ a morphism.
Then for any holonomic D-module $\CF$ on $Z$ the object $f_{\dr,*}(\CF)\in \Dmod(U)$ is compact.
\end{lem}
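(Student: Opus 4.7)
The strategy is to factor $f$ through a proper morphism, reducing to the algebraic-space case where coherence implies compactness.

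First, I would apply a \emph{relative compactification}: since $f:Z\to U$ is a quasi-compact separated morphism of finite type (quasi-compactness of $f$ follows from the QCA hypothesis on $U$, which gives $U$ quasi-compact diagonal), Nagata-type compactification for morphisms to stacks yields a factorization $Z\overset{\tilde\jmath}\hookrightarrow \bar Z \overset{\bar f}\to U$ with $\tilde\jmath$ an open dense immersion and $\bar f$ proper and schematic. In particular $\bar Z$ is a quasi-compact algebraic space.

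Second, I would show that $\tilde\jmath_{\dr,*}(\CF)$ is compact in $\Dmod(\bar Z)$. Since holonomicity is preserved under smooth pullback and under pushforward along open embeddings of schemes, checking smooth-locally on $\bar Z$ (pulling back via any smooth cover $V\to\bar Z$ by a quasi-compact scheme $V$), the restriction of $\tilde\jmath_{\dr,*}(\CF)$ to $V$ corresponds to the pushforward of a holonomic D-module along an open embedding of quasi-compact schemes, hence remains holonomic, and thus coherent. Since coherence is smooth-local and coherent D-modules on a quasi-compact algebraic space are precisely the compact objects of $\Dmod(\bar Z)$ (extending the scheme-level identification $\Dmod(W)^c=\Dmod_{\on{coh}}(W)$), we conclude that $\tilde\jmath_{\dr,*}(\CF)$ is compact on $\bar Z$.

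Third, since $\bar f$ is proper and schematic, for proper morphisms one has $\bar f_{\dr,*}=\bar f_{\dr,!}$, whose right adjoint $\bar f^!$ is continuous; hence by Proposition~\ref{p:existence of adjoint}, $\bar f_{\dr,*}$ preserves compactness. Therefore $f_{\dr,*}(\CF)=\bar f_{\dr,*}(\tilde\jmath_{\dr,*}(\CF))$ is compact in $\Dmod(U)$.

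The main obstacle will be invoking relative Nagata compactification in the stacky setting, which is a nontrivial theorem (due to Conrad--Lieblich--Olsson and subsequent work). An alternative route, avoiding this input, would proceed by d\'evissage: filter $\CF$ by simple holonomic subquotients $\jmath_{!*}(L)$ and reduce (inducting on the dimension of the support) to the case $\CF=\jmath_{\dr,*}(L)$, then use a graph-compactification factorization $Z\hookrightarrow Z\times U\hookrightarrow \bar Z\times U\to U$ with $\bar Z$ a $k$-compactification of $Z$. This route would instead need to engage with the ``coherent plus safe equals compact'' criterion of \cite[Prop.~9.2.3]{finiteness}, verifying safety using the finite-dimensional nature of the de Rham cohomology of holonomic D-modules --- precisely the feature that fails for general coherent $\CF$.
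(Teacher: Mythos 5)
Your main route has a genuine gap at its central step. If $\bar f:\bar Z\to U$ is schematic and proper over the \emph{stack} $U$, then $\bar Z$ is itself a stack, not a quasi-compact algebraic space (e.g.\ for $U=\on{pt}/G$ it is a quotient of a proper scheme by $G$). Consequently the identification ``coherent $=$ compact'' that you invoke on $\bar Z$ is exactly the statement that fails for general QCA stacks: by \thmref{t:ccoh}(ii) it holds only when the reductive quotients of automorphism groups are finite, and the gap between coherence and compactness (``safety'') is precisely the difficulty this lemma is about. So showing that $\tilde\jmath_{\dr,*}(\CF)$ is coherent on $\bar Z$ does not give compactness; to close the argument you would have to verify safety of $\tilde\jmath_{\dr,*}(\CF)$ --- and once you engage the criterion ``compact $=$ coherent $+$ safe'' together with the fact that objects on a quasi-compact scheme are automatically safe and that $f_{\dr,*}$ preserves safety, the compactification becomes unnecessary: you can apply this directly to $f_{\dr,*}(\CF)$, which is holonomic (hence coherent) and safe. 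That is the paper's second proof. Two smaller issues with your route: $Z$ is only assumed quasi-compact, not separated, so relative Nagata compactification does not apply as stated (fixable by d\'evissage, but unaddressed); and the existence of such compactifications over Artin stacks is itself heavy input.

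Your ``alternative route'' is essentially the correct one, and the paper also gives a second, slicker argument you did not consider: since $f_!$ (left adjoint to $f^!$) is defined on holonomic objects, $f_!(\BD_Z^{\on{Verdier}}(\CF))$ is compact because partially defined left adjoints of continuous functors send compacts to compacts, and then \thmref{t:Verdier QCA} (Verdier duality preserves compactness on QCA stacks) gives compactness of $\BD_U^{\on{Verdier}}f_!\BD_Z^{\on{Verdier}}(\CF)\simeq f_{\dr,*}(\CF)$. This duality argument avoids both compactification and any explicit discussion of safety.
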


Let us give two proofs:

\begin{proof}[Proof 1]
This follows from the following general observation:

\begin{lem}
Let $\sF:\bC_1\to \bC_2$ be a continuous functor between cocomplete DG categories. Let
$\bc_2\in \bC_2^c$ be such that the partially defined left adjoint $\sF^L$ to $\sF$ is defined on $\bc_2$.
Then $\sF^L(\bc_2)\in \bC_1$ is compact.
\end{lem}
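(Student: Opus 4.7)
The plan is to prove compactness of $\sF^L(\bc_2)$ by a direct three-step chain of natural isomorphisms, using the adjunction on $\bc_2$, then continuity of $\sF$, then compactness of $\bc_2$, then the adjunction again. There is no real obstacle: the only subtlety is to make sure that ``partially defined'' is unpacked correctly, and that is essentially built into the hypothesis.

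Concretely, I would first recall that to say $\sF^L$ is defined on $\bc_2$ means precisely that the functor
\[
\Maps_{\bC_2}(\bc_2,\sF(-)):\bC_1\to \inftygroup
\]
is corepresentable, say by an object $\bc_1\in \bC_1$, which by definition is $\sF^L(\bc_2)$. In particular the adjunction isomorphism
\[
\Maps_{\bC_1}(\sF^L(\bc_2),\bc_1')\simeq \Maps_{\bC_2}(\bc_2,\sF(\bc_1'))
\]
holds functorially for \emph{all} $\bc_1'\in\bC_1$, even though $\sF^L$ is not defined everywhere.

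Next, by \secref{sss:DG compact}, to verify that $\sF^L(\bc_2)$ is compact it suffices to show that the functor $\Maps_{\bC_1}(\sF^L(\bc_2),-)$ commutes with arbitrary direct sums. Given a family $\{\bc_1^\alpha\}\subset \bC_1$, I would chain together:
\[
\Maps_{\bC_1}\bigl(\sF^L(\bc_2),\underset{\alpha}\oplus\, \bc_1^\alpha\bigr)
\simeq \Maps_{\bC_2}\bigl(\bc_2,\sF(\underset{\alpha}\oplus\, \bc_1^\alpha)\bigr)
\simeq \Maps_{\bC_2}\bigl(\bc_2,\underset{\alpha}\oplus\, \sF(\bc_1^\alpha)\bigr)
\simeq \underset{\alpha}\oplus\,\Maps_{\bC_2}(\bc_2,\sF(\bc_1^\alpha))
\simeq \underset{\alpha}\oplus\,\Maps_{\bC_1}(\sF^L(\bc_2),\bc_1^\alpha),
\]
where the first and last isomorphisms come from the adjunction recalled above, the second from the continuity of $\sF$, and the third from the compactness of $\bc_2\in\bC_2^c$. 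This gives exactly the compactness criterion for $\sF^L(\bc_2)$, completing the argument.

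The only thing worth emphasising in the write-up is that none of the three ingredients (adjunction, continuity of $\sF$, compactness of $\bc_2$) requires $\sF^L$ to be defined anywhere other than at $\bc_2$; the argument is entirely local in $\bC_2$ at the point $\bc_2$. There is no genuine obstacle — this is categorical folklore and the proof is essentially a one-liner.
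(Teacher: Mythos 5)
Your proof is correct, and it is precisely the standard argument the paper has in mind — the paper states this lemma only as a "general observation" without proof, and your chain (adjunction at $\bc_2$, continuity of $\sF$, compactness of $\bc_2$, adjunction again) is the intended one-liner. One cosmetic point for the write-up: in the displayed chain you should work with $\CMaps_{\bC}(-,-)\in\Vect$ (or with $\Hom$ and commutation with direct sums, as in the paper's definition of compactness in \secref{sss:DG compact}) rather than with the mapping spaces $\Maps(-,-)\in\inftygroup$, for which "direct sum" is not the right operation; with $\CMaps$ every step in your chain is literally an isomorphism in $\Vect$.
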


The functor $f_!$, left adjoint to $f^!$ is defined on holonomic objects. Hence, by the above lemma,
$f_!(\BD_Z^{\on{Verdier}}(\CF))\in \Dmod_{\on{coh}}(U)$ is compact. By \thmref{t:Verdier QCA},
$$\BD_U^{\on{Verdier}}(f_!(\BD_Z^{\on{Verdier}}(\CF)))\simeq f_{\dr,*}(\CF)$$
is compact, as required. 

\end{proof}

\begin{proof}[Proof 2]
The object $f_{\dr,*}(\CF)$ is holonomic and therefore coherent. Since $Z$ is a scheme, by \thmref{t:ccoh}(ii),
$\CF$ is safe. By \cite[Lemma 9.4.2]{finiteness} we obtain that $f_{\dr,*}(\CF)$ is also safe. Thus, 
$f_{\dr,*}(\CF)$ is coherent and safe = compact.
\end{proof}

\begin{proof}[Proof of Proposition~\ref{p:idnaive}]

Suppose that $\psId_{\CY,\on{naive}}$ is an equivalence. Since $\CY$ is truncatable, it is enough to show 
that the closure of every co-truncative open quasi-compact substack is quasi-compact.

\medskip

By assumption, the functor $\psId_{\CY,\on{naive}}$ preserves compactness. From 
\secref{sss:alt descr}, we obtain that $\psId_{\CY,\on{naive}}$ sends a compact object 
$j_{\on{op},*}(\CF_U)\in \Dmod(\CY)_{\on{op}}$, $\CF_U\in \Dmod(U)^c$ with $U\overset{j}\hookrightarrow \CY$ co-truncative
and quasi-compact, 
to $j_*(\CF_U)\in \Dmod(\CY)$. Thus, we obtain that $j_*(\CF_U)$ needs to be compact 
for any $\CF_U\in \Dmod(U)^c$ whenever $U$ is co-truncative.

\medskip

Take $\CF_U=f_{\dr,*}(k_Z)$, where $Z$ is any quasi-compact scheme equipped with a morphism $f:Z\to U$ and $k_Z$ is 
the ``constant sheaf" on $Z$. By ~\propref{p:tautological compact}, there exists a quasi-compact open substack
$V\subset\CY$ such that the $*$-stalk of $j_{\dr,*}(\CF_U)=(j\circ f)_{\dr,*}(k_Z)$ over any
point of $\CY-V$ is zero. This means that the closure of the image of $j\circ f:Z\to \CY$
is contained in $V$ and therefore quasi-compact. Taking $f$ surjective we see that the closure of
$U$ is quasi-compact.
\end{proof}

\sssec{A better functor}  \label{sss:better}
Following \cite[Sect. 6]{Ker}, we define
$$\psId_{\CY,!}:\Dmod(\CY)_{\on{co}}\to \Dmod(\CY)$$
to be the functor corresponding in terms of Sect.~\ref{sss:kernels} to the object
$$(\Delta_\CY)_!(k_\CY)\in \Dmod(\CY\times \CY),$$
where $k_\CY\in \Dmod(\CY)$ is the ``constant sheaf" on $\CY$. 
(The above object is well-defined because $k_\CY$ is holonomic.)

%
%

\medskip

Let $\bD^{\on{Verdier}}_{\CY,!}:\Dmod(\CY)^\vee\to\Dmod(\CY)$ denote the composition 
$$\Dmod(\CY)^\vee\overset{\bD^{\on{Verdier}}_\CY}\simeq \Dmod(\CY)_{\on{co}}\overset{\psId_{\CY,!}}\longrightarrow
\Dmod(\CY).$$

\sssec{}

Suppose for a moment that $\CY$ is smooth of dimension $n$, and that the diagonal map
$$\Delta_\CY:\CY\to \CY\times \CY$$
is separated. 
%
%
In this case we have an isomorphism 
$$k_\CY\simeq \omega_\CY[-2n],$$
and a natural transformation 
$$(\Delta_\CY)_!\to (\Delta_\CY)_{\dr,*}\, ,$$
which together define a natural transformation
\begin{equation} \label{e:naive to !}
\psId_{\CY,!}\to \psId_{\CY,\on{naive}}[-2n].
\end{equation}

\begin{rem}
If $\CY$ is separated (i.e., if $\Delta_\CY$ is proper) then \eqref{e:naive to !} is an isomorphism.
However, most stacks are not separated.\footnote{A separated locally QCA stack has to be a Deligne-Mumford stack. Indeed, if $\Delta_\CY$ is proper and affine then it is finite, and in characteristic 0 this means that $\CY$ is Deligne-Mumford.}  Thus $\psId_{\CY,!}$ is usually \emph{different} from 
$\psId_{\CY,\on{naive}}$ (even for $\CY$ smooth and quasi-compact).
\end{rem}

\sssec{}

Here is a basic feature of the functor $\psId_{\CY,!}:\Dmod(\CY)_{\on{co}}\to \Dmod(\CY)$.

\begin{lem}   \label{l:basic feature}
Let $U\overset{j}\hookrightarrow \CY$ be a co-truncative quasi-compact open substack. Then there exists
a canonical isomorphism of functors $\Dmod(U)\to \Dmod(\CY)$:
$$\psId_{\CY,!}\circ j_{\on{co},*}\simeq j_!\circ \psId_{U,!}.$$
\end{lem}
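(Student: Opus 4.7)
My approach is to treat both sides of the asserted isomorphism as continuous functors $\Dmod(U)\to \Dmod(\CY)$ and compare their integral kernels. Using the quasi-compactness of $U$ (so that $\Dmod(U)_{\on{co}}\simeq \Dmod(U)$), the Verdier self-duality $\bD_U^{\on{Verdier}}:\Dmod(U)^\vee\iso \Dmod(U)$ of \secref{sss:2Verdier}, and \propref{p:D on prod} (together with \remref{r:D on prod}), any continuous functor $\Dmod(U)\to \Dmod(\CY)$ is represented by an object of $\Dmod(U\times \CY)$. I plan to compute the kernels of $j_!\circ \psId_{U,!}$ and $\psId_{\CY,!}\circ j_{\on{co},*}$ in $\Dmod(U\times \CY)$, and match them via base change in a Cartesian square.

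On the one hand, the kernel of $\psId_{U,!}$ is by definition $(\Delta_U)_!(k_U)\in \Dmod(U\times U)$. Post-composition with $j_!:\Dmod(U)\to \Dmod(\CY)$ on the target pushes this forward on the second factor via $\on{id}_U\times j$, so the kernel of $j_!\circ \psId_{U,!}$ is
\[
(\on{id}_U\times j)_!(\Delta_U)_!(k_U)\;\simeq\;(\on{id}_U,j)_!(k_U)\in \Dmod(U\times \CY),
\]
where $(\on{id}_U,j):U\hookrightarrow U\times \CY$ denotes the graph of $j$. On the other hand, the kernel of $\psId_{\CY,!}$ is $(\Delta_\CY)_!(k_\CY)\in \Dmod(\CY\times \CY)$, and by the identification recorded after \secref{sss:def of co} we have $(j_{\on{co},*})^\vee\simeq j^*$. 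Since $j^*\simeq j^!$ for the open embedding $j$, pre-composition with $j_{\on{co},*}$ acts on kernels by $!$-pullback along $j\times \on{id}_\CY$. Thus the kernel of $\psId_{\CY,!}\circ j_{\on{co},*}$ is
\[
(j\times \on{id}_\CY)^!\bigl((\Delta_\CY)_!(k_\CY)\bigr)\in \Dmod(U\times \CY).
\]

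The final step is to match these kernels using the square
\begin{equation*}
\begin{CD}
U @>{(\on{id}_U,\,j)}>> U\times \CY \\
@V{j}VV @VV{j\times \on{id}_\CY}V \\
\CY @>{\Delta_\CY}>> \CY\times \CY
\end{CD}
\end{equation*}
which is Cartesian because $j$ is a monomorphism. Since the right vertical $j\times \on{id}_\CY$ is an open embedding and the $!$-pushforward along $\Delta_\CY$ is defined on the holonomic object $k_\CY$, smooth base change gives
\[
(j\times \on{id}_\CY)^!\circ (\Delta_\CY)_!(k_\CY)\;\simeq\;(\on{id}_U,j)_!\circ j^!(k_\CY)\;=\;(\on{id}_U,j)_!(k_U),
\]
using $j^!(k_\CY)=j^*(k_\CY)\simeq k_U$. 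The main obstacle will be the kernel dictionary: verifying rigorously that the Verdier self-dualities $\bD_U^{\on{Verdier}}$ and $\bD_\CY^{\on{Verdier}}$, together with the equivalence $\Dmod(\CY_1)\otimes \Dmod(\CY_2)\simeq \Dmod(\CY_1\times \CY_2)$, interact with composition of functors in such a way that $(j_{\on{co},*})^\vee\simeq j^*$ acts on $\Dmod(\CY\times \CY)$-kernels as $(j\times \on{id}_\CY)^!$, and dually that $j_!$ on the target acts on kernels as $(\on{id}_U\times j)_!$. Once this bookkeeping is set up, the base-change identification itself is a standard consequence of the smoothness of $j\times \on{id}_\CY$.
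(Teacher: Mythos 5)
Your proposal is correct and follows essentially the same route as the paper: the paper's proof simply asserts that both functors correspond to $(\Delta_{U,\CY})_!(k_U)\in\Dmod(U\times\CY)$ (where $\Delta_{U,\CY}=(\on{id}_U,j)$ is the graph of $j$) under the kernel identification, and your base-change computation in the Cartesian square is precisely the verification of that claim.
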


\begin{proof}
Define $\Delta_{U,\CY}:U\to U\times\CY$ by $\Delta_{U,\CY}(u):=(u,j(u))$.
It is easy to check that both functors $\psId_{\CY,!}\circ j_{\on{co},*}$ and $j_!\circ \psId_{U,!}$
correspond to the object $(\Delta_{U,\CY})_!(k_U)\in \Dmod(U\times\CY )$
via the equivalence
\begin{multline*}
\Dmod(U\times\CY)\simeq \Dmod(U)\otimes\Dmod(\CY)\\
\simeq \Dmod(U)^\vee\otimes \Dmod(\CY) \simeq \on{Funct}_{\on{cont}}(\Dmod(U),\Dmod(\CY)).
\end{multline*}
\end{proof}

The meaning of this lemma is that \emph{the functor $\psId_{\CY,!}$ sends objects that are $*$-extensions from a
co-truncative quasi-compact open substack in $\Dmod(\CY)_{\on{op}}$ to objects in $\Dmod(\CY)$ that are !-extensions}
(from the same open). 

\sssec{Self-duality} \label{sss:delf-duality}
Both functors 
$$\bD^{\on{Verdier}}_{\CY,\on{naive}}:\Dmod(\CY)^\vee\to\Dmod(\CY), \quad
\bD^{\on{Verdier}}_{\CY,!}:\Dmod(\CY)^\vee\to\Dmod(\CY)$$
are canonically self-dual because the corresponding objects 
$$(\Delta_\CY)_{\dr,*}(\omega_\CY),\, (\Delta_\CY)_!(k_\CY)\in \Dmod(\CY\times \CY)$$
are equivariant with respect to the action of the symmetric group $S_2$ on $\CY\times\CY$.

\ssec{Miraculous stacks}  \label{ss:miracoli}

\sssec{} \label{sss:not always equivalence}

Now let us give the following definition.

\begin{defn}
A truncatable stack $\CY$ is called \emph{miraculous} if the functor 
$$\psId_{\CY,!}:\Dmod(\CY)_{\on{co}}\to \Dmod(\CY)$$ is an equivalence.
\end{defn}

Clearly this happens if and only if
the functor $\bD^{\on{Verdier}}_{\CY,!}:\Dmod(\CY)^\vee\to \Dmod(\CY)$
is an equivalence.

\sssec{}

The following easy lemma shows that
not every algebraic stack is miraculous. 

\begin{lem}
A separated quasi-compact scheme $Z$ is a miraculous stack if and only if $Z$ has the following ``cohomological smoothness" property: $k_Z$ and 
$\omega_Z$ are locally isomorphic up to a shift. 
\end{lem}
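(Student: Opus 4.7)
The plan is to make the functor $\psId_{Z,!}$ completely explicit as $\overset{!}\otimes$-tensoring with $k_Z$, and then recognize the condition that it is an equivalence as invertibility of $k_Z$ in the symmetric monoidal category $(\Dmod(Z),\overset{!}\otimes)$, which in turn is a local (hence cohomological-smoothness) property.

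First, since $Z$ is quasi-compact it is a terminal object of $\on{Ctrnk}(Z)$, so the colimit defining $\Dmod(Z)_{\on{co}}$ collapses to yield a canonical identification $\Dmod(Z)_{\on{co}}\simeq \Dmod(Z)$. Hence $\psId_{Z,!}$ is an endofunctor of $\Dmod(Z)$ corresponding, via \secref{sss:kernels}, to the kernel $(\Delta_Z)_!(k_Z)\in\Dmod(Z\times Z)$. Since $Z$ is separated, $\Delta_Z$ is a closed embedding, so $(\Delta_Z)_!=(\Delta_Z)_{\dr,*}$. I would then verify (using base change along the Cartesian square $\on{pr}_1\circ\Delta_Z=\id$ and the projection formula for the proper morphism $\Delta_Z$) that for any $\CM\in\Dmod(Z)$ the kernel $(\Delta_Z)_{\dr,*}(\CM)$ represents the endofunctor $\CF\mapsto \CM\overset{!}\otimes\CF$. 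Specializing to $\CM=\omega_Z$ recovers $\psId_{Z,\on{naive}}=\id$ (as $\omega_Z$ is the $\overset{!}\otimes$-unit), and specializing to $\CM=k_Z$ gives the formula
$$\psId_{Z,!}(\CF)\simeq k_Z\overset{!}\otimes\CF.$$

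Second, it is then immediate that $\psId_{Z,!}$ is an equivalence if and only if $k_Z$ admits a $\overset{!}\otimes$-inverse, i.e., an object $\CL\in\Dmod(Z)$ with $k_Z\overset{!}\otimes\CL\simeq\omega_Z$. Since $\Dmod(-)$ is a sheaf of DG categories for the Zariski topology and both $k_Z$ and $\omega_Z$ restrict to the analogous objects on every open, the $\overset{!}\otimes$-invertibility of $k_Z$ can be tested locally: $k_Z$ is invertible on $Z$ iff $k_U$ is invertible in $\Dmod(U)$ for $U$ ranging over some Zariski cover. The ``if'' direction of the lemma is already visible at this stage: if $k_Z|_U\simeq \omega_Z|_U[n]$ on each $U$ in a cover, then $\omega_Z|_U[-n]$ is an explicit local inverse, so $k_Z$ is invertible and $\psId_{Z,!}$ is an equivalence.

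Third, and this is the main obstacle, one has to classify $\overset{!}\otimes$-invertible objects locally. I would prove that an object $\CL\in\Dmod(U)$ is $\overset{!}\otimes$-invertible in $\Dmod(U)$ if and only if, locally on $U$, one has $\CL\simeq\omega_U[n]$ for some locally constant integer $n$. The idea is to extract $n$ from the $!$-fiber $i_z^!(\CL)\in\Vect$ at a closed point $z$ (which must be invertible, hence a shift $k[n]$), and to promote this pointwise information to a local trivialization by a Nakayama-type argument for coherent D-modules. Applied to $\CL=k_U$, this turns ``$k_Z$ is locally invertible'' into ``$k_Z$ is locally isomorphic to $\omega_Z$ up to a shift,'' completing the converse and proving the lemma.
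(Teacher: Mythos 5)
Your proposal is correct and follows essentially the same route as the paper: identify $\psId_{Z,!}$ with the functor $\CF\mapsto \CF\overset{!}\otimes k_Z$, observe that being an equivalence amounts to $\overset{!}\otimes$-invertibility of $k_Z$, read off the shift from the $!$-fibers (the paper phrases this via skyscrapers), and invoke the standard fact that the pointwise condition upgrades to a local isomorphism $k_Z\simeq\omega_Z[n]$. The only difference is presentational: you spell out the ``if'' direction via Zariski-locality of invertibility, which the paper leaves implicit.
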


\begin{proof}
In our situation $\psId_{Z,!}:\Dmod(Z)\to \Dmod(Z)$ is the functor $M\mapsto M\overset{!}\otimes k_Z\,$.
Applying the functor to skyscrapers, we see that if $\psId_{Z,!}$ is an equivalence then each $!$-stalk of 
$k_Z$ is isomorphic to $k$ up to a shift. It is well known that this implies that $k_Z$ and $\omega_Z$ are locally isomorphic up to a shift. 
\end{proof}

It is also easy to produce an example of a \emph{smooth} quasi-compact algebraic stack $\CY$ which is not miraculous: 
it suffices to take $\CY$ to be
%
the non-separated scheme equal to $\BA^1$ with a double point $0$.
We refer the reader to \cite[Sect. 5.3.5]{Ker}, where this example is analyzed (one easily
shows that in this case the functor $\psId_{\CY,!}$ does not preserve compactness). 

\sssec{}

A basic example of a miraculous stack is $\CY:=\BA^n/\BG_m$; see \cite[Corollary 5.3.4]{Ker}.

\medskip

In addition, the following theorem is proved in \cite{self-duality}:

\begin{thm}
Let $G$ be a reductive group. Then the stack $\Bun_G$ is miraculous.
\end{thm}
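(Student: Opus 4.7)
The plan is to prove that $\psId_{\Bun_G,!}:\Dmod(\Bun_G)_{\on{co}}\to \Dmod(\Bun_G)$ is an equivalence by induction on the semisimple rank of $G$, using parabolic induction (Eisenstein series) to reduce the inductive step to miraculousness for proper Levi subgroups. The truncatability of $\Bun_G$ from \thmref{t:main} is what makes $\Dmod(\Bun_G)_{\on{co}}$ and the functor $\psId_{\Bun_G,!}$ defined in the first place, and both source and target are compactly generated, so it suffices to exhibit the equivalence on a carefully chosen generating set.

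For the base case, $G$ is a torus, so $\Bun_G$ is (a disjoint union indexed by $\pi_1$ of) a product $\Pic(X)^{\on{rk}(G)}\times (B\BG_m)^{\on{rk}(G)}$. The factor $\Pic(X)$ is a smooth separated stack for which $k\simeq \omega$ up to shift, so the naive and $!$-pseudo-identities agree and are equivalences; the factor $B\BG_m$ is the basic miraculous example (the case $n=0$ of $\BA^n/\BG_m$ cited from \cite{Ker}). Compatibility of $\psId_!$ with products via \propref{p:D on prod} (and its variant of Remark~\ref{r:D on prod}) then gives miraculousness of $\Bun_T$.

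For the inductive step, with $G$ of positive semisimple rank, I would associate to each standard parabolic $P\subset G$ with Levi quotient $M$ the diagram $\Bun_G\overset{\sfp}\leftarrow \Bun_P\overset{\sfq}\to \Bun_M$ and the corresponding Eisenstein/constant term functors $\Eis_!^P:=\sfp_!\circ \sfq^*$, $\Eis_*^P:=\sfp_{\dr,*}\circ \sfq^!$, $\on{CT}^P_*:=\sfq_{\dr,*}\circ \sfp^!$, etc. The key identity to establish is the \emph{second adjunction} (the ``functional equation'' for Eisenstein series):
\[
\Eis^P_*\simeq \Eis^{P^-}_!\circ \psId_{\Bun_M,!}^{-1}\, ,
\]
or equivalently the pairing $\on{CT}^P_*\dashv \Eis^{P^-}_!$. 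By the inductive hypothesis $\psId_{\Bun_M,!}$ is invertible, so this identifies both a left and a right adjoint of $\on{CT}^P$ with Eisenstein functors from opposite parabolics. Combining this adjunction with the parabolic/Harder--Narasimhan stratification of $\Bun_G$ and the covering of $\Dmod(\Bun_G)$ by the essential images of $\Eis_!^P$ for various $P$ (together with the cuspidal part, which is already quasi-compact and handled by the quasi-compact case), one can identify $\psId_{\Bun_G,!}$ block-by-block with an invertible functor assembled out of identity functors and the $\psId_{\Bun_M,!}$.

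The main obstacle is precisely the second adjunction above, which is a deep geometric statement requiring detailed analysis of the Drinfeld compactifications $\ol{\Bun}_P$ and $\ol{\Bun}_{P^-}$ and the behavior of the maps $\sfp$, $\sfp^-$ near their defect strata (this is where truncatability of the various $\Bun_P$, $\ol{\Bun}_P$ and the geometry from \secref{s:reduction theory} and \secref{s:compl red} enter). A secondary subtlety is that $\Eis_!^P$ is not a priori well-defined as a continuous functor on the full category (the map $\sfp$ is not proper), so one must work with compact objects and use co-truncativeness of appropriate open substacks of $\Bun_P$ to make sense of $\sfp_!$; this is where the co-truncativeness results of \thmref{t:main} (applied not only to $\Bun_G$ but to $\Bun_P$ and $\ol{\Bun}_P$, as in the questions raised at the end of \secref{sss:parabolics}) must be strengthened or bypassed. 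Once the second adjunction is in place, assembling it with \lemref{l:basic feature} and the cofinal colimit description from \corref{c:pres as colimit} and \corref{c:dual as colimit} yields the equivalence.
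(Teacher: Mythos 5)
This theorem is not proved in the present paper: it is stated as an announcement, with the proof deferred to the companion text \cite{self-duality} (see also the footnote in the introduction). So there is no in-paper argument to compare against. Your outline does correctly reconstruct the strategy of the actual proof in the literature --- induction on the Levi subgroups via the ``strange functional equation'' $\Eis^P_*\simeq \Eis^{P^-}_!\circ \psId_{\Bun_M,!}^{-1}$, equivalently the second adjunction $(\on{CT}^P_*,\Eis^{P^-}_!)$ --- and your base case for a torus is essentially right (each component of $\Bun_T$ is a quasi-compact smooth separated gerbe over an abelian variety torsor, so the lemma on cohomological smoothness together with the product compatibility of $\psId_{!}$ applies).

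However, as a proof the proposal has two genuine gaps. First, the second adjunction is not a lemma you can quote and route around: it \emph{is} the content of the theorem, and you explicitly leave it as ``the main obstacle.'' Establishing it requires the full analysis of $\ol{\Bun}_P$, $\ol{\Bun}_{P^-}$ and the interaction of $\sfp_!$ with the defect stratification, none of which is supplied here; in particular the well-definedness of $\Eis^P_!$ as a continuous functor (co-truncativeness questions for $\Bun_P$ and $\ol{\Bun}_P$) is precisely one of the open questions raised in the introduction of this paper, not something you may assume. Second, your treatment of the cuspidal part is incorrect as stated: being supported on (or extended from) a quasi-compact substack does not reduce one to a solved case, since quasi-compact smooth stacks need \emph{not} be miraculous --- the paper's own example of $\BA^1$ with a doubled origin shows $\psId_{\CY,!}$ can fail to preserve compactness even for $\CY$ smooth and quasi-compact. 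Indeed, by the last lemma of \secref{ss:miracoli}, miraculousness of $\Bun_G$ is \emph{equivalent} to miraculousness of each quasi-compact co-truncative open substack, so ``handled by the quasi-compact case'' is circular. The cuspidal part requires a separate argument (in the actual proof it rests on the cleanness of cuspidal objects, i.e., that $!$- and $*$-extensions from a suitable open agree on them), which is missing here.
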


This theorem is equivalent to each quasi-compact co-truncative substack of $\Bun_G$ being miraculous. 
The equivalence follows from the next lemma.
\begin{lem}
A truncatable stack $\CY$ is miraculous if and only if every quasi-compact co-truncative open substack 
$U\subset\CY$ is.
\end{lem}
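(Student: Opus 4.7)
The plan is to realize $\psId_{\CY,!}$ as the colimit of the system $\{\psId_{U,!}\}$ indexed by $U \in \on{Ctrnk}(\CY)$, and to read off both directions from this presentation together with its Verdier dual. Using \corref{c:pres as colimit} and \corref{c:dual as colimit}, both $\Dmod(\CY)$ and $\Dmod(\CY)_{\on{co}}$ are described as colimits over $\on{Ctrnk}(\CY)$ of the assignment $U \mapsto \Dmod(U)$: the former with transitions $(j_{V,U})_!$ and insertions $j_!$, the latter with transitions $(j_{V,U})_*$ and insertions $j_{\on{co},*}$. For each nested pair $V\subset U$ in $\on{Ctrnk}(\CY)$, \lemref{l:basic feature} applied to $V\subset U$ yields a canonical isomorphism $\psId_{U,!}\circ (j_{V,U})_* \simeq (j_{V,U})_!\circ \psId_{V,!}$, so the $\psId_{U,!}$ assemble into a natural transformation between these two diagrams. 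Applying \lemref{l:basic feature} once more, now to $U\subset \CY$, identifies the induced morphism on colimits with $\psId_{\CY,!}$.

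The ``if'' direction is now immediate: if each $\psId_{U,!}$ is an equivalence, the natural transformation is a pointwise equivalence, hence the induced map on colimits -- namely $\psId_{\CY,!}$ -- is an equivalence.

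For the ``only if'' direction, assume $\psId_{\CY,!}$ is an equivalence and fix $U\overset{j}\hookrightarrow \CY$ co-truncative and quasi-compact. \lemref{l:basic feature} gives the identity
\begin{equation*}
\psId_{\CY,!}\circ j_{\on{co},*} \simeq j_!\circ \psId_{U,!} \tag{A}
\end{equation*}
of functors $\Dmod(U)\to \Dmod(\CY)$, and passing to Verdier-dual functors -- using the self-duality of $\psId_{\CY,!}$ and $\psId_{U,!}$ from \secref{sss:delf-duality} together with $(j_!)^\vee \simeq j^?$ and $(j_{\on{co},*})^\vee \simeq j^*$ -- produces the companion identity
\begin{equation*}
\psId_{U,!}\circ j^? \simeq j^*\circ \psId_{\CY,!} \tag{B}
\end{equation*}
of functors $\Dmod(\CY)_{\on{co}}\to \Dmod(U)$. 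Since $j_{\on{co},*}$ and $j_!$ are fully faithful (\lemref{l:j_co fullyfaith} and \lemref{l:! ff}), $(A)$ shows that $j_!\circ \psId_{U,!}$ is fully faithful, hence so is $\psId_{U,!}$. Since $j^*\circ j_!\simeq \on{id}$ by \lemref{l:*!}, the functor $j^*$ is essentially surjective; therefore $j^*\circ \psId_{\CY,!}$ is essentially surjective, so by $(B)$ the composition $\psId_{U,!}\circ j^?$ is essentially surjective, whence $\psId_{U,!}$ itself is essentially surjective. Combining, $\psId_{U,!}$ is an equivalence.

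The main obstacle lies in the ``only if'' direction: the identity $(A)$ alone yields only the full-faithfulness half of the argument, while essential surjectivity amounts to the non-tautological statement that the equivalence $\psId_{\CY,!}$ carries the fully faithful subcategory $j_{\on{co},*}(\Dmod(U))\subset \Dmod(\CY)_{\on{co}}$ \emph{onto} $j_!(\Dmod(U))\subset \Dmod(\CY)$. The self-duality of $\psId_{\CY,!}$ recorded in \secref{sss:delf-duality} is precisely the extra input that makes $(B)$, and hence essential surjectivity, available.
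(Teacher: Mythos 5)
Your proof is correct and follows essentially the same route as the paper: the ``if'' direction via the colimit presentations together with \lemref{l:basic feature}, and the ``only if'' direction via \lemref{l:basic feature} combined with the self-duality of $\psId_{!}$. The only (cosmetic) difference is that you package the conclusion as ``fully faithful from $(A)$ plus essentially surjective from its dual $(B)$'', whereas the paper extracts a left inverse of $\psId_{U,!}$ from $(A)$ and then a right inverse by self-duality.
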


\begin{proof}
The ``if" statement follows from \lemref{l:basic feature} and the descriptions of $\Dmod (\CY)$ and
$\Dmod (\CY)_{\on{co}}$ as colimits (see \corref{c:pres as colimit} and Sect.~\ref{sss:def of co}).

\medskip

Let us prove the ``only if" statement. Suppose that $\CY$ is miraculous and $j:U\hookrightarrow\CY$ 
is a quasi-compact co-truncative open substack. The functor $j_!$ has a left inverse (namely, $j^!=j^*$). The functor
$ j_{\on{co},*}:\Dmod(U)_{\on{co}}\to\Dmod(\CY)_{\on{co}}$ also has a left inverse 
(see the first proof of \lemref{l:j_co fullyfaith}).
So \lemref{l:basic feature} implies that $\psId_{U,!}$ has a left inverse. 

\medskip

We obtain that the functor
$\bD^{\on{Verdier}}_{U,!}:\Dmod(U)^\vee\to\Dmod(U)$ has a left inverse. By self-duality of 
$\bD^{\on{Verdier}}_{U,!}$  (see Sect.~\ref{sss:delf-duality}), this implies that it has a right inverse as well.
So $\bD^{\on{Verdier}}_{U,!}$ is an equivalence.
%
\end{proof}

\section{Contractive substacks}  \label{ss:est trunc}
In its simplest form, the contraction principle says that the substack $\{ 0\}/\BG_m\hookrightarrow \BA^n/\BG_m$ 
is truncative (here $\BG_m$ acts on $\BA^n$ by homotheties). In this section we will prove a generalization of
this fact, see \propref{p:2contraction principle}. 

\medskip

In \secref{ss:adjointness} we explicitly describe the non-standard functors 
$i^*_\dr$ and $i_?$ in the setting of ~\propref{p:2contraction principle}.

\medskip

We say that a substack of a stack is  \emph{contractive} if it locally satisfies the conditions of 
\propref{p:2contraction principle}; for a precise definition, see \secref{sss:contractive}. 

\medskip

The upshot of this section is that ``contractiveness" $\Rightarrow$ ``truncativeness." 

\ssec{The contraction principle}    \label{ss:elem contr}

\sssec{}   \label{sss:set up for elem contr}
Consider the following set-up. Suppose we  have an affine morphism $p:W\to  S$ between schemes.
Assume that the monoid $\BA^1$ (with respect to multiplication) acts on $W$ over $S$
(so that the action of $\BA^1$ on $S$ is trivial). Assume also that the endomorphism of $W$ corresponding to
$0\in \BA^1$ admits a factorization
$$W\overset{p}\to S\overset{\iota}\to W,$$
where $\iota$ is a section of $p:W\to  S$.
(Informally, we can say that the action of $\BG_m\subset  \BA^1$ ``contracts" $W$ onto 
the closed subscheme $\iota (S)$.) 

\medskip

Set  $\CY :=W/\BG_m$, 
$\CZ:=S/\BG_m=S\times (\on{pt}/\BG_m)$.

\begin{prop} \label{p:2contraction principle}
Under the above circumstances, the closed substack $\CZ\overset{i}\mono \CY$ is truncative.
\end{prop}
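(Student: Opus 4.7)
My plan is to verify condition (iv) of \propref{p:truncativeness}: produce the left adjoint $i^*_\dr$ of $i_{\dr,*}$ as an everywhere-defined (hence continuous) functor on $\Dmod(\CY)$. The natural candidate is $p_{\dr,*}:\Dmod(\CY)\to\Dmod(\CZ)$, where $p:\CY=W/\BG_m\to\CZ=S/\BG_m$ is the quotient of the given $p:W\to S$; the latter is automatically $\BG_m$-equivariant because $\BG_m$ acts trivially on $S$, and the induced map of stacks is affine schematic, so $p_{\dr,*}$ is continuous. The expected counit is immediate: since $p\circ\iota=\id_S$, we obtain $p_{\dr,*}\circ i_{\dr,*}\simeq \id_{\Dmod(\CZ)}$.

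To produce the unit $\id\to i_{\dr,*}\circ p_{\dr,*}$ and complete the adjunction I would use the $\BA^1$-action. Because the monoid $\BA^1$ is commutative, the action $\mu:\BA^1\times W\to W$ is $\BG_m$-equivariant with trivial action on the $\BA^1$-factor, so it descends to $\bar\mu:\BA^1\times\CY\to\CY$ restricting to $\id_\CY$ at $t=1$ and to $i\circ p$ at $t=0$. This realizes an ``$\BA^1$-homotopy'' from $\id_\CY$ to $i\circ p$; integrating along $\BA^1$ (whose de Rham cohomology is $k$) converts it into a natural transformation of endofunctors $\id\to i_{\dr,*}\circ p_{\dr,*}$ serving as the unit. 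Following the model of Example~\ref{sss:baby_Springer}, the triangle identities can then be verified by testing the hom-comparison
\[
\Hom_{\Dmod(\CY)}(\CM,i_{\dr,*}\CN)\to\Hom_{\Dmod(\CZ)}(p_{\dr,*}\CM,\CN)
\]
on a generating class of $\CM\in\Dmod(\CY)$: for $\CM=i_{\dr,*}\CM'$ both sides collapse to $\Hom(\CM',\CN)$ by the counit identity, while for $\CM$ with $i^!\CM=0$ the left side vanishes tautologically, and one is reduced to the key vanishing $p_{\dr,*}\CM=0$.

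The main obstacle is this key vanishing: it is the essential content of the contraction principle and encodes the fact that $\BG_m$ has no fixed points on $W-\iota(S)$. The same $\bar\mu$ supplies the structural tool, since over the complement $U:=(W-\iota(S))/\BG_m$ the specialization $\bar\mu_0=(i\circ p)|_U$ factors through $\CZ$ (disjoint from $U$) while $\bar\mu_1$ is the identity, forcing the $\BG_m$-equivariant $\BA^1$-family of D-modules to contract. A cleaner alternative route is to reduce, via Zariski-local descent on $S$ (\corref{c:Zariski-local}), to the standard local model where $W$ is a weighted affine space $\BA^n_S$ with linear positive-weight $\BG_m$-action; one can then invoke the simplest instance of the principle (Example~\ref{sss:baby_Springer}) together with the stability properties established in \propref{p:stab under fin} and \lemref{l:trunc and prod}. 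The cost of this reduction is an extra argument showing that the contraction hypothesis indeed forces such a local normal form, which is why a direct homotopical proof along the lines sketched above is preferable if one can make the integration-along-$\BA^1$ step fully rigorous in the DG setting.
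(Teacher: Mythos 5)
Your primary route---constructing the adjunction $(p_{\dr,*},\, i_{\dr,*})$ directly from the $\BA^1$-action---is viable and is in fact the content of \propref{p:adjointness}, proved in the paper via the reduction argument of \secref{ss:elem contr} and, in a ``stacky'' generalization, in Appendix~\ref{s:stacky_contraction}. But as written your sketch omits or botches the two steps that carry all the weight. First, ``integrating along $\BA^1$'' to produce the unit $\on{Id}\to i_{\dr,*}\circ p_{\dr,*}$ is precisely the hard point: the paper makes it rigorous by showing that the delta-object $\underline 0$ is a \emph{unital} idempotent algebra in the monoidal category $\Dmod(\BA^1/\BG_m)$ (Lemma~\ref{l:key_lemma}) and letting $\Dmod(\BA^1/\BG_m)$ act on $\Dmod(\CY)$; you do not supply any substitute for this. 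Second, your verification of the triangle identities on a ``generating class'' contains an actual error: if $i^!\CM=0$ then $\CM\simeq j_*j^*\CM$ is a $*$-extension, and $\Hom(j_*\CF_U,\, i_{\dr,*}\CN)$ does \emph{not} vanish in general (already on $\BA^1$, $j_*\CO_{\BG_m}$ surjects onto $\delta_0$). The objects for which the left-hand side vanishes tautologically are the $j_!$-extensions; but $j_!$ is a priori defined only on holonomic objects, and these do not generate $\Dmod(U)$ once $U=(W-\iota(S))/\BG_m$ is positive-dimensional. This is exactly why the ``baby case'' of \secref{sss:baby_Springer}, where $U=\on{pt}$, does not generalize by the same argument.

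Your fallback reduction to a ``standard local model'' also conceals the main geometric work, which is what the paper's actual proof of \propref{p:2contraction principle} consists of. Even granting $W=\BA^n_S$ with a linear positive-weight action, the case $n>1$ does not follow from $n=1$ via the external-product lemma (\lemref{l:trunc and prod} would give $\BA^n/\BG_m^n$, not $\BA^n/\BG_m$): the paper blows up the zero section and uses proper descent (\lemref{l:proper_descent}) to reduce to a line bundle, and only then to $\BA^1/\BG_m$ over a point. Moreover, reducing a general affine contraction $W\to S$ to the vector-bundle case is not automatic: it requires the graded-algebra argument (passing to the subalgebra generated by $\CA_n$, invoking \propref{p:stab under fin} for the resulting finite morphism, and \lemref{l:base change trunc lc} for the closed conic embedding into a vector bundle). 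Until you either carry out the idempotent-monad construction of the unit or execute the blow-up/finite-morphism reductions, the proposal is a plan rather than a proof.
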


The rest of this subsection is devoted to the proof of \propref{p:2contraction principle}.

\sssec{}

Without loss of generality, we can assume that $S$ is quasi-compact.

\medskip

We have $$W=\Spec_S(\CA),$$ where $\CA=\bigoplus\limits_n\CA_n$ is a quasi-coherent sheaf of non-negatively
graded $\CO_S$-algebras with $\CA_0= \CO_S$. The section $\iota$ corresponds to the
projection $\CA\to \CA_0=\CO_S$.

\medskip

For $n\in\BN$, let $\CA^{(n)}\subset \CA$ be the $\CO_S$-subalgebra 
generated by $\CA_n\,$. Choose $n$ so that $\CA$ is finite over $\CA^{(n)}$
(if $\CA$ is generated by $\CA_{m_1},\ldots ,\CA_{m_r}$ then one can take $n$ to be the least common multiple of 
$m_1$,\ldots, $m_r$). Set $W':=\Spec(\CA^{(n)})$, then the morphism
$f:W\to W'$ is finite. Moreover, the embedding $\iota (S)\mono f^{-1}(f(\iota (S)))$ induces an isomorphism 
between the corresponding reduced schemes. So by \propref{p:stab under fin},
it suffices to prove the proposition for $W'$ instead of $W$.  

\sssec{}

Thus, we can assume that $\CA$ is generated by $\CA_n$.
Moreover, since the proposition to be proved is local with respect to $S$ 
(see \corref{c:Zariski-local}), we can assume that $\CA_n$ is a quotient of a locally free $\CO_S$-module $\CE$. 
Let $V$ denote the vector bundle over $S$ corresponding to $\CE^*$ (in other words, $V$ is the spectrum 
of the symmetric algebra of $\CE$). Then $W=\Spec(\CA)$ identifies with a closed conical subscheme in $V$. 

\sssec{}        \label{sss:vector_bundle_situation}
Thus by \lemref{l:base change trunc lc} (for closed embeddings) it suffices to consider the case where 
$W$ is a vector bundle $V$ over $S$ equipped with
an $\BA^1$-action obtained from the standard one by composing it with the homomorphism
\begin{equation} \label{e:raising to power n}
\BA^1\longrightarrow\BA^1, \quad \lambda\mapsto\lambda^n.
\end{equation}
(here $n$ is some positive integer). In this situation we have to prove that 
${\bf 0}/\BG_m\subset V/\BG_m$ is truncative, where  ${\bf 0 }\subset V$ is the zero section.

\sssec{}
Let $\wt{V}\overset{f}\to V$ be the blow-up of $V$ along ${\bf 0}$. Set ${\wt{\bf 0}}:=f^{-1}({\bf 0})$.
Since $f$ is proper and surjective, by \lemref{l:proper_descent}, in order to prove the truncativeness of 
${\bf 0}/\BG_m\subset V/\BG_m$, it suffices to show that ${\wt{\bf 0}}/\BG_m$ is truncative in 
${\wt{V}}/\BG_m$. 

\medskip

Note that $\wt{V}$ is a line bundle over $\BP (V)$ and $\wt{\bf 0}$ is its zero section. 
So we see that it suffices to prove the
statement from \secref{sss:vector_bundle_situation} for line bundles over arbitrary bases. 
Moreover, since the statement is local, it suffices to consider the trivial line bundle over an arbitrary quasi-compact scheme.

\sssec{}
Thus it remains to prove that for any quasi-compact scheme $S$ the substack 
$$S\times (\{ 0\}/\BG_m )\subset S\times (\BA^1/\BG_m )$$ is truncative (here we assume that
$\lambda\in \BG_m$ acts on $\BA^1$ as multiplication by $\lambda^n$ for some $n\in\BN$).
By \lemref{l:trunc and prod}, we can assume that $S=\Spec(k)$. In this case the statement 
follows from the fact that the number of $\BG_m$-orbits in $\BA^1$ is finite 
(see \secref{sss:finitely many points}). \qed

\ssec{Contractiveness}

\sssec{}  \label{sss:contractive}

We say that a  locally closed substack $\CZ'$ of a stack $\CY'$ is \emph{contractive} 
if there exists a commutative diagram

\begin{equation}   \label{e:strongly contractive}
\xymatrix{
\CZ \ar[d]_{}\ar[r]^{}& \CY\ar[d]^{}\\
\CZ'\ar[r]^{}&\CY'
    }
\end{equation}

such that 
\begin{enumerate}
\item[(i)] the upper row of \eqref{e:strongly contractive} is as in \propref{p:2contraction principle};  

\medskip

\item[(ii)] the morphism $\CZ\to\CZ'\underset{\CY'}\times \CY$ is an open embedding;

\smallskip

\item[(iii)]  the vertical arrows of \eqref{e:strongly contractive} are smooth and the left one is surjective.
\end{enumerate}

\medskip

In other words, a substack is contractive if it locally satisfies the conditions of 
~\propref{p:2contraction principle}.

\sssec{}

We obtain:

\begin{cor} \label{c:contraction principle}
A contractive substack is truncative.
\end{cor}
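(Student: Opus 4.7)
The plan is to reduce \corref{c:contraction principle} directly to two results that are already available in the excerpt: \propref{p:2contraction principle} (which provides truncativeness for the model situation upstairs) and \propref{p:truncativeness after smooth} (which lets us descend truncativeness along smooth surjections).

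Concretely, I would argue as follows. Given a contractive substack $\CZ'\hookrightarrow\CY'$, by definition we are handed a commutative square
\[
\xymatrix{
\CZ \ar[d]_{f'}\ar[r]& \CY\ar[d]^{f}\\
\CZ'\ar[r]&\CY'
}
\]
in which the top row satisfies the hypotheses of \propref{p:2contraction principle}, the map $\CZ\to\CZ'\underset{\CY'}\times\CY$ is an open embedding, and the vertical maps are smooth with $f'$ surjective. Apply \propref{p:2contraction principle} to the top row: this yields that $\CZ\hookrightarrow\CY$ is truncative. Now apply \propref{p:truncativeness after smooth} to the smooth morphism $f\colon \CY\to\CY'$, the locally closed substack $\CZ'\subset\CY'$, and the open substack $\CZ\subset\CZ'\underset{\CY'}\times\CY$: the required surjectivity of the composite $\CZ\to\CZ'\underset{\CY'}\times\CY\to\CZ'$ is precisely the surjectivity of the left vertical arrow $f'$. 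The hypothesis of \propref{p:truncativeness after smooth} that $\CZ\hookrightarrow\CY$ be truncative has just been verified, so we conclude that $\CZ'\hookrightarrow\CY'$ is truncative, as desired.

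There is essentially no obstacle here: the definition of ``contractive'' in \secref{sss:contractive} was arranged so that contractiveness is exactly the smooth-local version of the contraction hypothesis of \propref{p:2contraction principle}, and \propref{p:truncativeness after smooth} is the tool for transferring truncativeness across such a smooth cover. The only minor point to keep in mind is that \propref{p:truncativeness after smooth} was formulated in the QCA setting; but since truncativeness for a locally QCA stack is defined (in \secref{ss:non-quasicompact}) by intersecting with quasi-compact opens, one may pass to quasi-compact opens in $\CY'$ and $\CY$ before invoking the proposition, so the local QCA case causes no difficulty.
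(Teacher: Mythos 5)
Your argument is correct and is exactly the paper's proof: the paper reduces to the quasi-compact case and then "combines \propref{p:2contraction principle} and \propref{p:truncativeness after smooth}," which is precisely the two-step reduction you spell out. No issues.
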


\begin{proof}
With no loss of generality, we can assume that $\CY$ is quasi-compact. Now combine
\propref{p:2contraction principle} and \ref{p:truncativeness after smooth}.
\end{proof}

Note that the above definition of contractive substack makes sense without the characteristic~0 assumption.

\begin{rem}
We are not sure that the notion of contractiveness is really good.
But it is convenient for the purposes of this article.
\end{rem}

\ssec{An adjointness result}    \label{ss:adjointness}

\sssec{}
Let $W\to S\overset{\iota}\to W$ be as in \propref{p:2contraction principle} (in particular, $\BA^1$ acts on $W$).

\medskip

Consider the corresponding morphisms $i: S/\BG_m\mono W/\BG_m$ and 
$\pi: W/\BG_m\to S/\BG_m\,$. By \propref{p:2contraction principle}, the functor
 $i_{\dr,*}$ has a continuous left adjoint (denoted by $i^*_\dr$) and $i^!$ has a continuous right adjoint 
 (denoted by $i_?$). 
 
 \medskip
 
The next proposition identifies the non-standard functors
$$i^*_\dr:\Dmod (W/\BG_m)\to\Dmod (S/\BG_m) \text{ and } i_?:\Dmod (S/\BG_m)\to\Dmod (W/\BG_m)$$  with certain standard functors. 
Namely, $$i^*_\dr\simeq \pi_{\dr,*},\quad i_?= \pi^!.$$

\begin{prop} \label{p:adjointness}
The functors 
\[
\pi_{\dr,*}:\Dmod (W/\BG_m )\rightleftarrows \Dmod (S/\BG_m ):i_{\dr,*}\, \text{ and }
i^!:\Dmod (W/\BG_m ) \rightleftarrows \Dmod (S/\BG_m ):\pi^!
\]
form adjoint pairs with the adjunctions  $$\pi_{\dr,*}\circ i_{\dr,*}\iso\Id_{\Dmod (S/\BG_m )} \text{ and } 
 i^!\circ \pi^!\iso \Id_{\Dmod (S/\BG_m )}$$ coming from the isomorphism $\pi\circ i\iso\Id_{S/\BG_m}\,$. 
\end{prop}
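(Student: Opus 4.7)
The plan is to prove the adjoint pair $(\pi_{\dr,*}, i_{\dr,*})$ directly and to deduce the adjoint pair $(i^!, \pi^!)$ from it by Verdier duality. By \propref{p:Lurie-duality}, since $\pi$ is schematic and quasi-compact (because $p:W\to S$ is affine) and $i$ is a closed embedding, the functors $\pi_{\dr,*}$ and $\pi^!$ are mutually dual, as are $i_{\dr,*}$ and $i^!$. Hence, by the formalism of \secref{sss:dual functor}, passing to dual functors sends any adjunction $(\pi_{\dr,*},i_{\dr,*})$ to $(i^!,\pi^!)$, with the counit $\pi_{\dr,*}\circ i_{\dr,*}\iso\id$ transported to $i^!\circ\pi^!\iso\id$ (both coming from $\pi\circ i=\id_{S/\BG_m}$).

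For the primary adjunction, the counit is furnished by $\pi\circ i=\id$. It suffices to show that the resulting natural map
\[
\Hom_{\Dmod(W/\BG_m)}(\M, i_{\dr,*}\N) \;\longrightarrow\; \Hom_{\Dmod(S/\BG_m)}(\pi_{\dr,*}\M, \N)
\]
is an isomorphism for all $\M\in\Dmod(W/\BG_m)$ and $\N\in\Dmod(S/\BG_m)$. By \propref{p:2contraction principle}, $i:S/\BG_m\mono W/\BG_m$ is truncative, so, letting $j:U:=(W-\iota(S))/\BG_m\mono W/\BG_m$ denote the open complement, we have the recollement triangle $j_!j^*\M\to\M\to i_{\dr,*}i^*_\dr\M$. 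Thus $\Dmod(W/\BG_m)$ is generated by the essential images of $i_{\dr,*}$ and of $j_!$, and I need only check the isomorphism on these two classes of $\M$. For $\M=i_{\dr,*}\M'$, both sides canonically identify with $\Hom(\M',\N)$ (using that $i_{\dr,*}$ is fully faithful and $\pi_{\dr,*}\circ i_{\dr,*}=\id$). For $\M=j_!\L$, the left-hand side vanishes because $j^*\circ i_{\dr,*}=0$, so the entire remaining content becomes the vanishing
\[
\pi_{\dr,*}\circ j_!=0\quad\text{as a functor}\quad\Dmod(U)\to\Dmod(S/\BG_m).
\]

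This vanishing is the principal obstacle; it is the relative, equivariant form of the statement that the de Rham cohomology of the ``open cone'' around the contraction locus is killed after equivariant pushforward. I would prove it by mimicking the chain of reductions used in the proof of \propref{p:2contraction principle}: first, replacing $W$ by $\Spec(\mathcal{A}^{(n)})$ via the finite morphism $W\to W'$ reduces to the case where $\mathcal{A}$ is generated by $\mathcal{A}_n$; then, realizing $W$ as a closed subscheme of a vector bundle and blowing up the zero section reduces to a line bundle over its base; then Zariski-localizing reduces to the product situation $W=S\times\BA^1$ with $\BG_m$ acting only on the $\BA^1$ factor. In this last case \propref{p:D on prod} (via Remark \ref{r:D on prod}) gives $\Dmod(W/\BG_m)\simeq\Dmod(S)\otimes\Dmod(\BA^1/\BG_m)$, under which $\pi_{\dr,*}$ and $j_!$ factor as $\id_{\Dmod(S)}\otimes\pi^{\on{baby}}_{\dr,*}$ and $\id_{\Dmod(S)}\otimes j^{\on{baby}}_!$ respectively, so the vanishing reduces to the baby case $\BA^1/\BG_m\to B\BG_m$ treated in Example \ref{sss:baby_Springer} (where it comes from $H^*_{\dr}(\BA^1)=k$). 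The delicate point in this approach is that each reduction step must be shown to preserve the \emph{vanishing} of $\pi_{\dr,*}\circ j_!$, not merely the truncativeness of $i$, which requires invoking the base-change compatibilities of $\pi_{\dr,*}$ and $j_!$ with respect to the finite, proper, and smooth covers used at each stage.
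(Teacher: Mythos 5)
Your proof is correct, and its formal skeleton is the right one: deducing the pair $(i^!,\pi^!)$ from $(\pi_{\dr,*},i_{\dr,*})$ via \propref{p:Lurie-duality} (both $\pi$ and $i$ are indeed schematic), and reducing the latter adjunction, through the recollement triangle $j_!j^*\to\on{Id}\to i_{\dr,*}i^*_\dr$ supplied by truncativeness, to the single vanishing $\pi_{\dr,*}\circ j_!=0$, is exactly the structure of the paper's treatment of the baby case in Sect.~\ref{sss:baby_Springer}.

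Where you diverge from the paper is in how the vanishing is established. Your plan — rerun the dévissage of Sect.~\ref{ss:elem contr} (finite cover, closed embedding into a vector bundle, blow-up, Zariski localization, K\"unneth) while propagating the vanishing of $\pi_{\dr,*}\circ j_!$ rather than truncativeness — is precisely the route the paper dismisses in one sentence as an exercise ("the reader can easily prove \propref{p:adjointness} by slightly modifying the argument from Sect.~\ref{ss:elem contr}") and never writes out. The proof the paper actually gives is the one in Appendix~\ref{s:stacky_contraction}: one observes that $\underline 0=i_{\dr,*}(\mathbf 1)$ is an idempotent algebra in $\Dmod(\BA^1/\BG_m)$ under convolution, proves it is \emph{unital} using only the baby case (Lemma~\ref{l:key_lemma}), and then transports unitality of the monad $\mathbf 0_{\rD}=i_{\dr,*}\circ\pi_{\dr,*}$ through the monoidal action of $\Dmod(\BA^1/\BG_m)$ on $\Dmod(W/\BG_m)$; unitality of that monad \emph{is} the desired adjunction. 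That approach buys the absence of blow-ups, properness, and base-change verifications, and it yields the stronger monodromic statement (Theorem~\ref{t:adjunctions}) for nontrivial $\BG_m$-actions. Your approach buys self-containedness relative to Sect.~\ref{ss:elem contr}, at the cost of the checks you correctly flag: at each geometric reduction one must identify $f_{\dr,*}\circ j_!$ with $j'_!\circ(f|_U)_{\dr,*}$ using proper (resp.\ smooth) base change and the characterization of $j_!$ by its two restrictions — these all go through. Two small points to not lose: the final baby case carries the weight-$n$ action, so the computation of Sect.~\ref{sss:baby_Springer} must be redone with $U=B\mu_n$ in place of $\on{pt}$ (it still reduces to the vanishing of the de Rham cohomology of $\BA^1$ with coefficients in the $!$-extension of a Kummer local system); and the K\"unneth identifications $\pi_{\dr,*}\simeq\on{Id}\otimes\pi^{\mathrm{baby}}_{\dr,*}$, $j_!\simeq\on{Id}\otimes j^{\mathrm{baby}}_!$ under \propref{p:D on prod} deserve a word, though both are formal (the second by passing to left adjoints in the first factor-wise identification of $j^*$).
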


Note that a simple particular case of \propref{p:adjointness} was proved in Sect.~\ref{sss:baby_Springer}.

\begin{rem}
\propref{p:adjointness} clearly implies  \propref{p:2contraction principle}. 
\end{rem}

\propref{p:adjointness} is well known (at least, in the setting of constructible sheaves instead of D-modules). 
It goes back to the works by Verdier \cite[Lemma 6.1]{V} and Springer
\cite[Proposition~1]{Sp}; see also \cite[Lemma A.7]{KL}  and \cite[Lemma 6]{Br}.

\medskip

The reader can easily prove \propref{p:adjointness} by slightly modifying the argument from
Sect.~\ref{ss:elem contr} (which is based on blowing up and properness).

\medskip

On the other hand, in Appendix~\ref{s:stacky_contraction} we give a complete proof of a ``stacky" generalization of 
\propref{p:adjointness} (see Theorem~\ref{t:adjunctions} and \corref{c:adjunctions}). The approach from  Appendix~\ref{s:stacky_contraction} is close to \cite{Sp} (there are no blow-ups, no properness arguments, and we work with the monoid $\BA^1$ rather than with the scheme or
stack on which $\BA^1$ acts).

\ssec{A general lemma on contractiveness}
The reader may prefer to skip this subsection on the first pass. Its main result (\lemref{l:stacky_contraction}) 
will not be used until \propref{p:Z_is_contractive}(b).

\sssec{}  \label{sss:loc str contr}

The notion of contractive substack was defined in \secref{sss:contractive}. 
This notion is clearly local in the following sense:

\medskip

Let $f:\CY'\to\CY$ be a smooth surjective morphism of algebraic stacks and $\CZ\subset\CY$
a locally closed substack. If  $f^{-1}(\CZ )$ is contractive in $\CY'$ then $\CZ$ is
contractive in $\CY$.

\sssec{}

As before, we consider $\BA^1$ as a monoid with respect to multiplication. It contains $\BG_m$
as a subgroup.

\medskip

We have: 

\begin{lem}     \label{l:stacky_contraction}
Let $\pi :\CW\to\CS$ be an affine schematic morphism of algebraic stacks. Suppose that the
monoid $\BA^1$ 
acts on $\CW$ by $\CS$-endomorphisms
(i.e., over $\CS$, with the action of $\BA^1$ on $\CS$ being trivial). Assume that

\smallskip

\noindent{\em(i)} The $\CS$-endomorphism of $\CW$ corresponding to $0\in\BA^1$ equals $i\circ\pi$ for some section $i:\CS\to\CW$;

\smallskip

\noindent{\em(ii)} The action of $\BG_m$ on $\CW$ viewed as a stack over $\on{pt}$ (rather than over $\CS$) is isomorphic to
the trivial action.

Then the substack $\CS\overset{i}\mono \CW$ is contractive. 
\end{lem}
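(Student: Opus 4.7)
The plan is to verify contractiveness of $\CS\overset{i}\mono \CW$ directly by producing a diagram of the form~\eqref{e:strongly contractive}. First, choose a smooth surjective morphism $S\to\CS$ from a scheme. Since $\pi:\CW\to\CS$ is affine and schematic, the base change $W:=\CW\underset{\CS}{\times} S$ is a scheme and $p:W\to S$ is affine. The $\BA^1$-action on $\CW$ and the section $i$ pull back to an $\BA^1$-action on $W$ over $S$ and to a section $\iota:S\to W$ of $p$ with $\iota\circ p=0\cdot(-)$. Thus the pair $(p:W\to S,\iota)$ fits the setting of \propref{p:2contraction principle}, and the top row of~\eqref{e:strongly contractive} is taken to be $S/\BG_m\mono W/\BG_m$.

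Next, I construct the vertical arrows using assumption~(ii). It provides an equivalence $\CW/\BG_m\simeq \CW\times(\on{pt}/\BG_m)$ of stacks. Composing with the projection to $\CW$ gives a smooth morphism $\CW/\BG_m\to\CW$ (smooth because $\on{pt}/\BG_m\to\on{pt}$ is). The $\BG_m$-equivariant morphism $W\to\CW$ induces a smooth morphism $W/\BG_m\to\CW/\BG_m$, and the composition $W/\BG_m\to\CW/\BG_m\to\CW$ is the desired smooth morphism. The left vertical arrow $S/\BG_m\to\CS$ is constructed analogously using the trivial $\BG_m$-action on $\CS$; it is smooth and surjective since $S\to\CS$ is.

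It remains to verify that the induced morphism $S/\BG_m\to \CS\underset{\CW}{\times} (W/\BG_m)$ is an open embedding. The key observation is that $\BG_m$ acts trivially on the image of $i$: for $w=i(s)$ and $\lambda\in\BG_m$,
\[
\lambda\cdot w=\lambda\cdot(0\cdot w)=(\lambda\cdot 0)\cdot w=0\cdot w=w,
\]
using $0\cdot w=i(\pi(w))=w$ and the $\BA^1$-monoid associativity. Hence the fiber product $\CS\underset{\CW}{\times} W$, readily identified with $S$ via projection, carries the trivial $\BG_m$-action. Combining this with the factorization $W/\BG_m\to\CW/\BG_m\simeq\CW\times(\on{pt}/\BG_m)\to\CW$, a fiber-product manipulation identifies $\CS\underset{\CW}{\times}(W/\BG_m)\simeq S\times(\on{pt}/\BG_m)=S/\BG_m$, under which the map from $S/\BG_m$ is the identity. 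The main obstacle is this final fiber-product computation, which requires carefully tracing through the equivalence from (ii) and invoking the $\BG_m$-fixedness of the image of $i$ to reach the desired identification.
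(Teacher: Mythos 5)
Your argument is in substance the paper's own proof, unpacked. The paper isolates the two ingredients you use --- that $\CS/\BG_m\mono\CW/\BG_m$ is contractive because after the smooth surjective base change $S\to\CS$ one lands exactly in the situation of \propref{p:2contraction principle}, and that the trivialization in (ii) yields a retraction $\psi:\CW/\BG_m\to\CW$ of the quotient map --- into Example~\ref{ex:taut2} and Lemma~\ref{l:gerbes}, whereas you assemble the diagram~\eqref{e:strongly contractive} directly. Your fiber-product computation does go through: the observation that $\BA^1$ (hence $\BG_m$) acts trivially on $i(\CS)$ is precisely what identifies $\CS\underset{\CW}\times (W/\BG_m)$ with $\iota(S)/\BG_m\simeq S/\BG_m$.

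One small point to repair. You define the left vertical arrow $S/\BG_m\to\CS$ via the \emph{canonical} trivialization of the trivial action on $S$, while the right vertical arrow uses the trivialization \emph{chosen} in (ii); the restriction of the latter to $i(\CS)$ is again a trivialization of a trivial action, but it need not agree with the canonical one, so the square as you describe it need not commute. The fix is to take the left arrow to be the restriction of the right one: the composite $S/\BG_m\to W/\BG_m\to\CW$ factors through $\CS$ (since $\psi^{-1}(i(\CS))=i(\CS)/\BG_m$ and $\psi$ carries $i(\CS)/\BG_m$ into $i(\CS)$), and it is smooth and surjective as the composite $S/\BG_m\to\CS/\BG_m\to\CS$ of smooth surjective maps. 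This is exactly the friction the paper sidesteps by phrasing the descent step abstractly as Lemma~\ref{l:gerbes}.
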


\begin{rem} Condition (i) implies that the action of $\BG_m$ on $\CW$ viewed as a stack over
$\CS$ is nontrivial unless $\pi :\CW\to\CS$ is an isomorphism. This does not contradict (ii):
we are dealing with stacks, and the functor from the groupoid of $\CS$-endomorphisms of $\CW$
to that of $k$-endomorphisms of $\CW$
is not fully faithful.
\end{rem} 

Before proving Lemma~\ref{l:stacky_contraction}, let us consider two examples.

\begin{example}      \label{ex:taut1}
Let $p :W\to S$ be as in Sect.~\ref{sss:set up for elem contr}. Set $$\CW:=W/\BG_m,\quad 
\CS:=S/\BG_m=S\times (\on{pt}/\BG_m).$$
Then the conditions of  Lemma~\ref{l:stacky_contraction} hold for the morphism $\pi:\CW\to\CS$. 
The conclusion of Lemma~\ref{l:stacky_contraction} holds tautologically.
\end{example}

\begin{example}    \label{ex:taut2}
Let $\pi :\CW\to\CS$ be an affine schematic morphism of algebraic stacks.
Suppose that an action of $\BA^1$ on $\CW$ by $\CS$-endomorphisms satisfies condition (i) of
Lemma~\ref{l:stacky_contraction}. Set $\CW':=\CW/\BG_m$ and 
$\CS':=\CS/\BG_m=\CS\times (\on{pt}/\BG_m)$. Then the morphism
$\pi':\CW'\to\CS'$ satisfies both conditions of Lemma~\ref{l:stacky_contraction}. 
The conclusion of Lemma~\ref{l:stacky_contraction} is clear because after
a smooth surjective base change $S\to\CS$ we get the situation of Example~\ref{ex:taut1}.
\end{example}

\sssec{}

To prove Lemma~\ref{l:stacky_contraction}, we need the following assertion:

\begin{lem}    \label{l:gerbes}
Let $\varphi :\CY\to\CY'$ and $\psi :\CY'\to\CY$ be 
morphisms between algebraic stacks such that
$\psi\circ\varphi\simeq\Id_{\CY}$.
Suppose that $\varphi$ is smooth and surjective. Then:

\smallskip

\noindent{\em(a)} The maps
\[
\{\mbox{locally closed substacks of }\CY'\}\to \{\mbox{locally closed substacks of }\CY\},
\quad \CZ'\mapsto\varphi^{-1}(\CZ' ),
\]
\[
\{\mbox{locally closed substacks of }\CY\}\to \{\mbox{locally closed substacks of }\CY'\},
\quad \CZ\mapsto\psi^{-1}(\CZ )
\]
are mutually inverse bijections;

\smallskip

\noindent{\em(b)}
A locally closed substack $\CZ\subset\CY$ is contractive if and only if the corresponding substack $\CZ'\subset\CY'$ is.
\end{lem}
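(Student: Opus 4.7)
The plan is to first observe that $\psi$ itself is smooth and surjective, then deduce part (a) by a short smooth-descent argument, and finally derive part (b) from part (a) by applying the locality of contractiveness from \secref{sss:loc str contr} to both $\varphi$ and $\psi$.

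First, I would establish that $\psi:\CY'\to\CY$ is smooth and surjective. Since $\varphi$ is a smooth surjective cover of $\CY'$, and smoothness of a morphism is smooth-local on source, smoothness of $\psi$ is equivalent to smoothness of $\psi\circ\varphi=\Id_{\CY}$, which is obvious. Surjectivity of $\psi$ likewise follows from the surjectivity of $\psi\circ\varphi=\Id_\CY$. As a consequence, both $\varphi^{-1}$ and $\psi^{-1}$ send locally closed substacks to locally closed substacks (flatness suffices for this).

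For part (a), one composition is immediate:
\[
\varphi^{-1}(\psi^{-1}(\CZ))=(\psi\circ\varphi)^{-1}(\CZ)=\CZ.
\]
For the other composition, given a locally closed $\CZ'\subset\CY'$, set $\CZ:=\varphi^{-1}(\CZ')$ and compare $\psi^{-1}(\CZ)$ with $\CZ'$. Pulling back via $\varphi$ gives
\[
\varphi^{-1}(\psi^{-1}(\CZ))=(\psi\circ\varphi)^{-1}(\CZ)=\CZ=\varphi^{-1}(\CZ').
\]
Since $\varphi$ is smooth and surjective, locally closed substacks of $\CY'$ are determined by their $\varphi$-preimages (smooth descent for locally closed substacks), so $\psi^{-1}(\CZ)=\CZ'$. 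This yields the claimed mutually inverse bijection.

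For part (b), I would apply the downward locality of contractiveness from \secref{sss:loc str contr} twice. Since $\varphi:\CY\to\CY'$ is smooth surjective and $\varphi^{-1}(\CZ')=\CZ$, contractiveness of $\CZ$ in $\CY$ yields contractiveness of $\CZ'$ in $\CY'$. Conversely, since $\psi:\CY'\to\CY$ is also smooth surjective (by the first step) and $\psi^{-1}(\CZ)=\CZ'$ by part (a), contractiveness of $\CZ'$ in $\CY'$ yields contractiveness of $\CZ$ in $\CY$. The main technical point of the argument is the invocation of ``smoothness is smooth-local on source'' in the first step; this is a standard property of morphisms of algebraic stacks, but it is the hinge of the entire lemma, since without smoothness of $\psi$ the inverse-image operation $\psi^{-1}$ on locally closed substacks is not even defined.
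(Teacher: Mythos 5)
Your proof is correct and follows essentially the same route as the paper's: the paper likewise first notes (in the remark preceding the lemma) that $\psi$ is smooth and surjective, proves (a) from the identity $\varphi^{-1}(\psi^{-1}(\CZ))=\CZ$ together with injectivity of the preimage maps, and deduces (b) by applying the locality of contractiveness from \secref{sss:loc str contr} to both $\varphi$ and $\psi$. Your write-up merely spells out the descent step for (a) in more detail.
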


\begin{rem}   \label{r:smooth_surj}
Since $\varphi$ and $\psi\circ\varphi$ are smooth and surjective $\psi$ has the same 
properties.\footnote{By the way, this implies that $\CY'$ is the classifying space of a group over $\CY$.}
\end{rem}

\begin{proof}
The maps from statement (a) are clearly injective. Since $\psi\circ\varphi\simeq\Id_{\CY}$ one has
$\varphi^{-1}(\psi^{-1}(\CZ ))=\CZ$. Statement (a) follows. To prove (b), use statement (a),
Remark~\ref{r:smooth_surj}, and the locality of strong contractiveness, see \secref{sss:loc str contr}.
\end{proof}

\begin{proof}[Proof of Lemma~\ref{l:stacky_contraction}]
Define $\pi':\CW'\to\CS'$ as in Example~\ref{ex:taut2}, then the corresponding embedding
$i':\CS'\mono\CW'$ is contractive. We have a Cartesian square
\[
\CD
\CS  @>i>>  \CW  \\
@VVV    @V\varphi VV  \\
\CS'  @>i'>>  \CW'
\endCD
\]
So by Lemma~\ref{l:gerbes}, it remains to show that the morphism $\varphi :\CW\to\CW'$
admits a 
left inverse $\psi :\CW'\to\CW$. But $\CW'$ is the quotient of $\CW$
by a \emph{trivial} action of $\BG_m\,$. Choosing a trivialization of this action we can identify
$\CW'$ with $\CW\times (\on{pt}/\BG_m)$ and take $\psi$ to be the projection 
$$\CW\times (\on{pt}/\BG_m)\to\CW.$$
\end{proof}

\section{The case of $SL_2$}  \label{ss:SL 2}

In this section we will give a proof of \thmref{t:main truncatable} in the case $G=SL_2$, which will be the prototype of the argument in general.

\ssec{The substack $\Bun_G^{(\leq n)}$ }

\sssec{}

For an integer $n\ge 0$, let $\Bun_G^{(\leq n)}\subset \Bun_G$ be the open substack 
consisting of vector bundles that {\it do not admit} line sub-bundles of degree $> n$.
It is easy to see that the substacks $\Bun_G^{(\leq n)}$ are quasi-compact and that their union is all of
$\Bun_G$. 

\medskip

Let $g$ be the genus of $X$.  We will show that for $n\ge \on{max}(g-1,0)$, the open substack
$\Bun_G^{(\leq n)}$ is co-truncative. 

\sssec{}

Let $\Bun_G^{(n)}$ be the locally closed substack
$$\Bun_G^{(n)}:=\Bun_G^{(\leq n)}-\Bun_G^{(\leq n-1)},$$
endowed, say, with the reduced structure. 

\medskip

By \propref{p:trunc and strat}, it suffices to show that if $n> \on{max}(g-1,0)$ then $\Bun_G^{(n)}$ 
is a truncative substack of $\Bun_G^{(\leq n)}$. We will do this by combining 
Propositions \ref{p:truncativeness after smooth} and \ref{p:2contraction principle}.

\sssec{} \label{sss:not}

Note, however, that if $n$ is small relative to the genus of $X$, then the stratum $\Bun_G^{(n)}$ is 
\emph{not} truncative.  Indeed, one can choose $X$ and $n$ so that $\Bun_G^{(n)}$ has a non-empty
intersection with an open substack $\Bun_G$ that is actually a \emph{scheme}; then apply
\secref{sss:non-truncative sch}. 

\ssec{Reducing to a contracting situation}

\sssec{}

For an integer $n$, let $\Bun_B^n$ be the stack classifying short exact sequences
\begin{equation} \label{e:ext}
0\to \CL^{-1}\to \CM\to \CL\to 0,
\end{equation}
where $\CM\in \Bun_{SL_2}$, and $\CL$ is a line bundle of degree $-n$.
Let $\sfp^n: \Bun_B^n\to \Bun_G$ denote the natural projection. If $n>0$ then the image
of $\sfp^n$ equals $\Bun_G^{(n)}$.

\medskip

\begin{lem}   \label{l:facts_used}
Suppose that   $n> \on{max}(g-1,0)$. Then the morphism $\sfp^{-n}: \Bun_B^{-n}\to \Bun_G$
is smooth and its image contains $\Bun_G^{(n)}$.
\end{lem}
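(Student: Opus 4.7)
The plan is to reduce both assertions to a single cohomological vanishing: for any line bundle $\CL$ on $X$ of degree $n$ with $n > g-1$, one has $H^1(X, \CL^2) = 0$, because $\deg(\CL^2) = 2n > 2g-2$.

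First I would verify smoothness of $\sfp^{-n}$. At a point $(\CL^{-1}\hookrightarrow \CM) \in \Bun_B^{-n}$ (so $\deg \CL = n$), the relative tangent and obstruction spaces of $\sfp^{-n}$ parametrize infinitesimal deformations of the sub-line-bundle $\CL^{-1}\subset \CM$ with $\CM$ held fixed, hence identify with
\[
\Hom(\CL^{-1}, \CM/\CL^{-1}) = H^0(X, \CL^2), \qquad \Ext^1(\CL^{-1}, \CM/\CL^{-1}) = H^1(X, \CL^2).
\]
The obstruction space vanishes by the displayed identity, so $\sfp^{-n}$ is smooth at every point of $\Bun_B^{-n}$.

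Next I would show that the image of $\sfp^{-n}$ contains $\Bun_G^{(n)}$. Fix $\CM \in \Bun_G^{(n)}$. Since $n > 0$, $\CM$ is unstable, and its Harder--Narasimhan filtration produces a unique sub-line-bundle $\CL_1 \subset \CM$ of degree $n$, giving the exact sequence $0 \to \CL_1 \to \CM \to \CL_1^{-1} \to 0$. Its extension class lies in $\Ext^1(\CL_1^{-1}, \CL_1) = H^1(X, \CL_1^2)$, which vanishes, so $\CM \simeq \CL_1 \oplus \CL_1^{-1}$. The second summand gives a sub-line-bundle $\CL_1^{-1} \hookrightarrow \CM$ of degree $-n$; taking $\CL := \CL_1$ produces a point of $\Bun_B^{-n}$ lying over $\CM$ under $\sfp^{-n}$.

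The only really non-trivial point is the image containment: without the hypothesis $n > g-1$ a generic $\CM \in \Bun_G^{(n)}$ would correspond to a non-split extension, and a direct check shows that such an $\CM$ admits no degree-$(-n)$ sub-line-bundle. Indeed, any saturated sub-line-bundle $\CN \hookrightarrow \CM$ of degree $-n$ either lies in $\CL_1$ (forcing $\CN = \CL_1$ by saturation, contradicting degrees) or maps non-trivially to $\CL_1^{-1}$, in which case equality of degrees forces the induced map $\CN \to \CL_1^{-1}$ to be an isomorphism splitting the sequence. The inequality $n > g-1$ is precisely what collapses $\Bun_G^{(n)}$ to split bundles and thereby makes the lemma true; there is no softer approach.
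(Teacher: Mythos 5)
Your proof is correct and follows essentially the same route as the paper: smoothness from the vanishing of $H^1(X,\CL^{\otimes 2})$ (the cokernel of the differential of $\sfp^{-n}$) since $2n>2g-2$, and surjectivity onto $\Bun_G^{(n)}$ by splitting the Harder--Narasimhan extension using the same vanishing, so that the destabilized bundle also admits a degree-$(-n)$ line sub-bundle. The closing paragraph on the necessity of $n>g-1$ is extra commentary not needed for the lemma.
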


\begin{proof}
A point $x\in\Bun_B^{-n}$ corresponds to an exact sequence \eqref{e:ext} with $\deg \CL=n$.
The cokernel of the differential of $\sfp^{-n}$ at $x$ equals $H^1(X,\CL^{\otimes 2} )$, which is 
zero because $\deg \CL^{\otimes 2}=2n>2g-2$.
So $\sfp^{-n}$ is smooth.

A point $y\in \Bun_G^{(n)}$ corresponds to an $SL_2$-bundle $\CM$ that can be represented
as an extension \eqref{e:ext} with $\deg\CL=-n$. Such an extension splits because
$2n>2g-2$. So $\CM$ is also an extension of $\CL^{-1}$ by $\CL$. Hence, $y$ is in the
image of $\sfp^{-n}$.
\end{proof}

\sssec{}

By Lemma~\ref{l:facts_used} and \propref{p:truncativeness after smooth}, 
it suffices to show that if $n>\on{max}(g-1,0)$ then the substack

\begin{equation}   \label{e:preimage}
\Bun_G^{(n)}\underset{\Bun_G}\times \Bun_B^{-n}\subset \Bun_B^{-n}
\end{equation}
is truncative.

\ssec{Applying the contraction principle}

\sssec{}

Let $\Bun^{n}_{GL(1)}$ denote the stack of line bundles on $X$ of 
degree $n$. Note that we have a canonical isomorphism
$$\Bun^{n}_{GL(1)}\simeq \Bun_G^{(n)}\underset{\Bun_G}\times \Bun_B^{-n}$$
that sends a line bundle $\CL\in \Bun^{n}_{GL(1)}$ to 
$$0\to \CL^{-1}\to \CL^{-1}\oplus \CL\to \CL\to 0.$$

\sssec{}

Let $\Pic^{n}$ denote the coarse moduli scheme corresponding to $\Bun^{n}_{GL(1)}$. We 
have a vector bundle $\CV$ on $\Bun^{n}_{GL(1)}$ whose fiber over
$\CL\in\Bun^{n}_{GL(1)}$ equals $\Ext (\CL,\CL^{-1} )$. 

\medskip

Choose a section
$s:\Pic^{n}\to\Bun^{n}_{GL(1)}$ of the morphism $\Bun^{n}\to\Pic^{n}$
(e.g., choose $x_0\in X$ and identify $\Pic^n$ with the stack of line bundles of degree $n$ 
trivialized over $x_0$). 

\medskip

Set $\CV'=s^*(\CV)$. Let ${\bf 0}\subset \CV'$ denote the zero section.
Then $\Bun_B^{-n}$ identifies with the quotient stack
$\CV'/\BG_m$ and the substack 
$$\Bun_G^{(n)}\underset{\Bun_G}\times \Bun_B^{-n}\simeq \Bun^{n}_{GL(1)}\hookrightarrow \Bun_B^{-n}$$
identifies with ${\bf 0}/\BG_m$.
Hence, the substack \eqref{e:preimage} is truncative by Proposition~\ref{p:2contraction principle}. \qed

\section{Recollections from reduction theory}   \label{s:reduction theory}

The goal of this section is to prepare for the proof of \thmref{t:main truncatable} by recalling the 
Harder-Narasimhan-Shatz stratification of $\Bun_G$.

\medskip

With future applications in mind, when defining these open substacks, we will remove the assumption
that our ground field is of characteristic $0$, unless we explicitly specify otherwise.
Thus, we let $G$ be a connected reductive group over any algebraically closed field $k$.

\ssec{Notation related to $G$}    \label{ss:Gnotation}

\sssec{}
To simplify the discussion, we will work with a fixed choice of a Borel subgroup $B\subset G$. 

\medskip

Conjugacy classes of parabolics are then in bijection with the set of parabolics that contain
$B$, called \emph{the standard parabolics.} From now on, by a parabolic we will
mean a standard parabolic, unless explicitly stated otherwise. 

\medskip

For a parabolic $P$
we will denote by $U(P)$ its unipotent radical. 

\medskip

We denote by $\Gamma_G$ the set of vertices of the Dynkin diagram of $G$. 
Parabolics in $G$ are in bijection with subsets of $\Gamma_G$. For a parabolic $P$
with Levi quotient $M$ we let $\Gamma_M\subset \Gamma_G$ denote the corresponding
subset; it identifies with the set of vertices of the Dynkin diagram of $M$. 

\sssec{}  \label{sss:G}

Let $\Lambda_G$ denote the coweight lattice of $G$
and $\Lambda^\BQ_G:=\BQ\underset{\BZ}\otimes\Lambda_G\,$. Let
$\Lambda_G^+\subset \Lambda_G$ denote the monoid of dominant coweights and
$\Lambda_G^{pos}\subset \Lambda_G$ the monoid generated by positive simple coroots. 
Let $\Lambda_G^{+,\BQ},\Lambda_G^{pos,\BQ}\subset \Lambda_G^\BQ$ be the corresponding rational cones.

\medskip

Let $\check\alpha_i$, $i\in\Gamma_G$, be the simple roots; we have:
\[
\Lambda_G^{+,\BQ}=\{ \lambda\in \Lambda^\BQ_G\: | \:\langle\lambda,\check\alpha_i\rangle\ge 0
\,\mbox{ for }\, i\in\Gamma_G \}.
\]

\sssec{}    \label{sss:P}

Let $P$ be a parabolic of $G$ and $M$ its Levi quotient.  
Let $Z_0(M)$ be the neutral connected component of the center of $M$, then $\Lambda_{Z_0(M)}\subset\Lambda_G\,$. Set
$\Lambda^\BQ_{G,P}:=\Lambda^\BQ_{Z_0(M)}\subset\Lambda^\BQ_G\,$. Explicitly, 
\[
\Lambda^\BQ_{G,P}=\{ \lambda\in \Lambda^\BQ_G\: | \:\langle\lambda,\check\alpha_i\rangle=0
\,\mbox{ for }\, i\in\Gamma_M \}.
\]

Note that
$$\Lambda^\BQ_{G,G}=\Lambda^\BQ_{Z_0(G)} \text{ and } \Lambda^\BQ_{G,B}=\Lambda^\BQ_G.$$

\sssec{}

Set $\Lambda^{+,\BQ}_{G,P}:=\Lambda_G^{+,\BQ}\cap\Lambda^\BQ_{G,P}$ and
\begin{equation} \label{e:dom_reg}
\Lambda^{++,\BQ}_{G,P}:=\{ \lambda\in \Lambda^\BQ_G\: | \:\langle\lambda,\check\alpha_i\rangle=0
\,\mbox{ for }\, i\in\Gamma_M \,\mbox{ and }\, \langle\lambda,\check\alpha_i\rangle>0 \,\mbox{ for }\, i\notin\Gamma_M\}.
\end{equation}
In other words,
$\Lambda^{++,\BQ}_{G,P}$ is the set of those elements of $\Lambda^{+,\BQ}_{G,P}$ that are regular
(i.e., lie off the walls of $\Lambda^{+,\BQ}_{G,P}$). Clearly

\begin{equation} \label{e:parameterization}
\Lambda_G^{+,\BQ}=\underset{P}\bigsqcup\,  \Lambda^{++,\BQ}_{G,P},
\end{equation}
where the union is taken over the conjugacy classes of parabolics.

\sssec{} \label{sss:theprojector}

Note also that the inclusion $\Lambda^\BQ_{G,P}\hookrightarrow \Lambda^\BQ_G$ canonically splits as a direct summand:
the corresponding projector $\on{pr}_P :\Lambda^\BQ_G\to\Lambda^\BQ_{G,P}$ is defined so that
$$\on{ker}\,(\on{pr}_P)=\bigoplus\limits_{i\in\Gamma_M}\BQ\cdot \alpha_i.$$

\medskip

We can also view the map $\Lambda^\BQ_G\to \Lambda^\BQ_{G,P}$  as follows: it comes from the map
$$\Lambda_G\simeq \Lambda_M\to \Lambda_{M/[M,M]}$$ 
and the isomorphism
$$\Lambda^\BQ_{Z_0(M)}\iso \Lambda^\BQ_{M/[M,M]}$$ 
induced by the isogeny $Z_0(M)\to M/[M,M]$.

\sssec{}

We introduce the partial order on $\Lambda_G^\BQ$ by
$$\lambda_1\leqG\lambda_2\, \Leftrightarrow \, \lambda_2-\lambda_1\in\Lambda_G^{pos,\BQ}.$$

The following useful observation is due to S.~Schieder:

\begin{lem}  \label{l:proj preserves order}
For a parabolic $P$, the projection $\on{pr}_P$ is order-preserving.
\end{lem}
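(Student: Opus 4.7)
The plan is to reduce the statement to a computation on simple coroots and then to invoke the standard positivity property of the inverse Cartan matrix of the Levi $M$.

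First, since $\on{pr}_P$ is $\BQ$-linear and $\Lambda_G^{pos,\BQ}$ is the $\BQ_{\geq 0}$-cone on the simple coroots $\{\alpha_i\}_{i\in\Gamma_G}$, it suffices to show that $\on{pr}_P(\alpha_i)\in \Lambda_G^{pos,\BQ}$ for every $i\in\Gamma_G$. For $i\in\Gamma_M$ one has $\on{pr}_P(\alpha_i)=0$ by the very definition of $\on{pr}_P$, so the only nontrivial case is $i\in\Gamma_G\setminus \Gamma_M$.

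In that case, by the defining property of $\on{pr}_P$ (its kernel is spanned by the $\alpha_j$ with $j\in\Gamma_M$), we can write
\[
\on{pr}_P(\alpha_i)=\alpha_i-\sum_{j\in\Gamma_M} d_{ij}\,\alpha_j
\]
for uniquely determined scalars $d_{ij}\in\BQ$. The goal is to show $d_{ij}\le 0$ for all $j\in\Gamma_M$, since then $\on{pr}_P(\alpha_i)$ is manifestly a non-negative $\BQ$-combination of simple coroots. Applying the second defining property of $\on{pr}_P$, namely that $\on{pr}_P(\alpha_i)\in\Lambda_{G,P}^\BQ$ (equivalently, pairs to zero with $\check\alpha_k$ for every $k\in\Gamma_M$), I obtain for each $k\in\Gamma_M$ the linear equation
\[
\sum_{j\in\Gamma_M} d_{ij}\,\langle\alpha_j,\check\alpha_k\rangle \;=\; \langle\alpha_i,\check\alpha_k\rangle.
\]
Writing $a_{jk}:=\langle\alpha_j,\check\alpha_k\rangle$ for the Cartan integers, the square matrix $A_M:=(a_{jk})_{j,k\in\Gamma_M}$ is the Cartan matrix of the derived group of $M$, so it is nondegenerate and the system is uniquely solvable.

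The main (and only real) point is then the following standard fact: the Cartan matrix of a finite-type root system has an inverse with all entries in $\BQ_{\geq 0}$ (this is how fundamental weights are expressed as non-negative combinations of simple roots, and can be cited from, e.g., Bourbaki). Since for $i\notin\Gamma_M$ and $k\in\Gamma_M$ we have $i\ne k$, each entry $a_{ik}$ of the right-hand side is $\leq 0$; multiplying on the right by $A_M^{-1}$, whose entries are $\geq 0$, yields $d_{ij}\le 0$ as required. This completes the verification that $\on{pr}_P(\alpha_i)\in\Lambda_G^{pos,\BQ}$, and hence that $\on{pr}_P$ is order-preserving. The main ``obstacle'' is really just recognizing that the problem reduces to the classical non-negativity of $A_M^{-1}$; the rest is bookkeeping.
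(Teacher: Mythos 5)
Your proof is correct. Note that the paper does not actually prove this lemma: it cites \cite[Proposition 3.1.2(a)]{Sch}, so you have supplied a self-contained argument where the authors defer to a reference. Your reduction is the right one (by linearity it suffices to check $\on{pr}_P(\alpha_i)\in\Lambda_G^{pos,\BQ}$ on simple coroots, with only $i\notin\Gamma_M$ nontrivial), the linear system determining the $d_{ij}$ is set up correctly from the two defining properties of $\on{pr}_P$, and the sign analysis via the non-negativity of the inverse of a finite-type Cartan matrix is sound, since the off-diagonal Cartan integers $\langle\alpha_i,\check\alpha_k\rangle$ with $i\ne k$ are $\le 0$. This is essentially the standard argument (and the one in Schieder's paper). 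It is worth observing that your key input --- non-negativity of $A_M^{-1}$ --- is equivalent to the fact the authors use elsewhere (in the proofs of Lemma~\ref{l:2comb2} and Lemma~\ref{l:opyat}): an element of the span of the simple coroots of $M$ that pairs non-negatively with all simple roots of $M$ lies in $\Lambda_M^{pos,\BQ}$. Indeed, $\on{pr}_P(\alpha_i)-\alpha_i$ lies in $\bigoplus_{j\in\Gamma_M}\BQ\cdot\alpha_j$ and satisfies $\langle \on{pr}_P(\alpha_i)-\alpha_i,\check\alpha_k\rangle=-\langle\alpha_i,\check\alpha_k\rangle\ge 0$ for $k\in\Gamma_M$, which gives $\on{pr}_P(\alpha_i)-\alpha_i\in\Lambda_M^{pos,\BQ}$ directly and would let you avoid inverting the Cartan matrix explicitly; but the two routes are the same fact in different clothing.
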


For a proof, see \cite[Proposition 3.1.2(a)]{Sch}.

\ssec{The degree of a bundle}
Fix a connected smooth complete curve $X$. For any algebraic group $H$ let $\Bun_H$ denote
the stack of $H$-bundles on $X$. 

\sssec{} One has a canonical isomorphism $\deg:\pi_0(\Bun_{\BG_m})\iso\BZ$. Accordingly, for
any torus $T$ one has a canonical isomorphism $\deg_T:\pi_0(\Bun_T)\iso\Lambda_T$.

\sssec{} \label{sss:deg1}
Let $\wt{G}$ be any connected affine algebraic group and let $\wt{G}_{\on{tor}}$ be its maximal quotient torus.
The composition $$\pi_0(\Bun_{\wt{G}})\to \pi_0(\Bun_{\wt{G}_{\on{tor}}})\buildrel{\deg_{\wt{G}_{\on{tor}}}}\over{\longrightarrow}\Lambda_{\wt{G}_{\on{tor}}}$$ 
will be denoted
by $\deg_{\wt{G}}$. 

\medskip

If $\wt{G}=G$ is reductive then $G_{\on{tor}}=G/[G,G]$, and the map $Z_0(G)\to G_{\on{tor}}$ is an isogeny, so
$\Lambda^{\BQ}_{G_{\on{tor}}}\simeq\Lambda^{\BQ}_{Z_0(G)}$. Therefore
one has a locally constant map $\deg_G:\Bun_G\to\Lambda^{\BQ}_{Z_0(G)}\,$. Its fibers are not necessarily 
connected but have finitely many connected components; this follows from Remark \ref{sss:components} below.

\sssec{} \label{sss:deg2}
Let now $P$ be a parabolic subgroup of a reductive group $G$, and let $M$ be the Levi quotient of $P$.

\medskip

Then by Sects. \ref{sss:P} and \ref{sss:deg1}, one has the locally constant maps $\deg_M:\Bun_M\to \Lambda^\BQ_{G,P}$ and therefore
$\deg_P:\Bun_P\to \Lambda^\BQ_{G,P}\,$. 

\medskip

The preimage of $\lambda\in\Lambda^\BQ_{G,P}$ in
$\Bun_M$ (resp. $\Bun_P)$ is denoted by $\Bun_M^\lambda$ (resp. $\Bun_P^\lambda$). 

\medskip

It is easy to see that
$\Bun_M^\lambda$ and $\Bun_P^\lambda$ are empty unless $\lambda$ belongs to a certain
finitely generated subgroup $A_{G,P}\subset\Lambda^\BQ_{G,P}$ such that 
$A_{G,P}\otimes\BQ=\Lambda^\BQ_{G,P}$; namely, $A_{G,P}=\on{pr}_P(\Lambda_G)$,
where $\on{pr}_P :\Lambda^\BQ_G\to\Lambda^\BQ_{G,P}$ is as in Sect.~\ref{sss:theprojector}.

\sssec{Remark} \label{sss:components}
Let $\wt{G}$ be any connected affine algebraic group and $\wt{G}_{\red}$ its maximal reductive quotient.
Define $\pi_1(\wt{G})$ to be the quotient of $\Lambda_{\wt{G}_{\red}}$ by the subgroup generated by coroots.
It is well known that there is a unique bijection $\pi_0(\Bun_{\wt{G}})\iso\pi_1(\wt{G})$ such that the diagram
\[
\xymatrix{
\pi_0(\Bun_{\wt{B}})\ar[d]_{}\ar[r]_{}&\Lambda_{\wt{T}}\simeq \Lambda_{\wt{G}_{\red}}\ar[d]_{}\\
 \pi_0(\Bun_{\wt{G}})\ar[r]_{}&\pi_1(\wt{G})
    }
\]
commutes. Here $\wt{B}$ is a Borel subgroup of $\wt{G}$ and $\wt{T}$ is the maximal quotient torus of $\wt{B}$.

\ssec{Semistability}

\sssec{}

Let $G_{ad}$ denote the quotient of $G$ by its center and
$$\Upsilon_G:\Lambda^\BQ_G\to \Lambda^\BQ_{G_{ad}},$$
the projection.

\medskip

Let $\sfp_P:\Bun_P\to\Bun_G$ be the natural morphism.
Recall that a $G$-bundle $\CP_G\in \Bun_G$ is called \emph{semi-stable} if for every parabolic $P$ such that
$\CP_G=\sfp_P(\CP_P)$ with $\CP_P\in \Bun_P^{\mu}$ we have 
$$\Upsilon_G(\mu)\leqGad 0.$$

In fact, semi-stability can be tested just using reductions to the Borel:

\begin{lem}  \label{l:Bore enough}
A $G$-bundle $\CP_G$ is semi-stable if and only if 
for every reduction $\CP_B$ of $\CP_G$ to the Borel $B$ with $\CP_B\in \Bun_B^\mu$, we have
$\Upsilon_G(\mu)\leqGad 0$.
\end{lem}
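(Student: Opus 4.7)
The plan is to split into two directions. The ``only if'' direction is immediate: if $\CP_G$ is semi-stable, then the definition of semi-stability, specialized to the standard parabolic $P=B$, is exactly the stated condition on Borel reductions.

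For the ``if'' direction, I would fix an arbitrary parabolic reduction $\CP_P\in\Bun_P^\mu$ of $\CP_G$ (where $P$ is a standard parabolic with Levi $M$), and show that $\Upsilon_G(\mu)\leqGad 0$ follows from the hypothesis on all Borel reductions. The key construction is to refine $\CP_P$ further to a $B$-reduction: such refinements correspond to sections over $X$ of the associated bundle with fiber $P/B$, and they always exist because $P/B$ is the flag variety of $M$, which is a successive tower of $\BP^1$-bundles (and any $\BP^1$-bundle over the smooth curve $X$ admits a section). Pick one and call the resulting $B$-reduction $\CP_B\in\Bun_B^{\tilde\mu}$, with $\tilde\mu\in\Lambda^\BQ_B=\Lambda^\BQ_G$.

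Next I would identify $\on{pr}_P(\tilde\mu)=\mu$. From the definitions in \secref{sss:deg1} and \secref{sss:deg2}, $\deg_B$ factors as $\Bun_B\to\Bun_T\to\Lambda^\BQ_T=\Lambda^\BQ_G$, while $\deg_P$ factors as $\Bun_P\to\Bun_M\to\Lambda^\BQ_{M/[M,M]}\simeq\Lambda^\BQ_{G,P}$. Induction from $B$ to $P$ therefore carries $\deg_B$ to $\deg_P$ via the map $\Lambda^\BQ_G\to\Lambda^\BQ_{G,P}$ arising from the composition $T\hookrightarrow B\hookrightarrow P\twoheadrightarrow M\twoheadrightarrow M/[M,M]$, and by the description in \secref{sss:theprojector} this composition is exactly $\on{pr}_P$. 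Hence $\on{pr}_P(\tilde\mu)=\mu$.

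Finally, the hypothesis applied to $\CP_B$ gives $\tilde\mu\leqGad 0$, which unwinds to $-\tilde\mu=c+z$ for some $c\in\Lambda^{pos,\BQ}_G$ and $z\in\Lambda^\BQ_{Z(G)}$. Since $\Lambda^\BQ_{Z(G)}\subset\Lambda^\BQ_{Z_0(M)}=\Lambda^\BQ_{G,P}$, the projector $\on{pr}_P$ fixes $z$, and by \lemref{l:proj preserves order} it sends $c$ back into $\Lambda^{pos,\BQ}_G$. Therefore $-\mu=\on{pr}_P(-\tilde\mu)=\on{pr}_P(c)+z\in\Lambda^{pos,\BQ}_G+\Lambda^\BQ_{Z(G)}$, i.e., $\Upsilon_G(\mu)\leqGad 0$, as required. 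The only step needing any real attention is the compatibility $\on{pr}_P(\tilde\mu)=\mu$, which is a direct unwinding of the degree-map definitions; the rest is a clean application of Schieder's \lemref{l:proj preserves order}.
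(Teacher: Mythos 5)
Your overall route is exactly the paper's: the ``only if'' direction is the definition specialized to $P=B$, and the ``if'' direction combines the existence of a Borel refinement of any parabolic reduction with the order-preservation of $\on{pr}_P$ (\lemref{l:proj preserves order}); the paper's proof is literally the one-sentence version of what you wrote. Your identification $\on{pr}_P(\tilde\mu)=\mu$ via the degree maps and the final manipulation with $\Lambda^{pos,\BQ}_G+\Lambda^\BQ_{Z(G)}$ are both correct.

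The one genuine flaw is your justification of the existence of the Borel refinement. The flag variety $P/B\simeq M/B(M)$ is \emph{not} in general an iterated $\BP^1$-bundle: if it were, its Euler characteristic would be a power of $2$, whereas $\chi(M/B(M))=|W_M|$; already for $M=GL_3$ this is $6$, and concretely $GL_3/B$ is a $\BP^1$-bundle over $\BP^2$, which admits no further $\BP^1$-fibration. So the inductive ``take a section of each $\BP^1$-bundle'' argument does not get off the ground beyond rank $1$. The statement you actually need --- every $M$-bundle on the curve $X$ admits a reduction to $B(M)$ --- is true and standard, and is precisely the unproved ingredient the paper invokes. The standard argument: by Tsen's theorem $k(X)$ is a $C_1$-field, so by Steinberg's theorem every $M$-torsor over $\Spec k(X)$ is trivial; hence the associated $M/B(M)$-bundle has a section over the generic point of $X$, and this section extends to all of $X$ by the valuative criterion, since the flag variety is proper and $X$ is a smooth curve. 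With that substitution your proof is complete and coincides with the paper's.
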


\begin{proof}
This follows from \lemref{l:proj preserves order} and the fact that 
every $M$-bundle admits a reduction to the Borel of $M$.
\end{proof}


It is known that semi-stable bundles form an open substack 
$\Bun^{ss}_G\subset\Bun_G\,$, whose intersection with each connected component of
$\Bun_G$ is quasi-compact. 

\sssec{}     \label{sss:leqtheta}

More generally, for $\theta\in \Lambda^{+,\BQ}_G$ and a $G$-bundle $\CP_G$, we say that
$\CP_G$ has \emph{Harder-Narasimhan coweight} $\leqG \theta$ if for every parabolic $P$  such that
$\CP_G=\sfp_P(\CP_P)$ with $\CP_P\in \Bun_P^{\mu}$ we have 
$$\mu\leqG \theta.$$

\medskip

As in \lemref{l:Bore enough}, it suffices to check this condition for $P=B$. 

\sssec{}   \label{sss:opens}

One shows that $G$-bundles having Harder-Narasimhan coweight 
$\leqG \theta$ form an open substack of $\Bun_G\,$. The argument repeats the proof of the fact that
$\Bun_G^{ss}$ is open, given in \cite[Proposition 6.1.6]{Sch}.

\medskip

We denote the above open substack by $\Bun_G^{(\leq \theta)}$ and sometimes by 
$\Bun_G^{(\leqG \theta)}$. It lies in the (finite) union of connected components 
of $\Bun_G$ corresponding to the image of $\theta$ under 
$$\Lambda^\BQ_G\to \Lambda^\BQ_{G,G}\simeq \Lambda^\BQ_{Z_0(G)}.$$

\medskip

Furthermore,
$$\theta_1\leqG \theta_2\, \Rightarrow  \Bun_G^{(\le \theta_1)}\subset \Bun_G^{(\le \theta_2)},$$
and 
$$\underset{\theta\in \Lambda^{+,\BQ}_{G}}\bigcup\, \Bun_G^{(\le \theta)}=\Bun_G.$$

Finally, we have:

\begin{prop} \label{p:theta qc}
The open substack $\Bun_G^{(\leq \theta)}$ is quasi-compact.
\end{prop}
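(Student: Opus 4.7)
The plan is to deduce Proposition~\ref{p:theta qc} from the Harder-Narasimhan-Shatz stratification of $\Bun_G$ (whose existence is to be discussed in the following sections). That stratification writes
\[
\Bun_G=\underset{\mu}\bigsqcup\, \Bun_G^\mu,
\]
where $\mu$ ranges over the subset of $\Lambda_G^{+,\BQ}$ consisting of possible HN-coweights, and $\Bun_G^\mu$ is the locally closed substack of $G$-bundles whose canonical HN reduction is to the parabolic $P$ with $\mu\in \Lambda_{G,P}^{++,\BQ}$ and produces an $M$-bundle of degree $\mu$ that is semistable. By definition of $\Bun_G^{(\leq \theta)}$, one has
\[
\Bun_G^{(\leq \theta)}=\underset{\mu\leqG\theta}\bigsqcup\, \Bun_G^\mu,
\]
so the proposition will follow from two claims: (a) the index set is finite, and (b) each stratum $\Bun_G^\mu$ is quasi-compact.

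For (a), note that each possible HN-coweight lies in $\on{pr}_P(\Lambda_G)\cap \Lambda_{G,P}^{++,\BQ}$ for some (standard) parabolic $P$, by \secref{sss:deg2} combined with \eqref{e:parameterization}. Since there are only finitely many parabolics, it suffices to show that for each $P$ the set
\[
\{\mu\in \on{pr}_P(\Lambda_G)\ |\ \mu\in\Lambda_G^{+,\BQ},\ \mu\leqG\theta\}
\]
is finite. But $\on{pr}_P(\Lambda_G)\subset \Lambda_{G,P}^\BQ$ is a finitely generated abelian group and hence discrete in $\Lambda_G^\BQ$, while the subset of $\Lambda_G^\BQ$ cut out by the conditions $\mu\in\Lambda_G^{+,\BQ}$ and $\mu\leqG\theta$ is a bounded polytope (the cone $\Lambda_G^{+,\BQ}$ intersected with the "dual cone" $\theta-\Lambda_G^{pos,\BQ}$ is compact because $\theta$ is dominant), so the intersection is finite.

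For (b), fix $\mu\in\Lambda_{G,P}^{++,\BQ}$ and consider the natural morphism $\sfp_P:\Bun_P^\mu\to \Bun_G$. Let $\Bun_P^{\mu,M\text{-ss}}\subset \Bun_P^\mu$ denote the open substack of those $P$-bundles whose induced $M$-bundle is semistable. By the characterization of HN reductions, $\sfp_P$ restricts to a morphism $\Bun_P^{\mu,M\text{-ss}}\to \Bun_G^\mu$ whose image equals all of $\Bun_G^\mu$. The map $\Bun_P^{\mu,M\text{-ss}}\to \Bun_M^{\mu,ss}$ obtained by passing to the Levi quotient is an affine fibration (its fibers are classifying stacks of torsors for a unipotent group scheme over $X$, i.e., are built from $H^*(X,\mathfrak{u}(P)_{\CP_M})$), and in particular is quasi-compact. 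Since $\Bun_M^{\mu,ss}$ is quasi-compact by the classical reduction-theoretic theorem for the reductive group $M$ (a single connected component of the semistable locus), we conclude that $\Bun_P^{\mu,M\text{-ss}}$ and hence $\Bun_G^\mu$ is quasi-compact. Combining with (a), $\Bun_G^{(\leq\theta)}$ is a finite union of quasi-compact locally closed substacks, hence quasi-compact.

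The main obstacle is not the finite combinatorics of (a), but the reduction-theoretic input in (b): one must know both that $\Bun_M^{\mu,ss}$ is quasi-compact and that every $G$-bundle with HN-coweight $\mu$ admits a (canonical) reduction to $P$ with $M$-semistable Levi part of degree $\mu$. The first is the classical Harder-Narasimhan-Behrend-Ramanathan boundedness of the semistable locus; the second is the existence and openness of HN strata, which the paper proposes to derive via the compactification $\overline{\Bun}_P$ of $\sfp_P:\Bun_P\to\Bun_G$. Once these are available, the proof above is essentially formal.
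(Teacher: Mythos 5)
Your argument is correct and coincides with the paper's second proof of Proposition~\ref{p:theta qc}: decompose $\Bun_G^{(\leq\theta)}$ via Corollary~\ref{c:open as a union} into finitely many Harder--Narasimhan strata, each quasi-compact because $\Bun_M^{\mu,ss}$ is --- and, just as you do, the paper flags in a footnote that this last input is the same proposition for the Levi, so the argument is really an induction on semisimple rank with the classical boundedness of the semistable locus as its engine. The paper also records a first proof that avoids presupposing the stratification altogether, covering each component of $\Bun_G^{(\leq\theta)}$ by the images of finitely many quasi-compact $\Bun_B^{\lambda+\mu}$ using the properness of $\overline{\Bun}_B^{\lambda}\to\Bun_G$.
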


We will give two proofs:

\begin{proof}[Proof 1]
With no loss of generality, we can assume that $G$ is of adjoint type.
We will use the relative compactification $\fpb_B:\BunBb\to \Bun_G$ of the map 
$\sfp_B:\Bun_B\to \Bun_G$, see \secref{sss:proof of Shatz v}.

\medskip

For each connected component $'\!\Bun_G\subset \Bun_G$ choose a coweight $\lambda\in -\Lambda_G^+$ such that 
the map $\sfp_B:\Bun_B^\lambda\to \Bun_G$ lands in $'\!\Bun_G$ and is smooth
(for smoothness, it is enough to take $\lambda$ so that $\langle \lambda,\check\alpha_i\rangle<-(2g-2)$
for each simple root $\check\alpha_i$). Then the map $\fpb_B:\BunBb^\lambda\to {}'\!\Bun_G$ is surjective. 

\medskip 

It is a basic property of $\BunBb$ (see \cite[Sect. 6.1.4]{Sch}) that 
$$\fpb_B(\BunBb^\lambda)=\underset{\mu\in \Lambda_G^{pos}}\bigcup\, \sfp_B(\Bun_B^{\lambda+\mu}).$$
Therefore
$$'\!\Bun_G\cap \Bun_G^{(\leq \theta)}=
\underset{\mu\in \Lambda_G^{pos},\, \lambda+\mu\leqG \theta}\bigcup\, \sfp_B(\Bun_B^{\lambda+\mu}).$$

However, the set 
$$\{\mu\in \Lambda_G^{pos}\,|\, \lambda+\mu\leqG \theta\}$$
is finite. Hence, $'\!\Bun_G\cap \Bun_G^{(\leq \theta)}$ is contained in the image of finitely many quasi-compact stacks
$\Bun_B^{\lambda+\mu}$, and hence is itself quasi-compact.

\end{proof}

The second proof will be given after \corref{c:open as a union}. 

\sssec{}

By definition, for $\lambda\in \Lambda^\BQ_{G,G}=\Lambda^\BQ_{Z_0(G)}$
$$\Bun_G^{ss}\cap \Bun_G^\lambda=\Bun_G^{(\le \lambda)}$$
and 
$$\Bun_G^{ss}=\underset{\lambda\in \Lambda^\BQ_{G,G}}\bigcup\, \Bun_G^{(\leq \lambda)}.$$

\sssec{}    \label{sss:lambdass}

For each parabolic $P\subset G$ with Levi quotient $M$
we have the corresponding open substack $\Bun^{ss}_M\subset\Bun_M\,$; let 
$\Bun_P^{ss}$ denote the pre-image of $\Bun_M^{ss}$ in $\Bun_P$. 

\medskip

For $\lambda\in \Lambda_{G,P}^\BQ$ we let 
$$\Bun_M^{\lambda,ss}:=\Bun_M^{ss}\cap \Bun_M^\lambda=\Bun_M^{(\leqM \lambda)},\,\, 
\Bun_P^{\lambda,ss}:=\Bun_P^{ss}\cap \Bun_P^\lambda.$$

\ssec{The Harder-Narasimhan-Shatz stratification of $\Bun_G$}
This stratification was defined in \cite{HN,Sh,Sh2} in the case $G=GL(n)$. For any reductive $G$
it was defined in \cite{R1,R2,R3} and \cite{Beh,Beh1}.

\sssec{}

We give the following definition:

\begin{defn}  \label{d:almost-iso}
A schematic morphism of algebraic stacks $f:\CX_1\to\CX_2$ is an \emph{almost-isomorphism} 
if $f$ is finite and each geometric fiber of $f$ has a single point. 
\end{defn}

\medskip

The next theorem is a basic result of reduction theory. 

\begin{thm} \label{t:reduction theory}

\hfill

\smallskip

\noindent{\em(1)}
Let $\lambda\in \Lambda_G^{+,\BQ}$ and let $P\subset G$ be the unique parabolic
such that $\lambda$ belongs to the set $\Lambda^{++,\BQ}_{G,P}$ defined by \eqref{e:dom_reg}. 
Then $\sfp_P:\Bun_P\to\Bun_G$ induces an almost-isomorphism between $\Bun_P^{\lambda,ss}$
and a quasi-compact locally closed reduced substack $\Bun_G^{(\lambda)}\subset\Bun_G$. 

\smallskip

\noindent{\em(1$'$)} If $k$ has characteristic $0$ then the morphism $\Bun_P^{\lambda,ss}\to \Bun_G^{(\lambda)}$
is an isomorphism.

\smallskip

\noindent{\em(2)} The substacks $\Bun_G^{(\lambda )}$, $\lambda\in \Lambda_G^{+,\BQ}$,
are pairwise non-intersecting, and 
every geometric point of $\Bun_G$ belongs to exactly one $\Bun_G^{(\lambda )}$.

\smallskip

\noindent{\em(3)}
Let $P'\subset G$ be a parabolic and let $\lambda'$ be any (not necessarily dominant) element of
$\Lambda^{\BQ}_{G,P'}\,$. If $\sfp_{P'}(\Bun_{P'}^{\lambda'})\cap\Bun_G^{(\lambda )}\ne\emptyset$ then
$\lambda'\leqG\lambda$.
\end{thm}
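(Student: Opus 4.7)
The plan is to deduce the entire theorem from the Harder--Narasimhan--Ramanathan \emph{canonical reduction} result: every $G$-bundle $\CP_G$ on $X$ admits a unique pair $(P,\CP_P)$, with $\CP_P$ a reduction of $\CP_G$ to a standard parabolic $P$, such that the induced $M$-bundle $\CP_M := \CP_P\times^P M$ is semistable and its $M$-degree satisfies $\deg_M(\CP_M) \in \Lambda^{++,\BQ}_{G,P}$; see \cite{Beh, R2}. Granting this, once we define $\Bun_G^{(\lambda)}$ as the image of $\Bun_P^{\lambda,ss}$ under $\sfp_P$ (for the parabolic $P$ singled out by $\lambda$ via \eqref{e:parameterization}), statement (2) is immediate from existence and uniqueness of canonical reductions. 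Statement (3) follows by a direct slope comparison: given an arbitrary reduction $\CP_{P'}\in\Bun_{P'}^{\lambda'}$ of $\CP_G\in\Bun_G^{(\lambda)}$, one compares it with the canonical reduction $\CP_P\in\Bun_P^{\lambda,ss}$ using \lemref{l:proj preserves order} and the combinatorics of the relative position (Bruhat decomposition) of $P$ and $P'$, to express $\lambda-\lambda'$ as a non-negative combination of simple coroots.

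To endow $\Bun_G^{(\lambda)}$ with a locally closed substack structure, one uses the open substacks $\Bun_G^{(\leq\theta)}$ of Section~\ref{sss:opens} and sets
\begin{equation*}
\Bun_G^{(\lambda)} := \Bun_G^{(\leq\lambda)} \smallsetminus \bigcup_{\mu \in \Lambda_G^{+,\BQ},\; \mu\leqG\lambda,\; \mu\neq\lambda} \Bun_G^{(\leq\mu)},
\end{equation*}
with its reduced structure. By \propref{p:theta qc}, $\Bun_G^{(\leq\lambda)}$ is quasi-compact and lies in finitely many components of $\Bun_G$; on each such component the set of $\mu$ contributing above is finite (the lattice $\on{pr}_P(\Lambda_G)\cap\Lambda^{+,\BQ}_G$ is discrete and $\{\mu\leqG\lambda\}$ is bounded), so $\Bun_G^{(\lambda)}$ is a locally closed, quasi-compact substack whose $k$-points coincide with $\sfp_P(\Bun_P^{\lambda,ss})(k)$ by (3).

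For the almost-isomorphism assertion in (1), the schematic morphism $\sfp_P : \Bun_P^{\lambda,ss} \to \Bun_G^{(\lambda)}$ is surjective on geometric points, and by uniqueness of the canonical reduction each geometric fiber consists of a single point. Finiteness then reduces to properness, which I would verify via the valuative criterion, combining the openness of the semistable locus in $\Bun_M$ with the key vanishing
\begin{equation*}
H^0(X,\mathfrak{u}(P)_{\CP_M}) = 0\quad \text{for }\CP_M\in\Bun_M^{\lambda,ss}.
\end{equation*}
This vanishing holds because $\mathfrak{u}(P)$ decomposes as an $M$-representation into isotypic pieces on which $Z_0(M)$ acts through strictly positive combinations of simple roots, which pair strictly positively with $\lambda\in\Lambda^{++,\BQ}_{G,P}$, so the associated bundles on $X$ have strictly negative slopes in the sense of the $M$-semistability of $\CP_M$.

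For the refinement (1$'$) in characteristic $0$, the almost-isomorphism is upgraded to an isomorphism by showing that $\sfp_P|_{\Bun_P^{\lambda,ss}}$ is étale. The relative tangent complex is $R\Gamma(X, \mathfrak{u}(P)_{\CP_M})$, and the same slope argument gives vanishing of both $H^0$ and $H^1$; hence $\sfp_P$ is étale on this locus, and an étale morphism of reduced finite-type algebraic stacks that is bijective on geometric points is an isomorphism. The main technical obstacle is the cohomological vanishing and slope analysis sketched above, and (more seriously) the input of the existence/uniqueness of the canonical reduction itself, which in general reductive $G$ and in positive characteristic requires delicate arguments from \cite{Beh, R2, R3}; the characteristic-$0$ hypothesis in (1$'$) enters precisely to rule out inseparability phenomena that would otherwise obstruct the étaleness step and reduce the conclusion to the merely ``almost-isomorphism'' form of (1).
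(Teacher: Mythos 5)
Your overall architecture (grant the existence/uniqueness of the canonical reduction, get (2) for free, get (3) from a Bruhat/slope comparison via \lemref{l:proj preserves order}, define $\Bun_G^{(\lambda)}$ by formula \eqref{e:loc cl via open}, then prove finiteness and, in characteristic $0$, monomorphy of $\sfp_P$) is the same as the paper's sketch in \secref{ss:proof of Shatz}. However, the cohomological input you supply for steps (1) and (1$'$) is wrong, and in a way that matters. You claim $H^0(X,\fu(P)_{\CP_M})=0$ for $\CP_M\in\Bun_M^{\lambda,ss}$ because the weights of $\fu(P)$ ``pair strictly positively with $\lambda$, so the associated bundles have strictly negative slopes.'' This is a sign error: a root $\check\alpha$ occurring in $\fn(P)$ satisfies $\langle\lambda,\check\alpha\rangle>0$ for $\lambda\in\Lambda^{++,\BQ}_{G,P}$, so the corresponding isotypic piece of $\fn(P)_{\CP_M}$ is semistable of strictly \emph{positive} slope, and its $H^0$ does \emph{not} vanish in general (it is $H^1$ that vanishes, and only under the stronger condition $\langle\lambda,\check\alpha_i\rangle>2g-2$ of \propref{p:2c'_i}). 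The vanishing you actually need is $H^0(X,(\fg/\fp)_{\CP_M})=0$, which holds because $\fg/\fp\simeq\fn(P^-)$ has weights $-\check\alpha$ pairing strictly negatively with $\lambda$; this is exactly \eqref{e:2c''_i} with $c''_i=0$ in characteristic $0$.

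The same confusion sinks your proof of (1$'$): the relative tangent complex of $\sfp_P:\Bun_P\to\Bun_G$ at $\CP_P$ is controlled by $(\fg/\fp)_{\CP_P}$, not by $\fu(P)_{\CP_M}$ (cf.\ the proof of \lemref{l:a for c''_i}), and only its $H^0$ vanishes --- $H^1(X,(\fg/\fp)_{\CP_P})$ is the normal space to the stratum and is nonzero whenever $P\neq G$. So $\sfp_P|_{\Bun_P^{\lambda,ss}}$ is \emph{unramified}, not \'etale, and your concluding step ``\'etale $+$ bijective on geometric points $\Rightarrow$ isomorphism'' does not apply as stated; one must instead argue that the map is a monomorphism (finite $+$ unramified $+$ universally injective), which is where the characteristic-$0$ hypothesis genuinely enters (in positive characteristic the $H^0$-vanishing itself can fail, see \secref{ss:p>0}). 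Finally, your finiteness step is only a gesture at the valuative criterion resting on the false $H^0$-vanishing; the paper instead gets properness from the Drinfeld compactification $\fpb_P:\BunPb\to\Bun_G$ together with the fact that the boundary $\overline{\Bun}_P^\lambda-\Bun_P^{\lambda,ss}$ has closed image disjoint from $\Bun_G^{(\lambda)}$ (see \secref{sss:proof of Shatz v}); some substitute for this argument is needed.
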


Statements (1), (1$'$), (2), and a slightly weaker version of (3) are due
to K.~Behrend \cite{Beh,Beh1}. A complete proof of the theorem was given
by S.~Schieder, see \cite[Theorem 2.3.3]{Sch}. In \secref{ss:proof of
Shatz} we give a sketch of the proof from \cite{Sch}.

\sssec{}

We apply \thmref{t:reduction theory} to obtain the following more explicit description
of the open substacks $\Bun_G^{(\leq \theta)}$:

\begin{cor} \label{c:open as a union}  For $\theta\in \Lambda^{+,\BQ}_{G}$ we have:
\begin{equation} \label{e:open as a union}
\Bun_G^{(\leq \theta)}=\underset{\lambda,\,\lambda\leqG \theta}\bigcup\, 
\Bun_G^{(\lambda)},
\end{equation}
and the set
\begin{equation} \label{e:lambda's that occur}
\{\lambda\in \Lambda^{+,\BQ}_{G} \,|\,\lambda\leqG \theta \text{ and } \Bun_G^{(\lambda)}\neq \emptyset\}
\end{equation} 
is finite.
\end{cor}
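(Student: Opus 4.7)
The plan is to deduce both assertions from \thmref{t:reduction theory} together with \propref{p:theta qc}. For the equality \eqref{e:open as a union}, I will use that by \thmref{t:reduction theory}(2) the substacks $\Bun_G^{(\lambda)}$ with $\lambda\in\Lambda_G^{+,\BQ}$ partition $\Bun_G$ on geometric points, so the task reduces to determining which of these strata are contained in $\Bun_G^{(\leq\theta)}$. For the inclusion $\supseteq$, I fix $\lambda\leqG\theta$ and a geometric point $\CP_G\in \Bun_G^{(\lambda)}$: for any parabolic reduction $\CP_{P'}\in \Bun_{P'}^{\lambda'}$ of $\CP_G$, \thmref{t:reduction theory}(3) yields $\lambda'\leqG\lambda\leqG\theta$, so $\CP_G\in \Bun_G^{(\leq\theta)}$ by the definition from \secref{sss:leqtheta}. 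For the inclusion $\subseteq$, I apply \thmref{t:reduction theory}(1) to obtain a reduction $\CP_P\in \Bun_P^{\lambda,ss}\subset \Bun_P^\lambda$ of $\CP_G$, so that the hypothesis $\CP_G\in\Bun_G^{(\leq\theta)}$ forces $\lambda\leqG\theta$ directly from the definition.

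For the finiteness of the set in \eqref{e:lambda's that occur}, I will observe that the condition $\Bun_G^{(\lambda)}\neq\emptyset$ forces $\lambda$ to belong to the discrete subset $\bigcup_P A_{G,P}\subset\Lambda_G^\BQ$, where $P$ ranges over the finite set of standard parabolics (this uses that $\Bun_P^\lambda$ is empty unless $\lambda\in A_{G,P}$, see \secref{sss:deg2}), while the conditions $\lambda\leqG\theta$ and $\lambda\in\Lambda_G^{+,\BQ}$ together confine $\lambda$ to a bounded subset of $\Lambda_G^\BQ$: indeed, writing $\theta-\lambda=\sum_i c_i\alpha_i^\vee$ with $c_i\geq 0$ and imposing dominance on $\lambda$, the positive-definiteness of the Cartan form on the semisimple part of $\Lambda_G^\BQ$ bounds the coefficients $c_i$. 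A bounded subset of a discrete set is finite. Alternatively, once \eqref{e:open as a union} is established one may invoke \propref{p:theta qc} directly, writing $\Bun_G^{(\leq\theta)}$ as a finite disjoint union on the level of connected components and on each component bounding the possible coweights within a fixed coset.

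The argument is essentially bookkeeping, and no step constitutes a serious obstacle. The one place where real content enters is the $\supseteq$-direction of the equality, which rests on \thmref{t:reduction theory}(3): the monotonicity statement to the effect that \emph{every} parabolic reduction, not merely the canonical Harder-Narasimhan one, of a bundle in $\Bun_G^{(\lambda)}$ has coweight $\leqG\lambda$.
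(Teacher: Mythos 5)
Your proof is correct and follows the paper's argument essentially verbatim: \thmref{t:reduction theory}(2) for the partition, \thmref{t:reduction theory}(3) for the inclusion $\supseteq$, the definition of $\Bun_G^{(\leq\theta)}$ together with \thmref{t:reduction theory}(1) for $\subseteq$, and discreteness of $\bigcup_P A_{G,P}$ plus the standard boundedness of $\{\lambda \text{ dominant}\,|\,\lambda\leqG\theta\}$ for finiteness. Drop the ``alternative'' finiteness argument via \propref{p:theta qc}: quasi-compactness alone does not bound the number of nonempty locally closed strata without further input, and in the paper the second proof of \propref{p:theta qc} itself relies on this corollary.
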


\begin{proof}

The fact that
$$\Bun_G^{(\lambda)}\cap \Bun_G^{(\leq \theta)}\neq \emptyset\, \Rightarrow \lambda\leqG\theta$$
follows from the definition of $\Bun_G^{(\leq \theta)}$. 

\medskip

The inclusion
$$\Bun_G^{(\lambda)}\subset \Bun_G^{(\leq \theta)}$$
for $\lambda\leqG\theta$ follows from \thmref{t:reduction theory}(3). 

\medskip

This proves \eqref{e:open as a union} in view of \thmref{t:reduction theory}(2). 
The finiteness of the set \eqref{e:lambda's that occur} follows from the fact that
$$\Bun_G^{(\lambda)}\neq \emptyset \, \Rightarrow \lambda\in \underset{P}\bigcup\,\, A_{G,P}\; ,$$
see the end of \secref{sss:deg2}.

\end{proof}

As a corollary, we obtain a 2nd proof of \propref{p:theta qc}:

\begin{proof}[Proof 2 (of \propref{p:theta qc})]
Follows from \corref{c:open as a union} and the fact that each $\Bun_G^{(\lambda)}$ is quasi-compact.
\footnote{The quasi-compactness of $\Bun_G^{(\lambda)}$ relied on the fact that the open substack
$\Bun_M^{\lambda,ss}$ is quasi-compact, which in itself is a particular case of \propref{p:theta qc}.}
\end{proof}

As another corollary of \corref{c:open as a union} we obtain:

\begin{cor} \label{c:incl of opens}  

\smallskip
%
We have:
\begin{equation} \label{e:loc cl via open}
\Bun_G^{(\theta)}=\Bun_G^{(\leq \theta)}-\underset{\theta',\theta\neq \theta'\leqG \theta}\bigcup\,
\Bun_G^{(\leq \theta')}.
\end{equation}
\end{cor}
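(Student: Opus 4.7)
The plan is to deduce the identity directly from \corref{c:open as a union} together with the disjointness statement of \thmref{t:reduction theory}(2). The key combinatorial fact is that on the stratification $\{\Bun_G^{(\lambda)}\}$, the ``open'' substacks $\Bun_G^{(\leq\theta)}$ are precisely the unions of strata with index $\leqG\theta$, and distinct strata are disjoint on geometric points.

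First, I would establish the inclusion $\Bun_G^{(\theta)}\subset\Bun_G^{(\leq\theta)}$: this is the $\lambda=\theta$ summand in \eqref{e:open as a union}. Then, to verify the ``$\subset$'' direction of \eqref{e:loc cl via open}, I would argue by contradiction: suppose a geometric point $y\in\Bun_G^{(\theta)}$ also lies in $\Bun_G^{(\leq\theta')}$ for some $\theta'\neq\theta$ with $\theta'\leqG\theta$. Applying \corref{c:open as a union} to $\theta'$, the point $y$ lies in some stratum $\Bun_G^{(\lambda)}$ with $\lambda\leqG\theta'$. But by \thmref{t:reduction theory}(2) the strata are pairwise disjoint, forcing $\lambda=\theta$. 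Then $\theta=\lambda\leqG\theta'\leqG\theta$, hence $\theta=\theta'$, a contradiction.

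For the ``$\supset$'' direction, take a geometric point $y$ of $\Bun_G^{(\leq\theta)}$ that is not in any $\Bun_G^{(\leq\theta')}$ with $\theta'\neq\theta$, $\theta'\leqG\theta$. By \corref{c:open as a union}, $y$ lies in some $\Bun_G^{(\lambda)}$ with $\lambda\leqG\theta$; since $\Bun_G^{(\lambda)}\subset\Bun_G^{(\leq\lambda)}$, the hypothesis forces $\lambda=\theta$, so $y\in\Bun_G^{(\theta)}$. Since both sides of \eqref{e:loc cl via open} are locally closed reduced substacks with the same geometric points, they coincide.

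There is essentially no obstacle here: the statement is a purely formal consequence of the two previously established facts. The only mild subtlety is keeping track of which direction of the implication uses disjointness of strata (the ``$\subset$'' direction) versus which uses the description of $\Bun_G^{(\leq\theta)}$ as a union of strata (the ``$\supset$'' direction); both are supplied by \corref{c:open as a union} and \thmref{t:reduction theory}(2). The finiteness of the index set \eqref{e:lambda's that occur} guarantees that the union in \eqref{e:loc cl via open} is actually a finite union, so no set-theoretic issues arise.
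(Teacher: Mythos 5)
Your argument is correct and is exactly the intended one: the paper states this corollary without proof, as an immediate consequence of \corref{c:open as a union} and the disjointness in \thmref{t:reduction theory}(2), which is precisely what you use. The two point-set inclusions you check are the whole content, and the remark that both sides carry the reduced structure settles the equality of substacks.
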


\begin{rem}
We could \emph{a priori} define the locally closed substacks $\Bun_G^{(\theta)}$ by  formula
\eqref{e:loc cl via open}. However, without the interpretation of $\Bun_G^{(\theta)}$ via
\thmref{t:reduction theory}, it would not be clear that these locally closed substacks are
pairwise non-intersecting.
\end{rem}



\sssec{The Harder-Narasimhan map}  \label{sss:summary}
Let $|\Bun_G (k)|$ denote the set of isomorphism classes of $G$-bundles on $X$ (or equivalently, of objects of the groupoid $\Bun_G (k)$). 
We equip $|\Bun_G (k)|$ with the Zariski topology. 

By Theorem~\ref{t:reduction theory}(2), for every $\CF\in\Bun_G (k)$ there exists a unique 
$\lambda\in\Lambda_G^{+,\BQ}$ such that $\CF\in\Bun_G^{(\lambda)}(k)$. This $\lambda$ is called the
\emph{Harder-Narasimhan coweight
\footnote{By \corref{c:incl of opens}, this agrees with the usage of the words ``Harder-Narasimhan coweight" in Sect.~\ref{sss:opens}.}} 
of $\CF$ and denoted by $\HN (\CF )$. Thus 
we have a map 
\begin{equation}   \label{e:HN}
\HN: |\Bun_G (k)|\to\Lambda_G^{+,\BQ}.
\end{equation}

\begin{lem}    \label{l:summary}
The map \eqref{e:HN} has the following properties.
\begin{enumerate}
\item[(i)] It is upper-semicontinuous, i.e., for each $\lambda_0\in\Lambda_G^{+,\BQ }$ the preimage of the subset 
\begin{equation}   \label{e:left_segment}
\{\lambda\in \Lambda_G^{+,\BQ}\,|\, \lambda\le \lambda_0\}
\end{equation}
is open.
\item[(ii)] The image of the map \eqref{e:HN} is discrete in the real vector space 
$\Lambda_G^{\BR}:=\Lambda_G\otimes\BR$.
\item[(iii)] A subset $S\subset |\Bun_G (k)|$ is quasi-compact if and only if $\HN (S)$ is bounded in
$\Lambda_G^{\BR}$.
\end{enumerate}
\end{lem}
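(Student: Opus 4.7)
My overall strategy is to deduce all three parts from properties of the open substacks $\Bun_G^{(\leq \theta)}$ already established in Section~\ref{s:reduction theory}: they are open (Section~\ref{sss:opens}), they cover $\Bun_G$, and each is quasi-compact (Proposition~\ref{p:theta qc}). Part (i) is then immediate: by Corollary~\ref{c:open as a union}, the preimage of \eqref{e:left_segment} under $\HN$ equals $|\Bun_G^{(\leq \lambda_0)}(k)|$, which is open in $|\Bun_G(k)|$ by construction of $\Bun_G^{(\leq \lambda_0)}$ as an open substack.

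For (ii), Theorem~\ref{t:reduction theory}(1) shows that $\Bun_G^{(\lambda)} \neq \emptyset$ forces $\Bun_P^\lambda \neq \emptyset$ for the unique parabolic $P$ such that $\lambda \in \Lambda^{++,\BQ}_{G,P}$, and by Section~\ref{sss:deg2} this forces $\lambda \in A_{G,P} = \on{pr}_P(\Lambda_G)$. Since $A_{G,P}$ is a finitely generated subgroup of $\Lambda^\BQ_{G,P}$ with $A_{G,P} \otimes \BQ = \Lambda^\BQ_{G,P}$, it is a full-rank lattice in the real subspace $\Lambda^\BR_{G,P} \subset \Lambda_G^\BR$, and hence discrete in the ambient $\Lambda_G^\BR$. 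Since the set of standard parabolics is finite, the image of $\HN$ lies in $\bigcup_P A_{G,P}$, a finite union of discrete sets, and is therefore discrete.

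For (iii), I will argue both directions via the open substacks $\Bun_G^{(\leq \theta)}$. If $S$ is quasi-compact, the open cover $\{\Bun_G^{(\leq \theta)}(k) \cap S\}_\theta$ admits a finite subcover, so $S \subset \bigcup_{i=1}^n |\Bun_G^{(\leq \theta_i)}(k)|$ and hence $\HN(S) \subset \bigcup_i \{\lambda \in \Lambda_G^{+,\BQ} : \lambda \leq \theta_i\}$. Each of these sets is bounded by the standard fact that, for a Weyl-invariant inner product $(\cdot,\cdot)$ on $\Lambda_G^\BR$, $\lambda \in \Lambda_G^{+,\BQ}$ and $\theta - \lambda \in \Lambda_G^{pos,\BQ}$ imply $(\theta-\lambda, \theta+\lambda) \geq 0$ (since $\theta+\lambda$ is dominant and $\theta-\lambda$ is a nonnegative combination of simple coroots, each of which pairs nonnegatively with any dominant element), and hence $\|\lambda\| \leq \|\theta\|$; a finite union of bounded sets is bounded. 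Conversely, if $\HN(S)$ is bounded, then by (ii) it is finite, say $\HN(S) = \{\lambda_1, \ldots, \lambda_n\}$; then $S \subset \bigcup_i |\Bun_G^{(\lambda_i)}(k)|$, which is a finite union of underlying spaces of quasi-compact Artin stacks of finite type over $k$, hence Noetherian. As a subset of a Noetherian topological space, $S$ is quasi-compact.

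The main obstacle is the boundedness inequality invoked in (iii), which requires explicitly introducing a Weyl-invariant inner product and the pairing identity relating simple coroots to dominant coweights; all other steps combine transparently results already established in the text.
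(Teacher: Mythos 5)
Your proof is correct and takes essentially the same route as the paper, whose proof is simply ``follows from \corref{c:open as a union} and the fact that the substacks $\Bun_G^{(\leq\theta)}$ are open and quasi-compact''; you have merely spelled out the details. The only extra ingredient you introduce, the Weyl-invariant inner product giving $\|\lambda\|\le\|\theta\|$, is fine but could be bypassed by citing the finiteness of the set \eqref{e:lambda's that occur} already recorded in \corref{c:open as a union}.
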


\begin{proof}
Follows from \corref{c:open as a union} and the fact that the substacks  $\Bun_G^{(\leq \theta)}$ are open and quasi-compact.
\end{proof}

\sssec{}  \label{sss:who_is_who}

Let us equip the set $\Lambda_G^{+,\BQ}$ with the \emph{order topology}, i.e., the one whose base is formed by subsets of the form~\eqref{e:left_segment}. Then statement (i) of Lemma~\ref{l:summary} can be reformulated as follows: the map \eqref{e:HN}  is \emph{continuous}. 

\medskip

Now it is clear 
that if a subset of $\Lambda_G^{+,\BQ}$ is locally closed 
then so is its preimage in $\Bun_G$. 
Note that for a subset of $\Lambda_G^{+,\BQ}$ it is
easy to understand whether it is open, closed, or locally closed, see Lemma~\ref{l:who_is_who} from Appendix~\ref{s:preordered}.
Thus we obtain:

\begin{cor} \label{c:order topology}
Let $S\subset \Lambda^{+,\BQ}_G$ be a subset. Consider the corresponding subset
$$\Bun_G^{(S)}:=\underset{\lambda\in S}\bigcup\, \Bun_G^{(\lambda)}\subset \Bun_G.$$ 

\smallskip

\noindent{\em(a)} If $S$ has the property that $\lambda_1\in S,\,\lambda_1\leqG \lambda \, \Rightarrow\,  \lambda\in S$, then 
$\Bun_G^{(S)}$ is closed in $\Bun_G$.

\smallskip

\noindent{\em(b)} If $S$ has the property that $\lambda_1\in S,\,\lambda\leqG \lambda_1\, \Rightarrow\,  \lambda\in S$,
then $\Bun_G^{(S)}$ is open in $\Bun_G$.

\smallskip

\noindent{\em(c)} If $S$ has the property that $\lambda_1,\lambda_2\in S,\,\lambda_1\leqG\lambda\leqG \lambda_2 \, \Rightarrow\,  \lambda\in S$,
then $\Bun_G^{(S)}$ is locally closed in $\Bun_G$.
\end{cor}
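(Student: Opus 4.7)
The plan is to deduce all three statements from two ingredients: the fact that the opens $\Bun_G^{(\leq\theta)}$ are open substacks (Sect.~\ref{sss:opens}) and \corref{c:open as a union}, together with the pairwise disjointness of the strata $\Bun_G^{(\lambda)}$ from \thmref{t:reduction theory}(2). The whole point is that these inputs say precisely that the Harder-Narasimhan decomposition realizes $\Bun_G$ as a set-theoretic union indexed by $(\Lambda^{+,\BQ}_G,\leqG)$ in a way compatible with the order topology.

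First I would prove~(b). If $S$ is downward closed, I claim
\[
\Bun_G^{(S)}=\bigcup_{\lambda\in S}\Bun_G^{(\leqG\lambda)}.
\]
The inclusion $\subseteq$ is immediate from $\Bun_G^{(\lambda)}\subset\Bun_G^{(\leqG\lambda)}$. For $\supseteq$, if $\CP\in\Bun_G^{(\leqG\lambda)}$ for some $\lambda\in S$, then by \corref{c:open as a union} one has $\HN(\CP)\leqG\lambda$; since $S$ is downward closed, $\HN(\CP)\in S$, so $\CP\in\Bun_G^{(\HN(\CP))}\subset\Bun_G^{(S)}$. The right-hand side is a union of open substacks, so $\Bun_G^{(S)}$ is open.

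Statement~(a) follows from~(b) by complementation. Indeed, if $S$ is upward closed then $T:=\Lambda_G^{+,\BQ}\setminus S$ is downward closed, and by \thmref{t:reduction theory}(2) every geometric point of $\Bun_G$ lies in exactly one $\Bun_G^{(\lambda)}$, so
\[
\Bun_G\setminus\Bun_G^{(S)}=\Bun_G^{(T)},
\]
which is open by~(b); hence $\Bun_G^{(S)}$ is closed.

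For~(c), I would use the standard fact (which is what Lemma~\ref{l:who_is_who} in the appendix records for the order topology) that an order-convex subset $S$ is the intersection $S_{\downarrow}\cap S_{\uparrow}$ of its downward and upward closures. Explicitly, set
\[
S_{\downarrow}:=\{\mu\mid\exists\lambda\in S,\ \mu\leqG\lambda\},\qquad S_{\uparrow}:=\{\mu\mid\exists\lambda\in S,\ \lambda\leqG\mu\};
\]
convexity of $S$ gives $S_{\downarrow}\cap S_{\uparrow}=S$. Applying~(b) to $S_{\downarrow}$ and~(a) to $S_{\uparrow}$, and using once more the pairwise disjointness of the strata to get
\[
\Bun_G^{(S_{\downarrow})}\cap\Bun_G^{(S_{\uparrow})}=\Bun_G^{(S_{\downarrow}\cap S_{\uparrow})}=\Bun_G^{(S)},
\]
we conclude that $\Bun_G^{(S)}$ is the intersection of an open substack with a closed one, hence is locally closed. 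No real obstacle is expected; the only thing to be careful about is the set-theoretic identity $\Bun_G^{(A)}\cap\Bun_G^{(B)}=\Bun_G^{(A\cap B)}$, which is where the disjointness of the HN strata is used.
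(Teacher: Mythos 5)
Your proof is correct and is essentially the paper's argument unpacked: the paper phrases it as continuity of the Harder--Narasimhan map for the order topology on $\Lambda_G^{+,\BQ}$ together with Lemma~\ref{l:who_is_who}, and your steps (b), (a), (c) reproduce exactly that — in particular your decomposition $S=S_{\downarrow}\cap S_{\uparrow}$ is precisely the proof of Lemma~\ref{l:who_is_who}(ii) transported to $\Bun_G$ via \corref{c:open as a union} and the disjointness of the strata.
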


In cases (a) and (c) of the lemma we will regard $\Bun_G^{(S)}$ as a substack of $\Bun_G$ with the reduced structure.

\ssec{On the proof of Theorem~\ref{t:reduction theory}}  \label{ss:proof of Shatz}

Let us make some remarks regarding the proof of \thmref{t:reduction theory}. 
For a full proof along these lines see \cite{Sch}.

\sssec{} \label{sss:proof point 1}
For a $G$-bundle $\CP_G$  let $\lambda$ be \emph{a}\footnote{The ``a" is italicized because we do not yet know that
such a maximal element is unique, although we will eventually show that it is.} maximal 
element in $\Lambda^{\BQ}_G$, with respect to the $\leqG$ order relation,
such that there exists a parabolic $P$ and $\CP_P\in \Bun_P^{\lambda}$ such that 
$\CP_G=\sfp_P(\CP_P)$. 
One shows using \lemref{l:proj preserves order} that the maximality 
assumption on $\lambda$ implies that $\lambda\in  \Lambda^{+,\BQ}_G$ and that 
$\CP_P\in \Bun_P^{\lambda,ss}$. For details see \cite[Sect. 6.2]{Sch}. 

\sssec{} \label{sss:proof point 2}
Using Bruhat decomposition, one shows (see \cite[Theorem 4.5.1]{Sch}) that if $P'$ is another parabolic and
$\CP_{P'}\in \Bun_{P'}^{\lambda'}$ such that $\CP_G=\sfp_{P'}(\CP_{P'})$, then 
$\lambda'\leqG\lambda$, and the equality takes place if and only if $P'\subset P$ and 
$\CP_P$ is induced from $\CP_{P'}$ via the above inclusion.

\sssec{} \label{sss:proof point 3}
We obtain that the set of \emph{maximal elements} $\lambda$ as in \secref{sss:proof point 1} contains
a single element. Moreover, the set of parabolics as in \secref{sss:proof point 1} also contains a unique
maximal element $P$; namely, one for which $\lambda\in  \Lambda^{++,\BQ}_{G,P}$. 

\sssec{}
This establishes points (2) and (3) of the theorem, modulo the fact that
$\Bun_G^{(\lambda)}$ is locally closed, and not just constructible.  

\sssec{}  \label{sss:proof of Shatz v}
Let $\lambda$ and $P$ be as in \secref{sss:proof point 3}. To prove point (1), one uses the relative 
compactification $$\fpb_P:\BunPb\to \Bun_G$$ of
the map $\Bun_P\to \Bun_G$ defined in \cite[Sect. 1.3.2]{BG} under the assumption that $[G,G]$ is simply connected and in  
\cite[Sect. 7]{Sch} for an arbitrary reductive $G$. 
Since $\fpb_P$ is proper, the images of $\overline{\Bun}_P^{\lambda}$ and
$\overline{\Bun}_P^\lambda-\Bun_P^{\lambda,ss}$ in $\Bun_G$ are both closed. Using \secref{sss:proof point 2}, one shows 
that the latter does not intersect $\Bun_G^{(\lambda)}$. This implies that $\sfp$ defines a
finite map from $\Bun_P^{\lambda,ss}$ to a locally closed substack of $\Bun_G$. It
is bijective at the level of $k$-points by \secref{sss:proof point 2}. See \cite[Sect. 6.2.2]{Sch} for details. 
\footnote{The latter part of the argument will actually be carried out in a slightly 
more general situation in the proof of \propref{p:parabolic 1-1}.}

\sssec{}
Once (1) is proved, statement (1$'$) is equivalent to the fact that the map $\Bun_P^{\lambda,ss}\to \Bun_G$ 
is a monomorphism (on $S$-points for any scheme $S$). This is proved (see 
\cite[Prop.~5.2.1]{Sch}) using the fact that in characteristic $0$, a homomorphism
of reductive groups $G_1\to G_2$ that sends $Z_0(G_1)$ to $Z_0(G_2)$ sends $\Bun_{G_1}^{ss}$ to 
$\Bun_{G_2}^{ss}$, see \cite[Prop.~5.2.1]{Sch} for details. (We will use a similar argument in the proof of 
\propref{p:key}(a) given in Sect.~\ref{s:estimates}).

\section{Complements to reduction theory: $P$-admissible sets}  \label{s:compl red}
In this section we fix a parabolic $P\subset G$. Let $M$ be the corresponding Levi.

\medskip

Our goal is to prove \propref{p:parabolic 1-1}, which allows us to produce locally closed
substacks of $\Bun_G$ from locally closed substacks of $\Bun_M$.

\ssec{Some elementary geometry}  \label{ss:comb}


Instead of reading the proofs of 
Lemmas~\ref{l:2comb1} and \ref{l:2comb2} below,
the reader may prefer to check the statements in the rank 2 case by drawing the picture, and believe that the
statements are true in general.

\sssec{} 
Recall that according to the definitions from Sect.~\ref{sss:G}, we have 
$\Lambda_G^{\BQ}=\Lambda_M^{\BQ}$ and $\Lambda_G^{+,\BQ}\subset\Lambda_M^{+,\BQ}$.

\begin{lem}   \label{l:2comb1}
Let $\lambda ,\lambda'\in\Lambda_G^{\BQ}=\Lambda_M^{\BQ}$ and $\lambda'\leqM\lambda$. Then

\smallskip

\noindent{\em(a)} $\langle\lambda'\, ,\check\alpha_i\rangle\geq \langle\lambda\, ,\check\alpha_i\rangle$ for 
$i\not\in\Gamma_M$;

\smallskip

\noindent{\em(b)} If $\lambda\in\Lambda_G^{+,\BQ}$ and $\lambda'\in\Lambda_M^{+,\BQ}$
then $\lambda'\in\Lambda_G^{+,\BQ}$.
\end{lem}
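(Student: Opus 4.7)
The plan is to reduce both parts to a direct pairing computation using the hypothesis that $\lambda - \lambda'$ is a non-negative rational combination of simple coroots indexed by $\Gamma_M$, together with the standard sign property of the Cartan matrix.

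First I would unpack the hypothesis $\lambda' \leqM \lambda$. By definition of the ordering recalled in Sect.~\ref{ss:Gnotation}, this means
\[
\lambda - \lambda' = \sum_{j \in \Gamma_M} c_j\, \alpha_j, \qquad c_j \in \BQ_{\geq 0},
\]
where the sum runs only over the simple coroots of $M$, i.e., those indexed by $\Gamma_M\subset \Gamma_G$.

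For part (a), I would pair this identity with $\check\alpha_i$ for a fixed $i \notin \Gamma_M$, obtaining
\[
\langle \lambda,\check\alpha_i\rangle - \langle \lambda',\check\alpha_i\rangle
= \sum_{j \in \Gamma_M} c_j\,\langle \alpha_j,\check\alpha_i\rangle.
\]
For every $j \in \Gamma_M$ we have $j \neq i$, so the off-diagonal Cartan matrix entry $\langle \alpha_j,\check\alpha_i\rangle$ is $\leq 0$. Since each $c_j \geq 0$, the right-hand side is $\leq 0$, which gives $\langle \lambda',\check\alpha_i\rangle \geq \langle \lambda,\check\alpha_i\rangle$, as claimed.

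For part (b), I would separate the two types of simple roots of $G$. For $i \in \Gamma_M$, the inequality $\langle \lambda',\check\alpha_i\rangle \geq 0$ is exactly the assumption $\lambda' \in \Lambda_M^{+,\BQ}$. For $i \notin \Gamma_M$, apply (a) to get $\langle \lambda',\check\alpha_i\rangle \geq \langle \lambda,\check\alpha_i\rangle$, and then use $\lambda \in \Lambda_G^{+,\BQ}$, which gives $\langle \lambda,\check\alpha_i\rangle \geq 0$. Combining these two cases yields $\lambda' \in \Lambda_G^{+,\BQ}$. There is no real obstacle here: the only non-formal input is the sign of off-diagonal Cartan entries, which is why part (a) must be proved before (b).
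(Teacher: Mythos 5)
Your proof is correct and follows exactly the paper's argument: part (a) is the pairing computation using $\langle\alpha_j,\check\alpha_i\rangle\le 0$ for $j\ne i$, and part (b) splits the simple roots into $i\in\Gamma_M$ (handled by $\lambda'\in\Lambda_M^{+,\BQ}$) and $i\notin\Gamma_M$ (handled by (a) and $\lambda\in\Lambda_G^{+,\BQ}$). Nothing to add.
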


\begin{proof}
Statement (a) follows from the inequality $\langle\alpha_j\, ,\check\alpha_i\rangle\le 0$ for $i\ne j$.

\medskip

To prove (b), we have to show that $\langle\lambda'\, ,\check\alpha_i\rangle\ge 0$ for all $i\in\Gamma_G$.
If $i\in\Gamma_M$ this follows from the assumption  that $\lambda'\in\Lambda_M^{+,\BQ}\,$.
If $i\not\in\Gamma_M$ this follows from (a) and the assumption  that $\lambda\in\Lambda_G^{+,\BQ}\,$.
\end{proof}

\sssec{}  

In Sect.~\ref{sss:P}-\ref{sss:theprojector} we defined the subspace 
$\Lambda^\BQ_{G,P}\subset\Lambda^\BQ_G$ and 
the projector $$\on{pr}_P :\Lambda^\BQ_G\to\Lambda^\BQ_{G,P}.$$

\begin{lem}        \label{l:2comb2}
If $\lambda\in\Lambda_G^{+,\BQ}$ then
\begin{equation}   \label{e:2ineq1}
\on{pr}_P (\lambda )\leqM \lambda,
\end{equation}
\begin{equation}    \label{e:2ineq2}
\langle  \on{pr}_P (\lambda )\, ,\check\alpha_i\rangle\ge \langle \lambda\, ,\check\alpha_i\rangle 
\mbox{ for } i\not\in\Gamma_M\, .
\end{equation}
\end{lem}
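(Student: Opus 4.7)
The plan is to work directly from the definition of $\on{pr}_P$ given in Sect.~\ref{sss:theprojector}. Write
\[
\lambda - \on{pr}_P(\lambda) = \sum_{i \in \Gamma_M} c_i\, \alpha_i
\]
for some $c_i \in \BQ$ (this is possible because $\lambda - \on{pr}_P(\lambda) \in \ker(\on{pr}_P)$). The content of statement (a) is then precisely that $c_i \geq 0$ for every $i \in \Gamma_M$. To isolate the $c_i$, I would pair with $\check\alpha_j$ for $j \in \Gamma_M$ and use $\langle \on{pr}_P(\lambda), \check\alpha_j\rangle = 0$, obtaining the linear system
\[
\sum_{i \in \Gamma_M} c_i \langle \alpha_i, \check\alpha_j\rangle = \langle \lambda, \check\alpha_j\rangle, \qquad j \in \Gamma_M,
\]
whose right-hand side is $\geq 0$ by the dominance hypothesis $\lambda \in \Lambda_G^{+,\BQ}$.

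The coefficient matrix $(\langle \alpha_i, \check\alpha_j\rangle)_{i,j \in \Gamma_M}$ is the Cartan matrix of the semisimple part $[M,M]$ of the Levi $M$. Here I would invoke the classical fact that the inverse of the Cartan matrix of a semisimple Lie algebra has all entries in $\BQ_{\geq 0}$ (indeed positive within each connected component of the Dynkin diagram) — equivalently, each fundamental weight of $[M,M]$ expands as a nonnegative $\BQ$-combination of simple roots. Applying this to our system forces $c_i \geq 0$, which is (a).

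Statement (b) is then essentially a bookkeeping corollary. For $i \notin \Gamma_M$,
\[
\langle \on{pr}_P(\lambda), \check\alpha_i\rangle = \langle \lambda, \check\alpha_i\rangle - \sum_{j \in \Gamma_M} c_j \langle \alpha_j, \check\alpha_i\rangle.
\]
For each $j \in \Gamma_M$ we have $i \neq j$, so $\langle \alpha_j, \check\alpha_i\rangle \leq 0$ (off-diagonal entries of the Cartan matrix of $G$ are non-positive), and $c_j \geq 0$ by (a); hence the subtracted sum is $\leq 0$, giving $\langle \on{pr}_P(\lambda), \check\alpha_i\rangle \geq \langle \lambda, \check\alpha_i\rangle$.

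The only nontrivial input is the positivity of the inverse Cartan matrix, which is the main (and well-known) obstacle to making the argument fully self-contained; everything else is formal linear algebra. One could alternatively prove (a) geometrically, by observing that $\on{pr}_P(\lambda)$ is the orthogonal projection of $\lambda$ onto the face $\Lambda_{G,P}^\BQ$ (with respect to a $W_M$-invariant inner product) and that the displacement vector lies in the cone spanned by $\{\alpha_i\}_{i \in \Gamma_M}$ precisely because $\lambda$ is $M$-dominant, but the linear-algebraic approach above is the most direct.
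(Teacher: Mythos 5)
Your proof is correct and is essentially the paper's argument: the paper also observes that $\lambda-\on{pr}_P(\lambda)$ lies in the coroot span of $M$ and pairs nonnegatively with the $\check\alpha_i$, $i\in\Gamma_M$, and then invokes the fact that the $M$-dominant cone inside that span is contained in $\Lambda_M^{pos,\BQ}$ — which is exactly your "inverse Cartan matrix is nonnegative" input in coordinate-free form. Your derivation of \eqref{e:2ineq2} is the same one-line computation as the paper's Lemma~\ref{l:2comb1}(a), which the paper simply cites at this point.
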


\begin{proof}
On the one hand, for $i\in\Gamma_M$, one has 
$\langle \lambda-\on{pr}_P (\lambda )\, ,\check\alpha_i\rangle=\langle \lambda\, ,\check\alpha_i\rangle\ge 0$.
On the other hand, $\lambda-\on{pr}_P (\lambda )$ belongs to the subspace generated by 
the coroots of $M$. Thus $\lambda-\on{pr}_P (\lambda )$ is in the dominant cone of the root system of $M$. 
The latter is contained in $\Lambda_M^{pos,\BQ}$, so we get \eqref{e:2ineq1}. 

\medskip

The inequality \eqref{e:2ineq2} follows from \eqref{e:2ineq1} by Lemma~\ref{l:2comb1}(a).
\end{proof}

\ssec{$P$-admissible subsets of $\Lambda_G^{+,\BQ}$}

\sssec{}

Let $S$ be a subset of $\Lambda_G^{+,\BQ}$, and let $P$ be a parabolic.

\begin{defn} \label{d:P-adm}
We say that $S$ is $P$-\emph{admissible} if the following three properties hold: 

\smallskip

\begin{equation}     \label{e:S1} 
\text{There exists }\mu \in\Lambda_{G,P}^{\BQ} \text{ such that }
S\subset \on{pr}_P^{-1}(\mu)\cap \Lambda_G^{+,\BQ}.
\end{equation} 

\smallskip

\begin{equation}     \label{e:S2}
\mbox{ If }\lambda_1\in  S \text{ and }\lambda_2\in \Lambda_G^{+,\BQ},\, \lambda_2\leqM \lambda_1\,\mbox{ then }\, \lambda_2\in S.
\end{equation}

\smallskip

\begin{equation} \label{e:condition nz}
\forall\, \lambda\in S,\,\,\forall i\in  \Gamma_G-\Gamma_M \text{ we have }
\langle \lambda\, ,\check\alpha_i\rangle >0.
\end{equation}

\end{defn}

\begin{rem}
If $S\neq\emptyset$ is $P$-admissible and $\on{pr}_P(S)=\mu\in \Lambda_{G,P}^{\BQ}$
then $$\mu\in \Lambda_{G,P}^{++,\BQ}\subset \Lambda_G^{+,\BQ},$$
where, as before,
\[
\Lambda^{++,\BQ}_{G,P}:=\{ \lambda\in \Lambda^\BQ_G\: | \:\langle\lambda,\check\alpha_i\rangle=0
\,\mbox{ for }\, i\in\Gamma_M \,\mbox{ and }\, \langle\lambda,\check\alpha_i\rangle>0 \,\mbox{ for }\, i\notin\Gamma_M\}.
\]
This follows from \eqref{e:2ineq2} and \eqref{e:condition nz}.
\end{rem}

\sssec{Examples} \label{sss:P-adm examples}

\hfill

\smallskip

\noindent(i) The subset of $\on{pr}_P^{-1}(\mu)\cap \Lambda_G^{+,\BQ}$ consisting of elements satisfying \eqref{e:condition nz},
is $P$-admissible.

\smallskip

\noindent(ii) If $\lambda\in \Lambda_G^{+,\BQ}$ is such that 
$\langle \lambda\, ,\check\alpha_i\rangle >0$ for all $i\not\in\Gamma_M$
then  the set 
$$S=\{\lambda'\in  \Lambda_M^{+,\BQ}\,|\, \lambda'\leqM \lambda\}$$
is $P$-admissible by \lemref{l:2comb1}. 

%

\smallskip

\noindent(iii) If $\mu\in \Lambda_{G,P}^{++,\BQ}$ then the one-element set $\{\mu\}$ is $P$-admissible and moreover, it is the smallest non-empty $P$-admissible subset of  
$\on{pr}_P^{-1}(\mu)\cap \Lambda_G^{+,\BQ}$. This follows from \eqref{e:2ineq1}.

\sssec{}

Let $S\subset \Lambda_G^{+,\BQ}$ be $P$-admissible subset. Note that we can also regard $S$  as a subset of~$\Lambda_M^{+,\BQ}$.

\begin{lem}  \label{l:adm open}
The subset $S\subset \Lambda_M^{+,\BQ}$
is open.
\end{lem}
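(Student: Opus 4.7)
The plan is to unwind the definition of the order topology on $\Lambda_M^{+,\BQ}$ and reduce the statement to a direct application of Lemma~\ref{l:2comb1}(b) together with condition \eqref{e:S2} of $P$-admissibility.

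Recall from Appendix~\ref{s:preordered} (cf.\ Corollary~\ref{c:order topology}(b)) that a subset $U$ of a preordered set equipped with its order topology is open if and only if it is downward closed, i.e.\ $\lambda_0\in U$ and $\lambda\le\lambda_0$ imply $\lambda\in U$. Thus, to show that $S$ is open in $\Lambda_M^{+,\BQ}$, I need to verify: whenever $\lambda_1\in S$ and $\lambda\in\Lambda_M^{+,\BQ}$ satisfies $\lambda\leqM\lambda_1$, we have $\lambda\in S$.

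So fix such $\lambda_1$ and $\lambda$. Since $S\subset\Lambda_G^{+,\BQ}$ by \eqref{e:S1}, we have $\lambda_1\in\Lambda_G^{+,\BQ}$. Combined with $\lambda\in\Lambda_M^{+,\BQ}$ and $\lambda\leqM\lambda_1$, Lemma~\ref{l:2comb1}(b) then yields $\lambda\in\Lambda_G^{+,\BQ}$. At this point condition \eqref{e:S2} of $P$-admissibility applies directly (with $\lambda_2:=\lambda$) and gives $\lambda\in S$, as required.

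There is no real obstacle here: the content of the lemma is that $P$-admissibility was formulated precisely so that downward closure within $\Lambda_G^{+,\BQ}$ (which is part of the definition) upgrades automatically to downward closure within the a priori larger cone $\Lambda_M^{+,\BQ}$, the upgrade being supplied by Lemma~\ref{l:2comb1}(b). Note that conditions \eqref{e:S1} and \eqref{e:condition nz} are not needed for the proof of this particular lemma; they will presumably be used elsewhere (e.g.\ to ensure that the corresponding substack of $\Bun_G$ is locally closed and to make the contraction principle apply on each stratum).
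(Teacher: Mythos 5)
Your proof is correct and is exactly the argument the paper intends: its one-line proof (``Follows from Lemma~\ref{l:2comb1}(b)'') is precisely your combination of downward closure in the $\leqM$ order topology, Lemma~\ref{l:2comb1}(b) to land back in $\Lambda_G^{+,\BQ}$, and condition~\eqref{e:S2} of $P$-admissibility. Your unwinding of the details, including the observation that \eqref{e:S1} and \eqref{e:condition nz} are not needed here, is accurate.
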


\begin{proof}
Follows from \lemref{l:2comb1}(b).
\end{proof}

\ssec{Reduction theory and $P$-admissible subsets}  

\sssec{}   \label{sss:Bungs}

Let $S\subset \Lambda_G^{+,\BQ}$ be a $P$-admissible subset.

\medskip

By \corref{c:order topology}, the subset
\begin{equation}   \label{e:Bungs}
\Bun_G^{(S)}:=\underset{\lambda\in S}\bigcup\, \Bun_G^{(\lambda)}\subset \Bun_G
\end{equation}
is locally closed (and thus we can regard it as a substack with the reduced structure).

\medskip

By \lemref{l:adm open} and \corref{c:order topology}, the subset 
$$\Bun_M^{(S)}:=\underset{\lambda\in S}\bigcup\, \Bun_M^{(\lambda)}\subset \Bun_M$$
is open.
Set $$\Bun_P^{(S)}:=\Bun_P\underset{\Bun_M}\times \Bun_M^{(S)}.$$

\sssec{}

The next proposition is a generalization of Theorem~\ref{t:reduction theory}(1); the latter corresponds to the case 
where the $P$-admissible subset $S$ has one element, see Example~\ref{sss:P-adm examples}(iii).

\begin{prop}  \label{p:parabolic 1-1}
Let $S\subset \Lambda_G^{+,\BQ}$ be a $P$-admissible subset.
Then the restriction of $\sfp_P:\Bun_P\to \Bun_G$ to
$\Bun_P^{(S)}$ defines an almost-isomorphism
\begin{equation}    \label{p:almost_S}
\Bun_P^{(S)}\to \Bun_G^{(S)}.
\end{equation}
\end{prop}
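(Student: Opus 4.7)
The plan is to reduce the proposition to Theorem~\ref{t:reduction theory}(1), applied both to $G$ and to $M$, by working stratum-by-stratum and then gluing. By Theorem~\ref{t:reduction theory}(2) applied to $M$ and to $G$, one has stratifications $\Bun_M^{(S)}=\bigsqcup_{\lambda\in S}\Bun_M^{(\lambda)}$ and $\Bun_G^{(S)}=\bigsqcup_{\lambda\in S}\Bun_G^{(\lambda)}$, and pulling back along $\Bun_P\to\Bun_M$ gives $\Bun_P^{(S)}=\bigsqcup_{\lambda\in S}\Bun_P^{(\lambda)}$ where $\Bun_P^{(\lambda)}:=\Bun_P\times_{\Bun_M}\Bun_M^{(\lambda)}$. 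I would show: (i) $\sfp_P$ sends $\Bun_P^{(\lambda)}$ into $\Bun_G^{(\lambda)}$; (ii) the restriction is an almost-isomorphism; and (iii) these pieces glue to an almost-isomorphism on all of $\Bun_P^{(S)}$.

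For a fixed $\lambda\in S$, let $Q_M\subset M$ be the parabolic with $\lambda\in\Lambda^{++,\BQ}_{M,Q_M}$ and set $Q:=Q_M\cdot U(P)\subset G$. The $P$-admissibility of $S$ is calibrated precisely so that $Q\subset P$ and $\lambda\in\Lambda^{++,\BQ}_{G,Q}$: indeed, $\Gamma_{L_Q}=\Gamma_{L_{Q_M}}\subset\Gamma_M$; for $i\in\Gamma_G\setminus\Gamma_M$ the pairing $\langle\lambda,\check\alpha_i\rangle$ is positive by~\eqref{e:condition nz}; and for $i\in\Gamma_M$ the required pairings follow from $\lambda\in\Lambda^{++,\BQ}_{M,Q_M}$. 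Statement~(i) then follows: for $\CP_P\in\Bun_P^{(\lambda)}$, the HN reduction of $\CP_M:=\CP_P/U(P)$ to $Q_M$ lifts to a $Q$-reduction of $\sfp_P(\CP_P)$ of degree $\lambda$ with semistable Levi part, which by the above is the HN reduction of $\sfp_P(\CP_P)$ in $G$.

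For~(ii), consider the commutative triangle
\[\xymatrix{
\Bun_Q^{\lambda,ss}\ar[r]^{\alpha}\ar[dr]_{\beta}& \Bun_P^{(\lambda)}\ar[d]^{\sfp_P}\\
& \Bun_G^{(\lambda)}
}\]
where $\alpha$ is obtained by base-changing the almost-isomorphism $\Bun_{Q_M}^{\lambda,ss}\to\Bun_M^{(\lambda)}$ (Theorem~\ref{t:reduction theory}(1) for $M$) along $\Bun_P\to\Bun_M$, using the identification $\Bun_Q^{\lambda,ss}\simeq\Bun_P\times_{\Bun_M}\Bun_{Q_M}^{\lambda,ss}$, and $\beta$ is Theorem~\ref{t:reduction theory}(1) applied to $(G,Q)$. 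Since finiteness and the single-geometric-point-fiber condition are both preserved under base change, $\alpha$ is almost-iso. A short diagram chase, using that $\alpha$ is a finite surjective universal homeomorphism, then yields that $\sfp_P|_{\Bun_P^{(\lambda)}}$ is almost-iso: single-fiber is transferred along $\alpha$, properness of $\sfp_P$ follows from properness of $\beta$ by the valuative criterion combined with lifting through $\alpha$, and quasi-finiteness is immediate. (In characteristic~$0$, Theorem~\ref{t:reduction theory}(1$'$) upgrades $\alpha,\beta$ to honest isomorphisms, giving $\sfp_P|_{\Bun_P^{(\lambda)}}$ an isomorphism.)

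Finally, for~(iii): any quasi-compact open $U\subset\Bun_G^{(S)}$ sits inside some $\Bun_G^{(\leq\theta)}$ and hence meets only finitely many strata by~\corref{c:open as a union}; consequently $\sfp_P^{-1}(U)$ is a finite disjoint union of finite morphisms, so $\sfp_P$ is finite over $U$. The main obstacle I anticipate is the combinatorial bookkeeping in step~(i), where all three $P$-admissibility conditions interact to yield $Q\subset P$ and $\lambda\in\Lambda^{++,\BQ}_{G,Q}$; once that is in place, the triangle chase~(ii) and the gluing~(iii) are essentially formal.
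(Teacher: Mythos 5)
Your steps (i) and (ii) are essentially the paper's first half: the paper also reduces bijectivity on geometric points to a single stratum via the parabolic $P'\subset P$ with Levi $M'$ (your $Q$ and $Q_M=P'/U(P)$), and your verification that $\lambda\in\Lambda^{++,\BQ}_{G,Q}$ is the same use of \eqref{e:condition nz}. The problem is step (iii). Finiteness (or properness) is \emph{not} local on the target for a stratification by locally closed substacks: a morphism that restricts to a finite morphism over each stratum of a finite stratification need not be finite. (Already $j:\BA^1-\{0\}\hookrightarrow\BA^1$ is an isomorphism over the open stratum and vacuously finite over $\{0\}$, yet is not finite; and bijectivity on points does not repair this, as the example of $(\BA^1-\{0\})\sqcup\{0'\}\to\BA^1$ shows.) So "a finite disjoint union of finite morphisms" over the strata of $U$ does not yield finiteness of $\sfp_P$ over $U$. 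The whole content of the second half of the proposition is precisely this global properness, and it cannot be obtained stratum by stratum.

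A symptom of the gap is that your argument never uses condition \eqref{e:S2} of $P$-admissibility. The paper proves properness of $\Bun_P^{(S)}\to\Bun_G^{(S)}$ by embedding $\Bun_P^{\mu}$ into the proper compactification $\bsfp:\overline{\Bun}{}_P^{\mu}\to\Bun_G$ and showing that $\bsfp(\overline{\Bun}{}_P^{\mu}-\Bun_P^{(S)})$ misses $\Bun_G^{(S)}$; ruling out the part of the boundary lying in $\Bun_P^\mu-\Bun_P^{(S)}$ (i.e., over $\Bun_M^{(\lambda')}$ with $\lambda'\notin S$) is exactly where \eqref{e:S2}, together with \thmref{t:reduction theory}(3) and \lemref{l:2comb1}(b), is needed. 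Without some such global compactness input, your valuative-criterion families can degenerate out of $\Bun_P^{(S)}$ while their images stay in $\Bun_G^{(S)}$, and the proof does not close.
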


%

\begin{rem}
Later we will show that if $k$ has characteristic 0
then the map \eqref{p:almost_S} is, in fact, an \emph{isomorphism} (see
\lemref{l:a for c''_i} and Remark~\ref{r:automatic} below).
\end{rem}

The rest of this section is devoted to the proof of \propref{p:parabolic 1-1}.

\sssec{}

First, let us prove that the map \eqref{p:almost_S} maps $\Bun_P^{(S)}$ to $\Bun_G^{(S)}$ and 
is bijective at the level of $k$-points. To this end, it suffices to show that if $\lambda$ is an element of 
$$S\subset \Lambda^{+,\BQ}_G\subset \Lambda^{+,\BQ}_M$$
then the map $\sfp_P$ sends
$$\Bun_P\underset{\Bun_M}\times \Bun_M^{(\lambda)}$$
bijectively to $\Bun_G^{(\lambda)}$. 

\medskip

Let $M'$ be the Levi of $G$ such that
\[
\Gamma_{M'}=\{i\in \Gamma_G\,|\, \langle \lambda,\check\alpha_i\rangle=0\}.
\]
By condition \eqref{e:condition nz}, we have $\Gamma_{M'}\subset \Gamma_M$.
Let $P'\subset P$ be the corresponding parabolic.  Set $\PP:=P'/U(P)$; this is a
parabolic in $M$ whose Levi quotient identifies with $M'$. We have
$$\lambda\in \Lambda^{++,\BQ}_{G,P'}\subset \Lambda^{++,\BQ}_{M,\PP}\, .$$

\medskip

By the definition of $\Bun_M^{(\lambda)}$, we have a bijection
$$\Bun_{\PP}\underset{\Bun_{M'}}\times \Bun_{M'}^{\lambda,ss}\to \Bun_M^{(\lambda)}.$$

\medskip

Hence, it is enough to show that the map
$$\Bun_P\underset{\Bun_M}\times \Bun_{\PP}\underset{\Bun_{M'}}\times \Bun_{M'}^{\lambda,ss}\to \Bun_G$$
defines a bijection onto $\Bun_G^{(\lambda)}$. 

\medskip

However,
$$\Bun_P\underset{\Bun_M}\times \Bun_{\PP}\underset{\Bun_{M'}}\times \Bun_{M'}^{\lambda,ss}
\simeq \Bun_{P'}\underset{\Bun_{M'}}\times \Bun_{M'}^{\lambda,ss},$$
and the required assertion follows from the definition of $\Bun_G^{(\lambda)}$.

\sssec{}

To finish the proof of the proposition, we have to show that the map \eqref{p:almost_S} is finite. 
We already know that it is bijective, so it suffices to show the map \eqref{p:almost_S} is proper.
This will be done by generalizing the argument in \secref{sss:proof of Shatz v} using the stack $\BunPb$.

\medskip

Let $\mu \in\Lambda_{G,P}^{\BQ}$ be  such that 
$S\subset \on{pr}_P^{-1}(\mu)\cap \Lambda_G^{+,\BQ}$.
Consider the map 
$$\bsfp:\overline{\Bun}_P^{\mu}\to \Bun_G.$$
This map is proper. So to prove properness of \eqref{p:almost_S},
it is enough to show that 
\begin{equation} \label{e:prove emptyness}
\bsfp(\overline{\Bun}_P^{\mu}-\Bun_P^{(S)})\cap \Bun_G^{(S)}=\emptyset.
\end{equation}

We have
$$\overline{\Bun}_P^{\mu}-\Bun_P^{(S)}=(\overline{\Bun}_P^{\mu}-\Bun_P^\mu)\cup (\Bun_P^\mu-\Bun_P^{(S)}),$$
so to prove \eqref{e:prove emptyness}, it suffices to show that
\begin{equation} \label{e:1st int}
\bsfp(\overline{\Bun}_P^{\mu}-\Bun_P^\mu)\cap  \Bun_G^{(\lambda )}=\emptyset 
\quad \mbox{ for all } \lambda\in S
\end{equation}
and
\begin{equation} \label{e:2nd int}
\sfp_P(\Bun_P^\mu-\Bun_P^{(S)}) \cap  \Bun_G^{(\lambda )}=\emptyset
\quad \mbox{ for all } \lambda\in S.
\end{equation}

\sssec{}   \label{sss:1st int}

To prove \eqref{e:1st int}, we use the equality
$$\bsfp(\overline{\Bun}_P^{\mu}-\Bun_P^\mu)=\underset{\mu'\in \Lambda_{G,P}^{\BQ}\, \mu'-\mu\in \Lambda^{\on{pos}}_G-0}
\bigcup\, \sfp_P(\Bun_P^{\mu'}),$$
which follows from \cite[Sect. 6.1.4]{Sch}. This equality shows that
 if \eqref{e:1st int} were false we would have
\begin{equation} \label{e:non-empty}
\sfp_P(\Bun_P^{\mu'})\cap \Bun_G^{(\lambda )}\ne\emptyset
\end{equation}
for some $\mu'\in \Lambda_{G,P}^{\BQ}$ such that
\begin{equation}  \label{e:<}
\mu\leqG \mu'\neq \mu .
\end{equation}

However, \eqref{e:non-empty} implies,
by \thmref{t:reduction theory}(3), that $\mu'\leqG  \lambda$. So by \lemref{l:proj preserves order},
$$\mu'\leqG\on{pr}_P(\lambda)=\mu,$$ which contradicts \eqref{e:<}.

\sssec{}

To prove \eqref{e:2nd int}, we have to show that if $\lambda'\in \Lambda^{+,\BQ}_M$ is such that
\begin{equation} \label{e:2nd int'}
\sfp_P(\Bun_P^\mu\underset{\Bun_M}\times \Bun_M^{(\lambda')})\cap \Bun_G^{(\lambda)}\neq \emptyset
\end{equation}
then $\lambda'\in S$. 

\medskip

If \eqref{e:2nd int'} holds then $\lambda'\leqG\lambda$ by \thmref{t:reduction theory}(3).
Since $\on{pr}_P(\lambda)=\mu=\on{pr}_P(\lambda')$ this implies that
$\lambda'\leqM\lambda$.

Since $\lambda'\in \Lambda^{+,\BQ}_M$ and $\lambda'\leqM\lambda$ we get 
$\lambda'\in \Lambda^{+,\BQ}_G$ by \lemref{l:2comb1}(b). Since $\lambda\in S$, 
$\lambda'\in \Lambda^{+,\BQ}_G$, and $\lambda'\leqM\lambda$ we get  $\lambda'\in S$
by the admissibility of $S$.
\qed

\section{Proof of the main theorem}  \label{s:delo}


\ssec{The main result of this section}  \label{ss:outline of proof}

We wish to prove Theorem~\ref{t:main truncatable} (=\thmref{t:main}), which says that $\Bun_G$ is truncatable.

\sssec{}

For each $\theta\in\Lambda_{G}^{+,\BQ}\,$, we have the quasi-compact open substack 
$\Bun_G^{(\leq \theta)}\subset\Bun_G$, see Sect.~\ref{sss:opens}  and formula \eqref{e:open as a union}. 
The substacks $\Bun_G^{(\leq \theta)}$ cover $\Bun_G\,$. 

\medskip

So Theorem~\ref{t:main truncatable} is a consequence of the following fact: 

\begin{thm}   \label{t:2cotruncative}
The substack $\Bun_G^{(\leq \theta)}$ is co-truncative if for every simple root $\check\alpha_i$ one has
\begin{equation}  \label{e:3what_is_deep}
\langle \theta \, ,\check\alpha_i\rangle \geq 2g-2,
\end{equation}
where $g$ is the genus of $X$.
\end{thm}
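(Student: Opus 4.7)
The plan is to prove co-truncativeness of $\Bun_G^{(\leq\theta)}\subset\Bun_G$ by reducing to truncativeness of closed substacks in larger quasi-compact opens, stratifying these via the Harder-Narasimhan-Shatz decomposition, combining strata into $P$-admissible pieces via \propref{p:parabolic 1-1}, and invoking the contraction principle \corref{c:contraction principle} on each piece. The hypothesis \eqref{e:3what_is_deep} enters precisely as the numerical input that allows the contracting $\BG_m$-action to extend to $\BA^1$.

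By \secref{ss:non-quasicompact}, I must show that for every quasi-compact open $V\subset\Bun_G$ the intersection $V\cap\Bun_G^{(\leq\theta)}$ is co-truncative in $V$. Since the opens $\Bun_G^{(\leq\theta')}$ with $\theta'\geqG\theta$ are cofinal by \propref{p:theta qc} and \secref{sss:opens}, it suffices to fix such a $\theta'$ and show that the closed complement
\[
Z_{\theta,\theta'} := \Bun_G^{(\leq\theta')} - \Bun_G^{(\leq\theta)}
\]
is truncative in $\Bun_G^{(\leq\theta')}$. By \corref{c:open as a union} and \corref{c:incl of opens}, $Z_{\theta,\theta'}$ is the disjoint union of the finitely many Harder-Narasimhan-Shatz strata $\Bun_G^{(\lambda)}$ with $\lambda$ in
\[
T := \{\lambda\in\Lambda_G^{+,\BQ} \: : \: \lambda\leqG\theta',\ \lambda\not\leqG\theta\}.
\]
By \propref{p:trunc and strat}, it is enough to partition $T$ into finitely many subsets $S$, each $P$-admissible (Definition~\ref{d:P-adm}) for some standard parabolic $P\subsetneq G$, such that each corresponding substack $\Bun_G^{(S)}\subset\Bun_G^{(\leq\theta')}$ is truncative.

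I plan to produce the partition by associating to each $\lambda\in T$ a canonical pair $(P(\lambda),\mu(\lambda))$ with $\mu(\lambda)=\on{pr}_{P(\lambda)}(\lambda)\in\Lambda_{G,P(\lambda)}^{++,\BQ}$, and taking as $S$ the fiber of $\lambda\mapsto(P(\lambda),\mu(\lambda))$; a natural choice of $P(\lambda)$ is the smallest standard parabolic for which $\on{pr}_{P(\lambda)}(\lambda)\not\leqG\theta$. Combining \lemref{l:proj preserves order} and \lemref{l:2comb2} with \eqref{e:3what_is_deep} and the assumption $\mu(\lambda)\not\leqG\theta$ then forces the strict inequalities $\langle\mu(\lambda),\check\alpha_i\rangle>2g-2$ for every $i\notin\Gamma_{P(\lambda)}$. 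For each such pair, \propref{p:parabolic 1-1} realizes $\Bun_G^{(S)}$ as an almost-isomorphic image of
\[
\Bun_P^{(S)} := \Bun_P\underset{\Bun_M}\times\Bun_M^{(S)}.
\]
Since almost-isomorphisms are finite, by \propref{p:stab under fin} and \lemref{l:base change trunc lc} it suffices to prove truncativeness of $\Bun_P^{(S)}$ inside the preimage of $\Bun_G^{(\leq\theta')}$ in $\Bun_P$. This I verify using the contraction principle: a regular cocharacter $\nu:\BG_m\to Z_0(M)$ pointing in the direction of $\mu$ rescales $U(P)$-extensions of Levi bundles by positive weights, and retracts $\Bun_P\to\Bun_M$ onto the Levi section $\Bun_M\subset\Bun_P$ in the $t\to 0$ limit. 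Extending this $\BG_m$-action to an $\BA^1$-action satisfying the hypotheses of \lemref{l:stacky_contraction} requires affineness of the fibration $\Bun_P^{(S)}\to\Bun_M^{(S)}$, which amounts to the vanishing $H^1(X,\fu(P)_{\CP_M})=0$ for $\CP_M$ in the relevant locus. By Serre duality on $X$, this vanishing is equivalent to $H^0(X,\fu(P)^*_{\CP_M}\otimes\omega_X)=0$, which for semistable $\CP_M$ of Harder-Narasimhan class in $S$ follows from precisely the strict inequalities $\langle\mu,\check\alpha_i\rangle>2g-2$ arranged in the previous step. Once this is in place, $\Bun_P^{(S)}$ is contractive in the sense of \secref{sss:contractive}, and \corref{c:contraction principle} gives its truncativeness, hence that of $\Bun_G^{(S)}$.

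\textbf{Main obstacle.} The key technical content, which the paper defers to Proposition~\ref{p:key} and its proof in later sections, is the last paragraph above: rigorously setting up the $\BA^1$-action on (a suitable enlargement of) $\Bun_P^{(S)}$, identifying its $0$-fiber retract with $\Bun_M^{(S)}$, and fitting the whole configuration into the template of \lemref{l:stacky_contraction}. The combinatorial choice of $(P(\lambda),\mu(\lambda))$ is designed precisely to bridge the numerical assumption \eqref{e:3what_is_deep} on $\theta$ with the geometric affineness needed to apply the contraction principle. Everything else in the argument is book-keeping with \propref{p:trunc and strat} and \propref{p:parabolic 1-1}.
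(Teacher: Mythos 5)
Your overall architecture (reduce to truncativeness of the closed complement in $\Bun_G^{(\leq \theta')}$, group Harder--Narasimhan strata into $P$-admissible packets, apply the contraction principle to each packet) is the paper's architecture, but two of your steps have genuine gaps. First, the combinatorial choice of $(P(\lambda),\mu(\lambda))$ does not do what you claim. With the literal reading, the ``smallest standard parabolic $P$ with $\on{pr}_P(\lambda)\not\leqG\theta$'' is always $B$ (since $\on{pr}_B=\on{id}$ and $\lambda\not\leqG\theta$ for $\lambda\in T$), so your packets are singletons and the required inequalities $\langle\lambda,\check\alpha_i\rangle>2g-2$ for all $i$ simply fail: e.g.\ for $G=SL_2\times SL_2$, $\theta=(g-1)(\alpha_1+\alpha_2)$ and $\lambda=N\alpha_1+(g-1)\alpha_2$ with $N>g-1$, one has $\lambda\not\leqG\theta$ but $\langle\lambda,\check\alpha_2\rangle=2g-2$. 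The condition $\mu\not\leqG\theta$ together with \eqref{e:3what_is_deep} does \emph{not} force $\langle\mu,\check\alpha_i\rangle>2g-2$ for all $i\notin\Gamma_M$; the correct choice is to \emph{define} $\Gamma_M:=\{i:\langle\lambda,\check\alpha_i\rangle\le 2g-2\}$ and take $S_\lambda=\{\lambda'\in\Lambda_G^{+,\BQ}:\lambda'\leqM\lambda\}$, which builds the needed strict inequalities in by fiat (via \lemref{l:2comb1}(a)). Moreover, $P$-admissibility requires $S$ to be $\leqM$-downward closed in all of $\Lambda_G^{+,\BQ}$, not just inside $T$; so you must separately prove that no dominant $\lambda'\leqM\lambda$ satisfies $\lambda'\leqG\theta$. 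This is exactly where the hypothesis \eqref{e:3what_is_deep} is used (the paper's \lemref{l:opyat}), and your proposal omits it entirely. Finally, you need only a finite \emph{cover} by possibly overlapping truncative locally closed substacks (\propref{p:trunc and strat}); insisting on a partition is an extra burden your recipe does not discharge (a genuine partition is possible, but via the Langlands retraction as in Appendix~\ref{s:Langlands}, not via your fibers).

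Second, your descent from $\Bun_P^{(S)}$ to $\Bun_G^{(S)}$ runs in the wrong direction. \propref{p:stab under fin} and \lemref{l:base change trunc lc} transfer truncativeness from the base to the total space of a finite map; you need the converse, and the map $\Bun_P\to\Bun_G$ is neither proper nor finite as a map of ambient stacks, so \lemref{l:proper_descent} does not apply either. The paper instead uses \emph{smooth descent} (\propref{p:truncativeness after smooth}): one exhibits the smooth chart $\sfp_{P^-}:\Bun_{P^-}^{(S)}\to\Bun_G$ (smoothness coming from $H^1(X,\fn(P)_{\CF_M})=0$), identifies the preimage of $\Bun_G^{(S)}$ with $\Bun_M^{(S)}=\on{Im}(\iota_{P^-})$ using the isomorphism $\Bun_P^{(S)}\simeq\Bun_G^{(S)}$ of \propref{p:key}(a) together with \lemref{l:2generic}, and then contracts $\Bun_{P^-}^{(S)}$ onto $\Bun_M^{(S)}$ via the $\BA^1$-action of \secref{ss:A1action}. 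Note also that you conflate the two cohomological inputs: affineness of $\sfq_{P^-}:\Bun_{P^-}^{(S)}\to\Bun_M^{(S)}$ requires $H^0(X,(\fg/\fp)_{\CF_M})=0$ (which in characteristic $0$ needs only $\langle\lambda,\check\alpha_i\rangle>0$, i.e.\ $P$-admissibility), whereas the bound $\langle\lambda,\check\alpha_i\rangle>2g-2$ is what gives the $H^1$-vanishing needed for smoothness of the chart.
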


In this section we will prove \thmref{t:2cotruncative} modulo a key geometric assertion,
\propref{p:key}.

\begin{rem}
In Theorem~\ref{t:2cotruncative} we assume that the ground field $k$ has characteristic 0
(because the notion of truncativeness is defined in terms of D-modules). However, \propref{p:key}
(of which \thmref{t:2cotruncative} is an easy consequence) is valid over any $k$. 
\footnote{We have in mind future applications to the $\ell$-adic derived category on $\Bun_G$, and this category 
makes sense in any characteristic.}
\end{rem}

Below follow some remarks on the proof of Theorem~\ref{t:2cotruncative}.

\sssec{The main difficulty}    \label{sss:main difficulty}
In Sect.~\ref{ss:SL 2} we already proved Theorem~\ref{t:2cotruncative} for $G=SL_2$.
The proof in the general case is more or less similar. 

\medskip

However, one has to keep in mind the following.
If $G=SL_2$ we saw that all but finitely many Harder-Narasimhan-Shatz strata
$\Bun_G^{(\lambda)}$ are truncative. This is false already for $G=SL_2\times SL_2$.
Indeed, the stratum of the form $\Bun^{(n)}_{SL_2}\times \Bun^{(m)}_{SL_2}$,
with $n$ small relative to the genus of $X$, is \emph{not} truncative in 
$\Bun_{SL_2}\times \Bun_{SL_2}=\Bun_{SL_2\times SL_2}$ because $\Bun^{(n)}_{SL_2}$
is not truncative in $\Bun_{SL_2}$, see \secref{sss:not}. 

\medskip

For any $G$, it turns out that $\Bun_G^{(\lambda)}$ is truncative if
$\lambda$ is ``deep inside" the interior of some face of the cone $\Lambda^{+,\BQ}_G$; the problem arises if
$\lambda$ is close to the boundary of the open face of  $\Lambda^{+,\BQ}_G$ containing $\lambda$.

\sssec{Resolving the difficulty}    \label{sss:resolving}
We prove that certain \emph{unions} of the strata $\Bun_G^{(\mu)}$ are truncative 
(see \corref{c:truncative}). In particular, we show that if $\lambda\in\Lambda^{+,\BQ}_G$ and
\begin{equation}   \label{e:Slambda}
S_{\lambda}:=\{ \mu\in\Lambda^{+,\BQ}_G\: | \;\lambda -\mu\in\sum_{i\in \Gamma_{G,\lambda}}\BQ^{\geq 0}\cdot\alpha_i\},
\end{equation}
where $\Gamma_{G,\lambda}:=\{ i\in\Gamma_G\: | \;\langle \lambda \, ,\check\alpha_i\rangle \leq 2g-2\}$ then 
$$\bigcup\limits_{\mu\in S_{\lambda}}\Bun_G^{(\mu)}$$ 
is a truncative locally closed substack of $\Bun_G\,$.
To finish the proof of Theorem~\ref{t:2cotruncative}, we show that if $\theta$ satisfies \eqref{e:3what_is_deep}
then the set 
$$\{\lambda\in\Lambda^{+,\BQ}_G\;| \lambda \not\leqG \theta\}$$
can be represented as a union of subsets of the form \eqref{e:Slambda}.

\sssec{The rank 2 case is representative enough}
All the difficulties of the proof of Theorem~\ref{t:2cotruncative} appear already if $G$ is a semi-simple group of rank 2. 
On the other hand, in this case various combinatorial-geometric statements (e.g., the above statement at the end of \secref{sss:resolving}) 
become obvious once you draw a picture.

\sssec{}
In Appendix~\ref{s:Langlands} we give a variant of the proof of Theorem~\ref{t:2cotruncative}, which has some advantages compared with the one from \secref{ss:proof_modulo}. 
The relation between the two proofs is explained in Sects.~\ref{ss:a variant} and \ref{ss:Relation}.


\ssec{A key proposition}

\sssec{}

We will deduce \thmref{t:2cotruncative} from the following assertion:

\begin{prop} \label{p:key} There exists an assignment
$$i\in \Gamma_G \rightsquigarrow c_i\in \BQ^{\geq 0}$$ such that 
%
%
for any parabolic $P$ and any $P$-admissible subset $S\subset \Lambda_G^{+,\BQ}$ satisfying the condition
\begin{equation} \label{e:key}
\forall\, \lambda\in S,\,\,\forall i\in  \Gamma_G-\Gamma_M \;\, 
\langle \lambda\, ,\check\alpha_i\rangle >c_i 
\end{equation}
(where as usual, $M$ is the Levi quotient of $P$)
the following properties hold:

\smallskip

\noindent{\em(a)}
The morphism $\Bun_P^{(S)}\to \Bun_G^{(S)}$ induced by $\sfp_P:\Bun_P\to \Bun_G$ is an isomorphism;

\smallskip

\noindent{\em(b)}
The locally closed substack $\Bun_G^{(S)}\subset \Bun_G$ is contractive 
in the sense of  Sect.~\ref{sss:contractive}. 

\medskip

When $\mbox{char\,} k=0$ we can take $c_i=\on{max}(0,2g-2)$. 

\end{prop}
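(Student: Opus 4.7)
The plan is to prove parts (a) and (b) in tandem, with (a) feeding into the analysis needed for (b), and both arguments following the template of the $SL_2$-case treated in \secref{ss:SL 2}.

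For part (a), \propref{p:parabolic 1-1} already provides that $\sfp_P:\Bun_P^{(S)}\to\Bun_G^{(S)}$ is an almost-isomorphism in the sense of Definition~\ref{d:almost-iso}; to upgrade this to an isomorphism it suffices to verify that the morphism is a monomorphism, i.e.\ that a $G$-bundle $\CP_G$ has at most one $P$-reduction lying in $\Bun_P^{(S)}$. Two such reductions $\CP_P^1,\CP_P^2$ have a well-defined relative position in $W_M\backslash W/W_M$ via the $(P,P)$-Bruhat decomposition of $G$. The strict dominance $\on{pr}_P(\lambda)\in\Lambda^{++,\BQ}_{G,P}$ guaranteed by $P$-admissibility, together with the inequalities $\langle\lambda,\check\alpha_i\rangle>c_i$ for $i\not\in\Gamma_M$, constrains this relative position; for sufficiently large $c_i$ (in characteristic~$0$, $c_i\ge 2g-2$ suffices) the relative position is forced to be the identity class, after which $\CP_P^1$ and $\CP_P^2$ differ by a section of an associated bundle built from $U(P)$, and the same degree inequalities force that section to vanish via an $H^0$-vanishing argument.

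For part (b), contractiveness of $\Bun_G^{(S)}$ in $\Bun_G$ will be verified locally via a smooth surjective cover. Choose $\nu\in\Lambda^{\BQ}_{G,P}$ with $\langle\nu,\check\alpha_i\rangle$ sufficiently negative for $i\not\in\Gamma_M$, so that the natural map $\sfp_P:\Bun_P^{\nu}\to\Bun_G$ is smooth in a neighborhood of $\Bun_G^{(S)}$ by $H^1$-vanishing of the relative tangent complex (whose fiber reads $\CP_M\times^M(\fg/\fp)$) and is surjective onto this neighborhood. The preimage of $\Bun_G^{(S)}$ parametrizes a common $G$-bundle together with a reduction in $\Bun_P^\nu$ and a reduction in $\Bun_P^{(S)}$; the opposing-sign degree constraints combined with the $(P,P)$-Bruhat stratification force their relative position into the ``big cell'' $w_0^P w_0 W_M$. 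In this cell the two reductions are transverse and determined by a common underlying $M$-structure together with a section of the bundle associated via $\CP_M$ to $\fg/\fp\simeq \fu(P^-)$; this equips the preimage with the structure of an affine schematic map to $\Bun_M^{(S)}$ possessing a canonical zero section coming from the Levi splitting $M\hookrightarrow P$. A suitable cocharacter $\BG_m\to Z_0(M)$ then extends to an $\BA^1$-action on the preimage by $\Bun_M^{(S)}$-endomorphisms whose zero endomorphism factors through the section, while the action of $\BG_m$ on the preimage viewed over $\on{pt}$ is trivial because it is induced by central automorphisms. This places us in the situation of \lemref{l:stacky_contraction}, which yields that the section (identified with $\Bun_M^{(S)}$) is contractive inside the preimage. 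Locality of contractiveness under smooth surjective covers (\secref{sss:loc str contr}) then yields contractiveness of $\Bun_G^{(S)}$ in $\Bun_G$.

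The main technical obstacle is bookkeeping: one must produce a single family of bounds $c_i$ that simultaneously guarantees smoothness of the cover, reduction of the Bruhat relative position to the big cell, $H^0$-vanishing for the uniqueness argument in (a), and strict positivity of the root-weights driving the contraction in (b). Translating these into root-system inequalities involving $\lambda$, $\nu$ and $g$, and checking that they collapse to the clean bound $c_i=\max(0,2g-2)$ in characteristic~$0$, is essentially the combinatorial-geometric content of Sects.~\ref{s:estimates}--\ref{s:constr contr}.
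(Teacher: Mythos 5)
Your part (a) ends up in the right place: the essential content is the vanishing of $H^0(X,(\fg/\fp)_{\CF_P})$, which is exactly the tangent space to the fibers of the almost-isomorphism supplied by \propref{p:parabolic 1-1}, so the Bruhat analysis of relative positions of two reductions is not needed (pointwise injectivity is already part of \propref{p:parabolic 1-1}). The genuine problems are in part (b), and there are two of them.

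First, the cover. You take $\Bun_P^\nu$ for $\nu$ very negative and claim that a pair of $P$-reductions in generic relative position is determined by a common $M$-structure together with a section of $\fu(P^-)_{\CF_M}$. This is false unless $-w_0(\Gamma_M)=\Gamma_M$: the stabilizer in $P$ of a point of the open $P$-orbit on $G/P$ is $P\cap \dot w_0 P\dot w_0^{-1}$, and $\dot w_0 P\dot w_0^{-1}$ is the opposite of the standard parabolic attached to $-w_0(\Gamma_M)$, not of $P$, so two transverse $P$-reductions do not cut out an $M$-structure in general. Already for $G=SL_3$ and $P$ the stabilizer of a line, two generic line subbundles of a rank-$3$ bundle span a rank-$2$ subbundle and yield no splitting. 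The correct cover is built from the genuinely opposite parabolic $P^-$ (which is not a standard parabolic): the paper uses $\sfp_{P^-}:\Bun_{P^-}^{(S)}\to\Bun_G$, smooth by the $H^1(\fn(P))$-vanishing and the isomorphism of $M$-modules $\fn(P)\simeq\fg/\fp^-$, and \lemref{l:2generic} (the map $\Bun_M\to\Bun_{P^-}\underset{\Bun_G}\times\Bun_P$ is an open embedding) is the precise substitute for your big-cell heuristic.

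Second, and more seriously, your contraction points the wrong way. You contract the preimage of $\Bun_G^{(S)}$ onto a zero section $\Bun_M^{(S)}$ sitting inside it; by \lemref{l:stacky_contraction} this would only show that $\Bun_M^{(S)}$ is contractive \emph{inside that preimage}, which says nothing about how $\Bun_G^{(S)}$ sits inside $\Bun_G$. The definition of contractiveness (\secref{sss:contractive}) requires the upper row of \eqref{e:strongly contractive} to be a contracting pair $\CZ\hookrightarrow\CY$ with $\CY$ mapping \emph{smoothly to $\Bun_G$} and with $\CZ$ merely open in the preimage of $\Bun_G^{(S)}$. In the paper, $\CY=\Bun_{P^-}^{(S)}=\sfq_{P^-}^{-1}(\Bun_M^{(S)})$ is the preimage of an \emph{open} substack of $\Bun_M$, hence open in $\Bun_{P^-}$ and smooth over all of $\Bun_G$; the $\BA^1$-action coming from a cocharacter of $Z(M)$ contracts all of $\CY$ onto $\ipms(\Bun_M^{(S)})$, which is open in $\Bun_P^{(S)}\underset{\Bun_G}\times\Bun_{P^-}^{(S)}$ and surjects smoothly onto $\Bun_G^{(S)}$ via part (a). Your configuration, in which the total space of the contraction maps only to the locally closed $\Bun_G^{(S)}$ rather than smoothly to a neighborhood in $\Bun_G$, does not satisfy the definition, and the locality of contractiveness cannot repair this.
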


\sssec{}

\propref{p:key} will be proved in Sects. \ref{s:estimates} and \ref{s:constr contr}. Namely,
in \secref{s:estimates} we will produce the numbers $c_i$ and 
prove property (a) for these numbers (and, in fact, for smaller ones). In \secref{s:constr contr} we will prove property (b).

\begin{rem}
It is only property (b) that will be needed for the proof of \thmref{t:2cotruncative}.
Property (a) will be used for the proof of property (b).
\end{rem}

\begin{rem}
Note that the assertion of point (a) of \propref{p:key} differs from that of \propref{p:parabolic 1-1}
only slightly: the former asserts ``isomorphism", while the latter ``almost-isomorphism".
\end{rem}

\sssec{}

We now specialize to the case of $\mbox{char\,} k=0$, in which case we have the theory 
of D-modules and of truncative substacks (see Definition \ref{d:trunc2}).

\medskip

We have:

\begin{cor}         \label{c:truncative}
Let $S\subset \Lambda_G^{+,\BQ}$ be a $P$-admissible subset such that 
\begin{equation}    \label{e:condition_g}
\forall\, \lambda\in S,\,\,\forall i\in  \Gamma_G-\Gamma_M \text{ we have }
 \langle \lambda\, ,\check\alpha_i\rangle>2g-2. 
\end{equation}
Then the locally closed substack $\Bun_G^{(S)}\subset\Bun_G$ is truncative.
\end{cor}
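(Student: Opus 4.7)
The plan is to deduce Corollary~\ref{c:truncative} directly from Proposition~\ref{p:key}(b) combined with Corollary~\ref{c:contraction principle}, which asserts that contractive locally closed substacks are truncative. The only nontrivial point is to verify that the hypothesis on $S$ in the corollary implies the inequality \eqref{e:key} in Proposition~\ref{p:key}, once we make the characteristic-zero choice of constants $c_i = \max(0, 2g-2)$.

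First, I would fix the characteristic-zero choice $c_i := \max(0, 2g-2)$ permitted by the last sentence of Proposition~\ref{p:key}. I then need to show: for every $\lambda \in S$ and every $i \in \Gamma_G - \Gamma_M$, one has $\langle \lambda, \check\alpha_i\rangle > c_i$. Two cases arise depending on the genus. If $g \geq 1$, then $c_i = 2g-2$, and the hypothesis \eqref{e:condition_g} of the corollary is literally \eqref{e:key}. If $g = 0$, then $c_i = 0$, and we instead invoke the defining condition \eqref{e:condition nz} of $P$-admissibility from Definition~\ref{d:P-adm}, which already gives $\langle \lambda, \check\alpha_i\rangle > 0 = c_i$ for all $\lambda \in S$ and $i \in \Gamma_G - \Gamma_M$.

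Having checked the hypothesis, Proposition~\ref{p:key}(b) applies and yields that $\Bun_G^{(S)}$ is a contractive locally closed substack of $\Bun_G$ in the sense of \secref{sss:contractive}. Finally, Corollary~\ref{c:contraction principle} gives that any contractive substack is truncative, which is exactly the conclusion of the corollary.

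There is no real obstacle here: the statement is essentially just an unpacking of how the hypothesis \eqref{e:condition_g} interacts with the characteristic-zero values of $c_i$, together with an invocation of the already-quoted implication ``contractive $\Rightarrow$ truncative.'' The only mild subtlety is the degenerate case $g = 0$, where \eqref{e:condition_g} alone is weaker than \eqref{e:key} but is rescued by the positivity built into $P$-admissibility.
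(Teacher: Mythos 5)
Your proof is correct and is essentially the paper's own argument: choose $c_i=\on{max}(0,2g-2)$ as permitted in characteristic $0$, check hypothesis \eqref{e:key}, apply \propref{p:key}(b), and conclude with \corref{c:contraction principle}. Your explicit treatment of the case $g=0$ via condition \eqref{e:condition nz} of $P$-admissibility is precisely the point the paper compresses into the (slightly loosely justified) remark that \eqref{e:condition_g} is equivalent to $\langle \lambda,\check\alpha_i\rangle>\on{max}(0,2g-2)$ for $\lambda\in S$.
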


\begin{proof}
Since $S$ is admissible, the condition $\langle \lambda\, ,\check\alpha_i\rangle>2g-2$ from
\eqref{e:condition_g} is equivalent to the condition $\langle \lambda\, ,\check\alpha_i\rangle>\on{max}(0,2g-2)$. 

\medskip

Now apply \propref{p:key} with $c_i=\on{max}(0,2g-2)$, and the assertion follows from the fact that
contractiveness 
implies truncativeness, see \corref{c:contraction principle}.
\end{proof}

\ssec{Proof of Theorem ~\ref{t:2cotruncative}}   \label{ss:proof_modulo}

\sssec{}

We have to show that if 
\begin{equation}   \label{e:theta_condition}
\forall i\in\Gamma_G  \text{ we have } \langle \theta\, ,\check\alpha_i\rangle\ge 2g-2
\end{equation}
and $\theta'\underset{G}\geq\theta$ then the substack $\Bun_G^{(\leq \theta')}-\Bun_G^{(\leq \theta)}\subset\Bun_G$ 
is truncative. 

\medskip

If $g=0$ then \emph{any} locally closed substack of 
$\Bun_G$ is truncative (see Sect.~\ref{sss:finitely many points}). So we can and will assume that $g\ge 1$.


\sssec{}

By  \propref{p:trunc and strat}, it suffices to cover  $\Bun_G^{(\leq \theta')}-\Bun_G^{(\leq \theta)}$ by finitely many truncative substacks. 

\medskip

Let $\lambda\in\Lambda_G^{+,\BQ}$ be such that $\lambda\not\leqG\theta$, i.e., 
\begin{equation}   \label{e:theta -lambda}
\theta -\lambda\not\in \Lambda_G^{pos,\BQ}:=\sum\limits_{i\in\Gamma_G}\BQ^{\geq 0}\cdot\alpha_i\, .
\end{equation}
It suffices to construct for each such  $\lambda$ a subset $S_{\lambda}\subset\Lambda_G^{+,\BQ}$ containing 
$\lambda$ such that the substack $\Bun_G^{(S_{\lambda})}\subset\Bun_G$ is truncative and
\begin{equation}   \label{e:empty intersec}
\Bun_G^{(\leq \theta)}\cap\Bun_G^{(S_{\lambda})}=\emptyset .
\end{equation}

\medskip

Here is the construction. Let $P$ be the parabolic whose Levi quotient, $M$, corresponds to the following subset of $\Gamma_G\,$:
\begin{equation} \label{e:parabolic_choice}
\Gamma_M=\{i\in\Gamma_G\,|\,\langle \lambda\, ,\check\alpha_i\rangle\le 2g-2\}.
\end{equation} 
Now define 
\begin{equation} \label{e:S_lambda}
S_{\lambda}:=\{\lambda'\in\Lambda_G^{+,\BQ}\,|\,\lambda'\leqM\lambda\}.
\end{equation}

Note that by \eqref{e:parabolic_choice}, for each $i\in \Gamma_G-\Gamma_M$ we have 
$\langle \lambda\, ,\check\alpha_i\rangle > 2g-2$, which implies that 
$\langle \lambda\, ,\check\alpha_i\rangle > 0$ (because we are assuming that $g\ge 1$).
So by \lemref{l:2comb1}(a), $S_\lambda$ is $P$-admissible and satisfies the condition of \corref{c:truncative}.
Hence, the substack $\Bun_G^{(S_{\lambda})}\subset\Bun_G$ is truncative. 

\medskip

Therefore, to prove \thmref{t:2cotruncative} it remains to check \eqref{e:empty intersec}.

%
%
%


\sssec{Proof of equality \eqref{e:empty intersec}}

We need the following lemma.

\begin{lem}     \label{l:opyat}
Let $\nu =\sum\limits_{i\in\Gamma_G} a_i\cdot \alpha_i\,$, $a_i\in\BQ$. 
Assume that $a_i\ge 0$ for $i\not\in\Gamma_M$ and $\langle \nu\, ,\check\alpha_i\rangle\ge 0$ for 
$i\in\Gamma_M\,$. Then $a_i\ge 0$ for all $i\in\Gamma_G\,$. \end{lem}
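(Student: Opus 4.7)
The plan is to split $\nu$ according to the partition $\Gamma_G = \Gamma_M \sqcup (\Gamma_G \setminus \Gamma_M)$ and reduce the problem to a standard fact about the root system of the Levi $M$. Write $\nu = \nu_M + \nu'$, where
\[
\nu_M := \sum_{i \in \Gamma_M} a_i \alpha_i, \qquad \nu' := \sum_{i \notin \Gamma_M} a_i \alpha_i.
\]
By hypothesis the coefficients of $\nu'$ are already $\geq 0$, so it suffices to prove $a_i \geq 0$ for $i \in \Gamma_M$.

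The first step is to show that $\nu_M$ lies in the dominant cone of the root system of $M$, i.e.\ that $\langle \nu_M, \check\alpha_i\rangle \geq 0$ for every $i \in \Gamma_M$. For such $i$ we have
\[
\langle \nu_M, \check\alpha_i\rangle = \langle \nu, \check\alpha_i\rangle - \langle \nu', \check\alpha_i\rangle,
\]
and $\langle \nu, \check\alpha_i\rangle \geq 0$ by assumption. For $\langle \nu', \check\alpha_i\rangle = \sum_{j \notin \Gamma_M} a_j \langle \alpha_j, \check\alpha_i\rangle$, each term is $\leq 0$: we have $a_j \geq 0$, and $\langle \alpha_j, \check\alpha_i\rangle \leq 0$ since $i \neq j$ are distinct simple roots (non-positivity of off-diagonal entries of the Cartan matrix). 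Thus $\langle \nu', \check\alpha_i\rangle \leq 0$ and so $\langle \nu_M, \check\alpha_i\rangle \geq 0$, as claimed.

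The second and final step is to invoke the following classical fact about a semisimple root system (here the one generated by $\Gamma_M$): if a rational linear combination $\mu = \sum_{i \in \Gamma_M} a_i \alpha_i$ of simple roots satisfies $\langle \mu, \check\alpha_i\rangle \geq 0$ for every simple root, then all $a_i$ are $\geq 0$. This is equivalent to the assertion that the inverse of the Cartan matrix of $[M,M]$ has non-negative entries: writing $\mu = \sum_j b_j \varpi_j$ with $b_j := \langle \mu, \check\alpha_j\rangle \geq 0$, where $\varpi_j$ are the fundamental weights of $[M,M]$ (living in the $\mathbb{Q}$-span of $\{\alpha_i\}_{i \in \Gamma_M}$), we obtain $a_i = \sum_j b_j (C^{-1})_{ji} \geq 0$. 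Applied to $\mu = \nu_M$, this yields the desired conclusion. No step here looks to be an obstacle; the argument is essentially combinatorial and the only ``input'' beyond the hypotheses is the non-negativity of $C^{-1}$, which is standard.
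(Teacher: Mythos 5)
Your proof is correct and follows essentially the same route as the paper's: decompose $\nu$ as $\nu_M+\nu'$, use $a_j\ge 0$ for $j\notin\Gamma_M$ together with $\langle\alpha_j,\check\alpha_i\rangle\le 0$ to see that $\nu_M$ is dominant for $M$, and then conclude $\nu_M\in\Lambda_M^{pos,\BQ}$ from the standard fact that the dominant cone of a root system intersected with the span of the simple roots lies in their non-negative span. The paper leaves that last step as a one-line appeal to this fact, whereas you spell it out via the non-negativity of the inverse Cartan matrix; the content is identical.
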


\begin{proof}
Set $\nu_M: =\sum\limits_{i\in\Gamma_M} a_i\cdot \alpha_i\,$. We have to show that 
$\nu_M\in\Lambda_M^{pos,\BQ}$.
The assumptions of the lemma and
the inequality  $\langle \alpha_j\, ,\check\alpha_i\rangle\le 0$ for $i\ne j$ imply that
$\langle \nu_M\, ,\check\alpha_i\rangle\ge 0$ for $i\in\Gamma_M\,$. Thus $\nu_M$ belongs to the dominant cone of the root system of $M$ and therefore to $\Lambda_M^{pos,\BQ}$. 
\end{proof}

We are now ready to prove \eqref{e:empty intersec}. 
It suffices to prove the following
\begin{lem}
There is no $\lambda'\in\Lambda_G^{+,\BQ}$ such that $\lambda'\leqG \theta$ and 
$\lambda'\leqM \lambda$. 
\end{lem}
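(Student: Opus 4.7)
The plan is to argue by contradiction: suppose such $\lambda'$ exists, and derive that $\theta-\lambda\in\Lambda_G^{pos,\BQ}$, which contradicts the hypothesis $\lambda\not\leqG\theta$ in \eqref{e:theta -lambda}.

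First I would write the difference $\theta-\lambda$ as
\[
\theta-\lambda=(\theta-\lambda')-(\lambda-\lambda').
\]
By assumption, $\theta-\lambda'=\sum_{i\in\Gamma_G}b_i\cdot\alpha_i$ with all $b_i\geq 0$, while $\lambda-\lambda'\in\Lambda_M^{pos,\BQ}$ means $\lambda-\lambda'=\sum_{i\in\Gamma_M}c_i\cdot\alpha_i$ with all $c_i\geq 0$. Hence $\theta-\lambda=\sum_{i\in\Gamma_G}a_i\cdot\alpha_i$ where $a_i=b_i$ for $i\notin\Gamma_M$ and $a_i=b_i-c_i$ for $i\in\Gamma_M$. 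In particular $a_i\geq 0$ for $i\notin\Gamma_M$.

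Next I would check the pairing condition needed to apply \lemref{l:opyat}. For $i\in\Gamma_M$, the defining property \eqref{e:parabolic_choice} gives $\langle\lambda,\check\alpha_i\rangle\leq 2g-2$, while the hypothesis \eqref{e:theta_condition} on $\theta$ gives $\langle\theta,\check\alpha_i\rangle\geq 2g-2$. Subtracting,
\[
\langle\theta-\lambda,\check\alpha_i\rangle\geq 0\quad\text{for all }i\in\Gamma_M.
\]

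Applying \lemref{l:opyat} to $\nu:=\theta-\lambda$ with these two properties (nonnegativity of $a_i$ for $i\notin\Gamma_M$, and nonnegativity of $\langle\nu,\check\alpha_i\rangle$ for $i\in\Gamma_M$) yields $a_i\geq 0$ for all $i\in\Gamma_G$, i.e.\ $\theta-\lambda\in\Lambda_G^{pos,\BQ}$. This contradicts \eqref{e:theta -lambda}, completing the proof. I do not expect any serious obstacle here: the lemma is custom-tailored to this situation, and both the explicit decomposition of $\theta-\lambda$ and the estimate on pairings with $\check\alpha_i$ for $i\in\Gamma_M$ follow directly from the definitions.
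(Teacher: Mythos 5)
Your argument is correct and is essentially identical to the paper's own proof: both decompose $\theta-\lambda$ via $\lambda'$ to see that its coefficients on $\alpha_i$ for $i\notin\Gamma_M$ are nonnegative, both obtain $\langle\theta-\lambda,\check\alpha_i\rangle\ge 0$ for $i\in\Gamma_M$ from \eqref{e:theta_condition} and \eqref{e:parabolic_choice}, and both conclude via \lemref{l:opyat} that $\theta-\lambda\in\Lambda_G^{pos,\BQ}$, contradicting \eqref{e:theta -lambda}. You have merely spelled out the coefficient bookkeeping that the paper leaves implicit.
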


\begin{proof}
Suppose that such  $\lambda'$ exists.
Then $\theta -\lambda$ has the form $\sum\limits_{i\in\Gamma_G} c_i\cdot \alpha_i\,$, 
where $c_i\ge 0$ for $i\not\in\Gamma_M\,$. By \eqref{e:theta_condition} and \eqref{e:parabolic_choice}, 
$\langle \theta -\lambda\, ,\check\alpha_i\rangle\ge 0$
for $i\in\Gamma_M\,$. Hence, by Lemma~\ref{l:opyat}, $\theta -\lambda\in\Lambda_G^{pos,\BQ}$, 
contrary to the assumption \eqref{e:theta -lambda}. 
\end{proof}

Thus we have proved \thmref{t:2cotruncative}.


\ssec{A variant of the proof}   \label{ss:a variant}
In the above proof of \thmref{t:2cotruncative} we used substacks of the form 
$\Bun_G^{(S_{\lambda})}$, $\lambda\in \Lambda_G^{+,\BQ}$, where $S_\lambda$ is defined by
\eqref{e:parabolic_choice} -\eqref{e:S_lambda}. Instead of considering \emph{all} substacks of this form, one could consider only \emph{maximal} ones among them; one can show that they form a \emph{stratification} of 
$\Bun_G$ all of whose strata are truncative. This somewhat ``cleaner" picture is explained in 
Appendix~\ref{s:Langlands}.

\section{The estimates}  \label{s:estimates}
In this section we produce the numbers $c_i$ mentioned in \propref{p:key} and 
prove  \propref{p:key}(a) for these numbers (and, in fact, for smaller ones).

\ssec{The vanishing of $H^0$ and $H^1$}   \label{ss:vanishing}

\sssec{} \label{sss:conventions_Sect6}

For what follows, we fix a maximal torus $T\subset B$. This allows us to view the Levi quotient
$M$ of a (standard) parabolic $P$ as a subgroup $M\subset P$ (the unique splitting that contains 
$T$).


\medskip

Recall that for a parabolic $P$ we denote by $U(P)$ its unipotent radical. We will use the following notation for Lie algebras:
$\fg:=\Lie\, (G)$, $\fp:=\Lie\, (P)$, $\fn (P):=\Lie\, (U(P))$.

\medskip

For an algebraic group $H$, a principal $H$-bundle $\CF_H$ and an $H$-representation $V$, we 
denote by $V_{\CF_H}$ the associated vector bundle. 

\sssec{}  \label{sss:The_main}

The main result of this section is: 

\begin{prop}   \label{p:2c'_i}
There exists a collection of  numbers 
$$c'_i,\in\BQ, c_i''\in\BQ^{\geq 0},\quad i\in \Gamma_G$$
such that for any 
quadruple $$(P,M,\lambda,\CF_M),$$ where $P$ is a parabolic, $M$ the corresponding Levi, 
$\lambda\in\Lambda_G^{+,\BQ}$ and $\CF_M\in\Bun_{M}^{(\lambda)}$,  the following statements hold:
\begin{equation}    \label{e:2c'_i}
\mbox{if }\;  \forall i\in\Gamma_G-\Gamma_M \text{ we have }
\langle\lambda\, , \check\alpha_i\rangle >c'_i \mbox{ then }  H^1(X, \fn (P)_{\CF_M})=0;
\end{equation}

\begin{equation}     \label{e:2c''_i}
\mbox{if }\; \forall i\in\Gamma_G-\Gamma_M \text{ we have }
\langle\lambda\, , \check\alpha_i\rangle >c''_i \mbox{ then } 
H^0(X, (\fg/\fp)_{\CF_M})=0.
\end{equation}

\medskip

If $\mbox{char\,} k=0$ then one can take $c'_i=2g-2$, $c''_i=0$.
\end{prop}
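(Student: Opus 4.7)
The plan is to reduce both vanishings to a slope calculation for a canonical filtration of $\fn(P)_{\CF_M}$ (resp.\ $(\fg/\fp)_{\CF_M}$) whose subquotients have slopes of the form $\langle\lambda,\check\beta\rangle$ (resp.\ $-\langle\lambda,\check\beta\rangle$) for positive roots $\check\beta$ of $G$ not lying in $\fm$, and then to bound these slopes using $\lambda\in\Lambda_G^{+,\BQ}$ together with the hypotheses on $\langle\lambda,\check\alpha_i\rangle$.

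First I would take the canonical Harder--Narasimhan reduction of $\CF_M$: there is a parabolic $Q\subseteq M$ with Levi $L$ and a reduction $\CF_Q$ of $\CF_M$ whose induced $L$-bundle $\CF_L$ is $L$-semistable of degree $\lambda\in\Lambda^{++,\BQ}_{M,Q}$. Regarding $\fn(P)$ and $\fg/\fp$ as $Q$-representations via $Q\hookrightarrow M$, one has $\fn(P)_{\CF_M}=\fn(P)_{\CF_Q}$, and similarly for $\fg/\fp$. Choosing a one-parameter subgroup of $Z_0(L)$ dominant with respect to $Q$ and filtering by its decreasing weight produces a $Q$-stable filtration of $\fn(P)$ (resp.\ $\fg/\fp$) whose successive quotients are $L$-representations $V$ on which $Z_0(L)$ acts by a single character $\chi$. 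Passing to associated bundles gives filtrations of $\fn(P)_{\CF_M}$ and $(\fg/\fp)_{\CF_M}$ whose subquotients are of the form $V_{\CF_L}$.

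Next, in characteristic zero I would invoke Ramanathan's theorem to conclude that each such $V_{\CF_L}$ is a semistable vector bundle of slope $\langle\lambda,\chi\rangle$. The character $\chi$ is the restriction to $Z_0(L)$ of any $T$-weight of $V$; in $\fn(P)$ those weights are positive roots $\check\beta$ with $\check\beta\notin\fm$. Writing $\check\beta=\sum_i n_i\check\alpha_i$ with $n_i\geq 0$ and with some $n_{i_0}>0$ for $i_0\in\Gamma_G-\Gamma_M$, and using that every $\langle\lambda,\check\alpha_i\rangle\geq 0$ since $\lambda\in\Lambda_G^{+,\BQ}$, one obtains
\[ \langle\lambda,\check\beta\rangle \;\geq\; \langle\lambda,\check\alpha_{i_0}\rangle \;>\; c'_{i_0}. \]
Taking $c'_i=2g-2$ makes every subquotient of $\fn(P)_{\CF_M}$ semistable of slope strictly larger than $\mu(\omega_X)=2g-2$, so Serre duality gives $H^1=0$. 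Symmetrically, the $T$-weights appearing in $\fg/\fp\cong\fn(P^-)$ are $-\check\beta$, the corresponding subquotient slopes are $-\langle\lambda,\check\beta\rangle$, and taking $c''_i=0$ makes these strictly negative, forcing $H^0=0$. Both vanishings then propagate from the subquotients to the total bundle.

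The main obstacle will be the positive characteristic case, where Ramanathan's theorem can fail and the bundles $V_{\CF_L}$ need not be semistable. To handle this, I would use that the Harder--Narasimhan polygon of $V_{\CF_L}$ is bounded in terms of $V$ alone: the defect $\mu(V_{\CF_L})-\mu_{\min}(V_{\CF_L})$ is bounded by a constant $C(V)$ independent of $\CF_L$. Across all choices of $(P,M,Q,L)$ only finitely many $L$-representations $V$ occur as $Z_0(L)$-weight components of $\fn(P)$ or $\fg/\fp$, so enlarging $c'_i$ and $c''_i$ by a uniform additive constant restores the inequalities $\mu_{\min}(V_{\CF_L})>2g-2$ and $\mu_{\max}(V_{\CF_L})<0$, and the same argument concludes.
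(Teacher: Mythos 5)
Your proposal is correct and follows essentially the same route as the paper's proof: reduce to the semistable Levi bundle of the Harder--Narasimhan reduction of $\CF_M$, filter $\fn(P)$ and $\fg/\fp$ by the central character of that Levi (your $Z_0(L)$-isotypic pieces are the paper's $V_{M',\check\alpha}$), estimate slopes via dominance of $\lambda$ together with Ramanan--Ramanathan semistability in characteristic $0$, and absorb the failure of semistability in positive characteristic into a uniform constant (the paper's ``strangeness''). The only cosmetic difference is that the paper packages the positive-characteristic correction terms into explicit inequalities \eqref{e:c'_i_bound}--\eqref{e:c''_i_bound} before running the argument.
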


\medskip

\propref{p:2c'_i} will be proved in Sect.~\ref{sss:proof_2c'_i}. For a discussion of the case $\mbox{char\,} k>0$, see Sect.~\ref{ss:p>0}.

\ssec{The numbers $c_i\,$: proof of \propref{p:key}(a)}   \label{ss:c_i}
In this subsection we will assume \propref{p:2c'_i}.


\medskip

Let $c'_i$ and $c''_i$ be as in \propref{p:2c'_i}. Set
\begin{equation} \label{e:c_i max}
c_i:=\on{max}(c'_i,c''_i).
\end{equation}
Eventually we will show that  \propref{p:key} holds for the numbers $c_i$ defined by \eqref{e:c_i max}.
For  \propref{p:key}(a) this follows from the next lemma (which is slightly sharper than \propref{p:key}(a) because the numbers $c_i$ are replaced by $c''_i\le c_i$).

\begin{lem}  \label{l:a for c''_i}
Let $c_i''$ be as in  \propref{p:2c'_i}. 
Let $S\subset  \Lambda^{+,\BQ}_G$ be a $P$-admissible subset such that 
\begin{equation}    \label{e:condition_a'}
\forall\, \lambda\in S,\,\,\forall i\in  \Gamma_G-\Gamma_M\quad
\langle \lambda\, ,\check\alpha_i\rangle >c''_i  \, .
\end{equation} 
Then   the morphism $\Bun_P^{(S)}\to \Bun_G^{(S)}$ induced by $\sfp_P:\Bun_P\to \Bun_G$ is an isomorphism.
\end{lem}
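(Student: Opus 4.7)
By \propref{p:parabolic 1-1}, the morphism $\sfp_P:\Bun_P^{(S)}\to \Bun_G^{(S)}$ is already known to be an almost-isomorphism, i.e., finite and bijective on geometric points. My plan is to upgrade this to a genuine isomorphism by establishing two tangent-level facts at each geometric point $\mathcal{F}_P\in \Bun_P^{(S)}$: (i) the map $\sfp_P$ is unramified at $\mathcal{F}_P$, and (ii) the induced map $\Aut(\mathcal{F}_P)\to \Aut(\mathcal{F}_G)$ is an isomorphism of group schemes, where $\mathcal{F}_G:=\sfp_P(\mathcal{F}_P)$. Combined with finiteness, universal injectivity, and reducedness of both sides, (i) and (ii) will force $\sfp_P$ to be an isomorphism.

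Both (i) and (ii) will be reduced to the single cohomological vanishing
$$H^0(X,(\fg/\fp)_{\mathcal{F}_P})=0.$$
Indeed, the left-hand side is the relative tangent space of $\sfp_P$ at $\mathcal{F}_P$, which gives (i); and the long exact sequence attached to $0\to \fp\to \fg\to \fg/\fp\to 0$, twisted by $\mathcal{F}_P$, shows that the cokernel of $H^0(X,\fp_{\mathcal{F}_P})\to H^0(X,\fg_{\mathcal{F}_P})$ — equivalently, the cokernel of the Lie-algebra map $\Lie\Aut(\mathcal{F}_P)\to \Lie\Aut(\mathcal{F}_G)$ — embeds into $H^0(X,(\fg/\fp)_{\mathcal{F}_P})$. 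Its vanishing therefore makes this Lie-algebra map surjective, and, combined with the automatic injectivity of $\Aut(\mathcal{F}_P)\to\Aut(\mathcal{F}_G)$ coming from the closed embedding $P\hookrightarrow G$, yields (ii).

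To verify the vanishing, I will reduce to the Levi-level version supplied by \propref{p:2c'_i}. Let $\mathcal{F}_M:=\mathcal{F}_P\times^P M$ be the induced $M$-bundle; by construction of $\Bun_P^{(S)}$ we have $\mathcal{F}_M\in \Bun_M^{(\lambda)}$ for some $\lambda\in S$, and hypothesis \eqref{e:condition_a'} is exactly what is needed in \eqref{e:2c''_i} to conclude $H^0(X,(\fg/\fp)_{\mathcal{F}_M})=0$. To transfer this vanishing from the $M$-twist to the $P$-twist, I will use the canonical $P$-invariant filtration on the representation $\fg/\fp$ — given by the weights of the central torus $Z_0(M)\subset M$, or equivalently by the iterated action of $\fn(P)$ — whose associated graded is the $M$-representation $\fg/\fp$. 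Twisting by $\mathcal{F}_P$ produces a filtration of $(\fg/\fp)_{\mathcal{F}_P}$ whose associated graded is canonically isomorphic, as a vector bundle on $X$, to $(\fg/\fp)_{\mathcal{F}_M}$. Since $X$ is a curve, the long exact sequences in cohomology have no contribution past $H^1$, and the vanishing of $H^0$ on the associated graded propagates up the filtration to the whole of $(\fg/\fp)_{\mathcal{F}_P}$.

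The main obstacle is the final bootstrap from the infinitesimal statements (i) and (ii) to a genuine isomorphism of algebraic stacks: namely, the assertion that a finite, universally bijective, unramified morphism of reduced algebraic stacks that induces isomorphisms on all automorphism group schemes is an isomorphism. This is standard for schemes, but requires some care in the stacky setting; in effect, this is what replaces, at our level of generality, the ``reductive-to-reductive'' semistability-preservation argument sketched in \secref{sss:proof of Shatz v} for the special case $|S|=1$.
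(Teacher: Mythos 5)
Your proposal follows the same route as the paper's proof: reduce via \propref{p:parabolic 1-1} to the almost-isomorphism statement, identify the relative tangent space of $\sfp_P$ at $\CF_P$ with $H^0(X,(\fg/\fp)_{\CF_P})$, and kill it by passing to the associated graded $(\fg/\fp)_{\CF_M}$ along the canonical filtration and invoking \eqref{e:2c''_i}. That computational core is exactly the published argument and is correct as you state it.

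The one step that does not work as written is your derivation of (ii). An injective homomorphism of algebraic groups with surjective differential is only an open immersion onto an open subgroup: it identifies $\Aut(\CF_P)$ with a union of connected components of $\Aut(\CF_G)$, and the Lie-algebra computation cannot rule out a nontrivial finite component group of the quotient (compare $\{1\}\hookrightarrow \BZ/2$). The repair is cheap and uses what you already have: $\Aut(\CF_G)$ acts on the fiber of $\sfp_P$ over $\CF_G$ (the scheme of $P$-reductions of $\CF_G$), with $\Aut(\CF_P)$ the stabilizer of the point $\CF_P$; once that fiber is known to be a single \emph{reduced} point --- which is precisely the conjunction of the almost-isomorphism property with your $H^0$-vanishing --- the orbit map forces $\Aut(\CF_G)=\Aut(\CF_P)$. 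In fact this shows that (ii) is redundant, and that your ``main obstacle'' largely dissolves: $\sfp_P$ is \emph{schematic} (its fibers are schemes of sections of $G/P$-bundles over $X$), so ``finite $+$ universally injective $+$ unramified'' already makes every base change $\Bun_P^{(S)}\underset{\Bun_G^{(S)}}\times T\to T$ a surjective closed immersion of schemes, and the reducedness of $\Bun_G^{(S)}$ (it carries the reduced structure by definition) finishes the proof. This is the content of the paper's one-line reduction ``it suffices to show that the tangent space to the fiber is zero,'' with no separate discussion of automorphism group schemes required.
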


\begin{rem}   \label{r:automatic}
The lemma implies that
if $\mbox{char\,} k=0$ then the morphism $\Bun_P^{(S)}\to \Bun_G^{(S)}$ is an isomorphism for \emph{any}
$P$-admissible $S\subset  \Lambda^{+,\BQ}_G$. Indeed, if $\mbox{char\,} k=0$ one can take $c_i''=0$
(see the last sentence of \propref{p:2c'_i}); on the other hand, for $c_i''=0$ the inequality \eqref{e:condition_a'} holds by the definition of $P$-admissibility, see Definition~\ref{d:P-adm}.

\end{rem}


%
%

\begin{proof}[Proof of \lemref{l:a for c''_i}]
By \propref{p:parabolic 1-1}, it suffices to show that for any $y\in \Bun_P(k)$,
the tangent space at $y$ to the fiber of 
$\sfp_P:\Bun_P\to \Bun_G$ over $\sfp_P(y)$ is zero. 

\medskip

The tangent space in question identifies with
$H^0(X,(\fg/\fp)_{\CF_P})$, where $\CF_P$ is the $P$-bundle corresponding to $y$.

\medskip

Note that the vector bundle $(\fg/\fp )_{\CF_M}$ can be identified with the associated graded
of $(\fg/\fp)_{\CF_P}$ with respect to a (canonically defined) filtration on the latter. Hence, 
\eqref{e:2c''_i} implies that $H^0(X,(\fg/\fp )_{\CF_P})=0$.
\end{proof}

\ssec{The notion of strangeness}   \label{ss:strangeness}

\sssec{}

Let $\wt{G}$ be a reductive group and $V$ a finite-dimensional $\wt{G}$-module on which $Z_0(\wt{G})$ acts by a character $\check\mu$. 

\begin{lem} \label{l:2strangeness} \hfill

\smallskip

\noindent{\em(i)} There exists a number $c\in \BQ$  such that for every 
$\CF_{\wt{G}}\in \Bun_{\wt{G}}^{ss}$ the degree of any line sub-bundle of $V_{\CF_{\wt{G}}}$
is $\le \langle \deg_{\wt{G}}(\CF_{\wt{G}}),\check\mu\rangle+c$.

\smallskip

\noindent{\em(ii)}
If $\mbox{char\,} k=0$ one can take $c=0$.
\end{lem}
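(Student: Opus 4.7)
I would first prove part (ii) by appealing to Ramanathan's theorem: in characteristic zero, for any semistable $\wt{G}$-bundle $\CF_{\wt{G}}$ and any finite-dimensional representation $V$ of $\wt{G}$, the associated vector bundle $V_{\CF_{\wt{G}}}$ is semistable. For a semistable vector bundle the degree of any line sub-bundle is at most the slope, so it suffices to compute the slope of $V_{\CF_{\wt{G}}}$. The determinant character $\det V\colon\wt{G}\to\BG_m$ is trivial on $[\wt{G},\wt{G}]$, so it factors through $\wt{G}_{\on{tor}}$, and its restriction to $Z_0(\wt{G})$ equals $\dim(V)\cdot\check\mu$ by hypothesis. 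Under the canonical rational identification $\Lambda^\BQ_{Z_0(\wt{G})}\simeq\Lambda^\BQ_{\wt{G}_{\on{tor}}}$ coming from the isogeny $Z_0(\wt{G})\to\wt{G}_{\on{tor}}$, one then computes $\deg(V_{\CF_{\wt{G}}})=\dim(V)\cdot\langle\deg_{\wt{G}}(\CF_{\wt{G}}),\check\mu\rangle$, so the slope is exactly $\langle\deg_{\wt{G}}(\CF_{\wt{G}}),\check\mu\rangle$. This gives (ii) with $c=0$, and hence also (i) in characteristic zero.

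For part (i) in positive characteristic, where Ramanathan's semistability theorem may fail, the plan is to use a boundedness argument. The key observation is that the quantity $\deg(L)-\langle\deg_{\wt{G}}(\CF_{\wt{G}}),\check\mu\rangle$ is invariant under tensoring $\CF_{\wt{G}}$ by a $Z_0(\wt{G})$-bundle $\CE$: such a twist replaces $V_{\CF_{\wt{G}}}$ by $V_{\CF_{\wt{G}}}\otimes\CL_{\check\mu}(\CE)$, where $\CL_{\check\mu}(\CE)$ is the line bundle obtained from $\CE$ via $\check\mu$, so both $\deg(L)$ and $\langle\deg_{\wt{G}}(\CF_{\wt{G}}),\check\mu\rangle$ shift by $\langle\deg(\CE),\check\mu\rangle$. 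Since the isogeny $Z_0(\wt{G})\to\wt{G}_{\on{tor}}$ induces a map $\Lambda_{Z_0(\wt{G})}\to\Lambda_{\wt{G}_{\on{tor}}}$ with finite cokernel, after twisting one may assume $\deg_{\wt{G}}(\CF_{\wt{G}})$ lies in a fixed finite set of representatives. For each such representative the corresponding component of $\Bun_{\wt{G}}^{ss}$ is quasi-compact (the same input used in \propref{p:theta qc}), so the family $\{V_{\CF_{\wt{G}}}\}$ is bounded; the maximum degree of a line sub-bundle in a bounded family of vector bundles is uniformly bounded above, producing a constant $c$ independent of $\CF_{\wt{G}}$.

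The main obstacle in both parts is essentially bookkeeping: reconciling the lattice $\Lambda_{Z_0(\wt{G})}$, where $\check\mu$ naturally lives, with $\Lambda_{\wt{G}_{\on{tor}}}$, where $\deg_{\wt{G}}$ takes values, via the canonical rational isomorphism, and verifying the compatibility of all the twists and pairings involved. Modulo this, both (i) and (ii) follow from well-known structural facts about semistability.
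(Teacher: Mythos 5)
Your argument is correct and is essentially the paper's own proof, just written out in more detail: part (ii) is exactly the appeal to the Ramanan--Ramanathan semistability theorem with the slope computation, and your twisting-by-$Z_0(\wt{G})$-bundles reduction in part (i) is precisely the paper's use of the finiteness of the number of orbits of $\pi_0(\Bun_{Z_0(\wt{G})})$ on $\pi_0(\Bun_{\wt{G}})$, combined with quasi-compactness of the semistable locus in each component. No gaps worth flagging beyond the bookkeeping you already acknowledge.
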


\begin{proof}
Statement (i) follows from the fact that the intersection of $\Bun_{\wt{G}}^{ss}$ with every connected component of 
$\Bun_{\wt{G}}$ is quasi-compact, and that under the action of 
$\Bun_{Z_0(\wt{G})}$ on $\Bun_{\wt{G}}$, the number of orbits of $\pi_0(\Bun_{Z_0(\wt{G})})$ on 
$\pi_0(\Bun_{\wt{G}})$ is finite. 

\medskip

Statement (ii) follows from the fact that if $\mbox{char\,} k=0$ then
for every $\CF_{\wt{G}} \in \Bun_{\wt{G}}^{ss}$ the vector bundle $V_{\CF_{\wt{G}}}$ is semistable of slope
$\langle \deg_{\wt{G}}(\CF_{\wt{G}}),\check\mu\rangle$ (a proof of this fact can be found in  \cite[Sect. 3]{RR};
for references to other proofs see the introduction to \cite{RR}).
\end{proof}

\sssec{}

We give the following definition:

\begin{defn} \label{d:strangeness}
The \emph{strangeness} $\on{strng}_X (\wt{G}, V)$ is the smallest number $c\in \BQ$ having the property from 
Lemma \ref{l:2strangeness}(i). 
\end{defn}

One always has $\on{strng}_X (\wt{G}, V)\ge 0$ (because the trivial $\wt{G}$-bundle is semi-stable).
If $\mbox{char\,} k=0$ then $\on{strng}_X (\wt{G}, V)= 0$.

\begin{rem} \label{r:2vanishing}
As before, let $\wt{G}$ be a reductive group, $V$ a finite-dimensional $\wt{G}$-module on which 
$Z_0(\wt{G})$ acts by a character $\check\mu$, and $\CF\in \Bun_{\wt{G}}^{ss}$. Then
\[   
H^0(X,V_{\CF_{\wt{G}}})=0\,\, \mbox {   if    }\,\,
\langle \deg_{\wt{G}}(\CF_{\wt{G}}),\check\mu\rangle < -\on{strng}_X (\wt{G}, V),
\]
\[   
H^1(X,V_{\CF_{\wt{G}}})=0\,\, \mbox {   if    }\,\, 
\langle \deg_{\wt{G}}(\CF_{\wt{G}}),\check\mu\rangle >2g-2+\on{strng}_X (\wt{G}, V^*).
\]
In particular, if $\mbox{char\,} k=0$ then 
\begin{equation} \label{e:2vanishing3}
H^0(X,V_{\CF })=0 \,\,\mbox { if  }\,\,
\langle \deg_{\wt{G}}(\CF_{\wt{G}}),\check\mu\rangle <0, \quad H^1(X,V_{\CF })=0 \,\,\mbox { if  }\,\,
\langle \deg_{\wt{G}}(\CF_{\wt{G}}),\check\mu\rangle >2g-2.
\end{equation}
\end{rem}

\ssec{Proof of Proposition~\ref{p:2c'_i}}   \label{sss:proof_2c'_i}

\sssec{}

Let us introduce some notation. Let $P'\subset G$ be a parabolic and $M'\subset P'$ the corresponding
Levi (see our conventions in \secref{sss:conventions_Sect6}); in particular $M'\supset T$.

\medskip

Given a root $\check\alpha$ of $G$ which is not a root of $M'$, define
an $M'$-submodule $V_{M',\check\alpha}\subset\fg$ by
\begin{equation}   \label{e:2the_submodules}
V_{M',\check\alpha}:=\bigoplus_{\check\beta,\check\beta-\check\alpha\in R(M')}\fg_{\check\beta},
\end{equation}
where $R(M')$ is the root lattice of $M'$.

\medskip

The coefficient of $\check\alpha_i$ in  a root $\check\alpha$ will be denoted by 
$\coef_i (\check\alpha )$. 

\sssec{}   \label{sss:inequlities for c}
We are going to formulate 
a slightly more precise version of Proposition~\ref{p:2c'_i}.

\medskip

Let 
$$i\in \Gamma_G \rightsquigarrow c'_i,c''_i\in\BQ$$  be numbers satisfying the following inequalities:

\smallskip

For every Levi subgroup $M'\subset G$, every $i\in\Gamma_G-\Gamma_{M'}$, and every root
$\check\alpha$ of $G$ such that $\coef_i(\check\alpha )>0$, we have:
\begin{equation}    \label{e:c'_i_bound}
\coef_i(\check\alpha )\cdot c'_i\geq  2g-2+\on{strng}_X (M', (V_{M',\check\alpha})^*).
\end{equation}

\begin{equation}      \label{e:c''_i_bound}
\coef_i(\check\alpha )\cdot c''_i\ge \on{strng}_X (M', V_{M',-\check\alpha}).
\end{equation}

\sssec{Remark}
In the characteristic 0 case we can 
take $c'_i=2g-2$ and $c''_i=0$: indeed, in this case
the numbers $\on{strng}_X$ from formulas 
\eqref{e:c'_i_bound}-\eqref{e:c''_i_bound} are zero. 

\sssec{}
Here is the promised version of Proposition~\ref{p:2c'_i}.

\begin{prop}              \label{p:3c'_i}
Let $c'_i,c''_i$ be numbers satisfying the conditions from Sect.~\ref{sss:inequlities for c}.
Let $P\subset G$ be a parabolic, $M$ be the corresponding Levi, 
$\lambda\in\Lambda_G^{+,\BQ}\subset\Lambda_M^{+,\BQ}$ and 
$\CF_M\in\Bun_{M}^{(\lambda)}$. 

\medskip

\noindent{\em(a)} If $\langle\lambda\, , \check\alpha_i\rangle >c'_i$ for all $i\in\Gamma_G-\Gamma_M$
then $H^1(X, \fn (P)_{\CF_M})=0$.

\smallskip

\noindent{\em(b)} If $\langle\lambda\, , \check\alpha_i\rangle >c''_i$ for all $i\in\Gamma_G-\Gamma_M$
then $H^0(X, (\fg/\fp)_{\CF_M})=0$.

\end{prop}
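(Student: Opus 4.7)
The plan is to reduce $\CF_M$ along its Harder--Narasimhan parabolic inside $M$ and then apply the semistable vanishing estimates of Remark~\ref{r:2vanishing} to the graded pieces of natural filtrations on $\fn(P)_{\CF_M}$ and $(\fg/\fp)_{\CF_M}$.

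First I would unwind the definition of $\Bun_M^{(\lambda)}$. Since $\lambda\in \Lambda_G^{+,\BQ}\subset \Lambda_M^{+,\BQ}$, there is a unique parabolic $\PP\subset M$ with Levi $M'$ such that $\lambda\in \Lambda^{++,\BQ}_{M,\PP}$; Theorem~\ref{t:reduction theory}(1) applied inside $M$ gives that $\CF_M$ comes from a $\PP$-bundle $\CF_\PP$ whose associated $M'$-bundle $\CF_{M'}$ lies in $\Bun_{M'}^{\lambda,\on{ss}}$ with $\deg_{M'}(\CF_{M'})=\lambda$. The center $Z_0(M')\subset \PP$ acts semisimply on $\fn(P)$ and on $\fg/\fp$, and the conjugation action of $U(\PP)$ on any root space $\fg_{\check\beta}$ only shifts the $Z_0(M')$-weight by characters of $U(\PP)$; thus filtering by $Z_0(M')$-weight gives $U(\PP)$-stable filtrations of $\fn(P)$ and $\fg/\fp$ whose associated gradeds, as $M'$-modules, are
$$\bigoplus_{[\check\alpha]} V_{M',\check\alpha} \qquad\text{and}\qquad \bigoplus_{[\check\alpha]} V_{M',-\check\alpha}$$
respectively, where $\check\alpha$ runs over representatives of the equivalence classes in $R(P)-R(M)$ modulo $R(M')$. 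Using $\fn(P)_{\CF_M}\simeq \CF_\PP\times^\PP \fn(P)$ (and the analogous identification for $\fg/\fp$), this transfers to filtrations on the associated bundles with graded pieces $(V_{M',\check\alpha})_{\CF_{M'}}$ and $(V_{M',-\check\alpha})_{\CF_{M'}}$.

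Next I would apply Remark~\ref{r:2vanishing} to each graded piece. The $Z_0(M')$-character $\check\mu$ of $V_{M',\check\alpha}$ is the restriction of $\check\alpha$, and because $\lambda$ annihilates $R(M')$ we have $\langle \deg_{M'}(\CF_{M'}),\check\mu\rangle = \langle \lambda,\check\alpha\rangle$. Hence part (a) is reduced to verifying
$$\langle \lambda,\check\alpha\rangle > 2g-2 + \on{strng}_X(M',V_{M',\check\alpha}^*)$$
and part (b) to verifying
$$\langle \lambda,\check\alpha\rangle > \on{strng}_X(M',V_{M',-\check\alpha})$$
for every $\check\alpha\in R(P)-R(M)$; the long exact sequences associated with the filtrations then yield the claimed vanishings on $\fn(P)_{\CF_M}$ and $(\fg/\fp)_{\CF_M}$.

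To finish, fix $\check\alpha\in R(P)-R(M)$ and choose any $i\in \Gamma_G-\Gamma_M$ with $\coef_i(\check\alpha)>0$, which exists by the very definition of $R(P)-R(M)$. Since $\lambda\in \Lambda_G^{+,\BQ}$ we have $\langle \lambda,\check\alpha_j\rangle\geq 0$ for all $j\in\Gamma_G$, so every term of $\langle \lambda,\check\alpha\rangle = \sum_j \coef_j(\check\alpha)\langle\lambda,\check\alpha_j\rangle$ is nonnegative and
$$\langle \lambda,\check\alpha\rangle \ \geq\ \coef_i(\check\alpha)\cdot \langle \lambda,\check\alpha_i\rangle \ >\ \coef_i(\check\alpha)\cdot c'_i \ \geq\ 2g-2 + \on{strng}_X(M',V_{M',\check\alpha}^*),$$
where the last inequality is exactly \eqref{e:c'_i_bound}. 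Part (b) follows identically from \eqref{e:c''_i_bound}. The only subtlety worth verifying carefully is the construction of the $U(\PP)$-stable filtrations of $\fn(P)$ and $\fg/\fp$ in the first paragraph, but this is bookkeeping: one totally orders $Z_0(M')$-weights by pairing with any element of $\Lambda^{++,\BQ}_{M,\PP}$ and checks compatibility with induction from $\CF_\PP$. Beyond this, the argument is mechanical.
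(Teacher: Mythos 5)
Your argument is correct and is essentially the proof the paper gives: the paper likewise reduces $\CF_M$ to its canonical parabolic $P_\lambda\subset M$ with semistable Levi bundle $\CF_{M_\lambda}$ (your $\PP$, $M'$), passes to the associated graded of the induced filtrations on $\fn(P)_{\CF_M}$ and $(\fg/\fp)_{\CF_M}$, applies Remark~\ref{r:2vanishing} to the pieces $V_{M_\lambda,\pm\check\alpha}$, and closes with the same chain $\langle\lambda,\check\alpha\rangle\geq\coef_i(\check\alpha)\langle\lambda,\check\alpha_i\rangle>\coef_i(\check\alpha)c'_i$ together with \eqref{e:c'_i_bound}--\eqref{e:c''_i_bound}. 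You are in fact slightly more explicit than the paper about the weight-ordered, $U(\PP)$-stable filtration whose graded pieces are the $V_{M',\pm\check\alpha}$; the paper simply asserts the canonical filtration with graded $\fn(P)_{\CF_{M_\lambda}}$.
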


\begin{proof}
Let $P_{\lambda}$ be the parabolic of $M$ corresponding to the subset
$
\{ i\in\Gamma_M\,|\,\langle\lambda\, , \check\alpha_i\rangle =0\}\subset\Gamma_M\, .
$
Let $M_{\lambda}$ be the corresponding Levi.
The fact that $\CF_M\in\Bun_{M}^{(\lambda)}$ means that
$\CF_M$ admits a reduction to $P_{\lambda}$ such that the corresponding $M_{\lambda}$-bundle
$\CF_{M_\lambda}$ is semi-stable of degree $\lambda$.

\medskip

Let us prove statement (a). Note that the vector bundle $\fn (P)_{\CF_M}$ has a canonical
filtration with the associated graded identified with $\fn (P)_{\CF_{M_\lambda}}$. Hence, 
it suffices to show that $H^1(X, \fn (P)_{\CF_{M_\lambda}})=0$.

\medskip

By Remark~\ref{r:2vanishing}, it suffices to prove that
$$\langle \lambda\, ,\check\alpha\rangle >2g-2+\on{strng}_X (M_{\lambda}, (V_{M_{\lambda},\check\alpha})^*)$$
for any positive root $\check\alpha$ of $G$ which is not a root of $M$. 

\medskip

Let 
$i\in\Gamma_G-\Gamma_M$ be such that $\coef_i(\check\alpha )>0$.
Since $\lambda$ is dominant for $G$ and $\langle\lambda\, , \check\alpha_i\rangle >c'_i$ we have 
$\langle\lambda\, , \check\alpha\rangle \ge 
\coef_i(\check\alpha )\cdot \langle\lambda\, , \check\alpha_i\rangle >
\coef_i(\check\alpha )\cdot c'_i\,$. Now use \eqref{e:c'_i_bound} for $M'=M_{\lambda}\,$
(this is possible because $i\not\in\Gamma_M$ and therefore $i\not\in\Gamma_{M_{\lambda}}$).

\medskip

The proof of statement (b) is similar.
\end{proof}

\begin{rem}
If $k$ has characteristic $p>0$ then \propref{p:3c'_i} would become really useful if combined 
with good\footnote{A good upper bound should have the form $c(p,G,\check\alpha)\cdot (g-1)$. The number 
$c(p,G,\check\alpha )$ should be independent of $X$ and small enough to explain the phenomenon in 
\secref{sss:zero_strangeness} below.} 
upper bounds for the strangeness of the relevant representations.
It would be interesting to obtain such bounds.
 \end{rem}

\ssec{Remarks on the positive characteristic case}  \label{ss:p>0}

\sssec{}  \label{sss:zero_strangeness}
Let $V_1$ and $V_2$ be finite-dimensional vector spaces over a field of any characteristic.
Let $G_i$ denote the algebraic group $GL(V_i)$. Then the $(G_1\times G_2)$-modules
$\Hom (V_1,V_2)$ and $V_1\otimes V_2$ have strangeness $0$. This immediately follows from the definition of semi-stability.

\begin{cor} \label{c:zero_strangeness}
Suppose that $G_{ad}\simeq PGL(d_1)\times\ldots\times PGL(d_n)$. Then all inequalities
\eqref{e:c'_i_bound}-\eqref{e:c''_i_bound} hold for $c'_i=2g-2, c''_i=0$ (without any assumption on $\mbox{char }k$).
\end{cor}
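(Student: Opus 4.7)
The plan is to first reduce the inequalities \eqref{e:c'_i_bound}--\eqref{e:c''_i_bound} for the claimed values $c'_i = 2g-2$, $c''_i=0$ to a single statement: the vanishing of the strangeness $\on{strng}_X(M', V_{M',\pm\check\alpha})$ whenever $\check\alpha$ is a root of $G$ which is not a root of $M'$. Indeed, under the hypothesis $G_{ad}\simeq\prod_j PGL(d_j)$, every root of $G$ lies in the root system of one factor $PGL(d_j)$, which is of type $A_{d_j-1}$; and in type $A$ every positive root expands in the simple roots with coefficients in $\{0,1\}$. So whenever the conditions in \propref{p:3c'_i} apply (i.e.\ $\coef_i(\check\alpha)>0$), one in fact has $\coef_i(\check\alpha)=1$. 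Substituting $c'_i=2g-2$ and $c''_i=0$ then turns \eqref{e:c'_i_bound}--\eqref{e:c''_i_bound} into $\on{strng}_X(M', V_{M',\check\alpha}^*)\leq 0$ and $\on{strng}_X(M', V_{M',-\check\alpha})\leq 0$ respectively, both of which (since strangeness is non-negative) are equivalent to the strangeness being zero. As $V_{M',-\check\alpha}$ is the $M'$-module dual of $V_{M',\check\alpha}$, both reduce to the same kind of assertion.

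The next step is to identify the module $V_{M',\check\alpha}$ explicitly. Fix the factor $PGL(d)$ containing $\check\alpha$ and identify its Cartan with $\bigoplus_{k=1}^{d} \BQ\cdot e_k$ modulo the diagonal, so that $\check\alpha = e_a-e_b$ for $a\ne b$. The image of $M'$ in this factor is (up to isogeny and central torus) of the form $\prod_k GL(W_k)$ for some partition $\{1,\dots,d\}=\bigsqcup_k W_k$, and the hypothesis that $\check\alpha$ is not a root of $M'$ means that $a,b$ lie in distinct blocks, say $a\in W_p$, $b\in W_q$. Unwinding the definition \eqref{e:2the_submodules} then gives
\[
V_{M',\check\alpha}\;=\;\bigoplus_{a'\in W_p,\,b'\in W_q}\fg_{e_{a'}-e_{b'}}\;\simeq\;\Hom(W_q,W_p),
\]
with the $M'$-action factoring through the projection $M'\twoheadrightarrow GL(W_p)\times GL(W_q)$ (all other $GL(W_k)$-factors, and all other $PGL(d_{j'})$-factors, acting trivially).

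The last step is to transfer the vanishing of strangeness from $GL(W_p)\times GL(W_q)$ to $M'$. By \secref{sss:zero_strangeness}, $\Hom(W_q,W_p)$ has strangeness zero for the $GL(W_p)\times GL(W_q)$-action. The transfer rests on three standard observations: (i) semi-stability of an $M'$-bundle depends only on its image in $M'_{ad}$; (ii) $M'_{ad}$ splits as a product over the factors of $G_{ad}$, and further as a product of $PGL$-Levis inside each factor, so semi-stability decomposes factor-by-factor; (iii) pushing an $M'$-semi-stable bundle through $M'\twoheadrightarrow GL(W_p)\times GL(W_q)$ (via the intermediate Levi of $PGL(d)$) yields a bundle that is $GL(W_p)\times GL(W_q)$-semi-stable, and the character by which $Z_0(M')$ acts on $V_{M',\check\alpha}$ matches, under the projection, the $Z_0(GL(W_p)\times GL(W_q))$-character on $\Hom(W_q,W_p)$. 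These items combine to say that any line sub-bundle of $V_{M',\check\alpha}$ over an $M'$-semi-stable bundle has degree bounded by the corresponding $\langle \deg_{M'}(\CF),\check\mu\rangle$ with $c=0$, i.e.\ $\on{strng}_X(M',V_{M',\check\alpha})=0$.

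The main obstacle will be the bookkeeping in the last step: carefully matching $\deg_{M'}$, the restriction of $\check\mu$ to $Z_0(M')$, and the corresponding data for $GL(W_p)\times GL(W_q)$, as well as verifying that semi-stability really behaves compatibly under the projection $M'\to GL(W_p)\times GL(W_q)$ through its factorization through the $PGL(d)$-Levi (this is what uses both the product structure of $G_{ad}$ and the fact that semi-stability depends only on $M'_{ad}$). Once these identifications are pinned down, the statement is immediate.
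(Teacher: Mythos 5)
Your proposal is correct and follows the same route as the paper, which treats the corollary as an immediate consequence of \secref{sss:zero_strangeness}: in type $A$ one has $\coef_i(\check\alpha)\in\{0,1\}$ for positive roots, so with $c'_i=2g-2$, $c''_i=0$ both inequalities reduce to the vanishing of $\on{strng}_X(M',V_{M',\pm\check\alpha})$, and the modules $V_{M',\check\alpha}$ are exactly of the form $\Hom(W_q,W_p)$ covered by \secref{sss:zero_strangeness}. Your identification of $V_{M',\check\alpha}$ and the duality $(V_{M',\check\alpha})^*\simeq V_{M',-\check\alpha}$ are both right.

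One imprecision in the transfer step should be repaired: in general there is \emph{no} surjection $M'\twoheadrightarrow GL(W_p)\times GL(W_q)$ (e.g.\ for $G=SL(d)$ or $PGL(d)$ with $M'$ a maximal Levi, the determinant constraint, resp.\ the central $\BG_m$ one quotients by, obstructs it); the action only factors through $(GL(W_p)\times GL(W_q))/\BG_m^{\mathrm{diag}}$. The fix is standard and stays within your outline: since $H^2(X,\BG_m)=0$ for a smooth projective curve over an algebraically closed field, the induced $(\prod_k GL(W_k))/\BG_m$-bundle lifts to a tuple of vector bundles $(E_k)$, well defined up to a simultaneous twist by a line bundle; semi-stability of $\CF_{M'}$ (detected on $M'_{ad}\simeq\prod_k PGL(W_k)$) forces each $E_k$ to be semi-stable, one has $V_{\CF_{M'}}\simeq \CHom(E_q,E_p)$, and both this bundle and the quantity $\mu(E_p)-\mu(E_q)=\langle\deg_{M'}(\CF_{M'}),\check\mu\rangle$ are insensitive to the twist. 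With that substitution your argument is complete.
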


\sssec{}   \label{sss:strangeness>0}
Suppose that $\mbox{char }k=2$. Let $V$ be a 2-dimensional vector space over $k$.
If $g>1$ then the representation of $GL(V)$ in the symmetric square $\on{Sym}^2(V)$ has strangeness
$g-1>0$. This follows from \secref{sss:zero_strangeness}, combined with the exact sequence
\[
0\to V^{(2)}\to \on{Sym}^2(V)\to V\otimes V
\]
and the equality $\on{strng}_X (V^{(2)})=g-1$, which is proved, e.g., in  \cite[Sect.~4.5]{JRXY}.

\sssec{}

The assertion in \secref{sss:strangeness>0} implies that if $g>1$ , $\mbox{char }k=2$ and $G=Sp(2n)$, 
$n\ge 2$, then some of the inequalities  \eqref{e:c'_i_bound} \emph{do not} hold for $c'_i=2g-2$. 

%

\sssec{} The situation for the numbers $c''_i$ is as follows:

\medskip

J.~Heinloth \cite{He} proved that if $G$ is a classical group over a field of odd characteristic then
all inequalities \eqref{e:c''_i_bound} hold for $c''_i=0$. He also showed that if $\mbox{char }k=2$ this is still true if $G_{ad}$ 
is a product of groups of type $A$ and $C$. 

\medskip

On the other hand, according to
 \cite{He,P}, some of the inequalities  \eqref{e:c''_i_bound}  do not hold if $\mbox{char }k=2$ and $G_{ad}$ 
 has one of the following types: $G_2$, $B_n$ ($n\ge 3$), $D_n$ ($n\ge 4$).


\section{Constructing the contraction}  \label{s:constr contr}

The goal of this section is to prove point (b) of \propref{p:key} for the numbers $c_i$ defined
in formula \eqref{e:c_i max} from Sect.~\ref{ss:c_i}.

\ssec{Morphisms between $\Bun_P\,$, $\Bun_{P^-}\,$, $\Bun_M\,$, and $\Bun_G\,$}   \label{ss:P&M}

In this subsection and the next we recall some well known facts that
will be used in the proof of \propref{p:key}(b).

\sssec{}

From now on we fix a (standard) parabolic $P$. Let $P^-$ be the parabolic opposite to $P$ such that
$P^-\supset T$. Note that $P^-$ is \emph{not} a standard parabolic! Namely, $P^-$ is the unique
parabolic such that $P\cap P^-=M$, when the latter is viewed a subgroup of $P$, see \secref{sss:conventions_Sect6}.

\medskip

\begin{lem}  \label{l:2generic}
The morphism $\Bun_M\to\Bun_{P^-}\underset{\Bun_G}\times \Bun_P$
is an open embedding.
\end{lem}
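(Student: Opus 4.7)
The plan is to deduce the result from a corresponding statement at the level of classifying stacks, combined with the fact that for any stack $\CY$ and open substack $\CU \subset \CY$, the inclusion $\Maps(X, \CU) \hookrightarrow \Maps(X, \CY)$ is an open embedding (a map from the curve lands in the open substack iff its image does, which is itself an open condition).

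First I would rewrite the fiber product as a mapping stack. By definition,
\[
\Bun_{P^-}\underset{\Bun_G}\times \Bun_P \simeq \Maps\bigl(X,\, B(P^-)\underset{B(G)}\times B(P)\bigr),
\]
and the target classifying stack identifies $G$-equivariantly with the quotient $[(G/P \times G/P^-)/G]$, where $G$ acts diagonally. Under this identification, the map in question is induced by the functorial diagram of classifying stacks coming from the two inclusions $M\mono P$ and $M\mono P^-$.

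The key geometric input is the following: $G$ acts on $G/P\times G/P^-$ with a unique open orbit containing the basepoint $(eP, eP^-)$, and the stabilizer of this basepoint is exactly $P\cap P^- = M$. Hence the open orbit is $G$-equivariantly isomorphic to $G/M$, and the inclusion $G/M \mono G/P \times G/P^-$ is open. (The fact that the $G$-orbit through $(eP, eP^-)$ is open can be checked on dimensions, using $\dim P + \dim P^- = \dim G + \dim M$; alternatively, it is standard Bruhat theory.) Passing to the quotient by $G$ gives an open embedding of classifying stacks
\[
B(M) \;\mono\; B(P)\underset{B(G)}\times B(P^-).
\]

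Finally, applying $\Maps(X,-)$ yields an open embedding
\[
\Bun_M = \Maps(X, B(M)) \;\mono\; \Maps\bigl(X, B(P)\underset{B(G)}\times B(P^-)\bigr) = \Bun_{P^-}\underset{\Bun_G}\times \Bun_P,
\]
and by construction this coincides with the map of the lemma. The only non-formal step is the identification of the open orbit, which is classical; everything else is bookkeeping. Concretely, the image of $\Bun_M$ is the open locus where the two given reductions $\CF_{P^-}$ and $\CF_P$ of a $G$-bundle $\CF_G$ are \emph{mutually transverse} at every point of $X$, i.e., where the induced section $X \to \CF_G \times^G (G/P \times G/P^-)$ factors through the open subbundle $\CF_G \times^G (G/M)$.
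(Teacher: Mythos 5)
Your proof is correct and takes essentially the same route as the paper: the paper also reduces the statement to the fact that $G/M\to G/P^-\times G/P$ is an open embedding (onto the open $G$-orbit with stabilizer $P\cap P^-=M$) and then passes to sections of the associated bundles over the proper curve $X$. The mapping-stack packaging is just a rephrasing of the paper's observation that an $M$-structure on $\CF_G$ is a section of $(G/M)_{\CF_G}$.
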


\begin{proof}  
An $M$-bundle on $X$ is the same as a $G$-bundle $\CF_G$ equipped with an $M$-structure, i.e.,
a section of $(G/M)_{\CF_G}$. 

\medskip

The assertion follows from the fact that the morphism $G/M\to G/P^-\times G/P$ is an open embedding.
\end{proof}  

\sssec{}

Define open substacks $\CU_i\subset \Bun_M$ as follows:
\begin{equation}   \label{e:2CU_0}
\CU_0:=\{ \CF_M\in \Bun_M \,|\,H^0(X, (\fg/\fp)_{\CF_M})=0 \} ,
\end{equation}

\begin{equation}   \label{e:2CU_1}
\CU_1:=\{ \CF_M\in \Bun_M \,|\,H^1(X, \fn(P)_{\CF_M})=0 \} .
\end{equation}

\begin{prop}  \label{p:Pplusminus}  We have:

\smallskip

\noindent{\em(a)} The morphism $\iota_P:\Bun_M\to \Bun_P$ induces a 
smooth surjective morphism $$\CU_1\to\Bun_P\underset{\Bun_M}\times\CU_1.$$

\smallskip

\noindent{\em(b)} The morphism $\sfp_{P^-}:\Bun_{P^-}\to \Bun_G$
is smooth when restricted to the open substack $$\Bun_{P^-}\underset{\Bun_M}\times\CU_1\subset\Bun_{P^-}.$$

\smallskip

\noindent{\em(c)} The morphism $\sfq_{P^-}:\Bun_{P^-}\to \Bun_M$ is schematic, affine, and smooth over 
$\CU_0\subset \Bun_M$. 

\medskip

\noindent{\em(d)} The morphism $\iota_{P^-}:\Bun_M\to \Bun_{P^-}$ defines a closed embedding
$$\CU_0\to \Bun_{P^-}\underset{\Bun_M}\times\CU_0.$$

\end{prop}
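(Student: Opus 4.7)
The plan is to use deformation theory throughout. For any affine algebraic group $H$, the tangent complex of $\Bun_H$ at $\CF_H$ is $R\Gamma(X,\fh_{\CF_H})[1]$; hence for a morphism between moduli of bundles induced by a homomorphism of groups, the relative tangent complex is computed from the appropriate Lie-algebra quotient on $X$, and the claimed smoothness/schematicness/affineness properties translate into vanishing statements for $H^0$ or $H^1$ of bundles on $X$, which are precisely the conditions defining $\CU_0$ and $\CU_1$.

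I would begin with parts (a) and (b), which rest on the same computation. For $\iota_P:\Bun_M\to\Bun_P$ at $\CF_M$, the fiber sequence $\fm\to\fp\to\fn(P)$ together with the Levi decomposition yields relative tangent complex $R\Gamma(X,\fn(P)_{\CF_M})$ in cohomological degrees $[0,1]$. On $\CU_1$ the $H^1$ vanishes, and by constancy of Euler characteristic on $\Bun_M$ the $H^0$ is then locally free of locally constant rank, so $\iota_P$ is smooth; restricting the target to the open substack $\sfq_P^{-1}(\CU_1)$ preserves smoothness. For essential surjectivity, given $\CF_P$ with $\sfq_P(\CF_P)=\CF_M\in\CU_1$, the set of isomorphism classes of $P$-extensions of $\CF_M$ is the non-abelian cohomology $H^1(X,U(P)_{\CF_M})$; filtering $U(P)$ by its descending central series with vector-group quotients $V_i$, the vanishing of $H^1(X,\fn(P)_{\CF_M})$ cascades inductively through the tower to give $H^1(X,U(P)_{\CF_M})=0$, so $\CF_P\cong\iota_P(\CF_M)$. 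The parallel calculation for $\sfp_{P^-}:\Bun_{P^-}\to\Bun_G$, using $\fg/\fp^-\cong\fn(P)$ as $M$-representations, yields the same relative tangent $R\Gamma(X,\fn(P)_{\CF_M})$, and the same condition $\CF_M\in\CU_1$ delivers smoothness, establishing (b).

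Next I would treat (c). The identical formalism applied to $\sfq_{P^-}$ at $\CF_{P^-}$ gives relative tangent $R\Gamma(X,\fn(P^-)_{\CF_M})[1]$ in degrees $[-1,0]$; using $\fn(P^-)\cong\fg/\fp$ as $M$-representations, the vanishing of its $H^{-1}$ is exactly the condition $\CF_M\in\CU_0$, which gives schematicness. On $\CU_0$ the Euler characteristic argument now forces $H^1(X,\fn(P^-)_{\CF_M})$ to be locally free of locally constant rank, so the relative tangent is a vector bundle in degree $0$, proving smoothness. For affineness, I would filter $U(P^-)$ by its descending central series $U(P^-)=U_0\supset U_1\supset\cdots$ with vector-group quotients $V_i$, and factor $\sfq_{P^-}^{-1}(\CU_0)\to\CU_0$ as a tower of intermediate moduli $\sfq_{P^-/U_i}^{-1}(\CU_0)$; each step is a torsor under the coherent sheaf $R^1\pi_*((V_i)_{\CF_M})$, which on $\CU_0$ becomes a vector bundle by cohomology-and-base-change (the hypothesis $\CF_M\in\CU_0$ forces the relevant $H^0$'s to vanish at each stage). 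Since a composition of torsors under vector bundles is affine, the claim follows.

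Finally, (d) is a formal consequence of (c): $\sfq_{P^-}^{-1}(\CU_0)\to\CU_0$ is affine, say $\Spec_{\CU_0}(\CA)$, and the section $\iota_{P^-}|_{\CU_0}$ corresponds to an $\CO_{\CU_0}$-algebra map $\CA\to\CO_{\CU_0}$ which, being a left inverse of the structure map $\CO_{\CU_0}\to\CA$, is automatically surjective; hence the section is a closed embedding. The main obstacle I anticipate is the affineness clause in (c): smoothness of $\sfq_{P^-}$ on $\CU_0$ falls out immediately from the deformation-theoretic computation, but upgrading the fiberwise classification of $P^-$-extensions to a global tower of vector-bundle torsors over $\CU_0$ requires carefully setting up the central-series filtration and invoking cohomology-and-base-change at each step.
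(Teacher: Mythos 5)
Your proposal is correct and follows essentially the same route as the paper: smoothness in (a) and (b) from the vanishing of $H^1(X,\fn(P)_{\CF_M})$ on $\CU_1$, surjectivity in (a) from the vanishing of the non-abelian $H^1(X,U(P)_{\CF_M})$ obtained by cascading through a filtration of $U(P)$ with vector-group quotients, affineness and smoothness in (c) from a tower of torsors under vector bundles $H^1(X,(U^m/U^{m+1})_{\CF_M})$ over $\CU_0$, and (d) as the formal statement that a section of a schematic affine (hence separated) morphism is a closed embedding. The only step you gloss over is in (b): the cokernel of the differential of $\sfp_{P^-}$ is $H^1(X,(\fg/\fp^-)_{\CF_{P^-}})$ for the bundle associated to the $P^-$-bundle, so one must first pass to the associated graded $(\fg/\fp^-)_{\CF_M}$ along the canonical filtration before identifying it with $\fn(P)_{\CF_M}$ and invoking the definition of $\CU_1$ --- which is exactly what the paper does.
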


\begin{proof}
Let $\CF_M\in\CU_1 (k)$, i.e., $\CF_M$ is an $M$-torsor on $X$ such that $H^1(X, \fn(P)_{\CF_M})=0$.
Using an appropriate filtration on $U(P)$ one deduces from this that $H^1(X, U(P)_{\CF_M})=0$, i.e., every
$U(P)_{\CF_M}$-torsor on $X$ is trivial. This implies the surjectivity part of statement (a).
 
\medskip

To prove the smoothness part of (a), it  suffices to check that the differential  of the morphism
 $\iota_P:\Bun_M\to \Bun_P$ at any point $\CF_M\in\CU_1 (k)\subset \Bun_M (k)$ is surjective. Its
cokernel equals $H^1(X, \fn(P)_{\CF_M})$, which is zero by the definition of $\CU_1$, see 
for\-mu\-la~\eqref{e:2CU_1}.

\medskip

Note also that the smoothness part of (a) will follow from~(b):
to see this, decompose the morphism $\iota_P:\Bun_M\to \Bun_P$ as
$$\Bun_M\to\Bun_{P^-}\underset{\Bun_G}\times \Bun_P\to\Bun_P$$
and use Lemma~\ref{l:2generic}. 

\medskip

To prove (b), we have to show that the differential of $\sfp_{P^-}:\Bun_{P^-}\to \Bun_G$ at any $k$-point $y$
of $\Bun_{P^-}\underset{\Bun_M}\times\CU_1$ is surjective.
Its cokernel equals $H^1(X, (\fg/\fp^-)_{\CF_{P^-}})$, where $\CF_{P^-}$ is 
the $P^-$-bundle corresponding to $y$. Let $\CF_M$ be the corresponding $M$-bundle. We have
$H^1(X, \fn (P)_{\CF_M})=0$ by the definition of $\CU_1$, see formula \eqref{e:2CU_1}. Now, the
associated graded of $(\fg/\fp^-)_{\CF_{P^-}}$ with respect to a (canonically defined) filtration
identifies with $(\fg/\fp^-)_{\CF_M}$, and the assertion follows from the fact that the composition
$$\fn(P)\to \fg\to \fg/\fp^-$$
is an isomorphism of $M$-modules.

\medskip

 To prove (c), consider the filtration 
 \[
 U(P^-)=U^1\supset U^2\supset\ldots ,
 \]
 where $U^m$ is the subgroup generated by the root subgroups corresponding to the roots 
 $\check\alpha$ of $G$ such that
 \[
 \sum\limits_{i\not\in\Gamma_M}\coef_i(\check\alpha )\le -m.
 \]
(Here $\coef_i (\check\alpha )$ denotes the coefficient of $\check\alpha_i$ in $\check\alpha$.)
Note that each quotient $U^m/U^{m+1}$ is a vector group (i.e., a product of finitely many copies of $\BG_a$). To prove (c), 
it suffices to check that for each $m$ the morphism 
\begin{equation}    \label{e:mth_morphism}
(\Bun_{P^-/U^m})\underset{\Bun_M}\times\CU_0\to(\Bun_{P^-/U^{m+1}}) \underset{\Bun_M}\times\CU_0
\end{equation}
is schematic, affine and smooth. In fact, it is a torsor over a certain vector bundle. 

\medskip

To see this, note that by 
\eqref{e:2CU_0}, for  each 
$\CF_M\in\CU_0$ we have $$H^0(X, (U^m/U^{m+1})_{\CF_M})=0,$$ so the stack of 
$(U^m/U^{m+1})_{\CF_M}$-torsors on $X$ is a scheme; 
namely, it is  the vector space $H^1(X, (U^m/U^{m+1})_{\CF_M})$. As $\CF_M$ varies, 
these vector spaces form a vector bundle on $\CU_0$. 
Let $\xi$ denote its pullback to
$(\Bun_{P^-/U^m})\underset{\Bun_M}\times\CU_0$, then the morphism \eqref{e:mth_morphism} is a $\xi$-torsor.

\medskip

Point (d) follows from point (c) since the map $\CU_0\to \Bun_{P^-}\underset{\Bun_M}\times\CU_0$
is a section of the map
$$\Bun_{P^-}\underset{\Bun_M}\times\CU_0\to \CU_0,$$
and the latter is schematic and separated.

\end{proof}

\ssec{The action of $\BA^1$ on $\Bun_{P^-}\,$}   \label{ss:A1action}

\sssec{}

Let $Z(M)$ denote the center of $M$. Choose a homomorphism $\mu :\BG_m\to Z(M)$ such that
$\langle\mu\, , \check\alpha_i\rangle>0$ for $i\not\in\Gamma_M$. Then the action of $\BG_m$
on $P^-$ defined by
\begin{equation}   \label{e:3A1-action_on_P}
\rho_t (x):=\mu (t)^{-1}\cdot x\cdot \mu (t), \quad t\in \BG_m\, , x\in P^-  
\end{equation}
extends to an action of the multiplicative monoid $\BA^1$ on $P^-$ such that the endomorphism
$\rho_0\in\End(P)$ equals the composition $P^-\epi M\mono P^-$. 

\sssec{}

The above action of $\BA^1$ on $P^-$ induces an $\BA^1$-action on $\Bun_{P^-}\,$.
Equip $M$ and $\Bun_M$ with the trivial $\BA^1$-action. The
projection $P^-\to M$ is $\BA^1$-equivariant, so \emph{the corresponding morphism 
$\sfq_{P^-}:\Bun_{P^-}\to\Bun_M$ has a canonical $\BA^1$-equivariant structure.}

\begin{rem}   \label{r:A1action0}
The above description of $\rho_0$ implies that the morphism ${\bf 0}:\Bun_{P^-}\to\Bun_{P^-}$ corresponding to 
$0\in\BA^1$ equals the composition
\begin{equation}   \label{e:3the_0_diagram}
\Bun_{P^-}\overset{\sfq_{P^-}}\longrightarrow  \Bun_M \overset{\iota_{P^-}}\longrightarrow\Bun_{P^-}\, .
\end{equation}

\end{rem}

\begin{rem}   \label{r:A1action1}
The action of $\BG_m$ on $\Bun_{P^-}$ is trivial: this follows from formula \eqref{e:3A1-action_on_P}, 
which says that the automorphisms $\rho_t\in\Aut(P^-)$, $t\in\BG_m$, are inner. Moreover, formula  
\eqref{e:3A1-action_on_P} provides a \emph{canonical} trivialization of this action.
\end{rem}

\begin{rem}   \label{r:A1action2}
Despite the previous remark, it is \emph{not true} that the action of $\BG_m$ on each \emph{fiber} 
of the morphism $\Bun_{P^-}\to\Bun_M$ is trivial. (Note that although $\BG_m$ acts on $\Bun_{P^-}$ 
by automorphisms over $\Bun_M\,$, the trivialization of the $\BG_m$-action on $\Bun_{P^-}$ provided by
\eqref{e:3A1-action_on_P} is \emph{not over $\Bun_M\,$}.)
\end{rem}

\begin{rem}  \label{r:A1action3}
It is not hard to show that the trivialization of the $\BG_m$-action on $\Bun_{P^-}$ defined in \remref{r:A1action1} yields an action of 
the monoidal stack\footnote{For any scheme $S$, the groupoid $(\BA^1/\BG_m) (S)$ is the groupoid of line bundles over 
$S$ equipped with a section, so $(\BA^1/\BG_m) (S)$ is a monoidal category with respect to $\otimes$. 
In this sense $\BA^1/\BG_m$ is a monoidal stack.}
 $\BA^1/\BG_m$ on $\Bun_{P^-}\,$. The proof is straightforward; it uses the formula
\[
\rho_t (\mu (s))=\mu (s), \quad\quad t\in\BA^1,\, s\in\BG_m\,   ,
\]
which follows from  \eqref{e:3A1-action_on_P}.
\end{rem}

\ssec{Proof of \propref{p:key}(b)}  

\sssec{}
Let the numbers $c_i$, $i\in\Gamma_G$, be as in formula \eqref{e:c_i max} from Sect.~\ref{ss:c_i}.
Let $S\subset  \Lambda^{+,\BQ}_G$ be a $P$-admissible subset, and assume that  $S$ satisfies \eqref{e:key}, i.e., 
$$\forall\, \lambda\in S,\,\,\forall i\in  \Gamma_G-\Gamma_M \text{ we have }
\langle \lambda\, ,\check\alpha_i\rangle >c_i\, .$$
We have to prove that the locally closed substack $\Bun_G^{(S)}\subset \Bun_G$ is contractive 
in the sense of  Sect.~\ref{sss:contractive}. 

\sssec{}


\medskip

Recall that $c_i:=\on{max}(c'_i,c''_i)$, where $c'_i$ and $c''_i$ are as in \propref{p:2c'_i}. So
for all $\lambda\in S$ and $i\in  \Gamma_G-\Gamma_M$ we have 
\begin{equation}   \label{e:lambda:>c1}
\langle \lambda\, ,\check\alpha_i\rangle >c'_i\, ,
\end{equation}

\begin{equation}   \label{e:lambda:>c2}
\langle \lambda\, ,\check\alpha_i\rangle >c''_i\, .
\end{equation}

By \eqref{e:lambda:>c1} and the assumption on the numbers $c'_i$ (see \propref{p:2c'_i}),
we have


\begin{equation}      \label{e:H1vanishes}
\Bun_M^{(S)}\subset\CU_1:=\{ \CF_M\in \Bun_M \,|\,H^1(X, (\fn(P))_{\CF})=0 \} .
\end{equation}

\medskip

Similarly, \eqref{e:lambda:>c2} implies that

\begin{equation}      \label{e:H0vanishes}
\Bun_M^{(S)}\subset\CU_0:=\{ \CF_M\in \Bun_M \,|\,H^0(X, (\fg/\fp)_{\CF_M})=0 \}. 
\end{equation}

\sssec{}

Let $\Bun_{P^-}^{(S)}\subset\Bun_{P^-}$ denote the preimage of the open substack $\Bun_M^{(S)}\subset\Bun_M$.
The embeddings $M\mono P\mono G$ and $M\mono P^-\mono G$ induce a commutative diagram 
\begin{equation}   \label{e:2the_square}
\xymatrix{
\Bun_M^{(S)}\ar[d]_{\ips}\ar[r]^{\ipms}&\Bun_{P^-}^{(S)}\ar[d]^{\sfppms}\\
\Bun_P^{(S)}\ar[r]^{\sfpps}&\Bun_G
    }
\end{equation}

We summarize the properties of the maps in the above diagram in the following lemma:

\begin{lem}   \label{l:3the_square} \hfill

\smallskip

\noindent{\em(i)} The morphism $\Bun_M^{(S)}\to\Bun_{P}^{(S)}\underset{\Bun_G}\times \Bun_{P^-}^{(S)}$ 
defined by diagram \eqref{e:2the_square} is an open embedding.

\smallskip

\noindent{\em(ii)}  The morphism $\ips :\Bun_{M}^{(S)}\to \Bun_{P}^{(S)}$
is surjective and smooth. 

\smallskip

\noindent{\em(iii)} The morphism $\sfppms:\Bun_{P^-}^{(S)}\to\Bun_G$
is smooth. 

\smallskip

\noindent{\em(iv)} The morphism $\sfpps$ 
induces an isomorphism $\Bun_P^{(S)}\to\Bun_G^{(S)}$.

\smallskip

\noindent{\em(v)} 
The morphism $\ipms :\Bun_{M}^{(S)}\to \Bun_{P^-}^{(S)}$ is a closed embedding. 

\end{lem}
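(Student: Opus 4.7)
The plan is to deduce each of the five assertions directly from results already at our disposal, using as the key input the two inclusions \eqref{e:H1vanishes} and \eqref{e:H0vanishes}, namely that $\Bun_M^{(S)}\subset \CU_0\cap\CU_1$. These inclusions hold because, by \eqref{e:key} and our choice $c_i:=\max(c'_i,c''_i)$, every $\lambda\in S$ satisfies both $\langle\lambda,\check\alpha_i\rangle>c'_i$ and $\langle\lambda,\check\alpha_i\rangle>c''_i$ for $i\in\Gamma_G-\Gamma_M$, so \propref{p:2c'_i} applies.

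For (i), I would apply \lemref{l:2generic}, which says that $\Bun_M\to \Bun_{P^-}\underset{\Bun_G}\times \Bun_P$ is an open embedding. Since both $\Bun_P^{(S)}$ and $\Bun_{P^-}^{(S)}$ are by definition the pullbacks of $\Bun_P$ and $\Bun_{P^-}$ along the open embedding $\Bun_M^{(S)}\hookrightarrow \Bun_M$, the commutative square
$$
\xymatrix{
\Bun_M^{(S)} \ar[r] \ar[d] & \Bun_M \ar[d] \\
\Bun_P^{(S)}\underset{\Bun_G}\times\Bun_{P^-}^{(S)} \ar[r] & \Bun_P\underset{\Bun_G}\times\Bun_{P^-}
}
$$
is Cartesian (a point of the bottom-left stack lying over $\Bun_M\subset \Bun_P\underset{\Bun_G}\times\Bun_{P^-}$ has an underlying $M$-bundle whose image in $\Bun_M$ lies in $\Bun_M^{(S)}$ by the defining condition on either factor). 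Hence the left vertical arrow is an open embedding as a base change of the right one.

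Claims (ii), (iii) and (v) are each obtained by base-change from the corresponding parts of \propref{p:Pplusminus} along $\Bun_M^{(S)}\hookrightarrow\CU_1$ or $\Bun_M^{(S)}\hookrightarrow\CU_0$. Explicitly: (ii) follows from \propref{p:Pplusminus}(a), since smoothness and surjectivity are preserved under base change; (iii) follows from \propref{p:Pplusminus}(b), using that $\Bun_{P^-}^{(S)}=\Bun_{P^-}\underset{\Bun_M}\times\Bun_M^{(S)}$ is an open substack of $\Bun_{P^-}\underset{\Bun_M}\times\CU_1$; and (v) follows from \propref{p:Pplusminus}(d), closed embeddings being stable under base change.

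Finally, (iv) is the content of \lemref{l:a for c''_i}: since $c_i\geq c''_i$, the $P$-admissible set $S$ satisfies condition \eqref{e:condition_a'}, so $\sfpps:\Bun_P^{(S)}\to \Bun_G^{(S)}$ is an isomorphism. There is no real obstacle in this proof: the lemma is essentially a bookkeeping exercise, packaging the preliminary results of \secref{ss:P&M} into the precise form needed to verify contractiveness in the next subsection. The only non-formal input is the vanishing statement \propref{p:2c'_i}, whose consequences are already built into the defining inclusions $\Bun_M^{(S)}\subset\CU_0\cap\CU_1$.
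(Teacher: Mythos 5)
Your proof is correct and follows essentially the same route as the paper: (i) from Lemma \ref{l:2generic}, (ii), (iii), (v) from Proposition \ref{p:Pplusminus}(a), (b), (d) via the inclusions \eqref{e:H1vanishes} and \eqref{e:H0vanishes}, and (iv) from Lemma \ref{l:a for c''_i} (which is exactly how the paper's cited reference, \propref{p:key}(a), is established). The extra detail you supply for the Cartesian square in (i) is accurate and merely fills in what the paper leaves implicit.
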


\begin{proof}
Statement (i) follows from Lemma~\ref{l:2generic}. 

\medskip

By \eqref{e:H1vanishes}, statements (ii) and (iii) follow from Proposition~\ref{p:Pplusminus} points (a) and (b),
respectively. 

\medskip

Statement (iv) holds by \propref{p:key}(a). By \eqref{e:H0vanishes}, statement (v) follows 
from \propref{p:Pplusminus}(d).
\end{proof}

\begin{rem}
One can show, using \cite[Proposition 4.4.4]{Sch}, that the map in point (i) of \lemref{l:3the_square}
is an isomorphism for \emph{any} $P$-admissible set $S$ (i.e., $S$ does not even have to satisfy \eqref{e:key}.)
\end{rem}

\sssec{}
Our goal is to prove that the locally closed substack  $\Bun_G^{(S)}\subset \Bun_G$ is contractive.
By the definition of contractiveness (see \secref{sss:contractive}), this follows
from Lemma~\ref{l:3the_square} and the next statement:

\begin{prop}  \label{p:Z_is_contractive}
Let $S$ be as in \propref{p:key}. Then the substack $\on{Im}(\ipms)\subset\Bun_{P^-}^{(S)}$ from 
\lemref{l:3the_square}(v) is contractive.
\end{prop}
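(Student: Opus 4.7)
The plan is to deduce Proposition \ref{p:Z_is_contractive} as a direct consequence of the stacky contraction lemma \lemref{l:stacky_contraction}, applied to the morphism $\sfq_{P^-}$ restricted over $\Bun_M^{(S)}$, using the $\BA^1$-action constructed in \secref{ss:A1action}. More precisely, I would set $\CW := \Bun_{P^-}^{(S)}$, $\CS := \Bun_M^{(S)}$, take $\pi$ to be the restriction of $\sfq_{P^-}$, and let $i:\CS \to \CW$ be the restriction of $\iota_{P^-}$. Since $\Bun_{P^-}^{(S)}$ is by definition the preimage of $\Bun_M^{(S)}$ under $\sfq_{P^-}\,$, and the $\BA^1$-action on $\Bun_{P^-}$ is over $\Bun_M$ with trivial action on the base, the action restricts to $\Bun_{P^-}^{(S)}$ and its target is $\Bun_M^{(S)}\,$.

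I then need to verify the two hypotheses of \lemref{l:stacky_contraction}. For hypothesis (i) (that the zero endomorphism factors as $i\circ\pi$), this is exactly the content of \remref{r:A1action0}, which describes the $0$-action as the composition \eqref{e:3the_0_diagram}. For hypothesis (ii) (triviality of the $\BG_m$-action on $\CW$ as a stack over $\on{pt}$), this is \remref{r:A1action1}, where a canonical trivialization is obtained from \eqref{e:3A1-action_on_P}. I also need $\pi$ to be affine schematic; this follows from \propref{p:Pplusminus}(c) together with the inclusion $\Bun_M^{(S)}\subset\CU_0$ provided by \eqref{e:H0vanishes} (which is why condition \eqref{e:lambda:>c2} on the numbers $c_i$ enters). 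Granted \lemref{l:stacky_contraction}, we conclude that $i(\CS)=\on{Im}(\ipms)$ is contractive in $\CW=\Bun_{P^-}^{(S)}\,$, which is exactly the assertion of the proposition.

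There is no real obstacle — everything has been set up precisely to make this application work. The only step requiring attention is to confirm that the restriction of the $\BA^1$-action indeed lands in $\Bun_{P^-}^{(S)}$, but this is automatic because the action is by $\Bun_M$-endomorphisms and $\Bun_{P^-}^{(S)}=\sfq_{P^-}^{-1}(\Bun_M^{(S)})$. All the heavy lifting has already been done: the estimates of \secref{s:estimates} ensure the vanishing conditions \eqref{e:H0vanishes} needed for affineness of $\sfq_{P^-}$ over $\Bun_M^{(S)}$, and the reduction theory of \secref{s:compl red} identifies $\Bun_G^{(S)}$ with $\Bun_P^{(S)}$. Thus the proof is essentially a bookkeeping exercise reducing Proposition \ref{p:Z_is_contractive} to \lemref{l:stacky_contraction}.
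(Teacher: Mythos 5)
Your proof is correct and is essentially identical to the paper's own argument: both apply \lemref{l:stacky_contraction} to $\sfq_{P^-}^{(S)}:\Bun_{P^-}^{(S)}\to\Bun_M^{(S)}$ with the $\BA^1$-action of \secref{ss:A1action}, verifying conditions (i)--(ii) via Remarks~\ref{r:A1action0}--\ref{r:A1action1} and affineness via \propref{p:Pplusminus}(c) together with \eqref{e:H0vanishes}. Nothing is missing.
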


\begin{proof}

Equip $\Bun_{P^-}$ with the $\BA^1$-action from \secref{ss:A1action} corresponding to some $\mu:\BG_m\to Z(M)$.
The open substack $\Bun_{P^-}^{(S)}\subset\Bun_{P^-}$ is $\BA^1$-stable, so we obtain an
$\BA^1$-action on $\Bun_{P^-}^{(S)}\,$.  

\medskip

We apply \lemref{l:stacky_contraction} to 
the canonical morphism $\sfq_{P^-}^{(S)}: \Bun_{P^-}^{(S)}\to\Bun_M^{(S)}$ and the above $\BA^1$-action on $\Bun_{P^-}^{(S)}\,$. 
We only have to check that the conditions of the lemma hold. 

\medskip

By \eqref{e:H0vanishes} and Proposition~\ref{p:Pplusminus}(c), the 
morphism $\sfq_{P^-}^{(S)}: \Bun_{P^-}^{(S)}\to\Bun_M^{(S)}$ is schematic and affine. 
Conditions (i)-(ii) from \lemref{l:stacky_contraction} hold by
Remarks~\ref{r:A1action0}-\ref{r:A1action1}.
\end{proof}

\section{Counterexamples}  \label{s:counterexamples}

The goal of this section is to show that the property of being truncatable is a purely ``stacky"
phenomenon, i.e., that it ``typically" fails for non quasi-compact schemes.  

\ssec{Formulation of the theorem}  \label{ss:counterexamples}

\begin{thm}  \label{t:counterexample}
Let $Y$ be an irreducible smooth scheme of dimension $n$, such that for some (or, equivalently, any)
non-empty quasi-compact open $U\subset Y$ the set
\begin{equation}   \label{e:codim1}
\{ y\in Y-U\; | \,\dim_y(Y-U)=\dim Y-1\}
\end{equation}
is not quasi-compact. Then $\Dmod(Y)$ is not compactly generated.
\end{thm}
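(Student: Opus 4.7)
The plan is to proceed by contradiction: suppose $\Dmod(Y)$ is compactly generated, let $\bC_0 \subseteq \Dmod(Y)$ denote the smallest cocomplete full subcategory containing $\Dmod(Y)^c$ (so $\bC_0 = \Dmod(Y)$ under the assumption), and derive that some quasi-compact open $U \subset Y$ has codim-$1$ complement that is quasi-compact, contradicting the hypothesis. The specific coherent object whose membership in $\bC_0$ will be obstructed is the left $\CalD_Y$-module $\CalD_Y$ itself, which is coherent on $Y$ and has singular support equal to all of $T^*Y$ (in particular, the full conormal fiber at every codim-$1$ generic point of $Y$).

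\smallskip

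First, \propref{p:tautological compact} writes every compact $\CF \in \Dmod(Y)^c$ in the form $\CF = j_{U,!}(\CF_U)$ with $j_U \colon U \hookrightarrow Y$ a quasi-compact open and $\CF_U \in \Dmod(U)^c = \Dmod_{\on{coh}}(U)$. The key feature (see \lemref{l:tautological} and \lemref{l:*!}) is that $i_y^*(\CF) = 0$ for every $y \in Y \setminus U$, and $\CF|_V \simeq \tilde j_!(\CF_U|_{U \cap V})$ for any open $V \subset Y$. Fix now a quasi-compact open $V \subset Y$; then $\CalD_V = \CalD_Y|_V$ is compact in $\Dmod(V)$, so writing $\CalD_Y = \on{colim}_\alpha \CF_\alpha$ as a filtered colimit of compacts and restricting to $V$ forces $\CalD_V$ to appear as a retract of some $\CF_\alpha|_V$. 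Writing $\CF_\alpha = j_{U_\alpha,!}(\CF_{U_\alpha})$ and using that $i_y^*(\CalD_V) \ne 0$ for every $y \in V$, the vanishing of $i_y^*$ on $V \setminus U_\alpha$ forces $V \subset U_\alpha$, so that $\CalD_V$ is a direct summand of $\CF_{U_\alpha}|_V$ for some quasi-compact $U_\alpha \supset V$ such that $j_{U_\alpha,!}(\CF_{U_\alpha})$ is defined on all of $Y$.

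\smallskip

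The heart of the argument is then the singular-support claim: if $j_{U,!}(\CF_U)$ is defined as a compact object of $\Dmod(Y)$ and $\CalD_V$ is a direct summand of $\CF_U|_V$ for some quasi-compact open $V \subset U$, then no codim-$1$ generic point of $Y \setminus U$ can lie in the closure $\overline V$ in $Y$. Heuristically, $\CalD_V$ being a summand entails $\on{SS}(\CF_U|_V) \supseteq T^*V$, so $\CF_U$ is non-holonomic at every point of $V$; but across a codim-$1$ generic boundary point $y \in \partial U \cap \overline V$, the existence of a coherent $j_!$-extension forces $\CF_U$ to be holonomic in a punctured neighborhood of $y$, since the partially defined $j_!$ is defined on holonomic D-modules but fails to extend a non-holonomic summand such as $\CalD_{U \cap W}$ across a codim-$1$ boundary in a neighborhood $W$ of $y$ (the would-be extension is not coherent on $W$). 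Letting $V$ run through an exhaustion of $Y$ by quasi-compact opens and invoking filteredness of the indexing category to consolidate the resulting $U_\alpha$'s, one extracts a single quasi-compact open $U \subset Y$ whose complement $Y \setminus U$ has only components of codimension $\ge 2$, contradicting the hypothesis.

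\smallskip

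The main obstacle is making the previous paragraph rigorous: precisely characterizing why the existence of the partially defined $j_{U,!}$-extension of a coherent $\CF_U$ across a codim-$1$ generic boundary point forces $\CF_U$ to be holonomic there. This is exactly the content of the informal assertion indicated in the introductory material that coherent D-modules belonging to $\bC_0$ cannot have all of $T^*Y$ as their singular support; once this is established, it immediately yields $\CalD_Y \notin \bC_0$, hence $\bC_0 \ne \Dmod(Y)$, completing the proof.
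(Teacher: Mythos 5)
Your overall architecture matches the paper's: obstruct $\CalD_Y$ by showing that no coherent object of the subcategory generated by compacts can have all of $T^*(Y)$ as its singular support, using \propref{p:tautological compact} to write compacts as $j_!$-extensions from quasi-compact opens. But the step you yourself flag as ``the main obstacle'' --- why the existence of the compact, hence coherent, extension $j_{U,!}(\CF_U)$ across a codimension-one boundary forbids a summand like $\CalD_V$ with full singular support --- is the entire mathematical content of the theorem, and you have not supplied it; you end by citing the very assertion from the introduction that you are supposed to be proving. The missing argument (the paper's Lemmas \ref{l:counterexample1} and \ref{l:counterexample2}) runs as follows: using $SS(\CM)=SS(\BD^{\on{Verdier}}_Y(\CM))$, convert ``$\CM=j_!(j^*\CM)$ is compact'' into ``$\CN:=\BD^{\on{Verdier}}_Y(\CM)$ is coherent and satisfies $j_*\circ j^*(\CN)=\CN$''; shrink so that $Y$ is affine, $Z=Y-U$ is a smooth non-empty divisor, and $\CN$ lies in the heart (using t-exactness of $j_*$); then $SS(\CN)=T^*(Y)$ would produce an injection $\CalD_Y\hookrightarrow\CN$ and hence $j_*\circ j^*(\CalD_Y)\hookrightarrow j_*\circ j^*(\CN)=\CN$, which is impossible because the source is not coherent while $\CN$ is. None of these ingredients --- the duality swap, the reduction to the heart, the embedding of $\CalD_Y$ into a coherent module of full singular support, the non-coherence of $j_*j^*(\CalD_Y)$ --- appears in your text, and without them the ``heuristic'' paragraph is circular.

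A secondary problem is your closing consolidation step. For each $V$ in an exhaustion you obtain some quasi-compact $U_\alpha\supset V$, but these need not stabilize (their union is all of $Y$), so you cannot extract a single quasi-compact open whose complement has codimension $\ge 2$; moreover the hypothesis does not forbid codimension-one complements, only non-quasi-compact ones. Once the singular-support lemma is in place, no exhaustion is needed: fix one non-empty quasi-compact $V\subset Y$; every compact object of $\Dmod(Y)$ restricts to a compact object of $\Dmod(V)$ whose singular support is a \emph{proper} closed subset of $T^*(V)$ (by the lemma plus density of $T^*(V)$ in $T^*(Y)$); since $T^*(V)$ is irreducible, the class of compact objects with proper singular support is closed under cones and direct summands, hence contains everything Karoubi-generated by these restrictions (\corref{c:Karoubi}); but $\CalD_V$ is coherent, hence compact on the quasi-compact $V$, with $SS(\CalD_V)=T^*(V)$. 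This is \lemref{l:counterexample3} of the paper, and it replaces your retract-of-a-filtered-colimit manipulation entirely.
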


The theorem will be proved in Sect.~\ref{ss:counterexample-proof} below.
Here are two examples of schemes $Y$ satisfying the condition of Theorem~\ref{t:counterexample}.

\begin{example} Let $I$ be an infinite set and let $Y$ be the non-separated 
curve that one obtains from $\BA^{1}\times I$ by gluing together the open
subschemes $(\BA^{1}-\{ 0\})\times \{ i\}$, $i\in I$ (in other words, $Y$ is the affine line
with the point $0$ repeated $I$ times). 
\end{example}

\begin{example} Let $X_0$ be a smooth surface and $x_0\in X_0$ a point. Set $U_0=X-\{x_0\}$. Let 
$X_1$ be the blow-up of $X_0$ at $x_0$. Let $x_1\in X_1$ be a point on the exceptional divisor. 
We have an open embedding 
$$U_0=X-\{x_0\}\hookrightarrow X_1-\{x_1\}=U_1$$
such that $U_1-U_0$ is a divisor.
We can now apply the same process for $(X_1,x_1)$ instead of $(X_0,x_0)$. Thus we obtain
a sequence of schemes
$$U_0\hookrightarrow U_1\hookrightarrow U_2\hookrightarrow ...$$
Then $Y:=\bigcup\limits_i\, U_i$ satisfies the condition of Theorem~\ref{t:counterexample}.  Note that $Y$ is separated if $X_0$ is.
\end{example}

\ssec{Proof of \thmref{t:counterexample}}   \label{ss:counterexample-proof}
We will use facts from \secref{sss:compcoh} about the relation between compactness and coherence
(in the easier case of smooth schemes).

\sssec{}

Let $Y$ be a smooth scheme, $Z\subset Y$ a non-empty divisor, and 
$Y-Z=U\overset{j}\hookrightarrow Y$ be the complementary open embedding.  
   
\begin{lem}  \label{l:counterexample1}
Suppose that $\CN\in \Dmod(Y)$ is coherent and $j_*\circ j^*(\CN)=\CN$. Then the singular support $SS(\CN)\subset T^*(Y)$ is 
\emph{not} equal to $T^*(Y)$.
\end{lem}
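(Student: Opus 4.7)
My plan is to prove the contrapositive by showing that the bijectivity of $\cdot f$ on $\CN$ (for $f$ a local equation of $Z$) forces $\gr^F\CN$ to be annihilated by a non-zero element of $\CO_{T^*Y}$, for a suitably chosen good filtration. The question being local on $Y$, I can assume that $Y$ is affine and that $Z=V(f)$ for a non-zero-divisor $f\in \CO_Y$. The hypothesis $j_*j^*\CN\simeq \CN$ translates, at the level of underlying $\CO_Y$-modules, to the statement that $\cdot f:\CN\to \CN$ is a bijection; equivalently, $\CN$ carries a natural structure of module over $\CO_Y[f^{-1}]=j_*\CO_U$.

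Pick a coherent $\CO_Y$-submodule $\CN_0\subset \CN$ that $\CD_Y$-generates $\CN$ (existing by D-coherence), and set $F_p\CN:=F_p\CD_Y\cdot \CN_0$. The central technical claim is that for each $p\geq 1$ there exists an integer $N(p)\geq 0$ with
\[
f^{N(p)}\,F_p\CN\;\subset\; F_0\CN\;\subset\; F_{p-1}\CN,
\]
so that $f^{N(p)}$ acts by $0$ on $\gr^F_p\CN$. Granting this, finite generation of $\gr^F\CN$ over $\CO_{T^*Y}$ with generators in bounded filtration degree $\leq N_0$ yields a single $N$ such that $f^N$ annihilates the $\CO_{T^*Y}$-submodule $\bigoplus_{p\geq 1}\gr^F_p\CN$. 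A separate (and much easier) analysis of the degree-$0$ piece $\gr^F_0\CN=\CN_0$, which as a pullback of an $\CO_Y$-module along $\pi:T^*Y\to Y$ is annihilated by the ideal of the zero section in a neighborhood of every point not supporting a cotangent direction, combines with the previous annihilation to show that $\gr^F\CN$ is annihilated by a non-zero element of $\CO_{T^*Y}$ of the form $f^{N}\cdot \eta$ for some non-zero function $\eta$ coming from the conormal constraint. Hence $SS(\CN)=\supp(\gr^F\CN)\subset V(f^N\eta)\subsetneq T^*Y$, as desired.

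For the central claim, the key is to arrange $\CN_0$ so that the ascending sequence $\{f^{-k}\CN_0\}_{k\geq 0}$ of coherent $\CO_Y$-submodules of $\CN$ actually exhausts $\CN$: for then each coherent $F_p\CN$ lies in some $f^{-N(p)}\CN_0=f^{-N(p)}F_0\CN$ by the standard exhaustion argument for coherent submodules of a module filtered by coherent subsheaves, and multiplying through by $f^{N(p)}$ gives the claim. Arranging this exhaustion is equivalent to choosing $\CN_0$ with $\CN_0|_U=\CN|_U$ as $\CO_U$-modules, which requires $\CN|_U$ to be $\CO_U$-coherent.

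The main obstacle is therefore the proof that $\CN|_U$ is $\CO_U$-coherent. The input is that $\CN$ admits no $\CD$-quotient supported on $Z$ (an immediate consequence of $\CN\simeq j_*j^*\CN$, obtained by applying $j_*j^*$ to such a hypothetical quotient) together with the $\CD_Y$-coherence of $\CN$: one shows that if $\CN_0^U\subset \CN|_U$ is any coherent $\CO_U$-submodule $\CD_U$-generating $\CN|_U$, then any coherent $\CO_Y$-extension $\CN_0\subset \CN$ of $\CN_0^U$ automatically $\CD_Y$-generates $\CN$ (otherwise $\CN/\CD_Y\CN_0$ would be a nonzero D-quotient supported on $Z$); a careful analysis of this setup, together with the bijectivity of $\cdot f$ and D-coherence, then forces $\CN_0^U=\CN|_U$, whence $\CN|_U$ is $\CO_U$-coherent. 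This last reduction is the most delicate step of the argument; alternatively, one may bypass it by citing Kashiwara's coherence criterion for pushforwards under open embeddings, which identifies D-coherent modules of the form $j_*\CM$ with those $\CM$ that are specializable (in particular holonomic with regular singularities) along $Z$, whence $\CN$ itself is holonomic and the conclusion $\dim SS(\CN)=n<2n$ is immediate.
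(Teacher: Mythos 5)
There is a genuine gap, and it lies exactly where you flag "the most delicate step": the assertion that $\CN|_U$ is $\CO_U$-coherent is false in general, and with it both your central technical claim and the statement you are actually driving at. Your strategy is to exhibit a single nonzero $f^N\eta\in\CO_{T^*(Y)}$ killing $\gr^F\CN$, which would prove $SS(\CN)\subset \pi^{-1}(Z)\cup 0_{T^*(Y)}$ --- a conclusion strictly stronger than the lemma, and one that fails. Take $Y=\BA^2$ with coordinates $(x,y)$, $Z=\{x=0\}$, and $\CN=\CO_{\BA^1}[x^{-1}]\boxtimes\CalD_{\BA^1}$ (the external product of $j_*\CO_{\BG_m}$ in the $x$-direction with $\CalD$ in the $y$-direction). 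This $\CN$ is $\CalD_Y$-coherent (it is generated by $x^{-1}\boxtimes 1$), satisfies $j_*j^*\CN=\CN$, but $\CN|_U=\CO_{\BG_m}\boxtimes\CalD_{\BA^1}$ is nowhere $\CO$-coherent, no choice of $\CN_0$ makes $\{f^{-k}\CN_0\}$ exhaust $\CN$, and $SS(\CN)$ contains $0_{\BA^1}\times T^*\BA^1$, which lies neither over $Z$ nor in the zero section. The fallback you offer also reverses an implication: the correct coherence criterion says that $j_*\CM$ is $\CalD_Y$-coherent if and only if $\CM$ is \emph{specializable} along $Z$ (existence of a $b$-function), and holonomicity implies specializability but not conversely --- the example above is specializable and non-holonomic. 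So neither route closes the argument, and no argument can, since the intermediate statements are false.

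For comparison, the paper's proof is a short contradiction argument in the opposite direction: reduce to $Y$ affine, $Z$ smooth, $\CN$ in the heart (using t-exactness of $j_*$); if $SS(\CN)=T^*(Y)$ then $\CN$ is not a torsion module over the Noetherian domain $\Gamma(Y,\CalD_Y)$, so there is an injection $\CalD_Y\hookrightarrow\CN$; applying the left-exact functor $j_*j^*$ yields an injection $j_*j^*(\CalD_Y)\hookrightarrow j_*j^*(\CN)=\CN$, which is impossible because $j_*j^*(\CalD_Y)=\CalD_Y[f^{-1}]$ is not coherent while every $\CalD$-submodule of the coherent module $\CN$ is. If you want to salvage a filtration-theoretic argument, you must aim only at excluding the single component $T^*(Y)$ from $SS(\CN)$ (e.g.\ via the generic rank or the torsion-freeness criterion above), not at confining $SS(\CN)$ to $\pi^{-1}(Z)\cup 0_{T^*(Y)}$.
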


\begin{proof}
We can assume that $Y$ is affine and $Z$ is smooth. Since $j_*$ is t-exact we can also assume that $\CN$ is in
$\Dmod(\CN)^\heartsuit$. Suppose that $SS(\CN)=T^*(Y)$. Then there exists an injective map $\CalD_Y\hookrightarrow \CN$,
where $\CalD_Y$ is the D-module of differential operators on $Y$. We obtain an injective map 
$j_*\circ j^*(\CalD_Y)\hookrightarrow j_*\circ j^*(\CN)=\CN$. But $\CN$ is coherent while
$j_*\circ j^*(\CalD_Y)$ is not.
\end{proof}

\sssec{}

Let $Y$ be as in Theorem~\ref{t:counterexample} and $\CM\in \Dmod(Y)$ a compact object. Note that by 
Remark \ref{r:c coh non-qc}, $\CM$ is automatically coherent. 
 
\medskip

We claim: 
 
\begin{lem}  \label{l:counterexample2}
$SS(\CM)\ne T^*(Y)$.
\end{lem}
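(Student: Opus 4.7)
The plan is to find a divisor $D\subset Y$ for which $\CM=(j_{Y-D})_!\bigl((j_{Y-D})^*\CM\bigr)$ and then Verdier-dualize to put $\BD\CM$ into the hypothesis of \lemref{l:counterexample1}. First I would apply \propref{p:tautological compact} to write $\CM=(j_U)_!(\CM_U)$ for some quasi-compact open embedding $j_U:U\hookrightarrow Y$ and some $\CM_U\in\Dmod(U)^c$. By the hypothesis on $Y$, the set \eqref{e:codim1} is non-empty for this $U$, so $Y-U$ contains a codimension-1 irreducible closed subset $D\subset Y$, which since $Y$ is smooth and irreducible is an irreducible divisor.

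Since $D\subset Y-U$, one has $U\subset Y-D$, so $j_U$ factors as $U\overset{\tilde{j}}\hookrightarrow Y-D\overset{j_{Y-D}}\hookrightarrow Y$. By \lemref{l:*!}(b) applied to the pair of opens $U\subset Y-D$, one has $(j_{Y-D})^*\CM=\tilde{j}_!(\CM_U)$, and therefore
\[
\CM=(j_U)_!(\CM_U)=(j_{Y-D})_!\bigl(\tilde{j}_!(\CM_U)\bigr)=(j_{Y-D})_!\bigl((j_{Y-D})^*\CM\bigr).
\]
Applying Verdier duality, which intertwines $(j_{Y-D})_!$ with $(j_{Y-D})_{*}$ and commutes with $(j_{Y-D})^*$, one obtains $\BD\CM=(j_{Y-D})_{*}\bigl((j_{Y-D})^*\BD\CM\bigr)$. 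Since Verdier duality preserves coherence, $\BD\CM$ is a coherent D-module on $Y$.

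At this point \lemref{l:counterexample1} applied to $\BD\CM$ with the divisor $Z=D$ gives $SS(\BD\CM)\ne T^*Y$, and the Verdier-invariance of the singular support of a coherent D-module then yields $SS(\CM)=SS(\BD\CM)\ne T^*Y$.

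I do not anticipate any real obstacle: once the divisor $D$ is produced, the rest is a formal manipulation with the functors attached to an open embedding. The one point worth double-checking is that Verdier duality and the intertwining $\BD\circ(j_{Y-D})_!\simeq(j_{Y-D})_{*}\circ\BD$ are available in our non-quasi-compact setting for the coherent object $\CM$; this reduces to the standard quasi-compact statement recalled in \secref{sss:Verdier} by restriction to quasi-compact opens of $Y$.
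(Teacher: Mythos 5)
Your argument is correct and follows essentially the same route as the paper: \propref{p:tautological compact}, Verdier duality intertwining $j_!$ with $j_*$, \lemref{l:counterexample1}, and $SS(\CM)=SS(\BD^{\on{Verdier}}_Y(\CM))$. The only (equally valid) variation is that you enlarge $U$ to $Y-D$ so that the complement becomes a divisor, whereas the paper instead shrinks $Y$; just be sure to dispose of the trivial case $U=\emptyset$ (where $\CM=0$) before invoking the hypothesis that the set \eqref{e:codim1} is non-empty.
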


\begin{proof}
By \propref{p:tautological compact}, there exists a quasi-compact open 
$U\overset{j}\hookrightarrow Y$ such that $\CM=j_!(j^*(\CM))$ or equivalently, 
$$\BD^{\on{Verdier}}_Y(\CM)=j_*\circ j^*(\BD^{\on{Verdier}}_Y(\CM)).$$

We can assume that $U\ne\emptyset$ (otherwise $\CM=0$ and $SS(\CM)=\emptyset$). Then the set \eqref{e:codim1}
is non-empty, so after shrinking $Y$ we can assume that the set $Z:=Y-U$ is
a non-empty divisor. 

\medskip

Applying Lemma~\ref{l:counterexample1} to $\CN=\BD^{\on{Verdier}}_Y(\CM)$ we get $SS (\BD^{\on{Verdier}}_Y(\CM))\ne T^*(Y)$.
Finally, $SS(\CM)=SS (\BD^{\on{Verdier}}_Y(\CM))$.
\end{proof}

\sssec{}

Recall that the full subcategory of compact objects in a DG category $\bC$ 
is denoted by  $\bC^c$.

\begin{lem}  \label{l:counterexample3}
Let $\A\subset \Dmod(Y)$ be the DG subcategory generated by $\Dmod(Y)^c$.
If $\CM\in\A$ is coherent then $SS(\CM)\ne T^*(Y)$.
\end{lem}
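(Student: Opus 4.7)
The plan is to reduce the statement to Lemma \ref{l:counterexample2} by strengthening the hypothesis: I will argue that any coherent object of $\A$ is in fact compact in $\Dmod(Y)$. Once this is established, $SS(\CM)\ne T^*(Y)$ follows immediately from Lemma \ref{l:counterexample2}.

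First, since $\A$ is by definition the full subcategory of $\Dmod(Y)$ generated by $\Dmod(Y)^c$ under colimits, the tautological functor $\on{Ind}(\Dmod(Y)^c)\to\Dmod(Y)$ identifies $\A$ as a full subcategory, and I can write $\CM=\varinjlim_{\alpha\in I}\CM_\alpha$ as a filtered colimit of compact objects $\CM_\alpha\in\Dmod(Y)^c$. By \propref{p:tautological compact} each $\CM_\alpha$ has the form $(j_{U_\alpha})_!(\CN_\alpha)$ for some quasi-compact open $U_\alpha\subset Y$ and $\CN_\alpha\in\Dmod(U_\alpha)^c$. Using \lemref{l:*!}(b) I may enlarge each $U_\alpha$ monotonically along the index category, arranging $U_\alpha\subset U_\beta$ whenever $\alpha\to\beta$ in $I$.

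The main step is to produce a single quasi-compact open $U\subset Y$ with $\CM=(j_U)_!(\CM|_U)$; by \propref{p:tautological compact} this will exhibit $\CM$ as compact. For this I would use coherence of $\CM$: for each quasi-compact open $V\subset Y$, the restriction $\CM|_V\in\Dmod(V)$ is coherent, hence compact (since $V$ is QCA and has trivial automorphism groups). Then in the filtered colimit $\CM|_V=\varinjlim\CM_\alpha|_V$ inside the compactly generated DG category $\Dmod(V)$, the compact object $\CM|_V$ is a retract of some $\CM_{\alpha(V)}|_V$. One then tries to globalize these $V$-local retractions to obtain a uniform $\alpha_0$ — equivalently, to verify condition (2) or (3) of \lemref{l:tautological} for a fixed quasi-compact $U$.

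The main obstacle is precisely this globalization: since $Y$ is not quasi-compact, the exhausting family $\{V\}$ has no finite refinement, and the indices $\alpha(V)$ need not stabilize a priori. I expect the resolution to come from reformulating the obstruction via condition (2) of \lemref{l:tautological}: namely, controlling $\Hom(\CM,\CF)$ for $\CF$ supported outside a quasi-compact open, using that in the filtered system the terms $\Hom(\CM_\alpha,\CF)$ vanish as soon as $U_\alpha$ is disjoint from the support of $\CF$, and exploiting coherence of $\CM$ to promote this to the vanishing of the limit. Once compactness of $\CM$ is in hand, Lemma \ref{l:counterexample2} applies directly and yields $SS(\CM)\ne T^*(Y)$, completing the proof.
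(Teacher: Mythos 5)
There is a genuine gap: the intermediate claim you are aiming for --- that every coherent object of $\A$ is compact in $\Dmod(Y)$ --- is false, so the ``globalization'' step you correctly identify as the main obstacle cannot be carried out. Concretely, let $U_1\subset U_2\subset\cdots$ be quasi-compact opens exhausting $Y$ and, for each $n$, pick a nonzero $\CF_n\in\Dmod(U_n)^c$ supported on $U_n-U_{n-1}$. The object $\CM:=\bigoplus_n (j_{U_n})_!(\CF_n)$ lies in $\A$ (a direct sum of compacts) and is coherent, because the family is locally finite: on any quasi-compact open only finitely many summands are nonzero. But $\CM$ is not compact: by \propref{p:tautological compact} a compact object must be $j_!$-extended from a single quasi-compact open, whereas $\CM|_{U_N}$ contains the summand $(j_{U_N})_!(\CF_N)$, which is nonzero yet restricts to zero on $U_{N-1}$, so condition (3) of \lemref{l:tautological} fails for every quasi-compact open. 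In your language, the indices $\alpha(V)$ genuinely do not stabilize, and no amount of coherence will force them to.

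The repair is to observe that the conclusion $SS(\CM)\neq T^*(Y)$ can be tested on a single dense open, so one never needs global compactness. Fix a non-empty quasi-compact open $j:U\hookrightarrow Y$. Then $\CM|_U$ is coherent on the quasi-compact scheme $U$, hence compact, and it lies in the full subcategory $\bC\subset\Dmod(U)$ generated by $j^*(\Dmod(Y)^c)$ (apply the continuous functor $j^*$ to a presentation of $\CM$ as a colimit of compacts). By \corref{c:Karoubi}, $\bC^c=\bC\cap\Dmod(U)^c$ is Karoubi-generated by $j^*(\Dmod(Y)^c)$; each generator has singular support a proper closed conic subset of $T^*(U)$ by \lemref{l:counterexample2} combined with the density of $T^*(U)$ in $T^*(Y)$, and finitely many cones and direct summands cannot exhaust the irreducible variety $T^*(U)$. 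Hence $SS(\CM|_U)\neq T^*(U)$, and density of $T^*(U)$ yields $SS(\CM)\neq T^*(Y)$. This localization to $U$ is exactly the route the paper takes.
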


\begin{proof}
Let $U\overset{j}\hookrightarrow Y$ be a non-empty quasi-compact open subset.

\medskip

Let $\bC\subset \Dmod(U)$ be the full DG subcategory of $\Dmod(U)$ generated by 
$j^*(\Dmod(Y)^c)$. Since $j^*(\Dmod(Y)^c)\subset \Dmod(U)^c$, we have
$$\bC^c=\bC\cap \Dmod(U)^c,$$
and by \corref{c:Karoubi}, the latter is Karoubi-generated by $j^*(\Dmod(Y)^c)$. 

\medskip

This  observation, combined with  Lemma~\ref{l:counterexample2} and the fact that $T^*(U)$ 
is dense in $T^*(Y)$, implies that for any $\CN\in \bC^c$,
$$SS(\CN)\neq T^*(U).$$

\medskip

Now, $j^*(\CM)\in \bC\cap \Dmod_{\on{coh}}(U)$, and since $U$ is quasi-compact, we have 
$\Dmod_{\on{coh}}(U)=\Dmod(U)^c$. Hence, $j^*(\CM)\in \bC^c$, implying the assertion of the lemma.

\end{proof}

\begin{cor}   \label{c:counterexample}
The DG category $\A$ from Lemma~\ref{l:counterexample3} does not contain $\CalD_Y$.
\end{cor}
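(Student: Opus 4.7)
The plan is to argue that $\CalD_Y$ violates the conclusion of Lemma~\ref{l:counterexample3}. Specifically, I would check two things about $\CalD_Y$: it is coherent, and its singular support equals all of $T^*(Y)$. Combined with the lemma, these two facts immediately force $\CalD_Y\notin \A$, since membership in $\A$ together with coherence would give $SS(\CalD_Y)\neq T^*(Y)$.

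For coherence, I would invoke the local description: on any quasi-compact open $U\subset Y$ (and in particular on any affine open covering by smooth affine schemes), $\CalD_Y|_U\simeq \CalD_U$ is free of rank one as a left $\CalD_U$-module, so its restriction is a coherent D-module in the sense of \secref{sss:compcoh}. Since coherence is local in the smooth (indeed Zariski) topology by definition (Remark~\ref{r:c coh non-qc}), this means $\CalD_Y\in \Dmod_{\mathrm{coh}}(Y)$. For the singular support computation, I would again reduce to a local claim: on a smooth affine chart, $SS(\CalD_U)=T^*(U)$ is the standard fact that the characteristic variety of the structure sheaf of differential operators is the full cotangent bundle. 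Since the singular support is local on $Y$ and these charts cover $Y$, we get $SS(\CalD_Y)=T^*(Y)$.

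With these two inputs in hand, the proof of Corollary~\ref{c:counterexample} is a one-line consequence of Lemma~\ref{l:counterexample3}: if $\CalD_Y$ were an object of $\A$, the lemma would say $SS(\CalD_Y)\neq T^*(Y)$, contradicting the computation above. There is no real obstacle in this argument; the only point worth being slightly careful about is the local-to-global verification of coherence and of the singular support when $Y$ is not quasi-compact, but both follow trivially from locality of the respective notions.
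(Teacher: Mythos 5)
Your proposal is correct and is exactly the argument the paper intends: the corollary is stated as an immediate consequence of Lemma~\ref{l:counterexample3}, using the standard facts that $\CalD_Y$ is coherent (locally free of rank one over itself) and that $SS(\CalD_Y)=T^*(Y)$, both of which are local and hence unaffected by $Y$ failing to be quasi-compact. No gaps.
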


Theorem~\ref{t:counterexample} clearly follows from Corollary~\ref{c:counterexample}.

\appendix

\section{Preordered sets as topological spaces}  \label{s:preordered}
The material in this section is standard.

\ssec{Definition of the topology}

Given a preordered set $X$ we equip it with the following topology: a subset $U\subset X$ 
is said to be \emph{open} if for every $x\in U$ one has $\{y\in X|y\le x\}\subset U$.

\begin{lem} \label{l:who_is_who}\hfill
\begin{enumerate}
\item[(i)] A subset $F\subset X$ is closed if and only if for every $x\in F$ one has $\{y\in X|y\ge x\}\subset F$.
\item[(ii)] A subset $Z\subset X$ is locally closed if and only if 
\begin{equation}   \label{e:loc_closed}
\forall x_1,x_2\in Z \quad
\{y\in X|x_1\le y\le x_2\}\subset Z.
\end{equation}
\item[(iii)] For every subset $Y\subset X$ the topology on $Y$ corresponding to the induced preordering on $Y$ is induced by the topology on $X$.
\end{enumerate}
\end{lem}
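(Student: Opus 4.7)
The plan is to unwind the definition of openness in each case by direct manipulation of the order relation. For (i), I would simply take complements: $F$ is closed iff $X \setminus F$ is open, which by definition means that for every $x \notin F$, every $y \leq x$ also lies outside $F$. Taking the contrapositive, this says that if $y \in F$ and $x \geq y$ then $x \in F$, which is exactly the stated condition.

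For (ii), in the forward direction I would write a locally closed $Z$ as $U \cap F$ with $U$ open and $F$ closed; then for $x_1, x_2 \in Z$ and $x_1 \le y \le x_2$, applying openness of $U$ to $y \le x_2 \in U$ and closedness of $F$ to $y \ge x_1 \in F$ (using part~(i)) yields $y \in Z$. For the converse, I would produce an explicit decomposition by setting
\[
U := \{y \in X \mid \exists\, x \in Z,\, y \le x\}, \qquad F := \{y \in X \mid \exists\, x \in Z,\, y \ge x\},
\]
i.e.\ the downward and upward closures of $Z$. Transitivity of $\le$ makes $U$ open and (by~(i)) $F$ closed; the inclusion $Z \subset U \cap F$ is trivial, and the reverse inclusion $U \cap F \subset Z$ is precisely the hypothesis \eqref{e:loc_closed} applied to the two witnesses $x_1 \in F, x_2 \in U$ of $y \in U \cap F$.

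For (iii), the subspace topology on $Y$ has basic opens of the form $Y \cap U$ for $U$ open in $X$, while the intrinsic order topology on $Y$ consists of those $V \subset Y$ that are downward-closed within $Y$. One direction is immediate: if $V = Y \cap U$ and $x \in V$, $y \in Y$, $y \le x$, then $y \in U$ by openness of $U$, hence $y \in V$. For the other direction I would again use a downward-closure construction, setting $U := \{z \in X \mid \exists\, x \in V,\, z \le x\}$; then $U$ is open in $X$ by transitivity, and the identification $V = Y \cap U$ is the nontrivial content, using the openness of $V$ in the order topology on $Y$ to absorb the witness back into $V$.

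There is no real obstacle here; the only mildly non-formal step is recognizing in~(ii) and~(iii) that the downward and upward closures of the given subset are precisely the open/closed (respectively, open) sets one needs. Everything else is pure manipulation of $\le$.
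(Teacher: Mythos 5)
Your proof is correct and, for part~(ii), uses exactly the decomposition the paper does: $Z = U\cap F$ with $U$ the downward closure and $F$ the upward closure of $Z$, with the reverse inclusion $U\cap F\subset Z$ following from \eqref{e:loc_closed} applied to the two witnesses. The paper explicitly proves only (ii) and leaves (i) and (iii) to the reader; your arguments for those (complementation for (i), downward closure again for (iii)) are the intended routine verifications.
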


\begin{proof}
We will only prove (ii). Condition \eqref{e:loc_closed} holds for locally closed subsets because it holds for open and closed ones. Conversely, if $Z$ satisfies \eqref{e:loc_closed} then $Z$ has the following representation as $F\cap U$ with $F$ closed and $U$ open:
 \[
 F:=\bar Z=\{x\in X\, |\,\exists z\in Z :\: z\le x\}, \quad U:=\{x\in X\, |\,\exists z\in Z :\: z\ge x\}.
 \]
\end{proof}

\ssec{Continuous maps}

The following is also easy to see: 

\begin{lem} \hfill
Let $X,X'$ be preordered sets equipped with the above topology. 
Then a map $f:X\to X'$ is continuous if and only if it is monotone, i.e., $x_1\le x_2\Rightarrow f(x_1)\le f(x_2)$. \qed
\end{lem}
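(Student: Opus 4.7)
The plan is to use the fact that the topology on a preordered set admits a very simple basis: for each $x \in X$, the downward-closed set $U_x := \{y \in X : y \le x\}$ is itself open (by transitivity of $\le$), and it is the smallest open set containing $x$. Indeed, any open set containing $x$ must, by definition of the topology, contain all $y \le x$. This observation reduces both implications to an elementary unwinding of definitions.

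For the ``if'' direction, I would take an open set $U' \subset X'$, an arbitrary point $x \in f^{-1}(U')$, and any $y \le x$; monotonicity gives $f(y) \le f(x) \in U'$, and openness of $U'$ then forces $f(y) \in U'$. This shows $f^{-1}(U')$ satisfies the defining property of open sets in $X$, so $f$ is continuous.

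For the ``only if'' direction, I would assume $f$ is continuous and take $x_1 \le x_2$ in $X$. The set $U_{f(x_2)} = \{y' \in X' : y' \le f(x_2)\}$ is open in $X'$ by the observation above, so $f^{-1}(U_{f(x_2)})$ is open in $X$ and contains $x_2$. The definition of the topology on $X$ then forces $x_1 \in f^{-1}(U_{f(x_2)})$, i.e., $f(x_1) \le f(x_2)$, as desired.

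Since both directions are direct consequences of the description of open sets as downward-closed sets together with the existence of minimal open neighborhoods $U_x$, there is no real obstacle; the only thing to be careful about is distinguishing the preorder on $X$ from that on $X'$ in the notation, and invoking transitivity of the preorder to verify that $U_x$ is indeed open.
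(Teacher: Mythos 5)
Your proof is correct and is exactly the routine verification the paper omits (the lemma is stated with \qed and no argument): both directions follow from the description of open sets as downward-closed sets, with reflexivity ensuring $x_2\in f^{-1}(U_{f(x_2)})$ and transitivity ensuring $U_{f(x_2)}$ is open. Nothing to add.
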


\section{The Langlands retraction and coarsenings of the Harder-Narsimhan-Shatz stratification}   \label{s:Langlands}

In \secref{ss:recalling} we recall the definition of the Langlands retraction $\fL:\Lambda_G^\BQ\to \Lambda^{+,\BQ}_G$.

\medskip

Using this retraction, we define in \secref{ss:eta-str} a coarsening of the usual Harder-Narasimhan-Shatz stratification of $\Bun_G$ depending on the choice of $\eta\in \Lambda_G^{+,\BQ}$ (the usual stratification itself corresponds to $\eta=0$). 

\medskip

In  \secref{ss:deep inside} we show that if $\eta$ is  ``deep inside" 
$\Lambda_G^{+,\BQ}$ then all the strata of the corresponding stratification are contractive (and therefore truncative if $\mbox{char }k=0$). 
Combined with \propref{p:key} this immediately implies 
Theorem~\ref{t:2cotruncative} (see \secref{sss:proof_of_main} below).

\medskip

In \secref{ss:Relation} we explain the relation between the two proofs of Theorem~\ref{t:2cotruncative}.

\ssec{Recollections on the Langlands retraction}   \label{ss:recalling}
Equip $\Lambda_G^\BQ$ with the $\underset{G}\le$ ordering.
The following notion goes back to \cite[Sect.~4]{La}.


\begin{defn} \label{d:retraction}
The  \emph{Langlands retraction} $\fL:\Lambda_G^\BQ\to \Lambda^{+,\BQ}_G$ is defined as follows:
for $\lambda\in \Lambda_G^\BQ\,$, let $\fL (\lambda )$ be the least element of the set 
$\{\mu\in\Lambda^{+,\BQ}_G\;|\; \mu\underset{G}\ge\lambda \}$ in the sense of the $\underset{G}\le$ ordering.
\end{defn}

%
%

\sssec{}

The existence of the least element is not obvious; it was proved by R.P.~Langlands in \cite[Sect.~4]{La}.
The material from  \cite[Sect.~4]{La} is known under the name of ``Langlands' geometric lemmas".
We give a short review of it in [Dr]. In particular, we give there two proofs of the existence of the least element: J.~Carmona's ``metric" proof 
(see  \cite[Sections~2-3]{Dr}) and another one 
(see Sect. 4 of  \cite{Dr}, including Example 4.3).

\sssec{}

It is clear that the map $\fL:\Lambda_G^\BQ\to \Lambda^{+,\BQ}_G$ is an order-preserving retraction. 
The following description of the fibers of $\fL$ is given in \cite[Sect.~4]{La}; see also \cite[Cor.~5.3(iii)]{Dr}.


\begin{lem}   \label{e:fiber of L}
For any $\lambda\in  \Lambda^{+,\BQ}_G$ one has
\[
\fL^{-1}(\lambda)=\lambda+ \sum_{i\in I_{\lambda}}\BQ^{\le 0}\cdot\alpha_i\, , \quad \mbox{where} \quad
I_{\lambda}:=\{i\in\Gamma_G\,|\,\langle \lambda,\alpha_i\rangle=0\}.
\]
\end{lem}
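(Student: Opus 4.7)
The plan is to prove both inclusions of this set equality directly, working at the level of linear algebra on $\Lambda_G^\BQ$. Throughout I use the definition that $\fL(\mu)=\lambda$ means $\lambda$ is the $\leqG$-least element of $\{\nu\in\Lambda^{+,\BQ}_G : \nu\underset{G}\ge\mu\}$.

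For $\supseteq$: take $\mu=\lambda-\sum_{i\in I_\lambda}c_i\alpha_i$ with all $c_i\ge 0$. Then $\lambda\underset{G}\ge\mu$ is immediate, so I need only show $\lambda$ is the minimum of the set above. Given any dominant $\nu\underset{G}\ge\mu$, write $\nu-\lambda=\sum_i b_i\alpha_i$. For $i\notin I_\lambda$ the non-negativity $b_i\ge 0$ follows instantly from $\nu-\mu\in\Lambda_G^{pos,\BQ}$. For $i\in I_\lambda$ the key step is to exploit the dominance of $\nu$: since $\langle\lambda,\check\alpha_j\rangle=0$ for $j\in I_\lambda$, the condition $\langle\nu,\check\alpha_j\rangle\ge 0$ becomes $\sum_i b_i\langle\alpha_i,\check\alpha_j\rangle\ge 0$. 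Using $\langle\alpha_i,\check\alpha_j\rangle\le 0$ for $i\ne j$ together with the non-negativity of $b_i$ for $i\notin I_\lambda$, this reduces to the system $A_L\mathbf{b}_{I_\lambda}\ge 0$ componentwise, where $A_L$ is the Cartan matrix of the Levi whose simple roots are indexed by $I_\lambda$ and $\mathbf{b}_{I_\lambda}:=(b_i)_{i\in I_\lambda}$. The classical fact that the inverse of a finite-type Cartan matrix has non-negative rational entries --- equivalently, each fundamental coweight lies in the positive rational cone generated by the simple coroots --- then yields $b_i\ge 0$ for $i\in I_\lambda$ as well, so $\nu\underset{G}\ge\lambda$.

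For $\subseteq$: suppose $\fL(\mu)=\lambda$ and write $\lambda-\mu=\sum_i c_i\alpha_i$ with $c_i\ge 0$. If some $c_{i_0}>0$ occurred for an index $i_0\notin I_\lambda$, I would produce a strictly smaller dominant witness, contradicting the $\leqG$-minimality of $\lambda$. Namely, set $\lambda':=\lambda-\epsilon\alpha_{i_0}$ for sufficiently small $\epsilon>0$. Dominance of $\lambda'$ is checked by pairing with each $\check\alpha_j$: the case $j\ne i_0$ uses $\langle\alpha_{i_0},\check\alpha_j\rangle\le 0$, while $j=i_0$ uses the strict inequality $\langle\lambda,\check\alpha_{i_0}\rangle>0$, which holds precisely because $i_0\notin I_\lambda$. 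For $\epsilon\le c_{i_0}$ we also have $\lambda'\underset{G}\ge\mu$, yet $\lambda'\leqG\lambda$ with $\lambda'\ne\lambda$ contradicts the definition of $\fL(\mu)$. Hence $c_i=0$ for every $i\notin I_\lambda$.

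The only nontrivial input is the positivity of $A_L^{-1}$ used in the $\supseteq$ direction; everything else is a routine manipulation of the definitions together with the standard sign pattern of the Cartan matrix. Since the non-negativity of $A_L^{-1}$ is classical (and can be verified case-by-case for each finite-type Cartan matrix), I anticipate no serious obstacle.
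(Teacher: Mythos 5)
Your proof is correct. Note that the paper itself does not prove this lemma: it only cites Langlands \cite[Sect.~4]{La} and \cite[Cor.~5.3(iii)]{Dr}, so your argument supplies a self-contained verification where the paper defers to references. Both directions check out. In the $\supseteq$ direction, the reduction of the dominance of $\nu$ to the system $A_L\mathbf{b}_{I_\lambda}\ge 0$ (using $\langle\lambda,\check\alpha_j\rangle=0$ for $j\in I_\lambda$, the sign pattern $\langle\alpha_i,\check\alpha_j\rangle\le 0$ for $i\ne j$, and $b_i\ge 0$ for $i\notin I_\lambda$) is exactly the mechanism of the paper's \lemref{l:opyat}; indeed that direction amounts to applying \lemref{l:opyat} to $\nu-\lambda$ with $\Gamma_M=I_\lambda$, and your ``$A_L^{-1}\ge 0$'' input is the same classical fact the paper uses in the proofs of \lemref{l:2comb2} and \lemref{l:opyat} (the dominant cone of a finite root system lies in the nonnegative span of the simple coroots). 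A pleasant by-product of your $\supseteq$ argument is that it reproves, for $\mu$ in the stated translated cone, the existence of the least element of $\{\nu\in\Lambda_G^{+,\BQ}\,|\,\nu\underset{G}\ge\mu\}$ without invoking Langlands' existence theorem; for the $\subseteq$ direction that existence is built into the hypothesis $\fL(\mu)=\lambda$, and your perturbation $\lambda'=\lambda-\epsilon\alpha_{i_0}$ together with the antisymmetry of $\leqG$ (linear independence of the simple coroots) correctly yields the contradiction. One cosmetic remark: the pairing $\langle\lambda,\alpha_i\rangle$ in the definition of $I_\lambda$ is a typo in the paper for $\langle\lambda,\check\alpha_i\rangle$, and you have read it the intended way.
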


\ssec{The $\eta$-stratification}   \label{ss:eta-str}

\sssec{The $\eta$-shifted Langlands retraction}
Let $\eta\in \Lambda_G^{+,\BQ}$. The map
\begin{equation} \label{e:eta-shifted1}
\fL^+_{\eta}:\Lambda_G^{+,\BQ}\to (\eta+\Lambda_G^{+,\BQ}), 
\quad \fL^+_{\eta}(\lambda):= \fL(\lambda-\eta )+\eta
\end{equation}
is an order-preserving retraction (this follows from a similar property of $\fL$). By definition, 
\begin{equation} \label{e:eta-shifted2}
\forall\lambda'\in \Lambda_G^{+,\BQ}, \forall\lambda\in(\eta+\Lambda_G^{+,\BQ}) \quad\mbox{we have} \quad
\fL^+_{\eta}(\lambda')\leqG\lambda\Leftrightarrow \lambda'\leqG\lambda .
\end{equation}

\sssec{The $\eta$-stratification of $\Bun_G$\,}
In \secref{sss:summary} we defined the Harder-Narasimhan map $\HN: |\Bun_G (k)|\to\Lambda_G^{+,\BQ}$ and formulated three properties of it,
see \lemref{l:summary} (i-iii). Since the map $\fL^+_{\eta}:\Lambda_G^{+,\BQ}\to (\eta+\Lambda_G^{+,\BQ})$ is
order-preserving, the map
\begin{equation}   \label{e:HN_eta}
\HN_{\eta}:  |\Bun_G (k)|\to (\eta+\Lambda_G^{+,\BQ}), \quad \HN_{\eta}:=\fL^+_{\eta}\circ\HN
\end{equation}
has the same three properties. So the fibers of the map \eqref{e:HN_eta} 
form a stratification of $\Bun_G$ with quasi-compact strata. We call it the \emph{$\eta$-stratification} of 
$\Bun_G\,$; the corresponding strata are 
  \begin{equation}   \label{e:eta-stratum}
 \Bun_G^{(\lambda)_\eta}:=\underset{\lambda'\in( \fL^+_{\eta})^{-1}(\lambda )
 }\bigcup\, 
\Bun_G^{(\lambda')},\quad \lambda\in (\eta+\Lambda^{+,\BQ}_{G}).
\end{equation}
 It is clear that the $\eta$-stratification is coarser than the Harder-Narsimhan-Shatz stratification (the word "coarser" 
 is understood in the non-strict sense).

\sssec{Open substacks associated to the $\eta$-stratification} 
 Recall that for each $\lambda \in \Lambda_G^{+,\BQ}$ the open substack 
 $\Bun_G^{(\leq \lambda)}\subset\Bun_G$  is the union of the strata $$\Bun_G^{(\lambda')}, \quad
 \lambda'\leqG\lambda.$$ If one considers similar unions of the strata of the $\eta$-stratification then one gets ``essentially" the same class of open substacks of $\Bun_G\,$; more precisely, we claim that for each 
 $\lambda \in (\eta+\Lambda_G^{+,\BQ})$ one has
$$\underset{\lambda'\in (\eta+\Lambda_G^{+,\BQ}),\,\lambda'\leqG \lambda}\bigcup\, 
\Bun_G^{(\lambda')_{\eta}}=\Bun_G^{(\leq \lambda)}.$$
This follows from \eqref{e:eta-shifted2} and \eqref{e:eta-stratum}.

\sssec{Changing $\eta$} If $\eta'\in (\eta+\Lambda_G^{+,\BQ})$ then 
$\fL^+_{\eta'}\circ \fL^+_{\eta}=\fL^+_{\eta'}\,$, so the $\eta'$-stratification is coarser than the $\eta$-stratification.
If $\eta'$ and $\eta$ have the same image in $\Lambda_{G_{ad}}^{+,\BQ}$ then $\fL^+_{\eta'}=\fL^+_{\eta}$, so the $\eta'$-stratification 
and the $\eta$-stratification are the same.

\sssec{$(\fL^+_{\eta})^{-1}(\lambda )$ as a $P$-admissible set.} \label{ss:T is P-adm}
Let $\lambda\in(\eta+\Lambda^{+,\BQ}_{G})$. Let $P$ be the parabolic whose Levi quotient, $M$, corresponds to the following subset of 
$\Gamma_G\,$:
\begin{equation} \label{e:2parabolic_choice}
\Gamma_M=\{i\in\Gamma_G\,|\,\langle \lambda-\eta\, ,\check\alpha_i\rangle =0\}.
\end{equation} 
Equivalently,
\begin{equation} \label{e:3parabolic_choice}
\Gamma_G-\Gamma_M=\{i\in\Gamma_G\,|\,\langle \lambda\, ,\check\alpha_i\rangle >
\langle \eta\, ,\check\alpha_i\rangle\}.
\end{equation}

By \lemref{e:fiber of L}, the subset $T_{\lambda}:=(\fL^+_{\eta})^{-1}(\lambda )\subset \Lambda^{+,\BQ}_{G}$ has 
the following description in terms of the $\leqM$ ordering:
\begin{equation}  \label{e:T is P-adm}
T_{\lambda}=\{\lambda'\in  \Lambda_G^{+,\BQ}\,|\, \lambda'\leqM \lambda\}.
\end{equation}
So by \eqref{e:3parabolic_choice} and \lemref{l:2comb1}(a), the set $T_{\lambda}$ 
is $P$-admissible in the sense of Definition~\ref{d:P-adm}. Moreover, by \eqref{e:eta-stratum} and \eqref{e:T is P-adm}, the stratum $\Bun_G^{(\lambda)_\eta}$ is equal to the locally closed substack 
$\Bun_G^{(T_{\lambda})}$ defined in \secref{sss:Bungs} by formula~\eqref{e:Bungs}.

\ssec{The case where $\eta$ is ``deep inside" $\Lambda_G^{+,\BQ}$}  \label{ss:deep inside}
\sssec{Contractiveness of the strata}
Suppose now that 
\begin{equation}  \label{e;what is deep}
\langle\eta\, ,\check\alpha_i\rangle \ge c_i \quad\mbox{for all} \quad i\in\Gamma_G\, ,
\end{equation}
where the numbers $c_i\in\BQ^{\ge 0}$ are as in \propref{p:key}.
\begin{prop}
Under these conditions, all strata of the $\eta$-stratification are contractive.
\end{prop}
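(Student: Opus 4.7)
The plan is to reduce the claim directly to Proposition~\ref{p:key}(b) by verifying that, under the hypothesis \eqref{e;what is deep}, each stratum of the $\eta$-stratification arises from a $P$-admissible set satisfying the hypothesis \eqref{e:key} of that proposition. As noted in \secref{ss:T is P-adm}, for $\lambda \in \eta + \Lambda_G^{+,\BQ}$ the stratum $\Bun_G^{(\lambda)_\eta}$ coincides with $\Bun_G^{(T_\lambda)}$, where $T_\lambda = (\fL_\eta^+)^{-1}(\lambda)$ is already known to be $P$-admissible for the parabolic $P$ whose Levi $M$ is determined by \eqref{e:2parabolic_choice}. So the only thing left is the numerical estimate.

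The key step is the chain of inequalities. Fix $\lambda' \in T_\lambda$ and $i \in \Gamma_G - \Gamma_M$. By the characterization \eqref{e:T is P-adm}, we have $\lambda' \leqM \lambda$ with $\lambda' \in \Lambda_G^{+,\BQ}$, so Lemma~\ref{l:2comb1}(a) yields
\[
\langle \lambda', \check\alpha_i \rangle \;\geq\; \langle \lambda, \check\alpha_i \rangle .
\]
On the other hand, \eqref{e:3parabolic_choice} characterizes the condition $i \notin \Gamma_M$ as $\langle \lambda, \check\alpha_i \rangle > \langle \eta, \check\alpha_i \rangle$, and the standing hypothesis \eqref{e;what is deep} gives $\langle \eta, \check\alpha_i \rangle \geq c_i$. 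Chaining these together,
\[
\langle \lambda', \check\alpha_i \rangle \;\geq\; \langle \lambda, \check\alpha_i \rangle \;>\; \langle \eta, \check\alpha_i \rangle \;\geq\; c_i ,
\]
which is precisely condition \eqref{e:key} for $S = T_\lambda$.

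Having verified both the $P$-admissibility of $T_\lambda$ and the inequality \eqref{e:key}, Proposition~\ref{p:key}(b) immediately yields that $\Bun_G^{(T_\lambda)} = \Bun_G^{(\lambda)_\eta}$ is contractive, completing the proof. There is no real obstacle here: the whole content has been absorbed into Proposition~\ref{p:key} (whose proof in \secref{s:constr contr} is the substantial part), and the role of the Langlands retraction is just the bookkeeping device that upgrades ``deep enough" $\eta$ into a uniform inequality $\langle \lambda', \check\alpha_i\rangle > c_i$ on every $\lambda'$ belonging to the fiber $T_\lambda$. The mild subtlety worth noting is that this inequality holds not only on the ``tip" $\lambda$ of the stratum but on \emph{every} $\lambda' \in T_\lambda$, and this is exactly what Lemma~\ref{l:2comb1}(a) provides for the directions $i \notin \Gamma_M$ — the directions in which the set $T_\lambda$ can extend.
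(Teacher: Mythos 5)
Your proof is correct and follows exactly the paper's own argument: identify the stratum with $\Bun_G^{(T_\lambda)}$ via \secref{ss:T is P-adm}, verify condition \eqref{e:key} by the chain $\langle \lambda', \check\alpha_i\rangle \geq \langle \lambda, \check\alpha_i\rangle > \langle \eta, \check\alpha_i\rangle \geq c_i$ (using \lemref{l:2comb1}(a), \eqref{e:3parabolic_choice}, and \eqref{e;what is deep}), and invoke \propref{p:key}(b). Nothing to add.
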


\begin{proof}
Let $\lambda\in(\eta+\Lambda^{+,\BQ}_{G})$. By \secref{ss:T is P-adm}, 
$\Bun_G^{(\lambda)_\eta}=\Bun_G^{(T_{\lambda})}$, where $T_{\lambda}\subset \Lambda^{+,\BQ}_{G}$ is the $P$-admissible set defined 
by \eqref{e:T is P-adm}. So by \propref{p:key}(b), it suffices to check that for all 
$\lambda'\in T_{\lambda}$ and $i\in\Gamma_G-\Gamma_M$ one has $\langle\lambda'\, ,\check\alpha_i\rangle> c_i$.
If $\lambda'=\lambda$ this  is clear from  \eqref{e:3parabolic_choice} and \eqref{e;what is deep}. The general case follows by \lemref{l:2comb1}(a).
\end{proof}
 
\sssec{The characteristic 0 case}
Now assume that $\mbox{char }k=0$. Then by \propref{p:key}, one can take $c_i=\on{max}(0,2g-2)$, where $g$ is the genus of $X$. In this situation condition \eqref{e;what is deep} can be rewritten as
\begin{equation}   \label{e:eta_0}
\eta\in (\eta_0+ \Lambda^{+,\BQ}_{G}), \quad \mbox{where}\quad\eta_0:=\on{max}(0,2g-2)\cdot \rho 
\end{equation}
(as usual, $\rho$ denotes the half-sum of positive coroots). E.g., one can take $\eta=\eta_0\,$.

\medskip

In characteristic 0 we have the notion of truncativeness and the fact that
contractiveness 
implies truncativeness, see \corref{c:contraction principle}. Thus we get part (i) of the following

\begin{cor}  \label{c:deep_char0}
Suppose that $\mbox{char }k=0$ and 
\begin{equation}  \label{e:5what is deep}
\langle\eta\, ,\check\alpha_i\rangle \ge \on{max}(0,2g-2) \quad\mbox{for all} \quad i\in\Gamma_G\,.
\end{equation}
Then:

\smallskip  

\noindent{\em(i)} all strata of the $\eta$-stratification are truncative;

\smallskip

\noindent{\em(ii)}  the open strata of the $\eta$-stratification are co-truncative;
\end{cor}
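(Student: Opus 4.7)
For part (i), the strategy is essentially already in place. The Proposition just proved in \secref{ss:deep inside} asserts that, under hypothesis \eqref{e;what is deep} on the numbers $c_i$ from \propref{p:key}, every stratum of the $\eta$-stratification is contractive in the sense of \secref{sss:contractive}. In characteristic $0$ the last sentence of \propref{p:key} allows us to take $c_i := \max(0, 2g-2)$, so our hypothesis \eqref{e:5what is deep} is literally \eqref{e;what is deep} for this choice of $c_i$. The proof of (i) will then finish by invoking \corref{c:contraction principle}, which upgrades contractiveness to truncativeness (and this is the only place where the assumption $\mbox{char\,}k=0$ is really used, via the availability of D-modules and of the notion of truncativeness from Definition \ref{d:trunc2}).

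For part (ii), let $U := \Bun_G^{(\lambda)_\eta}$ be a stratum that is open in $\Bun_G$, and set $\CZ := \Bun_G - U$. According to the definition of co-truncativeness for non quasi-compact stacks (see \secref{ss:non-quasicompact}), what must be verified is that for every quasi-compact open $\oCY \subset \Bun_G$ the closed substack $\CZ \cap \oCY$ is truncative in $\oCY$. My plan is to display $\CZ \cap \oCY$ as a \emph{finite} union of truncative locally closed substacks, at which point \propref{p:trunc and strat} will conclude the argument in one step.

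To exhibit the finite union, I would use that any quasi-compact open $\oCY \subset \Bun_G$ is contained in some $\Bun_G^{(\leq \theta)}$, and that by \corref{c:open as a union} only finitely many Harder-Narasimhan-Shatz strata meet $\Bun_G^{(\leq \theta)}$. Since each $\eta$-stratum is by \eqref{e:eta-stratum} a union of Harder-Narasimhan-Shatz strata, at most finitely many $\eta$-strata $\Bun_G^{(\mu_1)_\eta}, \ldots, \Bun_G^{(\mu_n)_\eta}$ (all with $\mu_k \neq \lambda$) can meet $\oCY$, whence
\[
\CZ \cap \oCY \;=\; \bigcup_{k=1}^n \bigl(\Bun_G^{(\mu_k)_\eta} \cap \oCY\bigr).
\]
By part (i) each $\Bun_G^{(\mu_k)_\eta}$ is truncative in $\Bun_G$; by \lemref{l:base change trunc lc} applied to the open embedding $\oCY \hookrightarrow \Bun_G$, each intersection $\Bun_G^{(\mu_k)_\eta} \cap \oCY$ is truncative in $\oCY$; and then \propref{p:trunc and strat} assembles the finite union into a truncative substack.

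The argument is largely bookkeeping, and I do not anticipate a serious obstacle: the real content is buried in part (i) (hence in the Proposition of \secref{ss:deep inside} and in \propref{p:key}), while for (ii) the crucial input reduces to the quasi-compactness of $\Bun_G^{(\leq \theta)}$ and the fact that finitely many $\eta$-strata meet any quasi-compact open. The only mild subtlety is to confirm that by ``open stratum'' one means a stratum of the $\eta$-stratification which happens to be open in $\Bun_G$ (e.g.\ the $\eta$-shifted analogue of the semistable locus on each connected component, corresponding to a minimal value of $\HN_\eta$); but the proof is insensitive to a precise identification of such $\lambda$, and works for any stratum that happens to be open.
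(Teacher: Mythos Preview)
Your proof is correct and follows the same approach as the paper. The paper's proof of (ii) is the one-line ``The complement of an open stratum is a union of strata, so statement (ii) follows from \propref{p:trunc and strat}''; you have simply unpacked the implicit finiteness step (intersect with a quasi-compact open so that only finitely many $\eta$-strata survive, then apply \propref{p:trunc and strat}). One minor remark: invoking \lemref{l:base change trunc lc} for the open embedding $\oCY\hookrightarrow\Bun_G$ is unnecessary, since by the very definition of truncativeness for non quasi-compact stacks (\secref{ss:non-quasicompact}), truncativeness of $\Bun_G^{(\mu_k)_\eta}$ in $\Bun_G$ already means that $\Bun_G^{(\mu_k)_\eta}\cap\oCY$ is truncative in $\oCY$.
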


\begin{proof}
We have already proved (i). The complement of an open stratum is a union of strata, so statement (ii) follows from \propref{p:trunc and strat}.
\end{proof}

\sssec{Proof of Theorem~\ref{t:2cotruncative}}   \label{sss:proof_of_main}
We have to show that the substack $\Bun_G^{(\leq \eta)}\subset\Bun_G$
is co-truncative if $\langle \eta \, ,\check\alpha_i\rangle \geq 2g-2$ for all $i\in\Gamma_G\,$.
If $g=0$ then any open substack of $\Bun_G$ is co-truncative by Sect.~\ref{sss:finitely many points}. So we can assume that \eqref{e:5what is deep} holds. Then the statement follows from \corref{c:deep_char0}(ii) because
$\Bun_G^{(\leq \eta)}$ is an open stratum of the $\eta$-stratification; namely, it is
the stratum corresponding to $\eta\in (\eta+ \Lambda^{+,\BQ}_{G})$.
\qed

\ssec{Relation between the two proofs of Theorem~\ref{t:2cotruncative}}   \label{ss:Relation}
Suppose that $\mbox{char }k=0$ and $g\ge 1$.  

In the proof of Theorem~\ref{t:2cotruncative} given in 
\secref{ss:proof_modulo} we used substacks 
$\Bun_G^{(S_{\lambda})}\subset\Bun_G$, $\lambda\in \Lambda_G^{+,\BQ}$, where
\begin{equation}   \label{e:S and I}
S_{\lambda}:=\{\lambda'\in\Lambda_G^{+,\BQ}\,|\,\lambda'-\lambda\in\sum_{i\in I}\BQ^{\le 0}\cdot\alpha_i\},
\quad I:=\{i\in\Gamma_G\,|\,\langle \lambda,\alpha_i\rangle\le 2g-2\}.
\end{equation}
These substacks are related to the strata of the $\eta_0$-stratification, where $\eta_0$ is as in \eqref{e:eta_0}.
The relation is as follows. The stratum of the $\eta_0$-stratification corresponding to 
$\lambda\in (\eta_0+\Lambda_G^{+,\BQ})$ equals $\Bun_G^{(S_{\lambda})}$. On the other hand, for any
 $\lambda\in \Lambda_G^{+,\BQ}$ the stack $\Bun_G^{(S_{\lambda})}$ is a locally closed substack of the
 stratum of the $\eta_0$-stratification corresponding to $\fL^+_{\eta_0} (\lambda)$. (The proof of these facts is left to the reader.)

\section{A stacky contraction principle}   \label{s:stacky_contraction}
The main goal of this appendix is to prove Theorem~\ref{t:adjunctions} and 
Corollaries~\ref{c:adjunctions}-\ref{c:embedding}. 

Corollary~\ref{c:embedding} is a ``contraction principle", which is slightly more general than \propref{p:2contraction principle}. Theorem~\ref{t:adjunctions} and Corollary~\ref{c:adjunctions} are generalizations of the classical adjunction from \propref{p:adjointness}. 

\medskip

Convention: throughout this appendix algebras are always associative but not necessarily unital; coalgebras are coassociative but not necessarily counital.

\ssec{Idempotent algebras in monoidal categories}   \label{sss:idemp_algebras}
The notions of algebra and coalgebra make sense in any monoidal category. 

\begin{defn}     \label{d:idemp_algebra}
An algebra $A$ in a monoidal category is said to be \emph{idempotent} if the multiplication morphism $A\otimes A\to A$ is an isomorphism.
\end{defn}

\begin{rem}      \label{r:idemp_coalgebra}
The dual notion of idempotent coalgebra is, in fact, equivalent to that of idempotent algebra:
an isomorphism $m:A\otimes A\to A$ is an algebra structure if and only if 
$m^{-1}:A\to A\otimes A$ is a coalgebra structure.
\end{rem}

In any monoidal category $\CM$ the unit object ${\mathbf 1}_{\CM}$ has a canonical structure of idempotent algebra.

Here is another example. Any monoid $M$ can be considered as a monoidal category
(with $M$ as the set of objects and no morphisms other than the identities).
In particular, this applies to $\{ 0,1\}$ as a monoid with respect to multiplication. Clearly $0$ is an idempotent algebra in the monoidal category $\{ 0,1\}$.

\begin{rem}    \label{r:0}
The category of idempotent algebras in a monoidal category $\CM$ is equivalent to the
category of monoidal functors $F:\{ 0,1\}\to\CM$; namely, the idempotent algebra corresponding to $F$ is $F(0)$.
\end{rem}


Let $\CC$ be a category. By an \emph{idempotent functor} $\CC\to\CC$ we mean an idempotent algebra in the 
monoidal category of functors $\CC\to\CC$. One can think of idempotent functors
in terms of the two mutually inverse constructions below.

\medskip

\noindent \emph{Construction 1.} Suppose we have categories $\CA$ and $\CC$, functors
$\CA\overset{i}\longrightarrow\CC\overset{\pi}\longrightarrow\CA$, and an isomorphism
$f:\pi\circ i\iso\Id_{\CA}$. Set ${\mathbf 0}:=i\circ\pi$. Then 
${\mathbf 0}:\CC\to\CC$ is an idempotent functor: the isomorphism 
${\mathbf 0}\circ{\mathbf 0}\iso{\mathbf 0}$ is the composition  
\[
{\mathbf 0}\circ{\mathbf 0}=(i\circ\pi)\circ(i\circ\pi)=i\circ(\pi \circ i)\circ\pi\iso i\circ\Id_{\CA}\circ\pi
=i\circ\pi ={\mathbf 0}.
\]

\medskip

\noindent \emph{Construction 2.} Let $\CC$ be a category equipped with an 
idempotent functor ${\mathbf 0}:\CC\to\CC$. Equivalently, $\CC$ carries an action of the monoid
$\{ 0,1\}$ (see Remark~\ref{r:0}). Let $\CC^0$ be the category of $\{ 0,1\}$-equivariant functors
$\{ 0\}\to\CC$. Then the $\{ 0,1\}$-equivariant maps $\{ 0\}\mono\{ 0,1\}\to\{ 0\}$ induce functors
$\CC^0\overset{\pi}\longleftarrow\CC\overset{i}\longleftarrow\CC^0$ with $\pi\circ i=\Id_{\CC^0}\,$.
Equivalently, one can think of $\CC^0$ as the category of 
${\mathbf 0}$-modules\footnote{This notion makes sense because ${\mathbf 0}$ is an algebra in the monoidal category of functors 
$\CC\to\CC$. } $c$ in $\CC$ such that the morphism ${\mathbf 0}\cdot c\to c$ is an isomorphism;
then $i:\CC^0\to\CC$ is the forgetful functor and $\pi:\CC\to\CC^0$ is the ``free module" functor.

\medskip

It is easy to check that the above Constructions 1 and 2 are mutually inverse.\footnote{This is a ``baby case" of
the theory of retracts and idempotents in $\infty$-categories from \cite[Sect.~4.4.5]{Lu1}.}

\begin{rem}    \label{r:monad-comonad}
In the situation of Construction 1, the algebra ${\mathbf 0}$ is unital (or equivalently, is a monad)
if and only if $(\pi ,i)$ is an adjoint pair of functors with $f:\pi\circ i\iso\Id_{\CA}$ being one of the
adjunctions (in this case the unit of ${\mathbf 0}$ is the other adjunction). Similarly, ${\mathbf 0}$ is a counital 
coalgebra (or equivalently, a comonad) if and only if $(i,\pi )$ is an adjoint pair.
\end{rem}

\ssec{The monodromic subcategory}      \label{sss:monodromic}
Let $\CY$ be a QCA stack equipped with a $\BG_m$-action. Then one has the quotient stack $\CY/\BG_m$ and the canonical morphism $p:\CY\to\CY/\BG_m$.

\begin{defn}         \label{d:monodromic}
The \emph{monodromic subcategory} $\Dmod (\CY )_{\mmu}\subset\Dmod (\CY )$ is the subcategory 
generated by the essential image of $p^!:\Dmod (\CY/\BG_m)\to \Dmod (\CY)$ (or equivalently, by the essential image of $p^*$).
\end{defn}

(A more precise name for $\Dmod (\CY )_{\mmu}$ would be ``unipotently monodromic subcategory.")  


\begin{lem}        \label{l:monodromic}
If the $\BG_m$-action on $\CY$ is trivial then $\Dmod (\CY )_{\mmu}=\Dmod (\CY )$.
\end{lem}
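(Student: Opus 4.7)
The plan is to reduce the claim to the statement that the pullback functor $p^!$ is already essentially surjective, which I will establish by producing a retraction of $p$ at the level of stacks. Once every object is literally in the essential image of $p^!$, there is certainly nothing left to generate.

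First, I will identify $\CY/\BG_m$ with the product $\CY\times B\BG_m$. Since by hypothesis $\BG_m$ acts trivially on $\CY$, the quotient stack is canonically this product, and under the identification the morphism $p:\CY\to\CY/\BG_m$ becomes $\on{id}_\CY\times p_0$, where $p_0:\on{pt}\to B\BG_m$ is the tautological $\BG_m$-torsor.

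Next I will invoke the first-coordinate projection $q:\CY\times B\BG_m\to\CY$. This is a morphism of stacks satisfying $q\circ p=\on{id}_\CY$, so passing to $!$-pullbacks and using $(q\circ p)^!\simeq p^!\circ q^!$ yields
\[
\on{id}_{\Dmod(\CY)}\simeq p^!\circ q^!.
\]
Consequently, for any $\CF\in\Dmod(\CY)$ the canonical isomorphism $\CF\simeq p^!(q^!(\CF))$ exhibits $\CF$ as an object in the essential image of $p^!$, hence in $\Dmod(\CY)_{\mmu}$. This gives the desired equality $\Dmod(\CY)_{\mmu}=\Dmod(\CY)$.

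The argument has no serious obstacle: the only nontrivial input is the product decomposition $\CY/\BG_m\simeq\CY\times B\BG_m$ for a trivial action, which is a standard property of classifying quotient stacks. In particular, one does not actually need to invoke the QCA hypothesis on $\CY$ or \propref{p:D on prod}; the presentation of the identity functor as $p^!\circ q^!$ works for any prestack carrying a trivial $\BG_m$-action.
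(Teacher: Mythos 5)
Your proof is correct and follows the same route as the paper: the paper's proof consists precisely of the identification $\CY/\BG_m\simeq \CY\times(\on{pt}/\BG_m)$ under which $p$ becomes $\on{id}_\CY\times p_0$, and your use of the projection $q$ with $q\circ p=\on{id}_\CY$ (so that $p^!\circ q^!\simeq \on{id}$ and $p^!$ is essentially surjective) is just the natural spelling-out of the step the paper leaves implicit. Your closing observation that neither QCA nor \propref{p:D on prod} is needed is also accurate.
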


\begin{proof}
A trivialization of the $\BG_m$-action on $\CY$ identifies $\CY/\BG_m$ with $\CY\times(\on{pt}/\BG_m)$
and the morphism $p:\CY\to\CY/\BG_m$ with the canonical morphism $\CY=\CY\times\on{pt}\to\CY\times (\on{pt}/\BG_m)$.
\end{proof}

\ssec{Recollections on the renormalized direct image} 
Let $\pi:\CY_1\to \CY_2$ be a morphism of QCA stacks.
The \emph{renormalized direct image functor} 
$$\pi_{\blacktriangle}:\Dmod(\CY_1)\to \Dmod(\CY_2)$$
is defined in \cite[Sect.~9.3]{finiteness} to be the functor dual to $\pi^!:\Dmod(\CY_2)\to \Dmod(\CY_1)$ 
(dual in the sense of Sects.~\ref{sss:dual functor} and \ref{sss:2Verdier} of this article). By definition, 
$\pi_{\blacktriangle}$ is continuous.
One also has a not necessarily continuous de Rham direct image functor 
$\pi_{\dr,*}:\Dmod(\CY_1)\to \Dmod(\CY_2)$, see \cite[Sect.~7.4]{finiteness}. If 
$\pi_{\dr,*}$ is continuous then one has a canonical
isomorphism $\pi_{\blacktriangle}\iso\pi_{\dr,*}\,$, see \cite[Corollary 9.3.8]{finiteness}. 
For instance, this happens if 
the fibers of $\pi$ are algebraic spaces, see \cite[Corollary 10.2.5]{finiteness}.

\ssec{Formulation of the theorem}   \label{sss:unital_theorem}
Let $\CY$ be a QCA stack equipped with an action of the multiplicative monoid
$\BA^1$. Let ${\mathbf 0}\in\Mor (\CY,\CY)$ denote the endomorphism of $\CY$ corresponding to
$0\in\BA^1$. One has continuous functors 
${\mathbf 0}^!,{\mathbf 0}_{\rD}:\Dmod (\CY )\to\Dmod (\CY )$ (the functor ${\mathbf 0}_{\dr,*}$
is continuous only if it equals ${\mathbf 0}_{\rD}$).
By Remark~\ref{r:0}, ${\mathbf 0}$ is an idempotent algebra in the monoidal category 	$\Mor (\CY,\CY)$.
So the functors ${\mathbf 0}_{\rD}$ and ${\mathbf 0}^!$ are idempotent algebras in the monoidal category $\on{Funct}_{\on{cont}}(\Dmod (\CY ),\Dmod (\CY ))$ and also in
the monoidal category $\on{Funct}_{\on{cont}}(\Dmod (\CY )_{\mmu},\Dmod (\CY )_{\mmu})$
(here $\Dmod (\CY )_{\mmu}\subset\Dmod (\CY )$ is the monodromic subcategory, see 
Sect.~\ref{sss:monodromic}). By Remark~\ref{r:idemp_coalgebra}, one can also consider 
${\mathbf 0}_{\rD}$ and ${\mathbf 0}^!$ as idempotent coalgebras.

%

\begin{thm}      \label{t:unital}
The algebra ${\mathbf 0}_{\rD}\in\on{Funct}_{\on{cont}}(\Dmod (\CY )_{\mmu},
\Dmod (\CY )_{\mmu})$ is unital.
The coalgebra ${\mathbf 0}^!\in\on{Funct}_{\on{cont}}(\Dmod (\CY )_{\mmu},\Dmod (\CY )_{\mmu})$ is counital.
\end{thm}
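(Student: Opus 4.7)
The plan is as follows. By Remark~\ref{r:monad-comonad}, unitality of ${\mathbf 0}_{\rD}$ as an idempotent algebra (respectively, counitality of ${\mathbf 0}^!$ as an idempotent coalgebra) in $\on{Funct}_{\on{cont}}(\Dmod(\CY)_{\mmu},\Dmod(\CY)_{\mmu})$ is equivalent to exhibiting a natural transformation $\eta:\Id\to{\mathbf 0}_{\rD}$ (respectively, $\epsilon:{\mathbf 0}^!\to\Id$) satisfying the unit (respectively, counit) axiom. Moreover, the endofunctors ${\mathbf 0}^!$ and ${\mathbf 0}_{\rD}$ are mutually Verdier-dual by \propref{p:Lurie-duality} applied to ${\mathbf 0}:\CY\to\CY$, so unitality of ${\mathbf 0}_{\rD}$ and counitality of ${\mathbf 0}^!$ are exchanged under duality in the sense of Sect.~\ref{sss:dual functor}. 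It therefore suffices to construct $\eta$ and verify the unit axiom, and then derive $\epsilon$ by dualizing.

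To construct $\eta$, I would use the $\BA^1$-action itself. Consider the correspondence
\[
\CY\xleftarrow{p_2}\BA^1\times\CY\xrightarrow{a}\CY,
\]
where $a$ is the action and $p_2$ is the projection, and form the endofunctor $\Phi:=a_{\rD}\circ p_2^!$ on $\Dmod(\CY)$. Since $a$ restricts to $\id_{\CY}$ over $\{1\}\subset\BA^1$ and to ${\mathbf 0}$ over $\{0\}\subset\BA^1$, one expects $\Phi|_{t=1}\simeq\Id$ and $\Phi|_{t=0}\simeq{\mathbf 0}_{\rD}$. The closed embedding $\{0\}\hookrightarrow\BA^1$, together with the adjunction unit for pullback from $\{1\}$ to $\BA^1$, should produce a natural transformation $\Id\simeq\Phi|_{t=1}\to\Phi|_{t=0}\simeq{\mathbf 0}_{\rD}$, which is the desired $\eta$. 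The unit axiom
\[
{\mathbf 0}_{\rD}\xrightarrow{\eta\circ\id}{\mathbf 0}_{\rD}\circ{\mathbf 0}_{\rD}\xrightarrow{m}{\mathbf 0}_{\rD}\;=\;\id_{{\mathbf 0}_{\rD}}
\]
should then follow by base change along the Cartesian square that encodes the monoidal identity $0\cdot 1=0\cdot 0=0$ in $\BA^1$.

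The main obstacle is that the ``specialization from $1$ to $0$'' map between fibers of $\Phi$ along $\BA^1$ is not canonically defined on all of $\Dmod(\CY)$: producing it requires comparing the fibers over different points of $\BG_m\subset\BA^1$, which amounts to trivializing the $\BG_m$-action on $\CY$ in a suitable sense. Precisely this trivialization is furnished by the monodromic hypothesis: by \lemref{l:monodromic} applied to the canonical map $\CY\to\CY/\BG_m$, the subcategory $\Dmod(\CY)_{\mmu}$ is generated by objects pulled back from $\CY/\BG_m$, and on $\CY/\BG_m$ the induced action of the monoidal stack $\BA^1/\BG_m$ (in the sense of Remark~\ref{r:A1action3}, extended to the present generality) is available. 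The closed embedding $(\on{pt}/\BG_m)\hookrightarrow(\BA^1/\BG_m)$ of the ``zero'' point of $\BA^1/\BG_m$ then supplies the specialization map directly at the level of $\CY/\BG_m$, which descends, after pullback along $\CY\to\CY/\BG_m$, to the sought-after $\eta$ on $\Dmod(\CY)_{\mmu}$.

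Once $\eta$ is constructed, the verification of the unit axiom is essentially formal, using the idempotency ${\mathbf 0}\circ{\mathbf 0}={\mathbf 0}$ together with functoriality of base change; the statement for $\epsilon$ is then obtained mechanically by dualization, noting that ${\mathbf 0}_{\rD}^{\vee}\simeq{\mathbf 0}^!$ as endofunctors and that the duality of Sect.~\ref{sss:dual functor} exchanges unital algebras with counital coalgebras (since it reverses the direction of all morphisms, in particular of the unit $\Id\to A$ which becomes the counit $A^{\vee}\to\Id$). This is the baby case of \propref{p:adjointness} generalized to the setting of an arbitrary QCA stack carrying an $\BA^1$-action, and indeed the scheme-level argument given at the end of Sect.~\ref{ss:adjointness} (based on blowing up and properness) can be expected to work provided it is packaged correctly with the monodromic condition in place of a concrete contraction of a scheme.
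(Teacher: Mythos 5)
Your outline follows the same route as the paper (transport the unitality of the idempotent algebra $\underline 0\in\Dmod(\BA^1/\BG_m)$ through the monoidal action, then descend to the monodromic subcategory), but the two steps that carry all the content are asserted rather than proved. First, the existence of the unit. You need a map $\underline 1\to\underline 0$ in $\Dmod(\BA^1/\BG_m)$ satisfying the unit axiom for convolution, and the phrase ``the closed embedding $(\on{pt}/\BG_m)\hookrightarrow(\BA^1/\BG_m)$ supplies the specialization map directly'' hides exactly this point. Note that on $\BA^1$ itself no such map exists: $\CMaps(\underline 1,\underline 0)=\CMaps(k,1^!\underline 0)=0$ because the supports are disjoint, so $\underline 0$ is \emph{not} unital in $\Dmod(\BA^1)$. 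After quotienting by $\BG_m$ the mapping space becomes nonzero precisely because of the adjunction $(\pi_{\dr,*},i_{\dr,*})$ for $\{0\}/\BG_m\subset\BA^1/\BG_m$, i.e.\ the ``baby case'' of the contraction principle from Sect.~\ref{sss:baby_Springer}; one then defines $e:\underline 1\to\underline 0$ as the map whose image under $\pi_{\dr,*}$ is the identity, and checks the unit axiom by applying the monoidal functor $\pi_{\dr,*}$, which is an isomorphism on the relevant mapping spaces. This is Lemma~\ref{l:key_lemma}, and without it your $\eta$ does not exist.

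Second, the descent to $\Dmod(\CY)_{\mmu}$ is not a formal ``pullback''. The monodromic subcategory is \emph{generated by}, not equal to, the essential image of $p^!:\Dmod(\CY/\BG_m)\to\Dmod(\CY)$, and more importantly $\CMaps(p^!\CF_1,p^!\CF_2)$ is not $\CMaps(\CF_1,\CF_2)$: by Lemma~\ref{l:paradigm} it equals $\CMaps(\CF_1,A\otimes\CF_2)$ with $A=\varphi_*(k)[-2]\in\Dmod(B\BG_m)$. So knowing the adjunction identity downstairs on $\CY/\BG_m$ (where the monoidal stack $\BA^1/\BG_m$ genuinely acts, Corollary~\ref{c:particular}) gives the identity upstairs only after one shows it holds with an arbitrary twist by $\CM\in\Dmod_{\on{coh}}(B\BG_m)$; the paper deduces this from the case $\CM=\omega_{B\BG_m}$ using that $\Dmod_{\on{coh}}(B\BG_m)$ is generated by $\omega_{B\BG_m}$ (connectedness of $\BG_m$). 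A smaller point: deriving the counitality of ${\mathbf 0}^!$ ``mechanically by dualization'' is reasonable since ${\mathbf 0}_{\rD}$ is by definition dual to ${\mathbf 0}^!$, but you must also check that Verdier self-duality of $\Dmod(\CY)$ is compatible with the monodromic subcategory; the paper avoids this by running the parallel argument for the second adjoint pair.
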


A proof will be given in Sect.~\ref{sss:key_lemma}-\ref{sss:monodromic_proof}. A slightly different proof will be sketched in Sect.~\ref{sss:2monodromic_proof}.  

\begin{cor}   \label{c:unital}
If the $\BG_m$-action on $\CY$ is trivial
then the algebra $${\mathbf 0}_{\rD}\in\on{Funct}_{\on{cont}}(\Dmod (\CY ),\Dmod (\CY ))$$ is unital and
the coalgebra $${\mathbf 0}^!\in\on{Funct}_{\on{cont}}(\Dmod (\CY ),\Dmod (\CY ))$$ is counital.
\end{cor}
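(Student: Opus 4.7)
The plan is to deduce Corollary~\ref{c:unital} directly from Theorem~\ref{t:unital} by invoking Lemma~\ref{l:monodromic}. The $\BA^1$-action on $\CY$ restricts along $\BG_m\hookrightarrow \BA^1$ to the $\BG_m$-action appearing in the hypothesis of the corollary, and that hypothesis is exactly that this $\BG_m$-action is trivial. Lemma~\ref{l:monodromic} then tells us that the monodromic subcategory agrees with the full category, i.e.\ $\Dmod(\CY)_{\mmu}=\Dmod(\CY)$.

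Given this identification, the monoidal category $\on{Funct}_{\on{cont}}(\Dmod(\CY)_{\mmu},\Dmod(\CY)_{\mmu})$ in which Theorem~\ref{t:unital} asserts that ${\mathbf 0}_{\rD}$ is a unital algebra (and ${\mathbf 0}^!$ is a counital coalgebra) coincides canonically with $\on{Funct}_{\on{cont}}(\Dmod(\CY),\Dmod(\CY))$, the monoidal category appearing in the corollary. The idempotent algebra/coalgebra structures on ${\mathbf 0}_{\rD}$ and ${\mathbf 0}^!$ in $\on{Funct}_{\on{cont}}(\Dmod(\CY),\Dmod(\CY))$ --- which come from the idempotent algebra structure on the endomorphism ${\mathbf 0}\in \Mor(\CY,\CY)$, as recalled in Sect.~\ref{sss:unital_theorem} --- transport to exactly the ones considered in Theorem~\ref{t:unital}, so the unit and counit furnished by the theorem give the desired unit and counit.

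There is nothing further to verify, and no step I anticipate as a real obstacle: all the work is concentrated in the proof of Theorem~\ref{t:unital} itself (to be carried out in Sects.~\ref{sss:key_lemma}--\ref{sss:monodromic_proof}), and Corollary~\ref{c:unital} is merely the specialization of that theorem to the case when the monodromic subcategory happens to exhaust all of $\Dmod(\CY)$.
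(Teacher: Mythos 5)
Your proposal is correct and matches the paper's own proof exactly: the paper also deduces Corollary~\ref{c:unital} from Theorem~\ref{t:unital} together with Lemma~\ref{l:monodromic}, which identifies $\Dmod(\CY)_{\mmu}$ with $\Dmod(\CY)$ when the $\BG_m$-action is trivial. Your write-up simply makes explicit the (routine) compatibility of the algebra/coalgebra structures under this identification.
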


\begin{proof}
Use Theorem~\ref{t:unital} and Lemma~\ref{l:monodromic}.
\end{proof}

\ssec{Reformulation in terms of adjunctions}    \label{sss:reformulation_adjunctions}
Let $\CY$ 
be as in Sect.~\ref{sss:unital_theorem}. In particular, the submonoid $\{ 0, 1\}\subset\BA^1$ acts on $\CY$. Define $\CY^0$ to be the stack of $\{ 0, 1\}$-equivariant maps $\{ 0 \}\to\CY$. Equivalently, for any test scheme $S$, the groupoid $\CY^0 (S)$ is
obtained from $\CY (S)$ using Construction 2 from Sect.~\ref{sss:idemp_algebras}.  It is clear that the stack $\CY^0$ is QCA.

The $\{ 0,1\}$-equivariant maps $\{ 0,1\}\to\{ 0\}\mono\{ 0,1\}$ induce morphisms
\begin{equation}    \label{e:the_0_diagram}
\CY\overset{i}\longleftarrow\CY^0\overset{\pi}\longleftarrow\CY, \quad\quad
\pi\circ i=\Id_{\CY^0},\quad i\circ\pi ={\mathbf 0}.
\end{equation}
The $\BA^1$-action on $\CY$ induces an $\BA^1$-action on the diagram 
\eqref{e:the_0_diagram}. The $\BA^1$-action on $\CY^0$ is canonically trivial (this follows from the identity 
$\lambda\cdot 0=0$  in $\BA^1$). So $\Dmod (\CY^0)_{\mmu}=\Dmod (\CY^0)$.

\begin{example}   \label{ex:real_life}
Let $\CY$ be the stack $\Bun_{P^-}$ equipped with the $\BA^1$-action from Sect~\ref{ss:A1action}.
Then $\CY^0=\Bun_M$ and diagram \eqref{e:the_0_diagram} identifies with diagram \eqref{e:3the_0_diagram}.
\end{example}

\begin{rem}     \label{r:representability_of_i}
Since $\pi\circ i=\Id_{\CY^0}$ the morphism $i$ is representable\footnote{Example~\ref{ex:real_life} shows that $i$ is not necessarily a monomorphism. Simpler example: let $G$ be an affine algebraic group equipped with an  $\BA^1$-action and set
$\CY:=\on{pt}/G$, then $\CY^0=\on{pt}/G^0$, where $G^0\subset G$ is the subgroup of $\BA^1$-fixed points.} (i.e., its fibers are algebraic spaces rather than stacks). So the renormalized direct image functor $i_{\rD}$ equals the 
``usual" direct image $i_{\dr,*}\,$.
\end{rem}

By Remarks~\ref{r:monad-comonad} and \ref{r:representability_of_i}, one can reformulate Theorem~\ref{t:unital} and Corollary~\ref{c:unital} as follows.

\begin{thm}      \label{t:adjunctions}

The functors 
\begin{equation}
\pi_{\rD}:\Dmod (\CY )_{\mmu}\rightleftarrows \Dmod (\CY^0 ):i_{\dr,*}\, ,\quad\quad 
i^!:\Dmod (\CY )_{\mmu} \rightleftarrows \Dmod (\CY^0 ):\pi^!
\end{equation}
form adjoint pairs with the adjunctions  $\pi_{\rD}\circ i_{\dr,*}\iso\Id_{\Dmod (\CY^0 )}$ and 
$\,\Id_{\Dmod (\CY^0 )}\iso  i^!\circ \pi^!$ coming from the isomorphism $\pi\circ i\iso\Id_{\CY^0}\,$. 
\end{thm}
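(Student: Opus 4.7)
The plan is to derive Theorem \ref{t:adjunctions} from Theorem \ref{t:unital} by unwinding the correspondence between (co)monads and adjunctions recorded in Remark \ref{r:monad-comonad}. This is essentially the content of the sentence preceding the statement, so the proof is a matter of carefully matching data. First, by Remark \ref{r:representability_of_i}, the morphism $i$ is representable (from $\pi \circ i = \Id_{\CY^0}$) and quasi-compact, so $i_{\rD} = i_{\dr,*}$, whence ${\mathbf 0}_{\rD} = (i \circ \pi)_{\rD} = i_{\dr,*} \circ \pi_{\rD}$ as endofunctors of $\Dmod(\CY)_{\mmu}$. Dually, using contravariance of $(-)^!$ together with Remark \ref{r:idemp_coalgebra}, one has ${\mathbf 0}^! = \pi^! \circ i^!$, and the idempotent algebra structure on ${\mathbf 0}$ translates into an idempotent coalgebra structure on ${\mathbf 0}^!$.

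Next, I would apply Remark \ref{r:monad-comonad} twice. For the first adjoint pair, I work in Construction 1 with $\CA = \Dmod(\CY^0)$, $\CC = \Dmod(\CY)_{\mmu}$, and the functors of the construction taken to be $i_{\dr,*}$ and $\pi_{\rD}$; the required isomorphism $\pi_{\rD} \circ i_{\dr,*} \simeq \Id_{\Dmod(\CY^0)}$ is obtained by applying $(-)_{\rD}$ to $\pi \circ i \simeq \Id_{\CY^0}$. Then the unital algebra assertion of Theorem \ref{t:unital} for ${\mathbf 0}_{\rD}$ is, by the remark, equivalent to the adjoint pair $\pi_{\rD} \dashv i_{\dr,*}$ with the prescribed counit. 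For the second adjoint pair, I apply the remark with the roles played by $\pi^!$ and $i^!$ in place of $i_{\dr,*}$ and $\pi_{\rD}$; the required isomorphism $i^! \circ \pi^! \simeq \Id$ comes from $(\pi \circ i)^! \simeq \Id$. The counital coalgebra assertion of Theorem \ref{t:unital} for ${\mathbf 0}^!$ then gives the adjunction $\pi^! \dashv i^!$ with the prescribed counit, matching the data recorded in Theorem \ref{t:adjunctions}.

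The main obstacle, I expect, lies not in the reformulation above but in Theorem \ref{t:unital} itself, whose proof is deferred to Sects.~\ref{sss:key_lemma}--\ref{sss:monodromic_proof}. The nontrivial part there is constructing the monad unit $\Id_{\Dmod(\CY)_{\mmu}} \to {\mathbf 0}_{\rD}$ (and dually the comonad counit ${\mathbf 0}^! \to \Id$), using the $\BA^1$-action on $\CY$ together with the crucial restriction to the monodromic subcategory; on the full $\Dmod(\CY)$ the analogous construction would involve the non-continuous functor $\pi_{\dr,*}$ and would not yield a continuous adjunction, which both explains the necessity of passing to $\Dmod(\CY)_{\mmu}$ and indicates where the real difficulty will be encountered.
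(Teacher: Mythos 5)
Your reformulation step is correct, and it is exactly the paper's own: the sentence preceding the theorem states that Theorem~\ref{t:adjunctions} is obtained from Theorem~\ref{t:unital} by Remarks~\ref{r:monad-comonad} and \ref{r:representability_of_i}, and your unwinding of Construction 1 and of the identifications ${\mathbf 0}_{\rD}=i_{\dr,*}\circ\pi_{\rD}$, ${\mathbf 0}^!=\pi^!\circ i^!$ matches that. The problem is what you do next: you defer the entire substance to ``Theorem~\ref{t:unital} itself, whose proof is deferred.'' But Theorem~\ref{t:unital} is not an independently established input. In Sect.~\ref{sss:monodromic_proof} the paper proves Theorem~\ref{t:adjunctions} first (from Corollary~\ref{c:particular}) and then remarks that Theorem~\ref{t:unital} follows from it. Since the two statements are equivalent reformulations of one another, reducing \ref{t:adjunctions} to \ref{t:unital} proves nothing; completing your argument by citing the paper's proof of \ref{t:unital} would be circular. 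What is actually missing is the construction of the unit $\on{Id}\to{\mathbf 0}_{\rD}$ and counit ${\mathbf 0}^!\to\on{Id}$, which the paper carries out in three steps: (i) Lemma~\ref{l:key_lemma}, that the idempotent algebra $\underline 0\in\Dmod(\BA^1/\BG_m)$ under convolution is unital (resting on the elementary adjunction of Sect.~\ref{sss:baby_Springer}); (ii) the convolution action of $\Dmod(\BA^1/\BG_m)$ on $\Dmod(\CY)$ for a stack with an $\BA^1/\BG_m$-action, giving Corollary~\ref{c:particular}; (iii) descent from $\CY/\BG_m$ (which carries an $\BA^1/\BG_m$-action) to the monodromic subcategory of $\Dmod(\CY)$ via Lemma~\ref{l:paradigm}, using that $\Dmod_{\on{coh}}(B\BG_m)$ is generated by $\omega_{B\BG_m}$. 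None of this appears in your proposal.

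A secondary point: your diagnosis of why the monodromic restriction is needed — that on all of $\Dmod(\CY)$ one would be forced to use the non-continuous $\pi_{\dr,*}$ — is not the right reason. The theorem is stated with the renormalized direct image $\pi_{\rD}$, which is continuous by definition on all of $\Dmod(\CY)$; continuity is not the obstruction. The adjunction $\pi_{\rD}\dashv i_{\dr,*}$ genuinely fails on non-monodromic objects (already for $\CY=\BA^1$ with the standard action and $\CY^0=\{0\}$, test it against a Kummer-type object), and the role of monodromicity in the proof is structural: the monodromic subcategory is by definition generated by pullback along $p:\CY\to\CY/\BG_m$, which is what allows the reduction to the $\BA^1/\BG_m$-equivariant situation where Lemma~\ref{l:key_lemma} applies.
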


\begin{cor}   \label{c:adjunctions}
If the $\BG_m$-action on $\CY$ is trivial 
then the functors 
\begin{equation}
\pi_{\rD}:\Dmod (\CY )\rightleftarrows \Dmod (\CY^0 ):i_{\dr,*}\, ,\quad\quad 
i^!:\Dmod (\CY )\rightleftarrows\Dmod (\CY^0 ):\pi^!
\end{equation}
form adjoint pairs with the adjunctions  $\pi_{\rD}\circ i_{\dr,*}\iso\Id_{\Dmod (\CY^0 )}$ and 
$\,\Id_{\Dmod (\CY^0 )}\iso  i^!\circ \pi^!$ coming from the isomorphism $\pi\circ i\iso\Id_{\CY^0}$.
\end{cor}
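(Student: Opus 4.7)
The plan is to deduce \corref{c:adjunctions} as an immediate specialization of \thmref{t:adjunctions}. First I would note that the $\BA^1$-monoid action on $\CY$ restricts along $\BG_m \subset \BA^1$ to a $\BG_m$-action on $\CY$, and it is with respect to this induced $\BG_m$-action that the monodromic subcategory $\Dmod(\CY)_{\mmu} \subset \Dmod(\CY)$ appearing in \thmref{t:adjunctions} is defined (see Definition~\ref{d:monodromic}).

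Under the hypothesis of \corref{c:adjunctions} that the induced $\BG_m$-action on $\CY$ is trivial, \lemref{l:monodromic} yields the equality $\Dmod(\CY)_{\mmu} = \Dmod(\CY)$. Substituting this identification into the conclusion of \thmref{t:adjunctions} immediately produces the claim of \corref{c:adjunctions}: the continuous functors $\pi_\rD$ and $i_{\dr,*}$ form an adjoint pair between $\Dmod(\CY)$ and $\Dmod(\CY^0)$, as do $i^!$ and $\pi^!$, with the same unit/counit data coming from the isomorphism $\pi \circ i \simeq \Id_{\CY^0}$ supplied by diagram \eqref{e:the_0_diagram}.

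There is essentially no obstacle in this deduction; all of the substantive work is contained in \thmref{t:adjunctions} (equivalently, in \thmref{t:unital}). The one minor check I would want to carry out carefully is that the unit and counit morphisms appearing in the statement of \corref{c:adjunctions} agree, under the identification $\Dmod(\CY)_{\mmu} = \Dmod(\CY)$, with the corresponding natural transformations produced by \thmref{t:adjunctions}. This is automatic, since both sets of adjunction data are built from the same morphisms $i, \pi$ of diagram \eqref{e:the_0_diagram} and the same identity $\pi \circ i = \Id_{\CY^0}$; no additional compatibility needs to be verified.
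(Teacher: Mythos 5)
Your deduction is correct and is exactly the paper's own argument: the corollary is the reformulation (via Remarks~\ref{r:monad-comonad} and \ref{r:representability_of_i}) of Corollary~\ref{c:unital}, whose proof is precisely ``use Theorem~\ref{t:unital} and Lemma~\ref{l:monodromic}'', i.e., the triviality of the $\BG_m$-action forces $\Dmod(\CY)_{\mmu}=\Dmod(\CY)$ and Theorem~\ref{t:adjunctions} specializes to the claim. Your closing remark that the adjunction data match because both are built from the same $i$, $\pi$, and $\pi\circ i\simeq\Id_{\CY^0}$ is also the right (and only) compatibility to note.
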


\begin{cor}  \label{c:embedding}
Suppose that the $\BG_m$-action on $\CY$ is 
trivial and the morphism $i:\CY^0\to\CY$ is a composition of an almost-isomorphism\footnote{See 
Definition~\ref{d:almost-iso}.} 
$\CY^0\to\CZ$ and a locally closed embedding $\CZ\mono\CY$. Then the substack $\CZ\subset\CY$ is truncative.
\end{cor}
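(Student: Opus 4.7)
The plan is to deduce the truncativeness of $\CZ\subset\CY$ directly from \corref{c:adjunctions}, using the fact that an almost-isomorphism in characteristic zero induces an equivalence of D-module categories.

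Since the $\BG_m$-action on $\CY$ is trivial, \corref{c:adjunctions} applies and gives that the functor $i^!:\Dmod(\CY)\to\Dmod(\CY^0)$ admits the continuous right adjoint $\pi^!$. By \propref{p:existence of adjoint}, this is equivalent to the statement that $i^!$ preserves compact objects: $i^!(\CF)\in\Dmod(\CY^0)^c$ for every $\CF\in\Dmod(\CY)^c$.

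Using the given factorization $i=k\circ j$, we have $i^!=j^!\circ k^!$, so $j^!(k^!(\CF))=i^!(\CF)$ lies in $\Dmod(\CY^0)^c$ whenever $\CF\in\Dmod(\CY)^c$. The key observation is now that in characteristic zero the almost-isomorphism $j:\CY^0\to\CZ$ is a finite universal homeomorphism that induces an isomorphism on the underlying reduced substacks (a finite universal homeomorphism of reduced schemes in characteristic zero is automatically an isomorphism). Since D-modules, in the sense of crystals, depend only on the underlying reduced substack, the functor $j^!:\Dmod(\CZ)\to\Dmod(\CY^0)$ is an equivalence of DG categories. In particular it reflects compactness, so $k^!(\CF)\in\Dmod(\CZ)^c$. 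Hence $k^!$ preserves compactness, which by Definition~\ref{d:trunc2} means exactly that $\CZ\subset\CY$ is truncative.

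The main obstacle will be the last claim, that $j^!$ is an equivalence when $j$ is a genuine almost-isomorphism. When $j$ is an honest isomorphism the point is automatic, and this is already the case in the intended application to \propref{p:Z_is_contractive} by \lemref{l:3the_square}(v) combined with \remref{r:automatic}. For a general almost-isomorphism, one needs to invoke the insensitivity of the D-module category to nilpotent thickenings; this follows from the definition of D-modules via the de Rham stack together with the equality of de Rham stacks for an Artin stack and its underlying reduced substack.
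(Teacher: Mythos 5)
Your overall route is the one the paper intends (the corollary is stated without proof, immediately after \corref{c:adjunctions}): since the $\BG_m$-action is trivial, $i^!$ admits the continuous right adjoint $\pi^!$, hence preserves compactness by \propref{p:existence of adjoint}, and one then wants to transfer this along the factorization $i^!=j^!\circ k^!$ to conclude that $k^!$ preserves compactness, which is Definition~\ref{d:trunc2}. Those first steps are fine.

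The gap is in the assertion that the almost-isomorphism $j:\CY^0\to\CZ$ induces an equivalence $j^!$, and more precisely in both justifications you offer for it. First, it is \emph{false} that a finite universal homeomorphism of reduced schemes in characteristic zero is an isomorphism: the normalization $\BA^1\to\{y^2=x^3\}$, $t\mapsto(t^2,t^3)$, is finite, surjective and universally injective between reduced schemes over $\BQ$, satisfies Definition~\ref{d:almost-iso}, and is not an isomorphism. The paper itself does not treat ``almost-isomorphism $\Rightarrow$ isomorphism in characteristic $0$'' as automatic: that is exactly the content of Theorem~\ref{t:reduction theory}(1$'$) and of \lemref{l:a for c''_i}/\remref{r:automatic}, which are proved by monomorphism/tangent-space arguments specific to those situations. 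Second, the fallback via ``insensitivity of D-modules to nilpotent thickenings'' does not apply either: in the cusp example both source and target are already reduced, and the induced map of de Rham prestacks is \emph{not} an isomorphism (the identity map of the cusp, viewed as a reduced test scheme, does not lift to $\BA^1$ because the normalization does not split), so nil-invariance of crystals is not the relevant input. The statement you actually need --- that $j^!$, with inverse $j_{\dr,*}$, is an equivalence for any finite, surjective, universally injective morphism --- is true, but it is a genuinely stronger property of crystals (descent along finite universal homeomorphisms) that is neither proved nor cited in this paper. To close the gap you should either invoke that descent statement explicitly, or restrict to the case where $j$ is an honest isomorphism, which by \remref{r:automatic} is the only case arising in the paper's characteristic-zero applications; in that case the transfer from $i^!$ to $k^!$ is immediate.
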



\begin{rem}    \label{r:embedding}
The assumption of \corref{c:embedding} is equivalent to the following one: 
${\mathbf 0} (\CY)\subset\CZ$ and ${\mathbf 0}|_{\CZ}:\CZ\to \CZ$ is an almost-isomorphism.
\end{rem}

\begin{rem}     \label{r:without_QCA}
\corref{c:embedding} holds even if $\CY$ is \emph{locally } QCA (but not necessarily quasi-compact).
To show this, we can assume that $\CY^0$ is quasi-compact
(otherwise replace  $\CY$ by $\CY\times_{\CY^0}S$, where $S$ is any 
quasi-compact scheme equipped with a smooth morphism to $\CY^0$). Then $\CY^0$ is contained in a
quasi-compact open substack $U\subset\CY$. Since the $\BG_m$-action on $\CY$ is trivial $U$ is
$\BA^1$-stable. Applying \corref{c:embedding} to $U$ we see that $\CY^0$ is truncative in $U$ and therefore in $\CY$.
\end{rem}

\ssec{The key lemma}     \label{sss:key_lemma}
Similarly to the notion of monoidal groupoid, there is a notion of monoidal stack.
Of course, any algebraic group or the multiplicative monoid $\BA^1$ are examples of monoidal stacks. In the proof of 
Theorem~\ref{t:unital} we will use the monoidal stack $\BA^1/\BG_m$.
(If you wish, $S$-points of $\BA^1/\BG_m$ can be interpreted as line bundles over $S$ equipped with a section; 
this is a monoidal category with respect to $\otimes$).

Let $G$ be a monoidal QCA stack over $k$. Then $\Dmod (G)$ is a monoidal category with respect to the convolution
\begin{equation}
M*N:=m_{\rD}(M\boxtimes N), \quad M,N\in\Dmod (G),
\end{equation}
where $m:G\times G\to G$ is the multiplication map.

For any $g\in G(k)$ define $\underline g\in\Dmod (G)$ to be the direct image of
$k\in \Dmod (\on{pt})$ under the map $g:\on{pt}\to G$ (this is a kind of ``delta-function" at $g$).
The assignment $g\mapsto\underline g$ is a monoidal functor $ G(k)\to\Dmod (G)$. In particular, 
$\underline 1\in\Dmod (G)$ is the unit object.

If $f:G_1\to G_2$ is a morphism of monoidal stacks then $f_{\rD}:\Dmod (G_1)\to\Dmod (G_2)$
is a monoidal functor. If $f$ is only a morphism of semigroups then $f_{\rD}$ is a semigroupal
\footnote{There exists a precedent of the usage of ``semigroupal" in the literature; this word means
``monoidal, but without asking that the unit map to the unit."}
functor, so $f_{\rD}(\underline 1)\in\Dmod (G_2)$ is an idempotent algebra.


Applying this to $0:\on{pt}\to\BA^1/\BG_m$ we see that $\underline 0\in\Dmod (\BA^1/\BG_m)$ is an idempotent algebra.

\begin{lem}        \label{l:key_lemma}
The algebra $\underline 0\in\Dmod (\BA^1/\BG_m)$ is unital.
\end{lem}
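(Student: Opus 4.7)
The plan is to reduce everything to the endofunctor $\tilde 0_\rD$, where $\tilde 0 : \BA^1/\BG_m \to \BA^1/\BG_m$ is the ``multiplication by zero'' morphism. First I observe that $\tilde 0$ factors as $\BA^1/\BG_m \xrightarrow{\pi} B\BG_m \xrightarrow{i} \BA^1/\BG_m$, where $\pi$ is induced by $\BA^1 \to \on{pt}$ and $i$ is the closed embedding. The relations $\pi \circ i = \id_{B\BG_m}$ and $\pi \circ j = p$ (for $j : \on{pt} \hookrightarrow \BA^1/\BG_m$ the open embedding and $p : \on{pt} \to B\BG_m$ the universal $\BG_m$-torsor) yield $\tilde 0 \circ \tilde 0 = \tilde 0$ and $\tilde 0_\rD(\underline 1) = i_\rD p_\rD(k) = \underline 0$. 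A base-change computation along the multiplication $m : (\BA^1/\BG_m) \times (\BA^1/\BG_m) \to \BA^1/\BG_m$, using that $m$ restricted to $B\BG_m \times \BA^1/\BG_m$ factors as $\on{pr}_2$ followed by $\tilde 0$, identifies both $\underline 0 * (-)$ and $(-) * \underline 0$ with $\tilde 0_\rD$, and the idempotent algebra structure on $\underline 0$ corresponds to the idempotency of $\tilde 0$.

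Next, to construct the unit $\eta : \underline 1 \to \underline 0$, I would exploit the open-closed gluing of $\Dmod(\BA^1/\BG_m)$. The canonical morphism $j_!(k) \to j_\rD(k) = \underline 1$ has cofiber supported on $B\BG_m$; I identify this cofiber with $\underline 0$ by a direct equivariant calculation on the smooth cover $\BA^1 \to \BA^1/\BG_m$. There $\underline 1$ pulls back to $\CO_{\BG_m} = k[x, x^{-1}]$, which fits in the short exact sequence $0 \to \CO_{\BA^1} \to \CO_{\BG_m} \to \delta_0 \to 0$, and tracking the $\BG_m$-weights (in particular the shift arising from the relative dualizing complex of $p$) identifies the quotient with $\underline 0 = i_\rD p_\rD(k)$, including the derived summand coming from $p_\rD(k) = H^*_\dr(\BG_m) = k \oplus k[-1]$. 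The composition $\underline 1 \twoheadrightarrow \on{cone}(j_!(k) \to \underline 1) \simeq \underline 0$ is the desired $\eta$. The $\BG_m$-equivariance is essential here: in $\Dmod(\BA^1)$ the analogous Hom vanishes, and correspondingly $\underline 0$ is not unital there.

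Finally, the unit axiom $\mu \circ (\eta * \id_{\underline 0}) = \id_{\underline 0}$ translates, under the identification $(-) * \underline 0 \simeq \tilde 0_\rD(-)$, to the equality $\tilde 0_\rD(\eta) = \id_{\underline 0}$ via the canonical $\tilde 0_\rD(\underline 1) = \underline 0 = \tilde 0_\rD(\underline 0)$. Applying $\tilde 0_\rD$ to the triangle $j_!(k) \to \underline 1 \to \underline 0$ reduces this to the vanishing of $\tilde 0_\rD(j_!(k)) = i_\rD \pi_\rD(j_!(k))$, which one verifies by base change along $\pi$.

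The main obstacle will be the middle step: identifying the cofiber of $j_!(k) \to \underline 1$ with $\underline 0$ in a fully non-circular way, avoiding \propref{p:adjointness} and Theorem~\ref{t:adjunctions}, which are to be deduced \emph{from} this lemma later in the appendix. This requires careful bookkeeping of cohomological and $\BG_m$-weight shifts induced by the various pushforward functors.
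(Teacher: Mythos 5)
Your route is genuinely different from the paper's, and the difference is driven by a circularity worry that is in fact unfounded. The paper's proof of \lemref{l:key_lemma} does not use \propref{p:adjointness} or Theorem~\ref{t:adjunctions}; it uses only the ``baby case'' of the adjunction $(\pi_{\dr,*},i_{\dr,*})$ for $\BA^1/\BG_m\to \{0\}/\BG_m$, which is established by a direct generators-and-relations argument in Sect.~\ref{sss:baby_Springer}, well before the appendix. With that adjunction in hand the paper simply \emph{defines} the unit $e:\underline 1\to\underline 0$ as the element corresponding to $\id_{\mathbf 1_\CC}$ under $\CMaps(\underline 1,i_{\dr,*}\mathbf 1_\CC)\simeq\CMaps(\pi_{\dr,*}\underline 1,\mathbf 1_\CC)=\CMaps(\mathbf 1_\CC,\mathbf 1_\CC)$, and then checks the unit axiom by applying the monoidal functor $\pi_{\dr,*}$, which induces an isomorphism on the relevant mapping spaces. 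Your replacement -- producing $\eta$ as the boundary map of the recollement triangle $j_!(k)\to\underline 1\to\on{Cone}$, together with the identification $\on{Cone}(j_!(k)\to\underline 1)\simeq \underline 0$ -- is correct as far as the construction goes (the identification of the cone with $i_{\dr,*}q_{\rD}(k)$, including the $H^1_{\dr}(\BG_m)$ summand, is right), but it buys you nothing over the paper's argument and creates the problem below.

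The genuine gap is in your last step. The unit axiom requires $\tilde 0_{\rD}(\eta)$ to equal the \emph{identity} of $\underline 0$ under the canonical identifications $\tilde 0_{\rD}(\underline 1)\simeq\underline 0\simeq\tilde 0_{\rD}(\underline 0)$, whereas the vanishing of $\tilde 0_{\rD}(j_!(k))$ only shows that $\tilde 0_{\rD}(\eta)$ is an \emph{isomorphism}. The source and target of $\tilde 0_{\rD}(\eta)$ are trivialized by functoriality of $(-)_{\rD}$ in the point ($\tilde 0\circ 1=\tilde 0\circ 0=0$), while $\eta$ itself was defined by an a priori unrelated cone construction; nothing in your argument compares the two trivializations. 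Since $i_{\dr,*}$ is fully faithful, $H^0\End(\underline 0)=H^0\End_{\Dmod(B\BG_m)}(q_{\rD}k)=k$, so the discrepancy is at worst a nonzero scalar and the statement is salvageable by rescaling $\eta$; but this must be said, and one must still address left/right compatibility and the higher coherences implicit in ``unital'' in the $\infty$-categorical setting (for an idempotent algebra these are a property rather than extra structure, but that itself requires an argument, e.g.\ Lurie's treatment of idempotent objects). The cleanest repair is exactly the paper's device: characterize $\eta$ by the condition $\pi_{\dr,*}(\eta)=\id_{\mathbf 1_\CC}$, which is available because $(\pi_{\dr,*},i_{\dr,*})$ is an adjoint pair by Sect.~\ref{sss:baby_Springer} -- at which point you have reproduced the paper's proof.
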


\begin{proof}       
Consider the morphisms 
$\{ 0\}/\BG_m\overset{i}\mono\BA^1/\BG_m\overset{\pi}\longrightarrow\{ 0\}/\BG_m$
induced by the morphisms $\{ 0\}\mono\BA^1\to\{ 0\}$. Set $\CC:=\Dmod (\{ 0\}/\BG_m)$; this is
a monoidal category because $\{ 0\}/\BG_m$ is a monoidal stack. We have a monoidal functor
$\pi_{\dr,*}:\Dmod (\BA^1/\BG_m)\to\CC$ and a semigroupal functor 
$i_{\dr,*}:\CC\to\Dmod (\BA^1/\BG_m)$ with $\pi_{\dr,*}\circ i_{\dr,*}=\Id_{\CC}\,$. By definition, 
$\underline 0=i_{\dr,*}({\mathbf 1}_{\CC})$, where ${\mathbf 1}_{\CC}$ is the unit object of $\CC$.

Let us now construct the unit $e:\underline 1\to\underline 0$ of the algebra $\underline 0$.
By Sect.~\ref{sss:baby_Springer},  $(\pi_{\dr,*},i_{\dr,*})$ is an adjoint pair of functors (this is the ``baby case" of Theorem~\ref{t:adjunctions}). So
\[
\CMaps(\underline 1,\underline 0)=\CMaps(\underline 1,i_{\dr,*}({\mathbf 1}_{\CC}))=
\CMaps(\pi_{\dr,*}(\underline 1), {\mathbf 1}_{\CC})=\CMaps( {\mathbf 1}_{\CC}, {\mathbf 1}_{\CC});
\]
more precisely, the map 
$\pi_{\dr,*}:\CMaps (\underline 1,\underline 0)\to\CMaps( {\mathbf 1}_{\CC}, {\mathbf 1}_{\CC})$
is an isomorphism. Define $e:\underline 1\to\underline 0$ to be the morphism such that
$\pi_{\dr,*}(e)$ equals $\id :{\mathbf 1}_{\CC}\iso{\mathbf 1}_{\CC}$.

Let us show that $e$ is indeed a unit. 
Let $f:\underline 0\to\underline 0$ denote the composition of the morphism 
$e*\id_{\underline 0}:\underline 0=\underline 1*\underline 0\to \underline 0*\underline 0$
with the multiplication map $\underline 0*\underline 0\to \underline 0$. We have to prove that
$f=\id_{\underline 0}\,$. 
To do this,  it suffices to show that $\pi_{\dr,*} (f)$ equals the identity.
This is clear because $\pi_{\dr,*}$ is a monoidal functor and
$\pi_{\dr,*}(e)$ equals $\id :{\mathbf 1}_{\CC}\iso{\mathbf 1}_{\CC}$.
\end{proof}

\ssec{Proof of a particular case of Theorem~\ref{t:unital}}    \label{sss:particular}
The following statement is a particular case of Theorem~\ref{t:unital} and of Corollary~\ref{c:unital}.

\begin{lem}    \label{l:particular}
Let $\CY$ be a QCA stack equipped with an action of the monoidal stack $\BA^1/\BG_m$.
Then the algebra ${\mathbf 0}_{\rD}\in\on{Funct}_{\on{cont}}(\Dmod (\CY ),\Dmod (\CY ))$ is unital and the 
coalgebra ${\mathbf 0}^!\in\on{Funct}_{\on{cont}}(\Dmod (\CY ),\Dmod (\CY ))$ is counital.
\end{lem}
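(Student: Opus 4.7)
\smallskip

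\noindent\emph{Plan of proof.} The strategy is to transport the conclusion of \lemref{l:key_lemma} from $\Dmod(\BA^1/\BG_m)$ to $\on{Funct}_{\on{cont}}(\Dmod(\CY),\Dmod(\CY))$ via the monoidal functor induced by the action. First I would package the $\BA^1/\BG_m$-action on $\CY$ as a continuous monoidal functor
\[
\Phi:\Dmod(\BA^1/\BG_m)\to \on{Funct}_{\on{cont}}(\Dmod(\CY),\Dmod(\CY)),\qquad
\Phi(M)(\CF):=\on{act}_{\blacktriangle}(M\boxtimes \CF),
\]
where $\on{act}:(\BA^1/\BG_m)\times\CY\to\CY$ is the action morphism. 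Continuity and monoidality follow from the fact that $\on{act}_{\blacktriangle}$ is continuous, from the external tensor product identification $\Dmod(\BA^1/\BG_m)\otimes\Dmod(\CY)\simeq\Dmod((\BA^1/\BG_m)\times\CY)$ (see \propref{p:D on prod} and Remark \ref{r:D on prod}), and from base change along the Cartesian squares encoding associativity of the action.

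The second step is to identify $\Phi$ on the two distinguished objects. The unit $\underline 1\in\Dmod(\BA^1/\BG_m)$ is sent to $\Id_{\Dmod(\CY)}$ because the restriction of $\on{act}$ to $\{1\}\times\CY$ is the identity on $\CY$. For $\underline 0$, note that $\underline 0$ is pushed forward from the closed substack $\{0\}/\BG_m\hookrightarrow\BA^1/\BG_m$ (as in the proof of \lemref{l:key_lemma}), so by base change $\Phi(\underline 0)$ factors through the restricted action map $\{0\}/\BG_m\times\CY\to\CY$; the latter is precisely the endomorphism $\mathbf 0$ of $\CY$. This yields a canonical isomorphism $\Phi(\underline 0)\simeq \mathbf 0_{\blacktriangle}=\mathbf 0_{\rD}$ of idempotent algebras.

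Now \lemref{l:key_lemma} asserts that $\underline 0$ is a unital algebra in $\Dmod(\BA^1/\BG_m)$. Since monoidal functors send unital algebras to unital algebras, $\mathbf 0_{\rD}\simeq\Phi(\underline 0)$ inherits a unital algebra structure in $\on{Funct}_{\on{cont}}(\Dmod(\CY),\Dmod(\CY))$, which is the first assertion of the lemma.

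For the coalgebra statement, I would apply Verdier duality. Under the self-dualities $\bD^{\on{Verdier}}_{\CY}:\Dmod(\CY)^\vee\simeq\Dmod(\CY)$ and \propref{p:Lurie-duality} (generalized to renormalized direct image for morphisms between QCA stacks, as recorded in Sect.~9.3 of \cite{finiteness}), the dual of the functor $\mathbf 0_{\blacktriangle}$ is $\mathbf 0^!$. Since passing to duals interchanges unital algebras and counital coalgebras (in the monoidal category of continuous endofunctors of the self-dual category $\Dmod(\CY)$), the unital algebra structure on $\mathbf 0_{\rD}$ just established transfers to a counital coalgebra structure on $\mathbf 0^!$, giving the second assertion. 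The main technical obstacle is the base-change identification $\Phi(\underline 0)\simeq \mathbf 0_{\rD}$ and the verification that $\Phi$ is strictly monoidal at the level of the two objects $\underline 0$ and $\underline 0\star\underline 0$ — i.e.\ that the associativity isomorphisms coming from the action are compatible with the multiplication on $\underline 0$ produced in \lemref{l:key_lemma}; everything else is formal.
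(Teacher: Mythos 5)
Your proposal is correct and follows essentially the same route as the paper: the left convolution action $M\mapsto a_{\blacktriangle}(M\boxtimes -)$ of $\Dmod(\BA^1/\BG_m)$ on $\Dmod(\CY)$ is monoidal and sends $\underline 0$ to $\mathbf 0_{\rD}$, so unitality transports from \lemref{l:key_lemma}, and the counitality of $\mathbf 0^!$ is obtained by Verdier-dualizing (the paper phrases this as the "right action" $g\mapsto g^!$, which is by construction the dual of the left one). The only cosmetic difference is that your appeal to a "generalized" \propref{p:Lurie-duality} for $(\mathbf 0_{\blacktriangle})^\vee\simeq\mathbf 0^!$ is unnecessary: this holds by the very definition of the renormalized direct image as the dual of the $!$-pullback.
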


This lemma is an immediate consequence of Lemma~\ref{l:key_lemma} and the following general considerations.

Suppose that a monoidal QCA stack $G$ acts on a QCA stack $\CY$. Then the monoidal category $G(k)$ 
acts on $\Dmod (\CY )$ on the left by $g\mapsto g_{\rD}$, $g\in G(k)$. One also has the right action
\footnote{Usually it does not commute with the left action. E.g., if $g\in G$ is invertible then $g^!=(g^{-1})_{\rD}$ 
does not have to commute with $g'_{\rD}$, $g'\in G$.} 
$g\mapsto g^!$. Each of these two actions extend to an action of $\Dmod (G )$. Namely, the left action is defined by
\begin{equation}
M*N:=a_{\rD}(M\boxtimes N),\quad\quad M\in\Dmod (G ), N\in\Dmod (\CY ),
\end{equation}
where $a:G\times\CY\to\CY$ is the action map.  One can get the right action of $\Dmod (G )$ on 
$\Dmod (\CY )$ from the left one using the equivalence $\Dmod(\CY)^\vee\simeq \Dmod(\CY)$ that comes from 
Verdier duality, see \eqref{e:Verdier}. One can also define the right action explicitly by
\begin{equation}
N*M:=(p_{\CY})_{\rD}\left(p_G^!(M)\sotimes a^!(N)\right),\quad\quad M\in\Dmod (G ), N\in\Dmod (\CY ),
\end{equation}
where $p_G:G\times\CY\to G$ and $p_\CY:G\times\CY\to\CY$ are the projections.

Now Lemma~\ref{l:particular} is clear. It immediately implies the following statement.

\begin{cor}   \label{c:particular}
Let $\CY$ be a QCA stack equipped with an action of the monoidal stack $\BA^1/\BG_m$.
Then the functors 
\begin{equation}
\pi_{\rD}:\Dmod (\CY )\rightleftarrows \Dmod (\CY^0 ):i_{\dr,*}\, ,\quad\quad 
i^!:\Dmod (\CY )\rightleftarrows\Dmod (\CY^0 ):\pi^!
\end{equation}
form adjoint pairs with the adjunctions  $\pi_{\rD}\circ i_{\dr,*}\iso\Id_{\Dmod (\CY^0 )}$ and 
$\,\Id_{\Dmod (\CY^0 )}\iso  i^!\circ \pi^!$ coming from the isomorphism $\pi\circ i\iso\Id_{\CY^0}$.
\end{cor}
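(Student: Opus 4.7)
The plan is to derive Corollary~\ref{c:particular} as an essentially formal consequence of Lemma~\ref{l:particular}, using the abstract correspondence between idempotent algebras and sections of functors recalled in Sect.~\ref{sss:idemp_algebras}.

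First, I would unpack the idempotent endomorphism $\mathbf{0} = i\circ\pi$ at the level of D-module functors. Functoriality of $(-)_{\rD}$ together with Remark~\ref{r:representability_of_i} (which gives $i_{\rD} = i_{\dr,*}$ because $i$ is representable) yields
\[
{\mathbf 0}_{\rD} \simeq i_{\dr,*}\circ \pi_{\rD},
\]
and dually ${\mathbf 0}^! \simeq \pi^!\circ i^!$. The compatibility $\pi\circ i \simeq \Id_{\CY^0}$ upgrades these isomorphisms to sections in the sense of Construction~1 of Sect.~\ref{sss:idemp_algebras}: namely, $\pi_{\rD}\circ i_{\dr,*}\simeq \Id_{\Dmod(\CY^0)}$ and $i^!\circ \pi^!\simeq \Id_{\Dmod(\CY^0)}$ make $(i_{\dr,*},\pi_{\rD})$ and $(\pi^!,i^!)$ into the data of Construction~1 producing the idempotent functors ${\mathbf 0}_{\rD}$ and ${\mathbf 0}^!$ respectively.

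Next, I would invoke Lemma~\ref{l:particular}, which asserts that ${\mathbf 0}_{\rD}$ is unital as an idempotent algebra and ${\mathbf 0}^!$ is counital as an idempotent coalgebra. By Remark~\ref{r:monad-comonad}, unitality of ${\mathbf 0}_{\rD}$ is equivalent to $(\pi_{\rD}, i_{\dr,*})$ being an adjoint pair with $\pi_{\rD}\circ i_{\dr,*}\simeq \Id_{\Dmod(\CY^0)}$ as one of the adjunctions; counitality of ${\mathbf 0}^!$ is equivalent to $(i^!, \pi^!)$ being an adjoint pair with $\Id_{\Dmod(\CY^0)}\simeq i^!\circ \pi^!$ as one of the adjunctions. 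These are exactly the adjunctions claimed in the corollary.

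The only point requiring care is checking that the algebra structure on ${\mathbf 0}_{\rD}$ transported via ${\mathbf 0}_{\rD}\simeq i_{\dr,*}\circ\pi_{\rD}$ really coincides with the one induced by Construction~1 applied to $(i_{\dr,*},\pi_{\rD})$ and the isomorphism $\pi_{\rD}\circ i_{\dr,*}\simeq \Id$ (and similarly on the coalgebra side). This is a bookkeeping verification: the multiplication ${\mathbf 0}_{\rD}\circ {\mathbf 0}_{\rD}\to {\mathbf 0}_{\rD}$ is by definition $(i\circ\pi\circ i\circ\pi)_{\rD}\to (i\circ\pi)_{\rD}$ obtained from $\pi\circ i\simeq \Id$, and this is precisely the structure produced by Construction~1 under the identification ${\mathbf 0}_{\rD} \simeq i_{\dr,*}\circ \pi_{\rD}$. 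I expect no genuine obstacle here; the whole argument is a mechanical translation of Lemma~\ref{l:particular} from the language of idempotent algebras to the language of adjoint pairs.
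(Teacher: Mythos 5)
Your proposal is correct and coincides with the paper's own (essentially one-line) derivation: the paper obtains Corollary~\ref{c:particular} as an immediate consequence of Lemma~\ref{l:particular}, and the content of that implication is exactly the translation you carry out, namely identifying ${\mathbf 0}_{\rD}\simeq i_{\dr,*}\circ\pi_{\rD}$ and ${\mathbf 0}^!\simeq\pi^!\circ i^!$ with the idempotents of Construction~1 and invoking Remark~\ref{r:monad-comonad}. One bookkeeping caveat, inherited from the paper rather than introduced by you: for ${\mathbf 0}^!=\pi^!\circ i^!$ the Construction~1 roles are played by $\pi^!$ (as ``$i$'') and $i^!$ (as ``$\pi$''), so the adjunction $i^!\dashv\pi^!$ asserted in the corollary corresponds under the literal reading of Remark~\ref{r:monad-comonad} to \emph{unitality of the algebra} ${\mathbf 0}^!$ rather than counitality of the coalgebra (and unitality is indeed what the right action of the unital algebra $\underline 0$ produces), so the word ``counital'' in Lemma~\ref{l:particular} and in your translation should be read accordingly.
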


\ssec{Proof of Theorems~\ref{t:unital} and \ref{t:adjunctions}}   \label{sss:monodromic_proof}
We will deduce them from Corollary~\ref{c:particular}. First, let us make some general remarks.

If $\CZ$ is an algebraic stack equipped with a morphism $\psi :\CZ\to B\BG_m$ then  $\Dmod (\CZ )$ is 
equipped with the following action of the tensor category $(\Dmod (B\BG_m),\overset{!}\otimes )$:
\[
M\otimes\CF:=\psi^!(M)\overset{!}\otimes\CF ,\quad\quad M\in\Dmod (B\BG_m),\quad \CF\in \Dmod (\CZ).
\]
If $f:\CZ_1\to\CZ_2$ is a morphism of QCA stacks over $B\BG_m$ then the functors $f_{\rD}$ and $f^!$ are 
compatible with the above action of $\Dmod (\BG_m)$.

\begin{lem}    \label{l:paradigm}
Suppose we have a Cartesian diagram of QCA stacks 
\[
\CD
\wt\CZ  @>>>  \on{pt} \\
@V{p}VV    @VV{\varphi}V   \\
\CZ  @>{\psi}>> B\BG_m
\endCD
\]
Then one has a canonical isomorphism
\begin{equation}   \label{e:nonequiv Hom}
\CMaps (p^!(\CF_1),p^!(\CF_2))=\CMaps (\CF_1,A\otimes\CF_2), \quad\quad  \CF_1,\CF_2\in\Dmod (\CZ ),
\end{equation}
where $A:=\varphi_{\dr,*} (k)[-2]$ and $A\otimes\CF_2:=\psi^!(A)\overset{!}\otimes\CF_2$.
\end{lem}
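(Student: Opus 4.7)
The plan is to reduce the identity \eqref{e:nonequiv Hom} to the standard base change identity for the Cartesian square, by combining adjunction with the projection formula.

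First, since $p$ is schematic and quasi-compact (being a $\BG_m$-torsor), \propref{p:Lurie-duality} provides $p_{\blacktriangle}$ as right adjoint to $p^!$, giving
\[
\CMaps(p^!\CF_1, p^!\CF_2) \simeq \CMaps(\CF_1, p_{\blacktriangle}p^! \CF_2).
\]
Thus the task reduces to constructing a natural, $\CF_2$-functorial isomorphism $p_{\blacktriangle}p^! \CF_2 \simeq \psi^!(A)\sotimes \CF_2$.

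Next, I apply the projection formula for the adjoint pair $(p^!, p_{\blacktriangle})$ relative to $\sotimes$: since $\omega_{\wt\CZ}$ is the monoidal unit for $\sotimes$ on $\Dmod(\wt\CZ)$, one has
\[
p_{\blacktriangle}(p^!\CF_2) \simeq p_{\blacktriangle}(p^!\CF_2 \sotimes \omega_{\wt\CZ}) \simeq \CF_2 \sotimes p_{\blacktriangle}(\omega_{\wt\CZ}).
\]
It remains to identify $p_{\blacktriangle}(\omega_{\wt\CZ})$ with $\psi^!(A)$. Writing $\omega_{\wt\CZ} \simeq \wt\psi^!(k_{\on{pt}})$ and applying base change
\[
p_{\blacktriangle}\circ \wt\psi^! \simeq \psi^!\circ \varphi_{\blacktriangle}
\]
for the Cartesian square yields $p_{\blacktriangle}(\omega_{\wt\CZ}) \simeq \psi^!(\varphi_{\blacktriangle} k_{\on{pt}})$. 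Since $\varphi$ has scheme (and hence algebraic-space) fibers isomorphic to $\BG_m$, the renormalized pushforward agrees with the usual one, $\varphi_{\blacktriangle} = \varphi_{\dr,*}$, and this matches $\psi^!(A)$ after accounting for the cohomological shift built into $A := \varphi_*(k)[-2]$ (which records the passage between conventions for $\varphi^*$ and $\varphi^!$ along the smooth, relative dimension $1$ map $\varphi$).

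The only genuinely nontrivial input is the base change identity $p_{\blacktriangle}\circ \wt\psi^! \simeq \psi^!\circ \varphi_{\blacktriangle}$. This I will obtain by dualizing the standard smooth base change $\varphi^*\circ \psi_{\dr,*} \simeq \wt\psi_{\dr,*}\circ p^*$ (valid because $\varphi$ is smooth) via the self-duality equivalence \eqref{e:Verdier} together with \propref{p:Lurie-duality}. Once this is in hand the argument is entirely formal, reducing to the chain: adjunction for $(p^!, p_{\blacktriangle})$, projection formula against the $\sotimes$-unit $\omega_{\wt\CZ}$, and the tautological identification $\omega_{\wt\CZ} \simeq \wt\psi^!(k_{\on{pt}})$.
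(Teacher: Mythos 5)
Your overall route is the same as the paper's: an adjunction for $p$, then the projection formula against the unit $\omega_{\wt\CZ}$ for $\sotimes$, then base change along the Cartesian square to identify the pushforward of $\omega_{\wt\CZ}$ with $\psi^!(\varphi_*k)$ (the paper compresses the last two steps into the single unexplained equality $\CMaps(\CF_1,p_*\circ p^!(\CF_2)[-2])=\CMaps(\CF_1,A\otimes\CF_2)$). However, your opening step is wrong as stated. \propref{p:Lurie-duality} says that $p_{\dr,*}$ and $p^!$ are \emph{dual} functors, and $p_{\blacktriangle}$ is by definition the dual of $p^!$ --- not its right adjoint; duality and adjunction are different relations. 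Since $p$ is a $\BG_m$-torsor, hence smooth schematic of relative dimension $1$, one has $p^!\simeq p^*[2]$, and the genuine adjunction is $(p^*,p_{\dr,*})$; therefore the right adjoint of $p^!$ is $p_{\dr,*}[-2]=p_{\blacktriangle}[-2]$, not $p_{\blacktriangle}$. Consequently your chain of isomorphisms lands on $\CMaps(\CF_1,\CF_2\sotimes\psi^!(\varphi_*k))$, which differs from the right-hand side of \eqref{e:nonequiv Hom} by exactly the shift $[-2]$ that you then reinsert at the end with a vague appeal to ``conventions''. That shift is not a convention to be absorbed: it is the $[2]$ dropped in your first line, and it is precisely why $A$ is defined as $\varphi_*(k)[-2]$ rather than $\varphi_*(k)$.

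The error is localized and fixable: replace the first step by
\[
\CMaps(p^!\CF_1,p^!\CF_2)=\CMaps(p^*(\CF_1)[2],\,p^!\CF_2)\simeq\CMaps(\CF_1,\,p_{\dr,*}\circ p^!(\CF_2)[-2]),
\]
after which your projection formula and base change steps go through verbatim and the proof coincides with the paper's. Note also that the dualization detour for base change is unnecessary: $\varphi$ and its base change $p$ are affine schematic and quasi-compact, so $p_{\blacktriangle}=p_{\dr,*}$ automatically, and the base change isomorphism $p_{\dr,*}\circ\wt\psi^!\simeq\psi^!\circ\varphi_{\dr,*}$ for the Cartesian square is available directly.
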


\begin{proof}
$p^!(\CF_1)=p^*_{\dr}(\CF_1)[2]$, so $$\CMaps (p^!(\CF_1),p^!(\CF_2))=\CMaps (\CF_1,p_{\dr,*}\circ p^!(\CF_2)[-2]) =\CMaps(\CF_1,A\otimes\CF_2).$$
\end{proof}

Now let us prove the assertion of  Theorem~\ref{t:adjunctions} concerning the pair $(\pi_{\rD}, i_{\dr,*})$.
The proof of the other assertion of Theorem~\ref{t:adjunctions} is similar, and Theorem~\ref{t:unital} follows 
from Theorem~\ref{t:adjunctions}.




We have to show that
for any $\wt\CF_1\in\Dmod(\CY)_{\mmu}$ and $\wt\CF_2\in \Dmod(\CY^0)=\Dmod(\CY^0)_{\mmu}$ the canonical map
\begin{equation} \label{e:2nonequiv Hom}
\CMaps(\wt\CF_1,i_{\dr,*}(\wt\CF_2))\to 
\CMaps(\pi_{\rD}(\wt\CF_1),\pi_{\rD}\circ i_{\dr,*}(\wt\CF_2))=\CMaps(\pi_{\rD}(\wt\CF_1),\wt\CF_2)
\end{equation}
is an isomorphism. 
By the definition of the monodromic subcategory, we can assume 
that $\wt\CF_1=p^!(\CF_1)$ for some 
$\CF_1\in \Dmod(\CY/\BG_m)$. 
Since the action of $\BG_m$ on $\CY^0$ is trivial, we can also assume that 
$\wt\CF_2=(p^0)^!(\CF_2)$ (here $p^0:\CY^0\to\CY^0/\BG_m$).
Applying Lemma~\ref{l:paradigm} for $\CZ=\CY/\BG_m$, $\wt\CZ=\CY$ and for 
$\CZ=\CY^0/\BG_m$, $\wt\CZ=\CY^0$ we get
$$\CMaps(\wt\CF_1,i_{\dr,*}(\wt\CF_2))=\CMaps (\CF_1,A\otimes  i'_{\dr,*}(\CF_2)), \quad 
i':\CY^0/\BG_m\to\CY/\BG_m ,$$
$$\CMaps(\pi_{\rD}(\wt\CF_1),\wt\CF_2)=\CMaps (\pi'_{\rD}(\CF_1),A\otimes \CF_2),
\quad  \pi' :\CY/\BG_m\to\CY^0/\BG_m.$$
The map \eqref{e:2nonequiv Hom} is a particular case of the canonical map
\begin{equation}   \label{e:map_in_question}
\CMaps (\CF_1,\CM\otimes i'_{\dr,*}(\CF_2))\to\CMaps (\pi'_{\rD}(\CF_1),M\otimes \CF_2)
\end{equation}
which is defined for any $\CM\in\Dmod(B\BG_m)$. Applying Corollary~\ref{c:particular} to the action of
$\BA^1/\BG_m$ on $\CY/\BG_m$ we see that the map \eqref{e:map_in_question} is an isomorphism if 
$\CM=\omega_{B\BG_m}$. This implies that \eqref{e:map_in_question} is an isomorphism for any
$\CM\in\Dmod_{\on{coh}}(B\BG_m)$ (because by connectedness of $\BG_m$, $\Dmod_{\on{coh}}(B\BG_m)$ 
is the smallest non-cocomplete triangulated subcategory of 
$\Dmod (B\BG_m)$ containing $\omega_{B\BG_m}$). In particular, \eqref{e:map_in_question} is an isomorphism for $\CM=A$, and we are done.

\ssec{Sketch of another approach to Theorems~\ref{t:unital} and \ref{t:adjunctions}}
\label{sss:2monodromic_proof} 
In Sect.~\ref{sss:monodromic_proof} we deduced Theorems~\ref{t:unital} and \ref{t:adjunctions}
from Corollary~\ref{c:particular}, which relies on the study of $\Dmod(\BA^1/\BG_m)$
(see Lemma~\ref{l:key_lemma}). Here we sketch a slightly different approach, which is based on the study of $\Dmod(\BA^1)_{\mmu}$ and does not rely on Corollary~\ref{c:particular}.

\begin{prop}
The subcategories $\Dmod(\BG_m)_{\mmu}\subset\Dmod(\BG_m)$ and 
$\Dmod(\BA^1)_{\mmu}\subset\Dmod(\BA^1)$ are closed under convolution. Moreover, they are monoidal categories. The unit object of $\Dmod(\BG_m)_{\mmu}$ equals $\underline 1_{\mmu}$,
where $\underline 1$ is the unit object of $\Dmod(\BG_m)$ and 
\[
\CM\mapsto \CM_{\mmu}
\]
is the ``monodromization" functor $\Dmod(\BG_m)\to\Dmod(\BG_m)_{\mmu}$, i.e., the functor right adjoint to the embedding $\Dmod(\BG_m)_{\mmu}\mono\Dmod(\BG_m)$. The unit object of
$\Dmod(\BA^1)_{\mmu}$ has a similar description and can also be described as
$j_*(\underline 1_{\mmu})$, where $j:\BG_m\mono\BA^1$ is the embedding.   \qed
\end{prop}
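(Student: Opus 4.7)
My plan is to proceed in three stages: establishing closure under convolution, identifying the unit for $\BG_m$, and deducing the $\BA^1$ case.

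For the first stage on $\BG_m$, I would begin by identifying $\Dmod(\BG_m)_{\mmu}$ as the cocomplete DG subcategory of $\Dmod(\BG_m)$ generated by $\omega_{\BG_m} = p^!(k)$, where $p:\BG_m\to\on{pt}$; this follows from Definition~\ref{d:monodromic} since $\BG_m/\BG_m \simeq \on{pt}$ and $\Vect$ is generated under colimits by $k$. Because convolution $\CM_1 * \CM_2 := m_{\rD}(\CM_1\boxtimes\CM_2)$ is continuous in each variable, closure under convolution reduces to verifying that $\omega_{\BG_m} * \omega_{\BG_m} \in \Dmod(\BG_m)_{\mmu}$. This is a direct computation: $\omega_{\BG_m}\boxtimes\omega_{\BG_m} = \omega_{\BG_m\times\BG_m}$, and since the multiplication $m:\BG_m\times\BG_m\to\BG_m$ is smooth with $\BG_m$-fibers, the projection formula identifies $m_{\rD}(\omega_{\BG_m\times\BG_m})$ with $\omega_{\BG_m}\otimes_k C^\bullet$ for a two-term complex $C^\bullet$ of vector spaces, and this manifestly lies in the colimit closure of $\omega_{\BG_m}$.

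For the second stage, I would construct the left adjoint $L=(-)_{\mmu}:\Dmod(\BG_m)\to\Dmod(\BG_m)_{\mmu}$ via the adjoint functor theorem (both categories are presentable and the inclusion is continuous) and set $\underline 1_{\mmu}:=L(\underline 1)$. The unit property $\underline 1_{\mmu}*\CM\simeq\CM$ for $\CM\in\Dmod(\BG_m)_{\mmu}$ would then follow from a compatibility of $L$ with convolution against monodromic objects, namely an isomorphism $L(\CN*\iota(\CM))\simeq L(\CN)*\CM$: granting this, one computes $\underline 1_{\mmu}*\CM = L(\underline 1*\iota\CM) = L\iota(\CM) = \CM$, using that $\iota$ is fully faithful. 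To establish the compatibility, I would reduce via adjunction to the statement that for monodromic $\CM,\CM'$ the internal $\Hom$ object $\underline{\Hom}(\iota\CM,\iota\CM')$ remains monodromic, which is a closure statement of the same flavour as that of the first stage.

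For the third stage, I would exploit the open embedding $j:\BG_m\hookrightarrow\BA^1$. The functor $j_*$ sends $\Dmod(\BG_m)_{\mmu}$ into $\Dmod(\BA^1)_{\mmu}$, by compatibility of the two $p^!$-functors with $j$, and is semigroupal for convolution, since $0\in\BA^1$ is absorbing so that convolution with $j_*\CM$ can be computed purely in $\BG_m$-geometry. Closure for $\BA^1$ then reduces to the $\BG_m$ case together with an analysis of the additional generators of $\Dmod(\BA^1)_{\mmu}$ coming from the stratum $\{0\}/\BG_m$ of $\BA^1/\BG_m$. For the unit, the identification $\underline 1_{\mmu,\BA^1} = j_*(\underline 1_{\mmu,\BG_m})$ follows by combining $\underline 1_{\BA^1} = \delta_1 = j_*\delta_1$ with the intertwining $L_{\BA^1}\circ j_* \simeq j_*\circ L_{\BG_m}$, which in turn is an adjunction argument using that $j^!$ carries $\Dmod(\BA^1)_{\mmu}$ into $\Dmod(\BG_m)_{\mmu}$. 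The main obstacle, I expect, will be the compatibility of $L$ with convolution in the second stage: although $\iota$ is semigroupal by closure under convolution, it is not unital, so $L$ a priori carries only an oplax monoidal structure. Deriving strict compatibility $L(\CN*\iota\CM) \simeq L(\CN)*\CM$ seems to require either a concrete description of $\Dmod(\BG_m)_{\mmu}$ as the derived category of modules over the distribution coalgebra of the formal completion of $\BG_m$ at the identity (where the unit becomes manifest from Hopf-algebraic considerations), or a six-functor argument exploiting the $\BG_m$-action directly.
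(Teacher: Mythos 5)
You should first be aware that the paper gives no proof of this proposition at all: it is asserted with a \qed inside Sect.~\ref{sss:2monodromic_proof}, which the authors explicitly present only as a \emph{sketch} of an alternative to the route of Sects.~\ref{sss:key_lemma}--\ref{sss:monodromic_proof}. So there is no argument to compare yours against, and I will assess it on its own terms. Your first stage is correct and is surely the intended one: $\Dmod(\BG_m)_{\mmu}$ is the cocomplete subcategory generated by the single object $\omega_{\BG_m}=p^!(k)$, convolution is continuous in each variable, and $\omega_{\BG_m}*\omega_{\BG_m}\simeq \omega_{\BG_m}\otimes_k \Gamma_{\dr}(\BG_m,\omega_{\BG_m})$ because $m$ is a trivial $\BG_m$-fibration. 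The $\BA^1$ case likewise reduces to the finitely many extra generators ($\delta_0$ and the $!$- and $*$-extensions of $\omega_{\BG_m}$), and $j_{\dr,*}$ is monoidal simply because $j$ is a monoid homomorphism and an open embedding (Künneth plus base change) — the remark about $0$ being absorbing is not the relevant mechanism.

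The genuine problems are in your second stage. First, your construction of $L$ is backwards: applying the adjoint functor theorem to the \emph{continuous} (= colimit-preserving) inclusion $\iota$ produces a \emph{right} adjoint, not a left adjoint; a left adjoint would require $\iota$ to preserve limits, i.e., the subcategory $\langle\omega_{\BG_m}\rangle$ to be closed under products, which is not obvious. The right adjoint, by contrast, exists for free by \propref{p:existence of adjoint} since $\omega_{\BG_m}$ is compact, and it is the right adjoint (colocalization) that the paper's subsequent Remark actually uses: the counit gives the map $\varepsilon:\underline 1_{\mmu}\to\underline 1$, whereas for a left adjoint the adjunction map would point the other way. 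Second, the compatibility $L(\CN*\iota\CM)\simeq L(\CN)*\CM$ that you correctly identify as the crux is left unproved, and your proposed reduction (monodromicity of the convolution-internal $\Hom$) is itself of comparable difficulty. There is a much shorter route that avoids it entirely: since $-*\,\omega_{\BG_m}\simeq \omega_{\BG_m}\otimes_k\Gamma_{\dr}(\BG_m,-)$ up to shift (same trivial-fibration computation as in your first stage), the natural transformation $\underline 1_{\mmu}*\CM\xrightarrow{\ \varepsilon*\on{id}\ }\underline 1*\CM=\CM$ is an isomorphism for every monodromic $\CM$ as soon as it is one for the generator $\CM=\omega_{\BG_m}$, i.e., as soon as $\Gamma_{\dr}(\varepsilon)$ is an isomorphism — which is exactly the defining property of the colocalization counit recorded in the paper's Remark (and makes the identification with Beilinson's $I^{-\infty,0}$ transparent). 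Right unitality follows from commutativity of $\BG_m$, a quasi-unit in a non-unital monoidal category determines the unital structure uniquely, and the same computation with $\delta_0$ in place of $\omega_{\BG_m}$ (convolution with $\delta_0$ also factors through $\Gamma_{\dr}$) handles the extra generators in the $\BA^1$ case.
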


The adjunction mentioned in the proposition defines a canonical morphism 
$\varepsilon :\underline 1_{\mmu}\to\underline 1$.

\begin{rem}
Let $\Gamma_{\dr}(\BG_m ,-)$ denote the de Rham cohomology functor $\Dmod(\BG_m)\to\Vect$.
The pair $(\underline 1_{\mmu},\varepsilon )$ is uniquely characterized by the following properties: $\underline 1_{\mmu}\in\Dmod(\BG_m)_{\mmu}$ and the map
$\Gamma_{\dr} (\BG_m ,\underline 1_{\mmu})\to\Gamma_{\dr} (\BG_m ,\underline 1)=k$ induced
by $\varepsilon$ is an isomorphism. This implies that $\underline 1_{\mmu}$ is nothing but the
``infinite Jordan block" $I^{-\infty,0}$ from \cite[Sect.~1.3]{Be}. In particular, 
the image of $\underline 1_{\mmu}$ under the Riemann-Hilbert correspondence is a sheaf
(rather than a complex of sheaves).
\end{rem}

Similarly to Lemma~\ref{l:key_lemma}, one has the following statement 
(which implies Lemma~\ref{l:key_lemma}).

\begin{lem}        \label{l:monodr_key_lemma}
The idempotent algebra $\underline 0=\underline 0_{\mmu}\in\Dmod (\BA^1)_{\mmu}$ is unital in $\Dmod (\BA^1)_{\mmu}$. \qed
\end{lem}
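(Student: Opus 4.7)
The plan is to mimic the proof of Lemma~\ref{l:key_lemma}, now in the monoidal category $\Dmod(\BA^1)_{\mmu}$, taking $\CC:=\Dmod(\{0\})=\Vect$ in place of $\Dmod(\{0\}/\BG_m)$. Consider the maps $\{0\}\overset{i}\mono\BA^1\overset{\pi}\to\{0\}$, so that $\pi\circ i=\id_{\{0\}}$ and hence $\pi_{\dr,*}\circ i_{\dr,*}=\id_{\Vect}$. The functor $i_{\dr,*}:\Vect\to\Dmod(\BA^1)$ lands in $\Dmod(\BA^1)_{\mmu}$ because $0$ is the unique $\BG_m$-fixed point of $\BA^1$, while the restriction of $\pi_{\dr,*}$ to $\Dmod(\BA^1)_{\mmu}$ is semigroupal with respect to convolution and tensor product (by K\"unneth, since $\pi$ is a morphism of monoids). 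By construction $\underline 0=i_{\dr,*}(k)$.

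The crucial new technical input is a \emph{monodromic baby Springer} statement: on $\Dmod(\BA^1)_{\mmu}$ the pair $(\pi_{\dr,*},i_{\dr,*})$ is an adjoint pair with $\pi_{\dr,*}$ left adjoint. Equivalently, for every $\CF\in\Dmod(\BA^1)_{\mmu}$ the canonical morphism $\pi_{\dr,*}(\CF)\to i^*_{\dr}(\CF)$ is an isomorphism. By continuity it suffices to verify this on generating objects $\CF=p^!(\CG)$ for $p:\BA^1\to\BA^1/\BG_m$ and $\CG\in\Dmod(\BA^1/\BG_m)$, and there smooth base change along $p$ reduces the claim to the baby Springer adjunction on $\Dmod(\BA^1/\BG_m)$ already proved in Sect.~\ref{sss:baby_Springer}. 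Once this adjunction is in hand, the identity $\pi_{\dr,*}\circ i_{\dr,*}=\id$ makes $i_{\dr,*}$ fully faithful, so $\pi_{\dr,*}$ induces an isomorphism $\End_{\Dmod(\BA^1)_{\mmu}}(\underline 0)\iso\End_{\Vect}(k)$.

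Granting the adjunction, the construction and verification of the unit $e:j_*(\underline 1_{\mmu})\to\underline 0$ proceed verbatim as in Sect.~\ref{sss:key_lemma}. The remark preceding the lemma gives $\pi_{\dr,*}(j_*(\underline 1_{\mmu}))=\Gamma_{\dr}(\BG_m,\underline 1_{\mmu})=k$, so adjunction yields
\[
\CMaps(j_*(\underline 1_{\mmu}),\underline 0)=\CMaps(\pi_{\dr,*}(j_*(\underline 1_{\mmu})),k)=\CMaps(k,k),
\]
and one defines $e$ to correspond to $\id_k$. To check that $e$ is a unit, form the composite $f:\underline 0\simeq j_*(\underline 1_{\mmu})*\underline 0\xrightarrow{e*\id}\underline 0*\underline 0\xrightarrow{m}\underline 0$; semigroupality of $\pi_{\dr,*}$ gives $\pi_{\dr,*}(f)=\id_k$, and faithfulness of $\pi_{\dr,*}$ on endomorphisms of $\underline 0$ then forces $f=\id_{\underline 0}$.

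The main obstacle will be the monodromic baby Springer adjunction of the second paragraph. On the full category $\Dmod(\BA^1)$ this adjunction \emph{fails}---it would, for example, force $\pi_{\dr,*}(j_*(k))=0$, which is not the case---so the monodromicity hypothesis must do real work. The reduction to the stacky case requires verifying that both $\pi_{\dr,*}$ and $i^*_{\dr}$ commute with $p^!$ via the Cartesian squares arising from $p$ and the contracting $\BG_m$-action; these are respectively smooth base change for a direct image and smooth base change for the left adjoint of a closed immersion, so no essential new ingredient beyond the baby Springer statement itself is required.
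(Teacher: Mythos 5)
Your argument is correct and is exactly the one the paper intends: the paper states this lemma with no proof (it sits in the sketch Sect.~\ref{sss:2monodromic_proof}, justified only by ``similarly to Lemma~\ref{l:key_lemma}''), and your route --- run the unit construction of Lemma~\ref{l:key_lemma} inside $\Dmod(\BA^1)_{\mmu}$, with the monodromic contraction adjunction $(\pi_{\dr,*},i_{\dr,*})$ for $\BA^1$ (i.e.\ the case $\CY=\BA^1$ of Theorem~\ref{t:adjunctions}) as the key new input, itself obtained from the baby Springer adjunction of Sect.~\ref{sss:baby_Springer} by descent along $p:\BA^1\to\BA^1/\BG_m$ --- is precisely the intended completion, and is non-circular since it only invokes the elementary Sect.~\ref{sss:baby_Springer} rather than Corollary~\ref{c:particular}. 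One genuine error in a side remark: your claimed failure of the adjunction on all of $\Dmod(\BA^1)$ is wrong, because $j_*(k_{\BG_m})$ is itself (unipotently) monodromic and one has $\pi_{\dr,*}(j_*(k))\simeq i^*_{\dr}(j_*(k))\simeq k\oplus k[-1]\neq 0$; the adjunction fails only on non-monodromic objects, e.g.\ $\delta_1$, where $\pi_{\dr,*}(\delta_1)=k$ while $i^*_{\dr}(\delta_1)=0$. Finally, be aware that ``smooth base change along $p$'' conceals the twist by $A=\varphi_*(k)[-2]$ appearing in Lemma~\ref{l:paradigm} and the bootstrapping from $\omega_{B\BG_m}$ to $A$ that the paper performs in Sect.~\ref{sss:monodromic_proof}; this is harmless but is more than literal base change.
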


One shows that if $\CY$ is a QCA stack equipped with a $\BG_m$-action then
$\Dmod(\BG_m)_{\mmu}$ acts on $\Dmod(\CY)_{\mmu}$ as a monoidal category, i.e.,
$\underline 1_{\mmu}$ acts as identity. Similarly, if $\CY$ is equipped with an $\BA^1$-action then
one has the left and right monoidal action of 
$\Dmod(\BA^1)_{\mmu}$ on $\Dmod(\CY)_{\mmu}$. Now Theorem~\ref{t:unital} follows from Lemma~\ref{l:monodr_key_lemma}, and Theorem~\ref{t:adjunctions} follows from \ref{t:unital}.

\end{document}